\newcommand{\Cnf}{\text{Cnf}}
\newcommand{\Variable}{\text{Variable}}
\newcommand{\Assignment}{\text{Assignment}}
\newcommand{\Literal}{\text{Literal}}
\newcommand{\Clause}{\text{Clause}}
\newcommand{\Vertex}{\text{Vertex}}
\newcommand{\Edge}{\text{Edge}}
\newcommand{\Graph}{\text{Graph}}
\newcommand{\Set}{\text{Set }}
\newcommand{\g}[1]{\boldsymbol{#1}}
\newcommand{\bsim}{\boldsymbol\sim}
\newcommand{\btop}{\boldsymbol\top}
\newcommand{\bbot}{\boldsymbol\bot}
\newcommand{\rest}{\mathrm{rest}}
\renewcommand{\star}{\mathrm{star}}
\newcommand{\link}{\mathrm{link}}
\newcommand{\bgamma}{\boldsymbol\gamma}
\newcommand{\SAT}{\textsc{sat}\xspace}
\newcommand{\GraphSAT}{\textsc{GraphSAT}\xspace}
\newcommand{\K}[2]{\boldsymbol{\mathcal{K}_{#1,#2}}}
\theoremstyle{plain}                 
\newtheorem{thm}{Theorem}[section]   
\newtheorem*{theorem*}{Theorem}
\newtheorem{lemma}[thm]{Lemma}       
\newtheorem{prop}[thm]{Proposition}  
\newtheorem{cor}{Corollary}[thm]     
\theoremstyle{definition}                   
\newtheorem{remark}{Remark}[section]        
\newtheorem{procedure}{Procedure}[section]  
\date{}
\title{\textsc{GraphSAT} -- a decision problem connecting satisfiability and graph theory}
\begin{document}

\title{\textsc{GraphSAT} -- a decision problem connecting satisfiability and graph theory}
\author{Vaibhav Karve}
\email{vkarve2@illinois.edu}
\address{Department of Mathematics, University of Illinois at Urbana-Champaign, 1409 West Green Street, Urbana, IL 61801}
\author{Anil N. Hirani}
\email{hirani@illinois.edu}
\address{Department of Mathematics, University of Illinois at Urbana-Champaign, 1409 West Green Street, Urbana, IL 61801}
\maketitle

\begin{abstract}
Satisfiability of boolean formulae (SAT) has been a topic of research in
logic and computer science for a long time. In this paper we are interested
in understanding the structure of satisfiable and unsatisfiable sentences.
In previous work we initiated a new approach to SAT by formulating a
mapping from propositional logic sentences to graphs, allowing us to find
structural obstructions to 2SAT (clauses with exactly 2 literals) in terms
of graphs. Here we generalize these ideas to multi-hypergraphs in which the
edges can have more than 2 vertices and can have multiplicity. This is
needed for understanding the structure of SAT for sentences made of clauses
with 3 or more literals (3SAT), which is a building block of
NP-completeness theory. We introduce a decision problem that we call
GraphSAT, as a first step towards a structural view of SAT. Each
propositional logic sentence can be mapped to a multi-hypergraph by
associating each variable with a vertex (ignoring the negations) and each
clause with a hyperedge. Such a graph then becomes a representative of a
collection of possible sentences and we can then formulate the notion of
satisfiability of such a graph. With this coarse representation of classes
of sentences one can then investigate structural obstructions to SAT. To
make the problem tractable, we prove a local graph rewriting theorem which
allows us to simplify the neighborhood of a vertex without knowing the rest
of the graph. We use this to deduce several reduction rules, allowing us
to modify a graph without changing its satisfiability status which can then
be used in a program to simplify graphs. We study a subclass of 3SAT by
examining sentences living on triangulations of surfaces and show that for
any compact surface there exists a triangulation that can support
unsatisfiable sentences, giving specific examples of such triangulations
for various surfaces.
\end{abstract}

\section{Introduction}
\label{sec:org67413fc}
We introduce and analyze a novel graph decision problem that we call
\GraphSAT. Using the tools of graph theory, this new variant builds upon
the classical logic and computer science problem of boolean satisfiability
(\(k\)\SAT). \(k\)\SAT asks if there exists a truth assignment that
satisfies a given boolean formula. Our variant deals with multi-hypergraphs
instead of boolean formulae and uses truth assignments on vertices instead
of variables. This graph-theoretic picture helps us explore and exploit
patterns in unsatisfiable instances of \(k\)\SAT, which in turn helps us
identify minimal obstruction sets to graph satisfiability.

In Theorem \ref{thm-local_rewriting} (the local rewriting theorem) we prove
invariance of graph satisfiability under a graph rewriting system that we
introduce. This theorem is the main conceptual and computational tool that
allows us to replace the question of satisfiability of a graph by those for
smaller graphs. This theorem makes possible a computational search for
patterns in satisfiable and unsatisfiable graphs without knowing the
entirety of the graph. The computational results in this paper, for example
the search for unsatisfiable instances of \GraphSAT, were obtained using a
Python package (called \texttt{graphsat}) that we created to handle
multi-hypergraph instances and local rewriting of graphs. This software
will be described elsewhere. Our goal in this paper is to introduce
\GraphSAT and logical operations on graphs and to prove the local graph
rewriting theorem as well as to detail consequences of this theorem.

This paper is a successor to \cite{KaHi2020}, which looks at \GraphSAT
restricted to multi-graphs and \(2\)\SAT. The key results in that paper are
that graph homeomorphisms preserve satisfiability status and a complete set
of minimal unsatisfiable simple graphs. Specifically, we showed that there
are exactly four obstructions to satisfiability in simple graphs and
exactly four obstructions to satisfiability in looped-multi-graphs.

\begin{theorem*}[Theorem 18 and Remark 19 in \cite{KaHi2020}]
  A simple graph is unsatisfiable if and only if it contains an element of
  the set \[{\raisebox{2.5pt}{\Big\{}\;\Butterfly,\;\;\Bowtiesmall,\;\;
  \Kfour,\;\;\Book\raisebox{2.5pt}{\Big\}}}\] as a topological minor.
  A looped-multi-graph is unsatisfiable if and only if it contains an
  element of the set \[{\raisebox{1ex}{\Big\{}\;\FlexibleButterfly{},\;
  \;\FlexibleBowtie,\; \; \FlexibleKfour,\;
  \;\FlexibleBook\raisebox{1ex}{\Big\}}}\] as a topological minor.
\end{theorem*}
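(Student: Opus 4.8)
The plan is to prove both equivalences by splitting each into a soundness ($\Leftarrow$) and a completeness ($\Rightarrow$) direction, treating the looped-multi-graph statement as primary and deducing the simple-graph statement from it by suppressing degree-two vertices (homeomorphism). The engine throughout is a \emph{forcing lemma}: a single cycle through a vertex $v$ can be assigned signs (a choice of literals for the clauses on its edges) so that every satisfying assignment of the resulting \(2\)\SAT sentence sets $v$ to a prescribed value. I would prove this by walking around the cycle and choosing each clause to realize an implication chain $\lnot v \Rightarrow \cdots \Rightarrow v$, which forces $v=\btop$ (and dually $v=\bbot$). A complementary observation is that a path between two vertices reduces, after eliminating its internal degree-two variables, to a single $2$-clause on its endpoints realizing any one of the four sign patterns, and that $k$ internally disjoint paths between $u$ and $v$ can impose any $k$ of the four possible clauses on $\{u,v\}$ simultaneously.

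For soundness I would first verify, as a finite computation, that each base graph is unsatisfiable by exhibiting one unsatisfiable sentence in its class: two cycles through a common vertex force $v=\btop$ and $v=\bbot$ ($\Butterfly$ / $\FlexibleButterfly$); two disjoint forcing-cycles joined by the clause ``not both true'' ($\Bowtiesmall$ / $\FlexibleBowtie$); four internally disjoint $u$--$v$ paths realizing all four clauses on $\{u,v\}$ ($\Book$ / $\FlexibleBook$); and a direct check of the implication graph for $\Kfour$ / $\FlexibleKfour$. Unsatisfiability is then inherited by any larger graph, since an unsatisfiable sentence on the obstruction extends to one on the whole graph by signing the remaining edges arbitrarily (monotonicity under passing to supergraphs), and it is unchanged by subdivision (invariance of satisfiability under graph homeomorphism); hence any graph containing one of the four as a topological minor is unsatisfiable.

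The completeness direction is the crux, and I would prove its contrapositive: a graph avoiding all four as topological minors is satisfiable. The first move is to suppress all degree-two vertices, reducing a simple graph to a looped-multi-graph in which cycles become loops, paths become single edges, and families of internally disjoint paths become parallel edges; this reduction is exactly where the simple and flexible obstruction sets match up ($\Butterfly \leftrightarrow \FlexibleButterfly$, $\Bowtiesmall \leftrightarrow \FlexibleBowtie$, and so on). Working with these reduced objects I would establish the structural statement that, in an obstruction-free reduced graph, each connected component contains at most one cycle-carrying block, and that block is either a single loop or a pair of vertices joined by at most three parallel edges, with trees attached at cut-vertices. Granting this, satisfiability for \emph{every} sign choice follows easily: a tree is satisfiable by leaf-to-root propagation; a single loop or cycle forces at most a chain of equivalences and so is satisfiable under any signs; and at most three parallel edges impose at most three of the four possible clauses on their endpoint pair, which is always satisfiable. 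Because distinct blocks meet only at cut-vertices, these partial assignments glue into a global one.

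The main obstacle I expect is the structural statement itself, and in particular the case analysis converting each way of violating it into one of the four named topological minors. The two global patterns are natural -- two cycle-carrying blocks in one component are linked by a path and give $\Bowtiesmall$, while a cut-vertex lying on cycles of two blocks gives $\Butterfly$ -- but the analysis inside a single block is delicate, because having several independent cycles is not by itself fatal. The boundary case is the theta configuration of three internally disjoint $u$--$v$ paths, which reduces to three clauses on $\{u,v\}$ and is satisfiable, versus a fourth disjoint path ($\Book$), which realizes all four clauses and is unsatisfiable, versus the $3$-connected $\Kfour$, which fails for yet another reason. Proving that any $2$-connected reduced block that is not a bounded theta must contain $\Book$ or a $\Kfour$ subdivision as a topological minor, and checking that this analysis is exhaustive so that no unsatisfiable graph escapes all four obstructions, is where I expect the real work to lie.
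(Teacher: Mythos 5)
You should first note that the paper you are reading never proves this statement: it is quoted verbatim as Theorem~18 and Remark~19 of the authors' earlier work \cite{KaHi2020}, so there is no in-paper proof to compare against. What the present paper does tell us about that proof --- that its two pillars are invariance of satisfiability under graph homeomorphism and an enumeration of minimal unsatisfiable graphs --- matches your plan exactly: you reduce the simple-graph statement to the looped-multi-graph statement by suppressing degree-two vertices, and you organize completeness around minimal obstructions. Your soundness direction is essentially complete and correct. The forcing lemma (signing a cycle through $\g{v}$ to realize the implication chain $\overline{v}\Rightarrow\cdots\Rightarrow v$) is the right engine; it yields unsatisfiable sentences on the butterfly and bowtie, the four-internally-disjoint-paths argument handles the book, $K_4$ is a finite check (e.g.\ $(a\vee b)(c\vee d)(\overline{a}\vee\overline{c})(\overline{a}\vee\overline{d})(\overline{b}\vee\overline{c})(\overline{b}\vee\overline{d})$ uses each edge once and is unsatisfiable), and monotonicity under supergraphs plus subdivision-invariance propagates unsatisfiability to anything containing one of the four as a topological minor.

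The genuine gap is the one you flag yourself: the structural claim in the completeness direction is asserted (``Granting this\ldots'') rather than proved, and it \emph{is} the theorem --- everything after it is routine gluing. To close it you need two things made precise. First, the global part: two cycles in one component must either share a single vertex (butterfly), be joined by a path (bowtie), or share at least two vertices; only the last case survives, and it collapses the component's cycles into a single block. Second, the local part inside that block: a $2$-connected block with no $K_4$ topological minor is series-parallel, and you must show by induction on the series-parallel decomposition that any composition other than a single parallel bundle of at most three $u$--$v$ branches recreates a butterfly (two cycles meeting at one vertex arise from series composition of cycle-bearing pieces), a bowtie, or a book (a fourth internally disjoint $u$--$v$ path). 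Neither step is hard, but neither is automatic, and the exhaustiveness of this case analysis is precisely what certifies that the obstruction list has only four elements; a cleaner route, and likely the one taken in \cite{KaHi2020}, is to first prove homeomorphism invariance in full, reduce to looped-multi-graphs with minimum degree three, and then enumerate the finitely many irreducible candidates directly. Until the block analysis is written out, your argument establishes only the ``if'' direction of the theorem.
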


This current paper is the first step towards generalizing the \(2\)\SAT
results of \cite{KaHi2020} to \(k\)\SAT. By studying properties of
satisfiable and unsatisfiable graphs we are studying entire classes of
boolean formulae. This results in a coarsening compared to studying Cnfs
directly. Nevertheless, the translation from Cnfs to graphs capture the
essential features of any formula and offers a complete picture of the
structure of its boolean constraints. For example, we show that this notion
of satisfiability is closed under subgraphing as well as under the
relations of ``shaved graph versions'' and topological-minoring. The family
of satisfiable multi-hypergraphs is also closed under the action of a
variety of higher-dimensional analogues of well-known graph operations. We
explore these analogous graph operations and analyze their effects
vis-à-vis the structural properties of multi-hypergraphs.

Robertson and Seymour, in their seminal papers \cite{RoSe2004} proved that
a graph family that is closed under the graph minor operation always has a
finite obstruction set. This theorem also does not apply to the case of
graphs for \(2\)\SAT considered in \cite{KaHi2020}. Despite this, in
\cite{KaHi2020}, we were able to obtain a very short list of obstructions
listed in the Theorem above. The case of \(k\)\SAT for \(k>2\) considered
here is considerably harder. There is no known finite obstruction
Robertson-Seymour theorem for multi-hypergraphs. Nevertheless, we obtain
several hundred obstructions to satisfiability for multi-hypergraphs. We
enumerate a part of this list in Appendix \ref{sec:org9d7b11d}.

Section \ref{sec:org93a5019} introduces definitions and notations used
in this paper. It introduces some type-theoretic notation that streamlines
the mathematical exposition. It also includes definitions for the two
halves of \GraphSAT, i.e. boolean formulae and graphs.

In Section \ref{sec:org7839d0f} we state and prove the local graph
rewriting theorem (Theorem \ref{thm-local_rewriting}), that enables rewriting of
graphs at a vertex, while preserving its satisfiability status. In Section
\ref{sec:org60b361b} we list some graph rewrite rules that leave graph
satisfiability invariant. In \ref{sec:org34ae191} we give a result that
is true for edges of any size. Thereafter, we restrict our attention to
edges of size at most three. In Sections \ref{sec:orga404456}, \ref{sec:org5b66ec3} and \ref{sec:org0d3ace9} we consider the satisfiability of mixed hypergraphs,
triangulations, and infinite graphs respectively. In these sections we
summarize some of the key findings enabled by the local rewriting theorem
(Theorem \ref{thm-local_rewriting}), and by our \texttt{graphsat} Python package. This
software package along with the local rewriting theorem allows us to
conduct experiments combining programming and proofs to classify graphs,
hypergraphs, and infinite graphs as totally satisfiable or unsatisfiable.
Section \ref{sec:org23f1643} provides a conclusion to this
study by summarizing key results as well as future directions and
conjectures.

\section{Definitions and notation}
\label{sec:org93a5019}
We start by inductively defining boolean formulae in conjunctive normal
form in \S \ref{sec:org1618a2f} and graphs (in fact multi-hypergraphs) in \S
\ref{sec:org2ef3df6}. We introduce a way to view graphs as sets of
Cnfs in \S \ref{sec:org2441f87}. After translating Cnfs to graphs, we
also define a notion of satisfiability for graphs in \S \ref{sec:orgcbfe531}. We also define notation and conventions aimed at
making the connection between Cnfs and graphs more intuitive. These are
summarized in Table \ref{table-operator-summary}.

\subsection{Type theory annotations}
\label{sec:org9892acd}
Throughout this section, we add various annotations to our terms in order
to aid the reader in parsing and understanding the mathematics presented
herein. These annotations are inspired from the field of type theory and
can be thought of as representing the ``category'' or the ``type'' of a term.

For example, we write \(c : \Clause = x_1 \vee x_2\) to mean \(c\) is of
type Clause and is equal to \(x_1 \vee x_2\). The type annotations help
with clarity without changing the mathematical content. For this reason, we
use them wherever they can add to the exposition and avoid them wherever
they might be unnecessarily verbose.

We will use a fixed list of types in this section, collected in Table
\ref{table-type-summary}. Full definitions for each type can be found in the
sections that follow. Let \(V\) denote an arbitrary type, used to
parameterize the other types. Elements of \(V\) will be called variables.
For example, we define the type \(\Cnf\;V\) to be the type of all Cnfs on
the variable set \(V\). In practice, we will avoid mentioning \(V\)
explicitly and simply write type judgments like \((x : \Cnf)\) instead of
\((x : \Cnf\;V)\).

In Table \ref{table-type-summary}, we use the notation \(A\equiv B\) to be mean
that the two types have exactly the same terms. We write \(A\sqsubset B\)
to mean that \(A\) is a subtype of \(B\). This indicates that there are
some extra restrictions placed on \(A\). For example, we have \(\Clause
\sqsubset \Set \Literal\) because every clause can also be viewed as a set
of literals. However, not every set of literals is a clause because we
require that clauses be nonempty. We also use \(\oplus\) and \(\times\) to
denote the disjoint-sum and Cartesian-product of types respectively.

\begin{table}
\footnotesize
\caption{Summary of all the types defined and used in this section.}
\begin{center}
\begin{tabular}{p{2.1cm}  p{3.9cm} l}
\hline
Type & Relation to other types & Description\\
\hline
Variable & Variable \(\equiv V\) & variables; an alias of \(V\)\\
Literal \(V\) & Literal \(\equiv\) \(V\oplus V \oplus\) Bool & literals of variable type \(V\)\\
Clause \(V\) & Clause \(\sqsubset\) Set Literal & clauses of variable type \(V\)\\
Cnf \(V\) & Cnf \(\sqsubset\) Set Clause & Cnfs of variable type \(V\)\\
Assignment \(V\) & Assignment \(\sqsubset\) Set Literal & assignment of variable type \(V\)\\
 &  & \\
Vertex & Vertex \(\equiv V\) & vertices; an alias of \(V\)\\
Edge \(V\) & Edge \(\sqsubset\) Set Vertex & hyperedges of vertex type \(V\)\\
Graph \(V\) & Graph \(\sqsubset\) Multiset Edge & multi-hypergraphs of vertex type \(V\)\\
 &  & \\
Bool &  & the type of boolean values (true and false)\\
\(\mathbb{N}\) &  & the type of natural numbers\\
Set \(V\) &  & homogeneous sets of elements of type \(V\)\\
Multiset \(V\) & Multiset \(V\equiv\) \(\mathbb{N}\times \Set V\) & homogeneous multi-sets of elements of type \(V\)\\
\hline
\end{tabular}
\end{center}
\label{table-type-summary}
\end{table}

\subsection{Boolean formulae}
\label{sec:org1618a2f}
In this section, we inductively define variables, literals, clauses, Cnfs
and assignments. We then define satisfiability and equi-satisfiability for
Cnfs.

We fix an arbitrary countable set and call its elements \emph{variables}.
Variables will be denoted by \(x_1, x_2, y_1, y_2,\) etc. We denote the
type of variables as Variable and thus can write \((x_2 : \Variable)\) to
mean that \(x_2\) is a variable.

A \emph{literal} is either a variable, denoted by the same symbol as the
variable; or its negation, denoted by \(\neg x_i\) or \(\overline{x_i}\).
We also declare that there are two additional literals called true (denoted
by \(\top\)) and false (denoted by \(\bot\)).

We denote the type of literals as Literal and can thus write
\(\overline{x_2} : \Literal\) to mean that \(\overline{2}\) is a literal.
The Literal type is thus composed of two copies of \(V\) and one copy of
true and false. We can therefore write Literal \(\equiv\) \(V\oplus V\oplus
\{\top, \bot\}\).

A \emph{clause} is a disjunction of one or more literals. A clause made of a
single literal \(x_i\) is also denoted by \(x_i\). We denote the type of
clauses as Clause. Thus, the following are all valid examples of clauses ---
\((\top : \Clause),\; (\bot : \Clause),\; (x_1 : \Clause),\;
(x_1\vee\overline{x_2} : \Clause)\).

A \emph{Cnf} is a boolean formula in \emph{conjunctive normal form}, i.e. it is a
formula made of conjunction of one or more clauses. A Cnf containing a
single clause has the same representation as the clause itself. Thus, the
following are all valid Cnfs --- \((\top : \Cnf),\; (\bot : \Cnf),\; (x_1 :
\Cnf),\; (x_1\vee\overline{x_2} : \Cnf),\; ((x_1\vee\overline{x_2})\wedge
x_2\vee\overline{x_3}) : \Cnf)\).

\subsection{Assignments}
\label{sec:org5ecd0fd}
A \emph{truth assignment} (or simply an \emph{assignment}) is a set of literals with
the additional condition that a literal and its negation cannot both belong
to an assignment. We denote the type of assignments by Assignment.

We define the action of a singleton assignment \(\{i\}\) on a Cnf \(x\) to
be the Cnf formed by replacing every occurrence of \(i\) in \(x\) with
\(\top\) and every occurrence of \(\overline{i}\) in \(x\) with \(\bot\).
We define the action of a general assignment \(a\) on a Cnf \(x\) to be the
Cnf formed by the successive action of each element of \(a\) on \(x\).
We denote this new Cnf by \(x[a]\).

For example, \((x_1\vee\overline{x_2})\wedge (x_1\vee x_2\vee\overline{x_3})
\wedge(\overline{x_1}\vee x_4) \left[\{x_1,\overline{x_2}\}\right]
\;=\; (\top\vee\overline{x_2})\wedge (\top\vee x_2\vee\overline{x_3})
          \wedge(\bot\vee x_4)
          \left[\{\overline{x_2}\}\right]
\;=\; (\top\vee\top)\wedge (\top\vee \bot\vee\overline{x_3})
          \wedge(\bot\vee x_4)
\;=\; x_4\).

We simplify this notation by writing \(x[x_1, \overline{x_2}]\) instead of
\(x[\{x_1, \overline{x_2}\}]\).

\subsection{Satisfiability of Cnfs}
\label{sec:org1954548}
A Cnf \(x\) is \emph{satisfiable} if there exists a truth assignment \(a\) such that
\(x[a]=\top\). Otherwise, \(x\) is \emph{unsatisfiable}. Two Cnfs \(x\) and \(y\) are
\emph{equi-satisfiable} (denoted \(x\sim y\)) if either they are both
satisfiable or they are both unsatisfiable. We write \(x\sim\top\) or
\(x\sim\bot\) to denote that \(x\) is satisfiable or unsatisfiable
respectively.

We also introduce a map for Cnf-satisfiability denoted \(\sigma : \Cnf
\longrightarrow \text{Bool}\), which maps satisfiable Cnfs to \(\top\) and
unsatisfiable Cnfs to \(\bot\).

We note that assigning a literal \(i\) to \(\top\) does not change the
satisfiability status of a Cnf \(x\), i.e \(x[i] \sim x\wedge i\).

\subsection{Graphs and Satisfiability}
\label{sec:org2ef3df6}
In this section we define graphs inductively starting with vertices and
then building up through edges (looped, simple and hyper) and then
multi-edges. The standard term for these graph objects would be
``looped-multi-hypergraphs''. We then define a novel way of interpreting a
graph as a set of Cnfs that can ``live on that graph''. This translation of
graphs into sets of Cnfs is lies at the heart of our attempt to turn
boolean satisfiability instances into graph theory problems. Consequently,
we make this translation as explicit as possible and provide enough details
so that the interested reader may generate a translation ``algorithm'' from
our definitions to readily turn any graph into a set of Cnfs.

Let \(V\) be a countable set of \emph{vertices}. We will omit any mention of a
specific \(V\) and simply denote its elements using boldface symbols
\(\g{v_1}, \g{v_2}, \ldots : \Vertex\). A \emph{hyperedge} (or simply an edge)
on \(V\) is a nonempty set of vertices. We omit the surrounding braces while
denoting edges and simply write the vertices in a contiguous fashion. For
example, \(\g{v_1}\), \(\g{v_1v_2}\) and \(\g{v_2v_3v_4}\) denote edges of size
one (a loop), two (a simple edge) and three (a triangle) respectively.

A \emph{graph} on \(V\) is a nonempty multiset of edges on \(V\). In literature,
these are typically referred to as \emph{multi-hypergraphs} or
\emph{looped-multi-hypergraphs}, but we simply call them graphs. When listing
the edges of a graph, we omit the surrounding braces and wrap each edge in
parentheses, listing them one after the other. For example, \(\g{g} :
\Graph = \g{(v_1v_2)(v_1v_2v_3)(v_1v_4)}\). If an edge repeats in a graph,
we denote its multiplicity as a superscript. For example, \(\g{g'} : \Graph
= \g{(v_1v_2)(v_2v_3)^3(v_4)^2}\).

The \emph{degree} of a vertex in an edge is \(1\) if the vertex is an element of
the edge, and \(0\) otherwise. The \emph{degree} of a vertex in a graph is the
sum of the degrees of the vertex over all edges of the graph, while
counting multiplicities as distinct edges. We note that we draw a loop as
an edge starting an ending at the same vertex, but this is still considered
to contribute only \(1\) to the degree count of said vertex.

\subsubsection{Graphs as sets of Cnfs}
\label{sec:org2441f87}
We define the functions \(f_1, f_2, f_3, f_4\) as follows ---
\begin{itemize}
\item \(f_1 : \Vertex \rightarrow \Set\Literal\), such that \(\g{v} \mapsto \{v, \overline{v}\}\)
\item \(f_2 : \Edge \rightarrow \Set\Clause\), such that \(\g{v_1v_2\cdots v_k}
  \mapsto \left\{l_1\vee l_2\vee\cdots\vee l_k \mid (l_i : \Literal)\in
  f_1(\g{v_i})\right\}\)
\item \(f_3 : \mathbb{N}\times\Edge \rightarrow \Set\Cnf\), such that \((n,
  \g{e}) \mapsto \{c_1\wedge \cdots \wedge c_n \mid (c_i : \Clause) \in
  f_2(\g{e}) \text{ and } c_i\neq c_j \}\)
\item \(f_4 : \Graph \rightarrow \Set\Cnf\), such that
\(\g{(e_1)^{n_1}(e_2)^{n_2}\cdots (e_k)^{n_k}} \mapsto \left\{x_1 \wedge
  \cdots \wedge x_k \mid (x_i : \Cnf) \in f_3(n_i, \g{e_i})\right\}\)
\end{itemize}

The image of an edge under \(f_3\) gives us the set of all Cnfs that ``live
on that edge''. There are \(\displaystyle\binom{2^k}{n}\) Cnfs in the set corresponding
to an edge of size \(k\) with multiplicity \(n\). The image of a graph
under \(f_4\) gives us the set of all Cnfs that ``live on that graph''. We
use this set often enough that we will simply omit writing \(f_1, f_2,
f_3\), and \(f_4\) and we will conflate the graph with its set of Cnfs. For
example, the following graph is also a set.
\begin{align*}
\g{g} &= \g{(v_1v_2)(v_1v_3)} : \Graph\\
          &= \left\{(v_1\vee v_2)\wedge(v_1\vee v_3),\; (v_1\vee v_2)\wedge
             (v_1\vee\overline{v_3}),\; \ldots,\;
             (\overline{v_1}\vee\overline{v_2})\wedge(\overline{v_1}
              \overline{v_3})\right\} : \Set\Cnf
\end{align*}
Therefore, when we write set operations on graphs, say like
\(\g{g}\cup\g{h}\), we really mean \(f_4(\g{g})\cup f_4(\g{h})\).

We further define two special sets of Cnfs, which we denote by boldface
true and false symbols
\begin{itemize}
\item \(\btop : \Set\Cnf = \{(\top : \Cnf)\}\)
\item \(\bbot : \Set\Cnf = \{(\bot : \Cnf)\}\)
\end{itemize}
Note that both of these are sets of Cnfs that are not graphs (due to the
requirement that a graph be a nonempty multiset of edges). Similarly, the
empty set is also a valid term of type Set Cnf but is not a valid term of
type Graph.

The set of Cnfs living on a graph can sometimes be empty. For example, the
following graphs correspond to empty sets --- \(\g{(v_1)^3}, \;
\g{(v_1v_2)^5}, \; \g{(v_1v_2v_3)^9}\). These sets are empty because their
image under \(f_3\) is empty. In general, a graph is empty if and only if
it has an edge of size \(k\) with multiplicity more than \(2^k\).

\subsubsection{Graph equi-satisfiability}
\label{sec:orgcbfe531}
A set of Cnfs \(\g{g}\) is \emph{totally satisfiable} if it is nonempty and if
every Cnf in it is satisfiable. Otherwise, it is \emph{unsatisfiable}. We denote
by \(\bgamma : \Set\Cnf \rightarrow \text{Bool}\), the map that sends a
Graph to \(\top\) if it is totally satisfiable, and to \(\bot\) otherwise.
We note that for any set of Cnfs \(\g{g_1}\) and \(\g{g_2}\), we have
\(\bgamma(\g{g_1} \cup \g{g_2}) = \bgamma(\g{g_1}) \wedge
\bgamma(\g{g_2})\).

Two sets of Cnfs \(\g{g_1}\) and \(\g{g_2}\) are \emph{equi-satisfiable} if they
are both totally satisfiable or are both unsatisfiable. We write this as
\(\g{g_1}\bsim\g{g_2}\).

We say that \(\g{g_1}\) \emph{equi-implies} \(\g{g_2}\) by the
\(\bot\)-criterion (denoted \(\g{g_1}\xRightarrow{\;\bot\;}\g{g_2}\)) if,
\(\forall (v_1 : \Cnf) \in \g{g_1},\; \exists (v_2 : \Cnf) \in
\g{g_2},\;v_1\sim\bot \implies v_2\sim\bot\). If both \(\g{g_1}
\xRightarrow{\;\bot\;} \g{g_2}\) and \(\g{g_2} \xRightarrow{\;\bot\;}
\g{g_1}\), then we denote this more compactly as \(\g{g_1}
\xLeftrightarrow{\;\bot\;} \g{g_2}\).

We say that \(\g{g_1}\) \emph{equi-implies} \(\g{g_2}\) by the
\(A\bot\)-criterion (denoted \(\g{g_1}\xRightarrow{\;A\bot\;}\g{g_2}\)) if,
\(\forall (v_1 : \Cnf) \in \g{g_1},\; \exists (v_2 : \Cnf) \in \g{g_2},\;
\forall (a : \Assignment),\) \(v_1[a]=\bot \implies v_2[a]=\bot\). If both
\(\g{g_1} \xRightarrow{A\bot} \g{g_2}\) and \(\g{g_2} \xRightarrow{A\bot}
\g{g_1}\), then we denote this more compactly as \(\g{g_1}
\xLeftrightarrow{A\bot} \g{g_2}\).

\begin{remark}
The idea of graph equi-satisfiability is not necessarily useful for a
specific pair of graphs, but it is useful for discovering operations that
preserve satisfiability like when a part of the graph structure is fixed
and not changing.
\end{remark}
\subsubsection{The \GraphSAT decision problem}
\label{sec:org84170fa}
\GraphSAT is the graph decision problem that asks if a given
looped-multi-hypergraph is totally satisfiable.
\begin{itemize}
\item \textbf{Instance}: Given a specific looped-multi-hypergraph \(\g{g}\).
\item \textbf{Question}: Is every Cnf \(x\) such that \(x\in\g{g}\) satisfiable.
\end{itemize}

\(k\)\GraphSAT is a restriction of \GraphSAT that only allows
looped-multi-hypergraphs with hyperedge sizes at most \(k\).

In \cite{KaHi2020} we enumerated a complete list of minimal unsatisfiable
simple graphs and a proof that \(2\)\GraphSAT is in complexity class P. We
might hope for a similar complete list for \(3\)\GraphSAT, and perhaps a
different complexity class for \(3\)\GraphSAT. Unfortunately, this is not
the case. The \(3\)\GraphSAT problem is more complicated for three main
reasons ---
\begin{enumerate}
\item The minimality of the \(2\)\GraphSAT list hinged on homeomorphisms (i.e.
edge-subdivisions) preserving graph satisfiability. There is no single
analogue of homeomorphisms, edge-subdivisions, and topological minoring
in the case of hypergraphs.
\item Bruteforce checking of the satisfiability status of a graph is not a
sustainable option for \(3\)\GraphSAT owing to the large number of Cnfs
supported by a typical looped-multi-hypergraph.
\item In studying \(2\)\GraphSAT, we argued (in \cite{KaHi2020}) that we can
always reduce higher multiplicity edges down to \(1\). This was because
a multiplicity \(4\) edge is always unsatisfiable, a multiplicity \(3\)
edge forces an assignment on its vertices, and a multiplicity \(2\) edge
forces an equivalence on its vertices. Such a complete result does not
exist for hyperedges. The best we can do is to say that a multiplicity
\(8\) hyperedge is always unsatisfiable, and a multiplicity \(7\)
hyperedge forces assignments on its vertices.
\end{enumerate}

These factors conspire to make \(3\)\GraphSAT a harder problem to solve,
but also yield richer structures and relations between various
multi-hypergraphs and their satisfiability statuses.

\subsubsection{Disjunction and conjunction of graphs}
\label{sec:orgf3d1fdf}
We define \emph{disjunction} for sets of Cnfs to be a binary operation \(f_1\)
such that \[f_1 : \Set\Cnf \times \Set\Cnf \rightarrow \Set\Cnf,\; (s_1,
s_2) \mapsto \left\{f_2(x_1\vee x_2) \mid (x_1 : \Cnf) \in s_1,\; (x_2 :
\Cnf) \in s_2\right\},\] where, \(f_2 : \text{Boolean-Formula} \rightarrow
\Cnf\), is a map that converts boolean formulae into Cnfs by repeatedly
using distributivity of disjunction over conjunction. We note that this is
only one of several possible implementations of \(f_2\). We choose this
implementation because it has the advantage of not introducing any new
variables. In a computer implementation of \(f_2\), the use of
distributivity is inefficient. But, since we are concerned only with small
graphs, this is not an issue. We write \(\g{g_1\vee g_2}\) instead of
\(f_1(\g{g_1}, \g{g_2})\).

We define \emph{conjunction} for sets of Cnfs to be a binary operation \(f_3\)
such that \(f_3 : \Set\Cnf \times \Set\Cnf \rightarrow \Set\Cnf,\; (s_1,
s_2) \mapsto \left\{x_1\wedge x_2 \mid (x_1 : \Cnf) \in s_1,\; (x_2 : \Cnf)
\in s_2\right\}\). We write \(\g{g_1\wedge g_2}\) instead of \(f_3(\g{g_1},
\g{g_2})\).

We note that disjunction of graphs need not result in a graph, while
conjunction of graphs always does. We now list some properties of graph
disjunction, graph conjunction and equi-satisfiability.

\begin{prop}
Let \(\g{g_1}\) and \(\g{g_2}\) be sets of Cnfs.
\begin{enumerate}
\item \(\g{g_1}\bsim\g{g_2}\) if and only if \(\g{g_1}
   \xLeftrightarrow{\;\bot\;} \g{g_2}\). \\In other words, we can restrict our
   attention to only finding unsatisfiable Cnfs in both sets in order to
   prove their equi-satisfiability. \label{item:prop1}

\item \(\g{g_1} \xRightarrow{\;A\bot\;}
  \g{g_2}\) implies \(\g{g_1} \xRightarrow{\;\bot\;}
  \g{g_2}\) for every \(\g{g} : \Set \Cnf\). \\In other words,
  equi-implication by the \(A\bot\)-criterion is a stronger condition than
  the \(\bot\)-criterion.

\item \(\g{g_1} \xRightarrow{\;A\bot\;}
  \g{g_2}\) implies \(\g{g\wedge g_1} \xRightarrow{\;A\bot\;}
  \g{g\wedge g_2}\) for every \(\g{g} : \Set \Cnf\).  \label{item:prop3}

\item \(\g{g_1} \bsim \g{g_2}\) implies \(\g{g\wedge g_1 \sim
  g\wedge g_2}\), for every \(\g{g} : \Set\Cnf\).
  \\In other words, particularly when dealing with graphs,
  we can factor out the largest common subgraph and restrict our
  attention to finding Cnfs on the remaining edges in order to prove
  graph equi-satisfiability.

\item \(\g{g_1\vee g_2} \xRightarrow{A\bot} \g{g_1}\), and similarly,
  \(\g{g_1\vee g_2} \xRightarrow{A\bot} \g{g_2}\).

\item If both \(\g{g_1}\) and \(\g{g_2}\) are unsatisfiable, then we can
  conclude that \(\g{g_1\vee g_2}\) is also unsatisfiable.

\item If we take our arrows to be equi-implications under the
  \(A\bot\) criterion, then graph disjunction obeys the universal property
  of products, while union of Cnf sets possesses the universal property of
  sums.
\begin{enumerate}
  \item If \(\g{g}\xRightarrow{A\bot}\g{g_1}\) and
     \(\g{g}\xRightarrow{A\bot}\g{g_2}\), then
     \(\g{g}\xRightarrow{A\bot}\g{g_1\vee g_2}\).
  \item If \(\g{g_1}\xRightarrow{A\bot}\g{g}\) and
     \(\g{g_2}\xRightarrow{A\bot}\g{g}\), then
     \(\g{g_1}\cup\g{g_2}\xRightarrow{A\bot}\g{g}\). \label{item:prop7b}
\end{enumerate}

\item Let \(\g{g}\) be a set of Cnfs. If either one of \(\g{g\wedge g_1}\)
  or \(\g{g\wedge g_2}\) is totally satisfiable, then, so is \(\g{g\wedge
  }(\g{g_1 \vee g_2})\). \label{item:prop8}

\item \(\g{g_1}\xRightarrow{A\bot}\g{g_2}\) implies \((\g{g\wedge g_1})
  \cup (\g{g\wedge g_2) \sim g\wedge g_2}\), for every \(\g{g} :
  \Set\Cnf\). Proposition \ref{prop-conjunction-disjunction}.\label{item:prop9}

\end{enumerate}
\label{prop-conjunction-disjunction}
\end{prop}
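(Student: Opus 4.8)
The nine parts share two mechanical ingredients, so the plan is to establish those first and then read off each part. The first ingredient is how the false value factors through the connectives: for Cnfs \(x,y\) and any assignment \(a\), \((x\wedge y)[a]=\bot\) exactly when \(x[a]=\bot\) or \(y[a]=\bot\), while \((x\vee y)[a]=\bot\) exactly when \(x[a]=\bot\) and \(y[a]=\bot\). I would verify this by following the substitution and the distributive normalization that defines graph disjunction: \(\bot\) annihilates a conjunct and is the identity of a disjunct, whereas a conjunction (resp.\ distributed disjunction) of two non-false Cnfs always retains a non-empty clause and so is not \(\bot\). The second ingredient is that each criterion has the shape \(\forall v_1\,\exists v_2\,(P(v_1)\Rightarrow Q(v_2))\): when \(v_1\) is satisfiable the premise fails and any member of the (nonempty) target serves as \(v_2\), so the criteria constrain only the \emph{unsatisfiable} members, and for a nonempty set ``unsatisfiable'' means ``contains an unsatisfiable Cnf''. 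I work throughout with nonempty sets, the case relevant to graphs, treating the empty set (which is unsatisfiable by definition and the identity for \(\cup\)) as a separate triviality.

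Parts (1) and (2) calibrate the three equivalences. For (1), unfolding \(\xLeftrightarrow{\;\bot\;}\) with the second ingredient shows each direction says ``if one set has an unsatisfiable member so does the other,'' which for nonempty sets is exactly \(\bsim\). For (2), the \(A\bot\)-witness \(v_2\) of \(v_1\) is already a \(\bot\)-witness: if \(v_1\sim\bot\) then \(v_1[a]=\bot\) at every full assignment, hence \(v_2[a]=\bot\) there too, so \(v_2\sim\bot\). Parts (3), (5) and (7) are then witness constructions from the first ingredient: in (3) send \(x\wedge x_1\) to \(x\wedge x_2\) with \(x_2\) the \(A\bot\)-witness of \(x_1\) and close the case split on \(x[a]=\bot\) versus \(x_1[a]=\bot\); in (5) send \(x_1\vee x_2\) to \(x_1\), since a disjunction is \(\bot\) only when both disjuncts are; and (7a), (7b) are the universal properties, with product witness \(v_1\vee v_2\) and a coproduct argument that splits on which of \(\g{g_1},\g{g_2}\) a member of \(\g{g_1}\cup\g{g_2}\) lies in.

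Parts (6), (8) and (9) transfer satisfiability. For (6), unsatisfiable members \(x_1^\ast,x_2^\ast\) of the two nonempty sets give \(x_1^\ast\vee x_2^\ast\in\g{g_1\vee g_2}\), which the disjunction factorization makes unsatisfiable; an empty factor makes \(\g{g_1\vee g_2}\) empty. For (8), any assignment satisfying \(u\wedge x_1\) also satisfies \(u\wedge(x_1\vee x_2)\) because it satisfies \(x_1\), so total satisfiability of \(\g{g\wedge g_1}\) passes to \(\g{g\wedge}(\g{g_1\vee g_2})\). Part (9), the final statement, composes the others: from \(\g{g_1}\xRightarrow{A\bot}\g{g_2}\), part (3) gives \(\g{g\wedge g_1}\xRightarrow{A\bot}\g{g\wedge g_2}\) and part (2) gives \(\g{g\wedge g_1}\xRightarrow{\;\bot\;}\g{g\wedge g_2}\), i.e.\ satisfiability of \(\g{g\wedge g_2}\) forces that of \(\g{g\wedge g_1}\); plugging this into \(\bgamma(\g{g\wedge g_1}\cup\g{g\wedge g_2})=\bgamma(\g{g\wedge g_1})\wedge\bgamma(\g{g\wedge g_2})\) and splitting on the value of \(\bgamma(\g{g\wedge g_2})\) shows the two sides are equi-satisfiable.

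The \textbf{main obstacle} is part (4). Its hypothesis is only \(\bsim\) (equivalently the \(\bot\)-criterion), and conjunction does \emph{not} preserve the \(\bot\)-criterion the way part (3) preserves \(A\bot\); indeed the literal ``for every \(\g{g}\)'' reading fails for arbitrary sets, as \(\g{g_1}=\{x_1\}\), \(\g{g_2}=\{x_2\}\), \(\g{g}=\{\overline{x_1}\}\) gives \(\g{g\wedge g_1}\) unsatisfiable but \(\g{g\wedge g_2}\) satisfiable. So (4) cannot be a formal corollary of (1)--(3) and (9); it must be read in the graph setting signalled by the accompanying remark about factoring out the largest common subgraph, where the Cnf-set of a graph carries extra structure (closure under flipping variable signs). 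I would therefore prove (4) last and separately, exploiting that structure to recover the \(A\bot\)-strength needed to invoke (3); isolating exactly which property of graph Cnf-sets does this is the genuinely non-formal step, and the place I expect to spend the most care.
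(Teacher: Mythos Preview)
Your arguments for parts (1)--(3) and (5)--(9) are essentially the same as the paper's, with only cosmetic differences: the paper proves (8) by contrapositive rather than directly, and for (9) the paper first shows \(\g{g_1}\cup\g{g_2}\xLeftrightarrow{A\bot}\g{g_2}\) using (7b) and then conjuncts with \(\g{g}\) via (3), whereas you conjunct first and then pass through \(\bgamma\). These are the same proofs.

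On part (4) you are in fact more careful than the paper. The paper's entire proof of (4) is the single sentence ``The proof follows from items \ref{item:prop1} and \ref{item:prop3},'' which is precisely the gap you flagged: item (1) converts \(\bsim\) only into the \(\bot\)-criterion, while item (3) requires the \(A\bot\)-criterion as input, and there is no bridge supplied between them. Your counterexample \(\g{g_1}=\{x_1\}\), \(\g{g_2}=\{x_2\}\), \(\g{g}=\{\overline{x_1}\}\) is valid and shows that (4) is false as stated for arbitrary sets of Cnfs. The paper neither acknowledges this nor invokes any extra structure (such as closure of graph Cnf-sets under literal negation) that would repair it, so your diagnosis that (4) must be read under an additional hypothesis is correct, and your plan to treat it separately is the right one.
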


\begin{proof}
~\newline
\begin{enumerate}

\item Suppose that \(\g{g_1\sim g_2}\). If both sets are totally
  satisfiable, then the statement is vacuously
  true. If both sets are unsatisfiable, then for every unsatisfiable Cnf in
  \(\g{g_1}\), we choose an unsatisfiable Cnf in \(\g{g_2}\). Similarly,
  for every unsatisfiable Cnf in \(\g{g_2}\), we can choose a corresponding
  unsatisfiable Cnf in \(\g{g_1}\), thus satisfying the requirements of the
  \(\bot\)-criterion of equi-implication. Conversely, if
  \(\g{g_1}\xLeftrightarrow{\;\bot\;}\g{g_2}\), then given an
  unsatisfiable Cnf in \(\g{g_1}\) we can obtain an unsatisfiable Cnf in
  \(\g{g_2}\). Thus, either both sets are totally satisfiable or both are
  unsatisfiable.

\item Proof follows from definition of the \(A\bot\)- and
  \(\bot\)-criterions of equi-implication.

\item If either \(\g{g}\), \(\g{g_1}\), or \(\g{g \wedge g_1}\) is empty,
  the conclusion is vacuously true. Let us suppose then that these sets are
  all nonempty. Let \((x \wedge x_1:\Cnf)\) be an element of
  \(\g{g \wedge g_1}\). Using the hypothesis, we can obtain a Cnf \(x_2\)
  such that \(x_2\) is falsified by every assignment that falsifies
  \(x_1\).

  Any assignment that falsifies \(x\wedge x_1\) also falsifies
  \(x\wedge x_2\). We have therefore proved that \(\g{g\wedge g_1}\)
  equi-implies \(\g{g\wedge g_2}\) by the \(A\bot\)-criterion.

\item The proof follows from items \ref{item:prop1} and \ref{item:prop3}.

\item Let \((x : \Cnf) \in \g{g_1\vee g_2}\). We can factor \(x\) as \(x =
  x_1\vee x_2\) for some \((x_1 : \Cnf)\) and \((x_2 : \Cnf)\), such that
  \(x_1\in\g{g_1}\) and \(x_2\in\g{g_2}\). We choose \(y=x_1\). Then, any
  assignment that falsifies \(x\) falsifies both \(x_1\) and \(x_2\). In
  particular, any such assignment also falsifies \(y\).

\item Let \((x_1 : \Cnf) \in \g{g_1}\) and \((x_2 : \Cnf) \in \g{g_2}\) be
  unsatisfiable Cnfs. Any assignment that satisfies \(x_1\vee x_2\) must also
  satisfy either \(x_1\) or \(x_2\) or both. Hence, we can conclude that no
  assignment satisfies \(x_1\vee x_2\). This proves that \(\g{g_1\vee g_2}\)
  is unsatisfiable.

\item For (a), we note that for every \((x : \Cnf)\in\g{g}\), there exist
  Cnfs \(x_1\in\g{g_1}\) and \(x_2\in\g{g_2}\) such that any assignment
  that falsifies \(x\) also falsifies both \(x_1\) and \(x_2\). Thus, any
  assignment that falsifies \(x\) also falsifies \(x_1\vee x_2\). For (b),
  we note that for every \((x_1 : Cnf)\in\g{g_1}\), there exists a
  Cnf \(x\in\g{g}\) such that any assignment that falsifies \(x_1\) also
  falsifies \(x\). This covers every Cnf in \(\g{g_1}\cup\g{g_2}\) coming
  from \(\g{g_1}\). A similar argument hold for every Cnf coming from
  \(\g{g_2}\). This proves that \(\g{g_1}\cup\g{g_2}
  \xRightarrow{A\bot}\g{g}\).

\item Suppose \(\g{g\wedge }(\g{g_1 \vee g_2})\) is unsatisfiable. Then,
  there exist Cnfs \(x\in\g{g}\), \(x_1\in\g{g_1}\), and \(x_2\in\g{g_2}\)
  such that \(x\wedge(x_1\vee x_2)\) is unsatisfiable. Then, both \(x\wedge
  x_1\sim\bot\) and \(x\wedge x_2\sim\bot\). In other words, both
  \(\g{g\wedge g_1}\) and \(\g{g\wedge g_2}\) are unsatisfiable.

\item It suffices to show that \(\g{g_1}\cup\g{g_2} \xLeftrightarrow{A\bot}
  \g{g_2}\). \(\g{g_1}\cup\g{g_2} \xRightarrow{A\bot} \g{g_2}\) follows
  from \ref{item:prop7b}. \(\g{g_2} \xRightarrow{A\bot} \g{g_1}\cup\g{g_2}\)
  is trivially true.

\end{enumerate}
\end{proof}

\begin{remark}
We note that graph conjunction and graph disjunction have several familiar
properties while also missing several properties that we are used to.
\begin{enumerate}
\item Both operations are commutative and associative.
\item The operations do not obey distributivity laws, i.e. for general sets or
Cnfs \(\g{g_1}, \g{g_2},\) and \(\g{g_3}\),
\begin{enumerate}
\item \(\g{g_1\wedge}(\g{g_2 \vee g_3}) \neq (\g{g_1\wedge g_2}) \g{\vee}
      (\g{g_1 \wedge g_3})\). In fact, we have \(\g{g_1\wedge}(\g{g_2 \vee
      g_3}) \subset (\g{g_1\wedge g_2}) \g{\vee} (\g{g_1 \wedge g_3})\). To
see that this is a proper subset, we can merely count the number of
Cnfs on both sides. The left side has \(n_1n_2n_3\) Cnfs, where
\(n_i\) is the cardinality of the set \(\g{g_i}\), while the right
side has \(n_1^2n_2n_3\) Cnfs.
\item \(\g{g_1\vee}(\g{g_2 \wedge g_3}) \neq (\g{g_1\vee g_2}) \g{\wedge}
      (\g{g_1 \vee g_3})\). In fact, we have \(\g{g_1\vee}(\g{g_2 \wedge
      g_3}) \subset (\g{g_1\vee g_2}) \g{\wedge} (\g{g_1 \vee g_3})\). To
see that this is a proper subset, we count \(n_1n_2n_3\) Cnfs on the
left side, while the right side has \(n_1^2n_2n_3\) Cnfs.
\end{enumerate}
\end{enumerate}
\label{remark-distributivity}
\end{remark}

\subsubsection{Graph disjunction does not always result in graphs}
\label{sec:org1981d72}
Essential to carrying out local graph rewriting (detailed in \S \ref{sec:org7839d0f}) is the calculation of graph disjunctions of the form
\(\g{g_i \vee h_i}\). A graph disjunction, even if we start with
\(\g{g_i}\) and \(\g{h_i}\) being graphs, frequently yields a set of Cnfs
that might not be a graph. For example,
\begin{align*}
  \g{a\vee a} &= \{x \vee y \mid (x, y : \Cnf)\in \g{a}\}\\
              &= \{a \vee a,\; a\vee \overline{a},\; \overline{a} \vee a,\; \overline{a} \vee\overline{a}\}\\
              &= \{a,\; \overline{a},\; \top\}
\end{align*}
We note that these three Cnfs do not belong to the set corresponding to any
single graph because the Cnfs do not have the same vertex set. The Cnfs
\(a\) and \(\overline{a}\) have a vertex set of \(\g{a}\); the true Cnf has
an empty vertex set. Nevertheless, we can write it as a union of graphs. We
can write \(\g{a\vee a} = \btop\cup\g{a}\).

Nevertheless, we cannot always write a disjunction even as a union of
graphs. For example,
\begin{align*}
  \g{a \vee} (\g{a\wedge b})
  &= \{a\vee(a\wedge b),\quad a\vee(a\wedge \overline{b}),\quad
       a\vee(\overline{a}\wedge b),\quad a\vee(\overline{a}\wedge \overline{b}),\\
  &\qquad \overline{a}\vee(a\wedge b),\quad \overline{a}\vee(a\wedge \overline{b}),\quad
       \overline{a}\vee(\overline{a}\wedge b),\quad \overline{a}\vee(\overline{a}\wedge \overline{b})
      \}\\
  &= \{a\wedge(a\vee b),\quad   a\wedge(a\vee \overline{b}),\quad
       a\vee b,\quad            a\vee\overline{b},\\
  &\qquad \overline{a}\vee b,\quad \overline{a}\vee\overline{b},\quad
       \overline{a}\wedge(\overline{a}\vee b),\quad \overline{a}\wedge(\overline{a}\vee \overline{b})
      \}\\
  &= \g{ab} \cup \{a\wedge(a\vee b),\quad   a\wedge(a\vee \overline{b}),\quad
       \overline{a}\wedge(\overline{a}\vee b),\quad \overline{a}\wedge(\overline{a}\vee \overline{b})
      \}
\end{align*}
Although the remaining Cnfs all belong to the same graph, namely
\(\g{a\wedge ab}\), we note that we do not have the complete set. For
example, we are missing the Cnf \(a\wedge(\overline{a}\vee b)\). Hence we
cannot write \(\g{a\vee}(\g{a\wedge b}) = \g{ab} \;\cup\; \g{a\wedge ab}\).
The best we can do is --- \[\g{ab} \subset \g{a\vee}(\g{a\wedge b}) \subset
\g{ab} \;\cup\; \g{a\wedge ab}.\]

These subset-superset pairs give us a lower and upper bound for the set in
the middle. The utility of these subset-superset pairs becomes apparent
when we view them in the context of satisfiability statuses. In the
following equation, let \(\g{s}\) an arbitrary graph. Then, we have ---
\[\bgamma(\g{s\wedge ab}) \leftarrow \bgamma(\g{s\wedge }
(\g{a\vee}(\g{a\wedge b}))) \leftarrow \bgamma(\g{s\wedge ab}) \wedge
\bgamma(\g{s\wedge a\wedge ab}).\]

This means that if \(\g{s\wedge ab}\) is unsatisfiable, then is
\(\g{s\wedge }(\g{a\vee}(\g{a\wedge b}))\) too. On the other hand, if
\(\g{s\wedge ab}\) is totally satisfiable, then by checking if \(\g{s\wedge
a\wedge ab}\) is also totally satisfiable, we can conclude that
\(\g{s\wedge }(\g{a\vee}(\g{a\wedge b}))\) is totally satisfiable as well.
This is a technique that we often use since the graph disjunction does not
always yield a union of graphs.

We include tables of such standard graph disjunctions in Appendix \ref{sec:org7678103}. These tables come in handy performing local rewrites,
as seen in \S \ref{sec:org60b361b}. The first two tables list graph
disjunctions that can be written exactly as a union of graphs; the third
table lists graph disjunctions that can only be listed as a subset-superset
pair.

\section{Local rewriting in graphs}
\label{sec:org7839d0f}
Graph rewriting concerns the technique of creating a new graph out of an
original graph algorithmically. Formally, a graph rewriting system consists
of a set of graph rewrite rules of the form \(\g{g_L}
\boldsymbol\rightsquigarrow \g{g_R}\). The idea is that to apply the rule
to a graph \(\g{g}\), we search for the presence of a subgraph \(\g{g_L}\)
of \(\g{g}\) and replace the subgraph with \(\g{g_R}\) while leaving the
rest unchanged. Such rewrite rules come in two forms ---

\begin{enumerate}
\item \textbf{Local rules} --- when a graph is rewritten at a particular vertex. In
this case, all edges not adjacent to the vertex remain unaffected by the
rewrite.
\item \textbf{Global rules} --- when a graph is rewritten by searching for specific
subgraphs that are isomorphic to \(\g{g_L}\).
\end{enumerate}

We will concern ourselves with local rules in this section, while global
rules will be handled by \S \ref{sec:org60b361b}.

We are looking for hypergraph analogue(s) of
edge-subdivisions/edge-smoothing in order to define the corresponding
notion of minimality in \(3\)\GraphSAT. This leads us naturally into local
rewriting, i.e. operations where change the graph at a single vertex and
its neighborhood. The idea is to find changes or rewrites that leave the
satisfiability of a graph unchanged. For example, edge-subdivision can be
though of as the rewrite \(\g{g\wedge ab} \rightsquigarrow \g{g\wedge
ac\wedge bc}\), while the inverse operation of edge smoothing can be
thought of as \(\g{g\wedge ac\wedge bc} \rightsquigarrow \g{ab}\). Using
local rewriting, we try to generalize this rule as well as the proof that
it leave the satisfiability of a graph unchanged.

For local rules, we focus on the extended notion of ``making assignments''
detailed in \S \ref{sec:org4e3aba8}. Just as for a Cnf \(x\), \(x[l]\)
denotes its an assignment at a literal \(l\), we write \(\g{g[v]}\) to
denote assignment at a vertex \(\g{v}\) of a graph \(\g{g}\).

Literal assignments on Cnfs have two key properties that make them useful
--
\begin{enumerate}
\item If \(x\) is a Cnf and \(l\) is a literal that is in the set of literals
of \(x\), then \(x[l]\) is guaranteed to be smaller than \(x\) -- it
either has fewer clauses, or it has the same number of clauses but with
those clauses having fewer literals in them.
\item \(x\) is satisfiable if and only if either one of \(x[v]\) or
\(x[\overline{v}]\) are satisfiable.
\end{enumerate}

Vertex assignments on graphs also have similar properties ---
\begin{enumerate}
\item If \(\g{g}\) is a graph and \(\g{v}\) is a vertex in the vertex set of
\(\g{g}\), then \(\g{g[v]}\) will always have Cnfs that are smaller than
the Cnfs in \(\g{g}\).
\item \(\g{g[v]}\) is totally satisfiable if and only if \(\g{g}\) is totally
satisfiable.
\end{enumerate}

This operation of vertex assignment is defined rigorously in \S \ref{sec:org4e3aba8}. The local graph rewriting theorem (Theorem \ref{thm-local_rewriting})
presents an alternate expression for computing \(\g{g[v]}\) that is easier
to write when performing calculations, easier to code when programming it
into a computer, and is a form that is used for proving several global
graph rewrite rules. The essence of this theorem is that even though
satisfiability (both boolean and graph) is a global problem i.e. it is
affected by the full structure of the Cnf, it can also be broken down into
a series of local assignments in the case of Cnfs and a series of local
rewrites in the case of graphs. Given a graph, we can decompose it at one
of its vertices of by computing this alternate expression in terms of the
link and rest of the graph without needing to step down to the level of
Cnfs. We close this section with a discussion on some consequences of this
theorem and an implementation of local rewriting in code (as a part of
\texttt{graphsat}).

\begin{remark}
We note that replacing a part of a CNF with an equisatisfiable part
breaks the equi-satisfiability of the whole CNF. For example, even though
\(a\vee b\) is equisatisfiable to \(c\vee d\), we cannot replace \((a\vee
  b) \wedge \overline{a} \wedge \overline{b}\) with \((c\vee d) \wedge
  \overline{a} \wedge \overline{b}\), since the former is unsatisfiable
while the latter is satisfiable.

However, in graphs, replacing some special subgraphs with equisatisfiable
pieces preserves equi-satisfiability. This is the basis for finding
rewrite rules using the local rewriting theorem (Theorem
\ref{thm-local_rewriting}.
\end{remark}

\subsection{Assignments on graphs}
\label{sec:org4e3aba8}
For a clause or Cnf, we defined assignments at a literal in \S \ref{sec:org5ecd0fd}.
We now define assignments for a graph at a vertex. We note that this is a
completely new notion that does not exists in graph theory and can be
defined here only because of the connection we have established between
graphs and Cnfs in the previous sections.

Let \(\g{v}\) be a vertex, and \(\g{g}\) be a graph. We define, \[\g{g[v]} :
\Set\Cnf = \left\{x[v]\vee x[\overline{v}]\;\mid (x:\Cnf)\in\g{g},\;(v :
\Literal) \in \g{v}\right\}\]

We note here that despite what the notation might suggest, \(x[v]\) is in
general not an element of \(\g{g[v]}\). If \(x\in\g{g}\), then \(x[v]\) is
in fact an element of the graph \(\link(\g{g}, \g{v})\g{\wedge}\rest(\g{g},
\g{v})\). The definitions of link and rest can be found in \S \ref{sec:orge9a5fbd}.

Next, we prove that assignments on graphs do not alter their satisfiability
status. This is useful because post-assignment the graphs always result in
sets with Cnfs having one fewer variable, while not altering their
satisfiability status.
\begin{lemma}
Let \(\g{g}\) be a graph and let \(\g{v}\) be a vertex. Then, \(\g{g[v]
\sim g}\).
\label{lemma-equisat-vertex-assign}
\end{lemma}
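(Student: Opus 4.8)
The plan is to prove the equivalence first for a single Cnf and then to lift it, element by element, to the whole set of Cnfs living on $\g{g}$. Concretely, I would show that for every $(x : \Cnf)\in\g{g}$ the disjunction $x[v]\vee x[\overline{v}]$ is equi-satisfiable (in the Cnf sense) to $x$, and then argue that total satisfiability of a set of Cnfs depends only on the satisfiability of its members.

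First I would establish the pointwise claim $x[v]\vee x[\overline{v}]\sim x$. Since $x[v]\vee x[\overline{v}]$ is a disjunction of two Cnfs converted into a Cnf by the map $f_2$ (which only applies distributivity and hence preserves the set of satisfying assignments), it is satisfiable if and only if at least one of the disjuncts $x[v]$, $x[\overline{v}]$ is satisfiable. By the key property of literal assignment recorded in \S\ref{sec:org7839d0f}---namely that $x$ is satisfiable if and only if one of $x[v]$ or $x[\overline{v}]$ is satisfiable---the right-hand condition is exactly satisfiability of $x$. Combining the two gives $x[v]\vee x[\overline{v}]\sim x$. Note moreover that this expression is symmetric in the two literals of $\g{v}$, so the choice of which literal is named $v$ in the definition of $\g{g[v]}$ is immaterial.

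Next I would lift this to the set level. The assignment $x\mapsto x[v]\vee x[\overline{v}]$ maps $\g{g}$ onto $\g{g[v]}$, so $\g{g[v]}$ is nonempty exactly when $\g{g}$ is. If both are empty, then both are unsatisfiable by the nonemptiness clause in the definition of total satisfiability, and $\g{g[v]}\bsim\g{g}$ holds trivially. If they are nonempty, then $\g{g}$ is totally satisfiable iff every $(x:\Cnf)\in\g{g}$ is satisfiable, which by the pointwise claim holds iff every $x[v]\vee x[\overline{v}]$ is satisfiable; since these are precisely the elements of $\g{g[v]}$ (the map being surjective), this is equivalent to total satisfiability of $\g{g[v]}$. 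Hence $\g{g}$ and $\g{g[v]}$ are equi-satisfiable.

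The step I expect to need the most care is not any single deduction but the bookkeeping around the definition of total satisfiability: one must track the nonemptiness requirement and the fact that distinct Cnfs of $\g{g}$ may collapse to the same element of $\g{g[v]}$, so that ``every element of $\g{g[v]}$ is satisfiable'' is correctly identified with ``every image $x[v]\vee x[\overline{v}]$ is satisfiable.'' Once the pointwise equivalence $x[v]\vee x[\overline{v}]\sim x$ is in hand, the remaining argument is purely set-theoretic.
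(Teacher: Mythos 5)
Your proposal is correct and follows essentially the same route as the paper's proof: establish the pointwise equi-satisfiability \(x[v]\vee x[\overline{v}]\sim x\) and then lift it to the whole set. The only cosmetic difference is that you justify the pointwise step via the splitting property (\(x\) is satisfiable iff \(x[v]\) or \(x[\overline{v}]\) is) together with \(\sigma(x_1\vee x_2)=\sigma(x_1)\vee\sigma(x_2)\), whereas the paper routes it through \(x[l]\sim x\wedge l\) and the identity \((x\wedge v)\vee(x\wedge\overline{v})=x\); your extra care about nonemptiness and the surjectivity of \(x\mapsto x[v]\vee x[\overline{v}]\) is a welcome tightening of the lifting step, which the paper treats more briskly.
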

\begin{proof}
From the definition of \(\g{g[v]}\), we know that \(\g{g[v]} =
\left\{x[v]\vee x[\overline{v}]\;\mid (x:\Cnf)\in\g{g},\;(v : \Literal) \in
\g{v}\right\}\).

First, we note that each \(x[v]\) is equisatisfiable to \(x\wedge v\) and
each \(x[\overline{v}]\) is equisatisfiable to \(x\wedge\overline{v}\).
Thus, we can write \(x[v] \vee x[\overline{v}] \sim (x\wedge v) \vee
(x\wedge\overline{v}) = x\), using the fact that disjunction of
equisatisfiable Cnfs is equisatisfiable.

Next, we note that we can replace each Cnf of the set \(\g{g[v]}\) with an
equisatisfiable Cnf without affecting the satisfiability status of the set.
Thus, we write \(\g{g[v]} \sim \left\{x\;\mid (x:\Cnf)\in\g{g}\right\} =
\g{g}\).
\end{proof}

\subsection{Parts of a graph}
\label{sec:orge9a5fbd}
We now define some parts of graphs that will be useful for stating the
local rewriting theorem (Theorem \ref{thm-local_rewriting}). Let \(\g{g}\) be a
graph and let \(\g{v}\) be a vertex.

\begin{itemize}
\item The \emph{star} of \(\g{g}\) at a vertex \(\g{v}\) of \(\g{g}\) is the graph of
all edges containing \(\g{v}\).\\
\(\star : \Graph\times\Vertex \rightarrow \Graph, \text{ such that }
  (\g{(e_1)^{n_1}\cdots (e_k)^{n_k}},\;\g{v}) \mapsto \g{(e_1)^{m_1}\cdots
  (e_k)^{m_k}}\), where \(m_i = n_i\) if \(\g{v}\in\g{e_i}\) and \(m_i=0\)
otherwise, i.e. we omit the edge \(\g{e_i}\) otherwise.

\item The \emph{link} of \(\g{g}\) at \(\g{v}\) is the graph formed by removing
\(\g{v}\) from each edge of the star of \(\g{g}\) at \(\g{v}\). In the
following equation, \((-)\) denotes the usual set difference.\\
\(\link : \Graph\times\Vertex \rightarrow \Graph, \text{ such that }
  \left(\g{(e_1)^{n_1}\cdots (e_k)^{n_k}},\;\g{v}\right) \mapsto\)\\
\(\g{(e_1 - \{v\})^{m_1}\cdots (e_k - \{v\})^{m_k}}\), where \(m_i =
  n_i\) if \(\g{v}\in\g{e_i}\) and \(m_i=0\) otherwise. Any edges with
\(0\) multiplicity or with size \(0\) after deleting the vertex \(\g{v}\)
will simply be omitted from the resulting graph.

\item The \emph{rest} of \(\g{g}\) at \(\g{v}\) is the graph formed by all edges not
containing \(\g{v}\). In other words, these are the edges of \(\g{g}\)
not contained in \(\star(\g{g}, \g{v})\)\\
\(\rest : \Graph\times\Vertex \rightarrow \Graph, \text{ such that }
  (\g{(e_1)^{n_1}\cdots (e_k)^{n_k}},\;\g{v}) \mapsto \g{(e_1)^{m_1}\cdots
  (e_k)^{m_k}}\), where \(m_i = n_i\) if \(\g{v}\notin\g{e_i}\) and
\(m_i=0\) otherwise, i.e. we omit the edge \(\g{e_i}\) otherwise.
\end{itemize}

We note that when computing the \(2\)-partitions of a multiset (for example
a graph), we can split an edge with multiplicity across the partitions.
This observation will come in handy when using the local rewriting theorem
(Theorem \ref{thm-local_rewriting}). For example, we will consider
\(\lbrace\g{(e_1)(e_2)},\; \g{(e_2)^2}\rbrace\) to be a valid
\(2\)-partition of the graph \(\g{(e_1)(e_2)^3}\).

A graph \(\g{g}\) is a \emph{subgraph} of a graph \(\g{h}\) if every edge of
\(\g{g}\) (counting duplicates as distinct) is also an edge of \(\g{h}\).
We denote this partial order on graphs by \(\g{g \le h}\). An edge
\(\g{e}\) is a \emph{face} of an edge \(\g{f}\) if every vertex in \(\g{e}\) is
also in \(\g{f}\). A graph \(\g{g}\) is a \emph{shaved version} of a graph
\(\g{h}\) if \(\g{g}\) can be constructed by replacing each edge of
\(\g{h}\) (counting duplicates as distinct) by a nonempty face of itself.
We note that each graph is a shaved version of itself. We denote this
partial order on graphs by \(\g{g \ll h}\).

We now prove some lemmas outlining the relation between subgraphs, shaved
versions and satisfiability.

\begin{lemma}
Let \(\g{g_1}\) and \(\g{g_2}\) be graphs such that \(\g{g_1 \le
  g_2}\). Then, \(\g{g_1}\xRightarrow{A\bot}\g{g_2}\).
\label{lemma-abot_of_subgraph}
\end{lemma}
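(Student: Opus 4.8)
The plan is to unpack the definition of the $A\bot$-criterion and, for each Cnf living on $\g{g_1}$, exhibit an explicit witness Cnf living on $\g{g_2}$ obtained by extending a clause-choice to the extra edges. Recall that $\g{g_1}\xRightarrow{A\bot}\g{g_2}$ requires that for every $(v_1:\Cnf)\in\g{g_1}$ there exist some $(v_2:\Cnf)\in\g{g_2}$ with $v_1[a]=\bot \implies v_2[a]=\bot$ for every assignment $a$. The single logical fact driving the whole argument is this: if the clause-set of $v_1$ is contained in the clause-set of $v_2$ (equivalently, $v_2=v_1\wedge w$ for some Cnf $w$), then any assignment falsifying $v_1$ also falsifies $v_2$. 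Indeed, $v_1[a]=\bot$ forces some clause of $v_1$ to reduce to $\bot$ under $a$, and that same clause is a conjunct of $v_2$, so $v_2[a]=\bot$ as well. Hence it suffices to produce, for each $v_1\in\g{g_1}$, a $v_2\in\g{g_2}$ whose clause-set contains that of $v_1$.

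Next I would build $v_2$ from $v_1$ edge by edge, using $\g{g_1\le g_2}$. Writing the multiplicity of an edge $\g{e}$ as $n_{\g{e}}^{(1)}$ in $\g{g_1}$ and $n_{\g{e}}^{(2)}\ge n_{\g{e}}^{(1)}$ in $\g{g_2}$, the Cnf $v_1$ selects $n_{\g{e}}^{(1)}$ distinct clauses of $f_2(\g{e})$ for each edge of $\g{g_1}$ (distinct edges contribute clauses over disjoint vertex sets, so there are no collisions across edges). For $v_2$ I would keep exactly these clauses, adjoin for each such edge enough further distinct clauses of $f_2(\g{e})$ to reach the required multiplicity $n_{\g{e}}^{(2)}$, and on the edges of $\g{g_2}$ absent from $\g{g_1}$ choose any admissible clauses. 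This yields a genuine element $v_2\in f_4(\g{g_2})$ whose clause-set contains that of $v_1$, and the fact above then closes the argument.

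The only point that needs care is the bookkeeping of multiplicities in this extension step: for each edge $\g{e}$ there must be at least $n_{\g{e}}^{(2)}$ distinct clauses available in $f_2(\g{e})$, i.e. $n_{\g{e}}^{(2)}\le 2^{|\g{e}|}$, which is precisely the condition that $\g{g_2}$ be nonempty as a set of Cnfs. When $\g{g_1}$ is empty the quantifier over $v_1$ is vacuous and there is nothing to prove, so the substantive content lies entirely in this extension. I expect the falsification step to be routine and this multiplicity/nonemptiness check to be the only place where the hypothesis $\g{g_1\le g_2}$ does real work.
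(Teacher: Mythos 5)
Your proposal is correct and follows essentially the same route as the paper: the paper writes \(\g{g_2}=\g{g_1\wedge g}\) for the complementary graph \(\g{g}\), picks an arbitrary \((x:\Cnf)\in\g{g}\), and observes that any assignment falsifying \(x_1\) also falsifies \(x_1\wedge x\). Your edge-by-edge construction produces the same witness explicitly, and is in fact slightly more careful than the paper on one point: when \(\g{g_1}\) and \(\g{g_2}\) share an edge with different multiplicities, the extra clauses must be chosen distinct from those already selected by \(v_1\) on that edge for the witness to genuinely lie in \(f_4(\g{g_2})\) (and \(\g{g_2}\) must be nonempty as a set of Cnfs), a bookkeeping issue your extension step addresses and the paper's one-line factorization \(\g{g_2}=\g{g_1\wedge g}\) glosses over.
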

\begin{proof}
We can write \(\g{g_2} = \g{g_1\wedge g}\) for some \((\g{g} : \Graph)\). Let
\((x_1 : \Cnf)\in\g{g_1}\). Let \((x : \Cnf) \in \g{g}\) be an arbitrary
Cnf in \(\g{g}\). If \((a:\Assignment)\) is such that \(x_1[a]=\bot\), then
we have \((x_1\wedge x)[a] = x_1[a] \wedge x = \bot\).
\end{proof}

\begin{lemma}
Let \(\g{e}\) and \(\g{f}\) be edges such that \(\g{e}\) is a face of
\(\g{f}\). Then, \(\g{f}\xRightarrow{A\bot}\g{e}\).
\end{lemma}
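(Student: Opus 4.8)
The plan is to unwind the definition of the $A\bot$-criterion together with the description of the Cnfs living on a single edge, and then exhibit an explicit witness. Since $\g{e}$ and $\g{f}$ are single edges, the Cnfs in $\g{f}$ (that is, the elements of $f_2(\g{f})$) are precisely the single clauses $x_f = l_1 \vee \cdots \vee l_k$, where $\g{f} = \g{v_1\cdots v_k}$ and each $l_j \in \{v_j, \overline{v_j}\}$; the Cnfs in $\g{e}$ have the analogous form on the smaller vertex set. To verify $\g{f}\xRightarrow{A\bot}\g{e}$, I must produce, for each such $x_f$, a clause $x_e \in \g{e}$ so that every assignment falsifying $x_f$ also falsifies $x_e$.

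The natural witness is the restriction of $x_f$ to the vertices of $\g{e}$. Because $\g{e}$ is a face of $\g{f}$, its vertex set is a subset of that of $\g{f}$, so I set $x_e := \bigvee_{j \,:\, v_j \in \g{e}} l_j$, keeping the same literals (and hence the same polarities) that appear in $x_f$. This is a legitimate element of $\g{e}$ since it is a disjunction containing exactly one literal per vertex of $\g{e}$, and therefore lies in $f_2(\g{e})$.

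The core observation is the semantics of falsifying a clause: for any assignment $a$, a disjunction evaluates to $\bot$ under $a$ exactly when every one of its literals is assigned false by $a$ (in particular, every variable occurring in the clause must be assigned). Thus $x_f[a] = \bot$ forces $l_j[a] = \bot$ for all $j \in \{1,\dots,k\}$, and a fortiori for every $j$ with $v_j \in \g{e}$. Since the literals of $x_e$ are precisely this sub-collection, all literals of $x_e$ are also false under $a$, so $x_e[a] = \bot$. This yields the required implication for every $x_f$, establishing $\g{f}\xRightarrow{A\bot}\g{e}$.

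This lemma is essentially routine and I do not anticipate a serious obstacle. The only points demanding care are getting the quantifier structure of the $A\bot$-criterion right — the witness $x_e$ is chosen \emph{per} clause $x_f$ by restriction, not as a single fixed clause — and stating the substitution semantics precisely enough that ``$x_f[a] = \bot$'' genuinely implies ``$x_e[a] = \bot$'' for all assignments $a$ at once. With the correspondence ``same literals, restricted vertex set'' in hand, both are immediate.
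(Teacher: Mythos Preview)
Your proof is correct and follows essentially the same approach as the paper: given a clause $c_f \in \g{f}$, restrict it to the vertices of $\g{e}$ to obtain $c_e \in \g{e}$ (the paper phrases this as writing $c_f = c_e \vee c$ with $c \in \g{f\setminus e}$), and observe that any assignment falsifying $c_f$ falsifies each of its literals and hence falsifies the sub-clause $c_e$.
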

\begin{proof}
Let \((c_f:\Clause)\in \g{f}\) be arbitrary. We can write \(c_f = c_e \vee
c\) for some \((c_e:\Clause)\in\g{e}\) and some
\((c:\Clause)\in\g{f\setminus e}\). Then, any assignment \(a\) that
falsifies \(c_f\) necessarily falsifies \(c_e\), hence proving the result.
\end{proof}

\begin{lemma}
Let \(\g{g_1}\) and \(\g{g_2}\) be graphs such that \(\g{g_1 \ll g_2}\).
Then, \(\g{g_2} \xRightarrow{A\bot}\g{g_1}\).
\label{lemma-shaved-version}
\end{lemma}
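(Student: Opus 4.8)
The plan is to lift the preceding face lemma from a single edge to an entire graph by restricting each clause of a witness Cnf onto its matched face, clause by clause. Since \(\g{g_1 \ll g_2}\), the shaved-version relation furnishes a bijection between the edge-copies of \(\g{g_2}\) (counting duplicates as distinct) and those of \(\g{g_1}\), under which each edge \(\g{f}\) of \(\g{g_2}\) is matched to a nonempty face \(\g{e}\) of itself sitting in \(\g{g_1}\). I would fix an arbitrary \((x_2 : \Cnf) \in \g{g_2}\) and aim to produce some \((x_1 : \Cnf) \in \g{g_1}\) witnessing \(\g{g_2}\xRightarrow{A\bot}\g{g_1}\).

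First I would recall that each clause of \(x_2\) lives on one edge-copy \(\g{f}\) of \(\g{g_2}\), and apply the decomposition from the preceding lemma: write that clause as \(c_f = c_e \vee c\), where \(c_e\) is obtained by discarding the literals sitting on the vertices of \(\g{f}\) that lie outside the matched face \(\g{e}\). Collecting the restrictions \(c_e\) over all edge-copies yields a candidate conjunction \(x_1\) whose clauses live on exactly the edges of \(\g{g_1}\). The \(A\bot\) property is then immediate, exactly as for a single edge: if an assignment \(a\) makes \(x_2[a] = \bot\), then some clause \(c_f\) of \(x_2\) is falsified by \(a\), hence so is its subclause \(c_e\), and therefore \(x_1[a] = \bot\).

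The main obstacle is that this candidate need not be a legitimate element of \(\g{g_1}\). A Cnf on \(\g{g_1}\) must assign \emph{distinct} clauses to the several copies of each repeated edge, yet two distinct clauses of \(x_2\) sitting on copies of a common edge of \(\g{g_2}\) can restrict to the same clause on a shared face, so the restrictions may collide and collapse. To repair this, for each edge \(\g{e}\) of \(\g{g_1}\) of multiplicity \(m\) I would take the distinct restrictions produced above and pad them with further distinct clauses on \(\g{e}\) up to the required count \(m\); adjoining extra clauses only enlarges the set of falsifying assignments of \(x_1\), so it cannot break the implication established above. This padding is possible exactly when \(\g{g_1}\) carries at least one Cnf, i.e. when no edge of \(\g{g_1}\) has multiplicity exceeding \(2^k\) for its size \(k\). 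I would therefore carry the standing hypothesis that \(\g{g_1}\) is nonempty as a set of Cnfs, the case excluded by over-multiplied edges; without it a nonempty \(\g{g_2}\) cannot equi-imply an empty target, and this is the natural reading consistent with how emptiness is treated elsewhere.

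Finally, I would remark that the same conclusion can be organized more structurally: \(\g{g_1}\) is reached from \(\g{g_2}\) by deleting one vertex from one edge at a time, each such step being an instance of the preceding face lemma promoted through Proposition \ref{prop-conjunction-disjunction}, item \ref{item:prop3} (monotonicity of \(\xRightarrow{A\bot}\) under conjunction), with the steps composed using the (routine) transitivity of \(\xRightarrow{A\bot}\). This reformulation trades the clause-counting above for bookkeeping about how conjunction of Cnf-sets interacts with edge multiplicities, so the same nonemptiness subtlety reappears and is, I expect, the only genuinely delicate point in either route.
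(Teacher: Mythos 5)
Your argument is correct, and its final remark is in fact the paper's entire proof: the paper simply says that each edge--face pair satisfies \(\g{f}\xRightarrow{A\bot}\g{e}\) by the preceding lemma, and that these can be ``strung together'' using Proposition \ref{prop-conjunction-disjunction}.\ref{item:prop3} (plus the unstated transitivity of \(\xRightarrow{A\bot}\)). Your main route is different in character: you build the witness \(x_1\) explicitly by restricting each clause of \(x_2\) to its matched face, and this more elementary construction surfaces two genuine subtleties that the paper's one-line argument glosses over. First, the collision problem you identify is real --- two distinct clauses on copies of a repeated edge of \(\g{g_2}\) can restrict to the same clause on a shared face, so the naive restriction need not be a legal element of \(\g{g_1}\) --- and your padding repair is sound, since adjoining clauses only enlarges the set of falsifying assignments. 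Second, your nonemptiness caveat is also a real boundary case: shaving can push an edge of \(\g{g_1}\) above multiplicity \(2^k\) (e.g.\ \(\g{(ab)^2(ac)^2}\) shaved entirely onto the vertex \(\g{a}\) gives \(\g{(a)^4}\)), in which case \(\g{g_1}\) is empty as a set of Cnfs and the \(\forall\exists\) statement fails vacuously; the paper's proof does not exclude this. Note that the paper's route has a parallel wrinkle you did not mention: because Cnfs on a repeated edge must use \emph{distinct} clauses, a graph with multiplicities is not literally the graph-conjunction of its single-edge graphs, so ``stringing the faces together'' via Proposition \ref{prop-conjunction-disjunction}.\ref{item:prop3} also requires a word about multiplicities. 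Your explicit construction, with the padding step, is the more careful of the two arguments.
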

\begin{proof}
We argue that the result follows for each face from the previous lemma. We
can string these faces together to form the graphs while preserving
equi-implication under the \(A\bot\)-criterion using Proposition
\ref{item:prop3}.
\end{proof}

\subsection{The local rewriting theorem}
\label{sec:org3b2f34d}
\begin{thm}
For every graph \(\g{g}\) and every vertex \(\g{v}\) with degree at least
\(2\), we have the following equi-satisfiability relation.
\begin{equation}\label{eqn:graph_rewriting}
  \g{g} \;\bsim\hspace*{-1em}
        \bigcup_{\substack{\g{h_1\;h_2}\;:\;\Graph\\
                 \{\g{h_1},\; \g{h_2}\}\;
                 \in \text{ 2-partitions of } \link(\g{g},\g{v})}}
	    \hspace*{-3em}
        \left(\g{h_1\vee h_2}\right) \g{\wedge} \rest(\g{g}, \g{v}).
\end{equation}
For every vertex \(\g{v}\) of \(\g{g}\) with degree \(1\), we have
\(\g{g\sim}\rest(\g{g},\g{v})\).
\label{thm-local_rewriting}
\end{thm}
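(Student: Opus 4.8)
The plan is to route everything through the vertex-assignment set $\g{g[v]}$ and then to recognise its elements, after one use of distributivity, as the terms of the displayed union. Lemma~\ref{lemma-equisat-vertex-assign} already gives $\g{g[v]}\bsim\g{g}$, so it suffices to prove that $\g{g[v]}$ is equi-satisfiable to the right-hand side of \eqref{eqn:graph_rewriting}. Moreover, by the first part of Proposition~\ref{prop-conjunction-disjunction}, equi-satisfiability of two sets of Cnfs is the same as mutual equi-implication under the $\bot$-criterion, so I only ever have to match up the \emph{unsatisfiable} Cnfs of the two sets, which makes the argument robust to rewriting individual Cnfs up to logical equivalence.

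First I would fix $(x:\Cnf)\in\g{g}$ and split it as $x=x_{\star}\wedge x_{\rest}$, where $x_{\star}$ collects the clauses on the edges of $\star(\g{g},\g{v})$ and $x_{\rest}$ those on $\rest(\g{g},\g{v})$. Since $\g{v}$ lies in no edge of the rest, $x[v]=x_{\star}[v]\wedge x_{\rest}$ and $x[\overline{v}]=x_{\star}[\overline{v}]\wedge x_{\rest}$, so the distributive law $(A\wedge C)\vee(B\wedge C)\equiv(A\vee B)\wedge C$ yields $x[v]\vee x[\overline{v}]\equiv\left(x_{\star}[v]\vee x_{\star}[\overline{v}]\right)\wedge x_{\rest}$. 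This is already the shape of a term $(\g{h_1\vee h_2})\g{\wedge}\rest(\g{g},\g{v})$, so the whole argument reduces to understanding the single factor $x_{\star}[v]\vee x_{\star}[\overline{v}]$.

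The heart of the proof is a bijection between $2$-partitions of $\link(\g{g},\g{v})$ and the literal patterns of $x_{\star}$. Every clause of $x_{\star}$ sits on a star edge $\g{e_i}\ni\g{v}$ and contains $\g{v}$ as exactly one of $v$ or $\overline{v}$; assigning that literal to $\bot$ leaves the corresponding clause on the link edge $\g{e_i}-\{\g{v}\}$, while the opposite assignment kills the clause. Hence the clauses using $\overline{v}$ assemble under $x_{\star}[v]$ into a Cnf $y_1$ on a sub-multiset $\g{h_1}\le\link(\g{g},\g{v})$, those using $v$ assemble under $x_{\star}[\overline{v}]$ into a Cnf $y_2$ on the complementary sub-multiset $\g{h_2}$, and $\{\g{h_1},\g{h_2}\}$ is a $2$-partition of $\link(\g{g},\g{v})$; thus $x_{\star}[v]\vee x_{\star}[\overline{v}]=y_1\vee y_2$ becomes, in conjunctive normal form, an element of $\g{h_1\vee h_2}$. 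Conversely, given a $2$-partition $\{\g{h_1},\g{h_2}\}$ and Cnfs $y_1\in\g{h_1}$, $y_2\in\g{h_2}$, $z\in\rest(\g{g},\g{v})$, reattaching $\overline{v}$ to every clause of $y_1$ and $v$ to every clause of $y_2$ rebuilds a legitimate $x_{\star}$ (the reattached clauses stay distinct on each edge, since the $v$- and $\overline{v}$-groups cannot collide and each group is already clause-distinct), so $x=x_{\star}\wedge z\in\g{g}$ reproduces the chosen term. The two sets therefore agree up to logical equivalence of individual Cnfs, which preserves satisfiability, giving mutual $\bot$-implication and hence $\g{g[v]}\bsim$ the union.

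The step I expect to be the real obstacle is controlling the degenerate and boundary cases the bijection glosses over. When all clauses of $x_{\star}$ carry the same literal, one class of the partition is empty, the factor $x_{\star}[v]\vee x_{\star}[\overline{v}]$ degenerates to $\top$, and the whole Cnf collapses by absorption to $x_{\rest}$; such a term is unsatisfiable only when $x_{\rest}$ is, and here the hypothesis $\deg(\g{v})\ge 2$ is used to guarantee that $\link(\g{g},\g{v})$ carries enough multiplicity to admit a genuine $2$-partition, so that any unsatisfiable $x_{\rest}$ is already witnessed by a proper term $(\g{h_1\vee h_2})\g{\wedge}\rest(\g{g},\g{v})$ of the union. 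A loop at $\g{v}$ is the genuinely delicate case, because deleting $\g{v}$ turns its size-$1$ edge into the empty face that $\link$ discards, so the loop clause contributes an empty (false) clause to $x_{\star}[v]$ or $x_{\star}[\overline{v}]$ rather than a clause on a link edge; this must be handled separately. Finally, the degree-$1$ statement is immediate: there $\g{v}$ lies in a unique edge, its variable occurs in a single clause of every $(x:\Cnf)\in\g{g}$, and that clause is satisfied by the forced value of $v$ without touching $x_{\rest}$, so $x$ is satisfiable iff its $\rest(\g{g},\g{v})$-part is, yielding $\g{g}\sim\rest(\g{g},\g{v})$.
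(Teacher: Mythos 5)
Your proposal follows essentially the same route as the paper's proof: reduce to $\g{g[v]}$ via Lemma~\ref{lemma-equisat-vertex-assign}, factor each Cnf into its star and rest parts, apply distributivity, and match the $v$/$\overline{v}$ clause pattern of the star Cnf with a $2$-partition of $\link(\g{g},\g{v})$ in both directions. If anything you are more careful than the paper, which asserts a set equality for $\star(\g{g},\g{v})\g{[v]}$ and glosses over the degenerate terms (all clauses carrying the same literal, loops at $\g{v}$) that you explicitly flag and dispose of via the $\bot$-criterion.
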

\begin{proof}
For vertices \(\g{v}\) of degree \(1\), the theorem states that we can
delete the single edge \(\g{e}\) incident on \(\g{v}\) without affecting
the satisfiability status of \(\g{g}\). This is true because any \((x :
\Cnf)\in \g{g}\) can be written as \(x = y \wedge (c \vee v)\) or \(x = y
\wedge (c \vee \overline{v})\), where \((y : \Cnf) \in \rest(\g{g},
\g{v})\), and \((c : \Clause)\) such that \(c\vee v\) is a clause supported
on the edge \(e\). Assigning \(v\) to true and false respectively leaves us
with the result \(x \sim y\), and thus \(\g{g\sim}\rest(\g{g}, \g{v})\).

We now consider the case where degree of \(\g{v}\) in \(\g{g}\) is at least
\(2\). Since the star and the rest together form a partition of the edge
set of a graph, we can write \(\g{g} = \star(\g{g}, \g{v})\g{\wedge
}\rest(\g{g}, \g{v})\). Since we know from Lemma
\ref{lemma-equisat-vertex-assign} that \(\g{g} \bsim \g{g[v]}\), we can
also write \(\g{g} \bsim \star(\g{g}, \g{v})\g{[v]\;\wedge }\rest(\g{g},
\g{v})\).

Since graph conjunction is defined as the pairwise conjunction of the
Cartesian product of underlying Cnfs, we can infer that graph disjunction
commutes with set union. Thus, it suffices to show that
\[\star(\g{g},\g{v})\g{[v]} \;=\hspace{-1em}
        \bigcup_{\substack{\g{h_1\;h_2}\;:\;\Graph\\
                 \{\g{h_1},\; \g{h_2}\}\;
                 \in \text{ 2-partitions of } \link(\g{g},\g{v})}}
	    \hspace*{-3em}
        \left(\g{h_1\vee h_2}\right).\]

Suppose \((x : \Cnf) \in \star(\g{g}, \g{v})\g{[v]}\). Then, there exists a
\((y : \Cnf) \in \star(\g{g}, \g{v})\) such that \(x = y[v] \vee
y[\overline{v}]\). Potentially, there are four types of clauses in \(y\)
--- those that contain the literal \(v\), those that contain
\(\overline{v}\), contain both, or contain neither. The last two cases are
not possible. A clause cannot contain both \(v\) and \(\overline{v}\)
because otherwise the edge corresponding to the clause will be incident on
the vertex \(\g{v}\) twice, which is something that the definition of an
edge does not allow. A case where a clause contains neither \(v\) nor
\(\overline{v}\) is impossible because the Cnf \(y\) belongs to the star of
\(\g{g}\) at \(\g{v}\).

We can therefore partition \(y\) into \(y_1\) containing clauses that
contain \(v\), and \(y_2\) containing clauses that contain
\(\overline{v}\), i.e.~we can write \(y=y_1\wedge y_2\). Thus, \(x =
y[v]\vee y[\overline{v}] = (\top \wedge y_2[v]) \vee
(y_1[\overline{v}]\wedge \top) = y_2[v] \vee y_1[\overline{v}]\). Let
\(\g{h_1}\) and \(\g{h_2}\) be the graphs that support
\(y_1[\overline{v}]\) and \(y_2[v]\) respectively. We have shown that \(x
\in \g{h_1\vee h_2}\). Furthermore, \(\g{h_1}\) and \(\g{h_2}\) form a
\(2\)-partition of \(\link(\g{g})\) since \(y_1\) and \(y_2\) form a
\(2\)-partition of the Cnf \(y\) in \(\star(\g{g}, \g{v})\). Generalizing
this to all the different Cnfs \(x\) in \(\star(\g{g}, \g{v})\g{[v]}\), we
have shown that \[\star(\g{g},\g{v})\g{[v]} \;\;\subseteq\hspace{-1em}
\bigcup_{\substack{\g{h_1\;h_2}\;:\;\Graph\\ \{\g{h_1},\; \g{h_2}\}\; \in
\text{ 2-partitions of } \link(\g{g},\g{v})}} \hspace*{-3em}
\left(\g{h_1\vee h_2}\right).\]

Conversely, suppose \((x : \Cnf) \in \g{h_1\vee h_2}\), such that
\(\{\g{h_1},\; \g{h_2}\}\) is some \(2\)-partition of \(\link(\g{g},
\g{v})\). Then, we can factor \(x\) as \(x=x_1\vee x_2\), for some Cnfs
\(x_1\) and \(x_2\) such that \(x_1\in\g{h_1}\) and \(x_2\in\g{h_2}\).
Consider then the Cnf \(y\) given by \(y=(x_1 \vee v)\wedge
(x_2\vee\overline{v})\). Firstly, we observe that \(x = y[v] \vee
y[\overline{v}]\). Secondly, we note that \(y \in \star(\g{g}, \g{v})\)
since the effect of disjuncting with \(v\) is to extend each clause in
\(x_1\) and \(x_2\) by a literal in \(\g{v}\). Thus, we can write \(x \in
\star(\g{g}, \g{v})\g{[v]}\). We have now shown that
\[\star(\g{g},\g{v})\g{[v]} \;\;\supseteq\hspace{-1em}
\bigcup_{\substack{\g{h_1\;h_2}\;:\;\Graph\\ \{\g{h_1},\; \g{h_2}\}\; \in
\text{ 2-partitions of } \link(\g{g},\g{v})}} \hspace*{-3em}
\left(\g{h_1\vee h_2}\right).\]
\end{proof}

An implementation of the right side of \eqref{eqn:graph_rewriting} in
Python using our \texttt{graphsat} package is in Appendix \ref{sec:org322849c}. Details of the \texttt{graphsat} package will be provided in a
separate publication. The source code of \texttt{graphsat} is available at
\cite{KaHi2021}.

Next, we will state two corollaries that can be turned into a test for
satisfiable graphs. These corollaries are turned into procedures for
testing the satisfiability status of a graph and are presented in detail
later in this section.

\begin{cor}
Let \(\g{g}\) be a graph and let \(\g{v}\) be a vertex of \(\g{g}\). If
\(\g{g}\) is unsatisfiable, then there exists a \(2\)-partition
\(\{\g{h_1}, \g{h_2}\}\) of \(\link(\g{g}, \g{v})\) such that both
\((\g{h_1\;\wedge\;}\rest(\g{g}, \g{v}))\) and
\((\g{h_2\;\wedge\;}\rest(\g{g}, \g{v}))\) are unsatisfiable.
\label{cor-exists_unsat_partition_of_unsat}
\end{cor}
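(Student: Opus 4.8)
The plan is to read this corollary off the local rewriting theorem (Theorem \ref{thm-local_rewriting}) together with part \ref{item:prop8} of Proposition \ref{prop-conjunction-disjunction}. I would take \(\g{v}\) to have degree at least \(2\), so that Equation \eqref{eqn:graph_rewriting} applies; the degree-\(1\) case is degenerate, since then \(\link(\g{g},\g{v})\) carries a single edge and admits no \(2\)-partition into two graphs, so the existential claim is understood in the degree-\(\ge 2\) setting. First I would invoke Theorem \ref{thm-local_rewriting} to write
\[
  \g{g}\;\bsim\hspace{-1em}\bigcup_{\substack{\{\g{h_1},\,\g{h_2}\}\;\in\\ \text{2-partitions of }\link(\g{g},\g{v})}}\hspace{-2em}\left(\g{h_1\vee h_2}\right)\g{\wedge}\rest(\g{g},\g{v}),
\]
where the union ranges over the finitely many \(2\)-partitions of \(\link(\g{g},\g{v})\). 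Since \(\g{g}\) is unsatisfiable and equi-satisfiability preserves satisfiability status, the set of Cnfs on the right is unsatisfiable as well.

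Next I would argue that a finite union of Cnf-sets can be unsatisfiable only if one of its summands already is. The clean way is to note that an unsatisfiable nonempty union must contain an unsatisfiable Cnf, and that Cnf lies in one of the summands \(\left(\g{h_1\vee h_2}\right)\g{\wedge}\rest(\g{g},\g{v})\), forcing that summand to be unsatisfiable; if instead the union is empty then every summand is empty, hence unsatisfiable, so any \(2\)-partition works. (This is exactly the content of \(\bgamma(\g{g_1}\cup\g{g_2})=\bgamma(\g{g_1})\wedge\bgamma(\g{g_2})\) applied to the finite union, with the empty-set corner cases spelled out.) Either way I obtain a single \(2\)-partition \(\{\g{h_1},\g{h_2}\}\) for which \(\left(\g{h_1\vee h_2}\right)\g{\wedge}\rest(\g{g},\g{v})\) is unsatisfiable.

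Finally I would apply the contrapositive of part \ref{item:prop8} of Proposition \ref{prop-conjunction-disjunction} with \(\g{g}\) taken to be \(\rest(\g{g},\g{v})\), \(\g{g_1}=\g{h_1}\) and \(\g{g_2}=\g{h_2}\), using commutativity of graph conjunction (Remark \ref{remark-distributivity}) to identify \(\left(\g{h_1\vee h_2}\right)\g{\wedge}\rest(\g{g},\g{v})\) with \(\rest(\g{g},\g{v})\g{\wedge}\left(\g{h_1\vee h_2}\right)\). Part \ref{item:prop8} says that if either \(\g{h_1}\g{\wedge}\rest(\g{g},\g{v})\) or \(\g{h_2}\g{\wedge}\rest(\g{g},\g{v})\) were totally satisfiable then so would \(\rest(\g{g},\g{v})\g{\wedge}\left(\g{h_1\vee h_2}\right)\) be; since the latter is unsatisfiable, both of the former are unsatisfiable, which is exactly the claim.

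I expect the only real subtlety — and the reason part \ref{item:prop8} is the right tool — to be the need to deliver \emph{both} pieces unsatisfiable simultaneously. One could instead chain part 5 (\(\g{h_1\vee h_2}\xRightarrow{A\bot}\g{h_1}\)) with part \ref{item:prop3} to get \(A\bot\)-implications from the unsatisfiable summand to each of \(\g{h_1}\g{\wedge}\rest\) and \(\g{h_2}\g{\wedge}\rest\), which via part 2 and the \(\bot\)-criterion would also force unsatisfiability; but part \ref{item:prop8} packages both conclusions at once and sidesteps having to track the \(\bot\)-criterion through the empty-Cnf-set cases. Accordingly, the main thing I would be careful about is the bookkeeping around empty sets of Cnfs (which are unsatisfiable by convention) when passing from the union to a single summand and when invoking part \ref{item:prop8}.
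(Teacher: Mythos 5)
Your proposal is correct and follows essentially the same route as the paper's proof: apply Theorem \ref{thm-local_rewriting} to conclude that some summand $\left(\g{h_1\vee h_2}\right)\g{\wedge}\rest(\g{g},\g{v})$ is unsatisfiable, then finish with the contrapositive of Proposition \ref{prop-conjunction-disjunction}.\ref{item:prop8}. The extra bookkeeping you supply (the degree-$1$ caveat, the empty-set cases, and the justification that an unsatisfiable finite union has an unsatisfiable summand) only makes explicit what the paper's two-line proof leaves implicit.
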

\begin{proof}
If \(\g{g}\) is unsatisfiable, then so is the set \((\g{h_1\vee h_2})
\g{\wedge} \rest(\g{g}, \g{v})\) for some \(2\)-partition \(\{\g{h_1},\;
\g{h_2}\}\) of \(\link(\g{g}, \g{v})\). The result follows from Proposition
\ref{item:prop8}.
\end{proof}

\begin{cor}
Let \(\g{g}\) be a graph and let \(\g{v}\) be a vertex of \(\g{g}\). If
either one of \((\g{h_1\;\wedge\;}\rest(\g{g}, \g{v}))\) or
\((\g{h_2\;\wedge\;}\rest(\g{g}, \g{v}))\) is totally satisfiable for
every \(2\)-partition \(\{\g{h_1}, \g{h_2}\}\) of \(\link(\g{g}, \g{v})\),
then \(\g{g}\) itself is totally satisfiable.
\label{cor-satisfiable-of-all-sat-partitions}
\end{cor}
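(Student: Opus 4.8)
The plan is to recognize this corollary as the logical contrapositive of Corollary \ref{cor-exists_unsat_partition_of_unsat}. The key fact making the contrapositive work cleanly is that, for a set of Cnfs, being \emph{totally satisfiable} and being \emph{unsatisfiable} are mutually exclusive and exhaustive alternatives. Thus the hypothesis here --- that for every $2$-partition $\{\g{h_1}, \g{h_2}\}$ of $\link(\g{g}, \g{v})$ at least one of $\g{h_1}\g{\wedge}\rest(\g{g},\g{v})$ or $\g{h_2}\g{\wedge}\rest(\g{g},\g{v})$ is totally satisfiable --- is precisely the negation of the conclusion of Corollary \ref{cor-exists_unsat_partition_of_unsat} (namely, that there exists a $2$-partition with both pieces unsatisfiable). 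Negating the hypothesis of that corollary ($\g{g}$ unsatisfiable) produces exactly our desired conclusion ($\g{g}$ totally satisfiable), so the statement would follow at once.

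For readers who prefer a self-contained argument, I would instead start directly from the local rewriting theorem (Theorem \ref{thm-local_rewriting}), which for $\g{v}$ of degree at least $2$ gives
\[
  \g{g} \;\bsim\; \bigcup_{\{\g{h_1},\g{h_2}\}} \left(\g{h_1\vee h_2}\right) \g{\wedge} \rest(\g{g},\g{v}),
\]
the union running over all $2$-partitions of $\link(\g{g},\g{v})$. Fixing one such partition, the hypothesis supplies total satisfiability of $\g{h_1}\g{\wedge}\rest(\g{g},\g{v})$ or of $\g{h_2}\g{\wedge}\rest(\g{g},\g{v})$; applying Proposition \ref{item:prop8} (with the role of $\g{g}$ there played by $\rest(\g{g},\g{v})$, and using commutativity of graph conjunction) would then show that $\left(\g{h_1\vee h_2}\right)\g{\wedge}\rest(\g{g},\g{v})$ is totally satisfiable. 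Since this holds for every term of the union, and since $\bgamma$ of a union is the conjunction of the $\bgamma$ values of its members, the whole union satisfies $\bgamma = \boldsymbol\top$ and is therefore totally satisfiable. Transporting this across the equi-satisfiability $\g{g}\bsim(\cdots)$ would yield that $\g{g}$ is totally satisfiable.

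I do not expect a serious obstacle here; the two points requiring care are the book-keeping of the logical negation (remembering that total satisfiability is exactly the complement of unsatisfiability for sets of Cnfs) and the passage from a termwise statement to one about the whole union, which is justified by $\bgamma(\g{g_1}\cup\g{g_2}) = \bgamma(\g{g_1}) \wedge \bgamma(\g{g_2})$ together with the fact that a totally satisfiable value of $\bgamma$ already encodes nonemptiness. As with the preceding corollary, the $2$-partition formula silently relies on the degree hypothesis of Theorem \ref{thm-local_rewriting}, so I would note that the statement is intended for vertices $\g{v}$ of degree at least $2$.
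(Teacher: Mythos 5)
Your first paragraph is exactly the paper's proof: the corollary is stated there to be the contrapositive of Corollary \ref{cor-exists_unsat_partition_of_unsat}, relying as you note on total satisfiability and unsatisfiability being exhaustive alternatives. The additional self-contained argument via Theorem \ref{thm-local_rewriting} and Proposition \ref{item:prop8} is a correct elaboration of what underlies that corollary, but the essential approach matches the paper.
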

\begin{proof}
This is the contrapositive of Corollary
\ref{cor-exists_unsat_partition_of_unsat}.
\end{proof}

The following procedures for checking graph satisfiability status use
Corollary \ref{cor-satisfiable-of-all-sat-partitions}.

\begin{procedure}
A non-recursive procedure for checking that a graph \(\g{g}\) is totally
satisfiable is as follows ---
\begin{enumerate}
\item Pick a vertex \(\g{v}\) of \(\g{g}\) (the vertex of lowest degree will
make the procedure easier).
\item Pick a \(2\)-partition \(\{\g{h_1}, \g{h_2}\}\) of \(\link(\g{g},
   \g{v})\), compute the satisfiability status of
\((\g{h_1}\;\g{\wedge}\;\rest(\g{g}, \g{v}))\) and
\((\g{h_2}\;\g{\wedge}\;\rest(\g{g}, \g{v}))\). Satisfiability status of
each graph can be checked using a graph satchecker (see \S
\ref{sec:orga404456}).
\item If both are unsatisfiable then the result is \textsc{inconclusive} and
we can exit the procedure (this follows from Remark
\ref{remark-distributivity}). If either of these is totally satisfiable, then
move on the next \(2\)-partition and repeat Step 2.
\item If all \(2\)-partitions have been exhausted, then \(\g{g}\) is
\textsc{totally satisfiabile}.
\end{enumerate}
\label{procedure-non-rec}
\end{procedure}

Note that in this procedure, once a vertex has been picked, the graphs used
in Step 2 are both smaller than \(\g{g}\) and do not use the vertex
\(\g{v}\). Although the graphs in Step 2 are smaller, there are many more
graphs to check since we have to check all possible \(2\)-partitions.
Hence, a naive application of Procedure \ref{procedure-non-rec} does not
necessarily improve the efficiency of checking the satisfiability of a
graph. The real application of this procedure is to serve as a step in the
recursion outlined in the procedure below.

\begin{procedure}
This procedure is a recursive version of Procedure \ref{procedure-non-rec}. It
checks the satisfiability status of a graph \(\g{g}\).
\begin{enumerate}
\item If \(\g{g}\) is small, i.e. \(3\) vertices or fewer, then check it using
a graph satchecker (see \S \ref{sec:orga404456}). If
\(\g{g}\) is not small, we pick a vertex \(\g{v}\) of \(\g{g}\) (the
vertex of lowest degree will make the procedure easier).
\item Pick a \(2\)-partition \(\{\g{h_1}, \g{h_2}\}\) of \(\link(\g{g},
   \g{v})\). Let \(\g{g_1} = (\g{h_1}\;\g{\wedge}\;\rest(\g{g}, \g{v}))\) and
\(\g{g_2} = (\g{h_2}\;\g{\wedge}\;\rest(\g{g}, \g{v}))\). We can think of
\(\g{g}\) as being the \emph{parent graph} of its \emph{child graphs} \(\g{g_1}\)
and \(\g{g_2}\).
\item The parent graph is totally satisfiable if either one of its child
graphs is totally satisfiable (this follows from Corollary
\ref{cor-satisfiable-of-all-sat-partitions}). To check satisfiability-status
of each child graph \(\g{g_i}\), go back to Step 1. but with \(\g{g_i}\)
in place of \(\g{g}\) and a vertex \(\g{v_i}\) of \(\g{g_i}\) in place
of \(\g{v}\).
\item If both child graphs are unsatisfiable, then check the parent graph
using a graph satchecker (see \S \ref{sec:orga404456}).
\item If either child graph is totally satisfiable, then conclude that their
parent graph is satisfiable.
\item Once we have backtracked all the way back to the original graph
\(\g{g}\), we can exit the procedure with a result of
\textsc{totally satisfiable} or \textsc{unsatisfiable}.
\end{enumerate}
\label{procedure-rec}
\end{procedure}

This procedure has several advantages over Procedure \ref{procedure-non-rec}.
Firstly, the recursion creates smaller graphs, each of which can be checked
more quickly. Secondly, unlike Procedure \ref{procedure-non-rec}, Procedure
\ref{procedure-rec} never returns an \textsc{inconclusive} result. However, this
comes at the potential cost of having to backtrack all the way to the
original graph \(\g{g}\) and then having to sat-check the entire graph.
Thirdly, the algorithm can be parallelized and memoized (storing and using
the satisfiability statuses of graphs already seen when checking the
satisfiability of new graphs). This helps in mitigating the potential
slowdowns caused by having to now many graphs, since each recursion of
Procedure \ref{procedure-rec} adds exponentially more graphs that need to be
checked.

\begin{procedure}
Procedure for checking satisfiability status of a graph \(\g{g}\) using the
process of ``graph completion''.
\begin{enumerate}
\item Choose a vertex \(\g{v}\) of \(\g{g}\) (the vertex of lowest degree will
make the procedure easier).
\item Compute the set of Cnfs \[A \;=
   \bigcup_{\substack{\g{h_{1}\;h_{2}}\;:\;\Graph\\ \{\g{h_1},\;
   \g{h_2}\}\; \in \text{ 2-partitions of } \link(\g{g},\g{v})}}
   \hspace*{-3em} \left(\g{h_1\vee h_2}\right) \g{\wedge} \rest(\g{g},
   \g{v}).\]
\item Construct a set of graphs \(\{\g{g_i} \mid i\in I\}\) for some index set
\(I\) such that \(A \subseteq \bigcup_{i\in I}\g{g_i}\). One way to
construct this is to look at the image set of \(A\) under the map that
sends a Cnf to the graph that supports it. We call this process ``graph
completion''.
\item If every \(\g{g_i}\) is totally satisfiable, then \(\g{g}\) is
\textsc{totally satisfiable}. If not, then \(\g{g}\) is
\textsc{inconclusive}.
\item To check the satisfiability status of \(\g{g_i}\), we can recursively
call this procedure on each \(\g{g_i}\) in place of \(\g{g}\).
\end{enumerate}
\label{procedure-graph-completion}
\end{procedure}

We note that the run-time complexity of these procedures has not been
analyzed.

The trouble with Procedures \ref{procedure-non-rec} and
\ref{procedure-graph-completion} is that it can only prove the total
satisfiability of a graph. If the graph \(\g{g}\) is unsatisfiable, then
the procedure is inconclusive. It is possible to prove that a graph is
totally satisfiable using this procedure, but not that it is unsatisfiable.
This shortcoming can however be mitigated somewhat by our choice of vertex
\(\g{v}\). We will prefer using a vertex of low degree to as to ensure a
smaller size of \(\link(\g{g}, \g{v})\). Picking a vertex of degree \(d\)
results in a link of size \(d\). The number of nonempty partitions of the
link are \(2^{d-1}-1\). (We only care about nonempty partitions because the
empty partition terms do not affect satisfiability.) Since \(d\) appears in
the exponent of this count, we choose a vertex of lowest possible degree
\(d\).

\section{Graph reduction rules}
\label{sec:org60b361b}
This section states some global graph rewriting rules that leave the
satisfiability status of a graph unchanged. We call these \emph{graph reduction
rules}. Using the graph local rewriting theorem (Theorem
\ref{thm-local_rewriting}), we prove the invariance of satisfiability status of
graphs under these reduction rules. The equi-satisfiability results in all
calculations in this section follow directly from the theorem.

These reduction rules yield a set of simple search-and-replace rules that
can be used to simplify a graph, make it smaller, and then subject it to a
graph-satchecker. In the following subsections, we will always label the
rest of the graph \(\g{g}\) at vertex \(\g{1}\) as \(\g{s}\). Since \(\g{s}\)
has no edges incident on \(\g{1}\), when decomposing locally at that
vertex, we can always write \(\g{s[1]} = \g{s}\).

We start by proving a lemma that we use often in this section.
\begin{lemma}
If \(\g{g_1}\) and \(\g{g_2}\) are graphs such that \(\g{g_1}\) is a
subgraph of \(\g{g_2}\), then \((\g{s\wedge g_1}) \cup (\g{s\wedge g_2})
\bsim \g{s\wedge g_2}\).
\label{lemma-subgraph-drop}
\end{lemma}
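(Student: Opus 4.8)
The plan is to show $(\g{s\wedge g_1}) \cup (\g{s\wedge g_2}) \bsim \g{s\wedge g_2}$ by invoking the characterization of equi-satisfiability via the $\bot$-criterion (Proposition \ref{item:prop1}), so that it suffices to establish the two-way equi-implication $(\g{s\wedge g_1}) \cup (\g{s\wedge g_2}) \xLeftrightarrow{\;\bot\;} \g{s\wedge g_2}$. In fact, I expect to prove the stronger $A\bot$-version and then drop down to $\bot$ via Proposition \ref{item:prop1} together with the implication in Proposition \ref{item:prop1}'s companion item (item \ref{item:prop1}). The key structural input is that $\g{g_1 \le g_2}$, which by Lemma \ref{lemma-abot_of_subgraph} gives $\g{g_1}\xRightarrow{A\bot}\g{g_2}$. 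This is precisely the hypothesis needed to feed into Proposition item \ref{item:prop9}.

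First I would apply Lemma \ref{lemma-abot_of_subgraph} to the subgraph relation $\g{g_1 \le g_2}$ to obtain $\g{g_1}\xRightarrow{A\bot}\g{g_2}$. Next, I would invoke Proposition item \ref{item:prop9}, which states that $\g{g_1}\xRightarrow{A\bot}\g{g_2}$ implies $(\g{g\wedge g_1}) \cup (\g{g\wedge g_2}) \sim \g{g\wedge g_2}$ for every $\g{g} : \Set\Cnf$. Taking $\g{g} = \g{s}$ yields exactly the desired conclusion $(\g{s\wedge g_1}) \cup (\g{s\wedge g_2}) \bsim \g{s\wedge g_2}$. So the whole argument is essentially a two-line composition: convert the subgraph relation into an $A\bot$-equi-implication, then apply the pre-packaged consequence about unions of conjunctions.

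The reason this works cleanly is that Proposition item \ref{item:prop9} was evidently designed for this situation: its proof reduces to showing $\g{g_1}\cup\g{g_2} \xLeftrightarrow{A\bot} \g{g_2}$ (using item \ref{item:prop7b} for one direction and triviality for the other), and then conjoining with $\g{s}$ preserves this via item \ref{item:prop3}. I would make sure to note that the edge cases where $\g{s}$, $\g{g_1}$, or $\g{g_2}$ produce empty Cnf-sets are handled by the vacuous-truth conventions already built into the definitions of equi-implication and total satisfiability, exactly as in the proof of item \ref{item:prop3}.

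The main (and only real) obstacle is bookkeeping rather than mathematics: I need to confirm that $\g{g_1 \le g_2}$ in the sense of the subgraph partial order is genuinely the hypothesis of Lemma \ref{lemma-abot_of_subgraph}, and that the $A\bot$-to-$\bot$ passage is legitimate when moving between $\sim$ and $\bsim$ notation on sets of Cnfs. Since Lemma \ref{lemma-abot_of_subgraph} is stated precisely for $\g{g_1 \le g_2}$ and Proposition item \ref{item:prop9} already phrases its conclusion with $\sim$, there is no gap to fill. I therefore expect the proof to be a short citation-driven argument with no delicate calculation.
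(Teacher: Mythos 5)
Your proposal matches the paper's proof exactly: the paper also derives $\g{g_1}\xRightarrow{A\bot}\g{g_2}$ from Lemma \ref{lemma-abot_of_subgraph} and then concludes by citing Proposition \ref{prop-conjunction-disjunction}.\ref{item:prop9} with $\g{g}=\g{s}$. Your additional remarks on edge cases and the $A\bot$-to-$\bot$ passage are consistent with how those items are proved in the paper, so there is nothing to correct.
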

\begin{proof}
From Lemma \ref{lemma-abot_of_subgraph}, we know that \(\g{g_1}
\xRightarrow{A\bot} \g{g_2}\). From Proposition
\ref{prop-conjunction-disjunction}.\ref{item:prop9}, the result follows.
\end{proof}

\subsection{Deleting leaf vertices}
\label{sec:org34ae191}
We show the effect of local rewriting at vertices with a single edge
incident on them (not counting edge multiplicities), i.e. at \emph{leaf
vertices}. Applying local graph rewriting (Theorem \ref{thm-local_rewriting}) to
leaf vertices incident on triangles of varying multiplicities we get ---
\begin{enumerate}
\item \(\g{s\wedge 123^1 \sim s}\).
\item \(\g{s\wedge 123^2 \sim s\wedge }(\g{23\vee23}) = \g{s\wedge }(\btop \cup
   \g{23}) = \g{s\wedge 23}\).
\item \(\g{s\wedge 123^3 \sim s\wedge }(\g{23\vee23^2}) = \g{s\wedge }(\g{23}
   \cup \btop) = \g{s\wedge 23}\).
\item \(\g{s\wedge 123^4 \sim s\wedge }(\g{23\vee23^3}) \cup \g{s\wedge
   }(\g{23^2\vee23^2}) = \g{s\wedge }(\g{23}\cup\btop) \cup \g{s\wedge
   }(\g{23^2} \cup \g{23}\cup \btop) = (\g{s\wedge 23^2}) \cup (\g{s\wedge
   23}) \cup \g{s} \bsim \g{s\wedge 23^2}\). This last equi-satisfiability
follows from Lemma \ref{lemma-subgraph-drop}.
\end{enumerate}

All the graph disjunctions used in 1. 2. and 3. above can be found in
Tables \ref{table-graph_disjunctions_size_2} and \ref{table-graph_disjunctions_size_3}.
The disjunctions in 4. are computed using the \texttt{operations.graph\_or}
function from our \texttt{graphsat} package. Code-snippets and their outputs are
provided below for reference but these can also be checked by hand
following the theory explained in \S \ref{sec:org1981d72}.

\vspace*{2ex}

\begin{minted}[frame=lines,label= (python3.9) (scratch) <<calculation1>>]{python}
import cnf, mhgraph
from operations import graph_or

g1 = mhgraph.mhgraph([[2, 3]])
g3 = mhgraph.mhgraph([[2, 3]]*3)

for x in graph_or(g1, g3):
    print(x)
\end{minted}
\footnotesize
\color{darkgray}
\uline{Output \(\g{23\vee23^3}\)}: \texttt{(<Bool: TRUE>)  (2,3)  (2,-3)  (-2,3)  (-2,-3)}
\normalsize
\color{black}

\begin{minted}[frame=lines,label= (python3.9) (scratch) <<calculation2>>]{python}
import cnf, mhgraph
from operations import graph_or

g2 = mhgraph.mhgraph([[2, 3]]*2)

for x in graph_or(g2, g2):
    print(x)
\end{minted}
\footnotesize
\color{darkgray}
\uline{Output \(\g{23^2\vee23^2}\)}: \texttt{(<Bool: TRUE>)  (2,3)  (2,-3)  (-2,3)  (-2,-3)  (2,3)(2,-3)  (2,3)(-2,3)  (2,3)(-2,-3)}\\
\texttt{(2,-3)(-2,3)  (2,-3)(-2,-3)  (-2,3)(-2,-3)}
\normalsize
\color{black}

We generalize all these calculations in Proposition \ref{prop-generalized-leaf}.
We first state the following observation without proof.
\begin{remark}
Let \(k \in \mathbb{N}\), and \(0 < k\). For every \(m, n \in
  \mathbb{N}\) such that \(0 < m < 2^k\) and \(0 < n < 2^k\), we have
\[\g{(1\cdots k)^m \vee (1\cdots k)^n} \;=\; \bigcup_{i=m+n-2^k}^{\min(m,
  n)} \g{(1\cdots k)^i},\] where it is understood that we will omit writing
terms \(\g{(1\cdots k)^i}\) for \(i < 0\) and we will write \(\btop\) in
place of \(\g{(1\cdots k)^0}\).
\label{remark-disjunction-same-edge}
\end{remark}

\begin{prop}
Let \(\g{s}\) be a graph not incident on the vertex \(\g{1}\). Let \(k
\in\mathbb{N}\), such that \(k \ge 1\). Then, for \(n\in\mathbb{N}\), we have ---
\begin{itemize}
\item \(\g{s\wedge (12\cdots k) \sim s}\)
\item \(\g{s\wedge (12\cdots k)^n \sim s\wedge (2\cdots k)^{\lfloor
  n/2\rfloor}}\), if \(1 < n < 2^k\).
\item \(\g{s\wedge (12\cdots k)^n \sim\bot}\), if \(n \ge 2^k\).
\end{itemize}
\label{prop-generalized-leaf}
\end{prop}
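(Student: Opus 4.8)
The plan is to split on the degree of the leaf vertex $\g{1}$, which equals $n$, and handle the three bullets in turn. Throughout I would write $\g{g}=\g{s\wedge(12\cdots k)^n}$ and note that, since $\g{s}$ is not incident on $\g{1}$, we have $\rest(\g{g},\g{1})=\g{s}$ and $\star(\g{g},\g{1})=\g{(12\cdots k)^n}$, so $\link(\g{g},\g{1})=\g{(2\cdots k)^n}$.

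The first bullet is exactly the degree-$1$ clause of the local rewriting theorem (Theorem \ref{thm-local_rewriting}): with $n=1$ the vertex $\g{1}$ has degree $1$, so $\g{g}\bsim\rest(\g{g},\g{1})=\g{s}$ immediately. For the third bullet I would argue directly from the set of Cnfs living on $\g{(12\cdots k)^n}$. There are only $2^k$ distinct clauses available on the vertices $\g{1},\ldots,\g{k}$, so if $n>2^k$ this set is empty and hence $\g{g}$ is empty and unsatisfiable; if $n=2^k$ the set is the single Cnf that conjoins all $2^k$ clauses, which is falsified by every assignment, so every Cnf of $\g{g}$ is unsatisfiable. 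In either case $\g{g}$ is unsatisfiable.

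The substance is in the middle bullet, where $1<n<2^k$ makes $\g{1}$ a vertex of degree $n\ge 2$ (note $k\ge 2$ here, as $k=1$ leaves no integer $n$ with $1<n<2$), so the main clause of Theorem \ref{thm-local_rewriting} applies. The nonempty $2$-partitions of $\link(\g{g},\g{1})=\g{(2\cdots k)^n}$ are exactly the pairs $\{\g{(2\cdots k)^a},\g{(2\cdots k)^{n-a}}\}$ with $1\le a\le\lfloor n/2\rfloor$ (partitions with an empty part contribute an empty disjunction and may be ignored), so the theorem gives
\[
  \g{g}\;\bsim\;\bigcup_{a=1}^{\lfloor n/2\rfloor}\bigl(\g{(2\cdots k)^a\vee(2\cdots k)^{n-a}}\bigr)\g{\wedge}\,\g{s}.
\]
I would expand each disjunction with Remark \ref{remark-disjunction-same-edge}, applied to the size-$(k-1)$ edge $\g{(2\cdots k)}$ (whose relevant threshold is $2^{k-1}$): where it applies it rewrites $\g{(2\cdots k)^a\vee(2\cdots k)^{n-a}}$ as a union of the graphs $\g{(2\cdots k)^i}$ with $i$ running up to $\min(a,n-a)=a$. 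Thus every term of the union is, up to equi-satisfiability, a conjunction of $\g{s}$ with some $\g{(2\cdots k)^i}$, $i\le\lfloor n/2\rfloor$. The maximal multiplicity $i=\lfloor n/2\rfloor$ is attained via the partition $a=\lfloor n/2\rfloor$, while every other term conjoins $\g{s}$ with a subgraph of $\g{(2\cdots k)^{\lfloor n/2\rfloor}}$ (the $i=0$ terms contributing only $\g{s}$, itself a subgraph of $\g{s\wedge(2\cdots k)^{\lfloor n/2\rfloor}}$). Repeatedly applying the subgraph-drop lemma (Lemma \ref{lemma-subgraph-drop}) then absorbs all smaller terms, leaving $\g{g}\bsim\g{s\wedge(2\cdots k)^{\lfloor n/2\rfloor}}$.

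The main obstacle is the bookkeeping of the boundary partitions for which Remark \ref{remark-disjunction-same-edge} does not literally apply, i.e.\ those with $n-a\ge 2^{k-1}$. If $n-a>2^{k-1}$ the factor $\g{(2\cdots k)^{n-a}}$ supports no Cnf, so that disjunction term is empty and simply drops out of the union. If $n-a=2^{k-1}$ the factor is $\{w\}$ for a single unsatisfiable Cnf $w$, and I would check separately that $\g{(2\cdots k)^a}\g{\vee}\{w\}$ is equi-satisfiable to $\g{(2\cdots k)^a}$, since disjoining a Cnf with an unsatisfiable one preserves its satisfiability status. These boundary terms never produce a multiplicity exceeding $\lfloor n/2\rfloor$, so they are harmless; and for the top partition $a=\lfloor n/2\rfloor$ one has $n-a=\lceil n/2\rceil\le 2^{k-1}$, which guarantees that the target term $\g{s\wedge(2\cdots k)^{\lfloor n/2\rfloor}}$ genuinely survives the expansion and is available to absorb the rest.
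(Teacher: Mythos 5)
Your proof is correct and follows essentially the same route as the paper's: the degree-$1$ clause of Theorem \ref{thm-local_rewriting} for $n=1$, then for $1<n<2^k$ the local rewriting theorem over the $2$-partitions $\{\g{(2\cdots k)^a},\g{(2\cdots k)^{n-a}}\}$ of the link, expanded via Remark \ref{remark-disjunction-same-edge} and collapsed with Lemma \ref{lemma-subgraph-drop}. You are in fact somewhat more careful than the paper, which silently applies the Remark at the boundary multiplicities $n-a\ge 2^{k-1}$ and does not spell out the $n\ge 2^k$ bullet; your explicit treatment of both is sound.
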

\begin{proof}
If \(n=1\), then by the result follows from the local rewriting theorem
(Theorem \ref{thm-local_rewriting}). If \(n > 1\), then we can compute
\(\link(\g{s\wedge (12\cdots k)^n},\; \g{1}) = \g{(2\cdots k)^n}\). Then,
every nonempty \(2\)-partition of the link is of the form \(\{\g{h_1},
  \g{h_2}\}\), where \(\g{h_1} = \g{(2\cdots k)^m}\) and \(\g{h_2} =
  \g{(2\cdots k)^{n-m}}\), for some \(1\le m \le n-1\). Using Remark
\ref{remark-disjunction-same-edge}, we infer that
\[\g{h_1\vee h_2} \;=\; \g{(2\cdots k)^m \vee (2\cdots k)^{n-m}} \;=\;
  \bigcup_{i=n-2^{k-1}}^{\min\{m,\; n-m\}} \g{(2\cdots k)^i}.\]

Thus, from Theorem \ref{thm-local_rewriting} we have
\[\g{s\wedge (12\cdots k)^n} \bsim \bigcup_{m=1}^{n-1}
  \bigcup_{i=n-2^{k-1}}^{\min\{m,\; n-m\}} \g{s\wedge (2\cdots k)^i} =
  \bigcup_{i=0}^{\lfloor n/2\rfloor} \g{s\wedge (2\cdots k)^i}.\]

Using Lemma \ref{lemma-subgraph-drop}, we can conclude that \(\g{s\wedge
  (12\cdots k)^n \sim s\wedge(2\cdots k)^{\lfloor n/2 \rfloor}}\).
\end{proof}

\subsection{Smoothing edges}
\label{sec:orgf7d5a42}
Having dealt with leaf vertices (i.e. vertices of degree \(1\)), we now
consider vertices of degree \(2\). Edges and hyperedges incident at such
vertices can be ``smoothed'' without affecting the satisfiability status of a
graph. We call these operations smoothing because each operation results in
graphs with fewer vertices.

We can smooth out the intersection of two simple edges as \(\g{s\wedge
12\wedge 13} \bsim \g{s\wedge }(\g{2 \vee 3}) = \g{s\wedge 23}\).
Similarly, we can smooth out the intersection of two triangles sharing a
common edge as \(\g{s\wedge 123\wedge 124} \bsim \g{s\wedge }(\g{23 \vee
24}) = \g{s\wedge }(\btop \cup \g{234}) \bsim (\g{s\wedge 234})\). This
last equi-satisfiability is obtained from Lemma \ref{lemma-subgraph-drop}.

Smoothing out the intersection of two triangles sharing a common vertex
results in a size \(4\) hyperedge --- \(\g{s\wedge 123\wedge 145} \bsim
\g{s\wedge }(\g{23 \vee 45}) = \g{s\wedge 2345}\). Smoothing at an
edge-triangle pair with a common vertex yields \(\g{s\wedge 12\wedge 134}
\bsim \g{s\wedge }(\g{2\vee 34}) = \g{s\wedge 234}\).

We state without proof a general pattern for smoothing of hyperedges
incident at a common vertex. For \(k_1, k_2 \in \mathbb{N}\), we have ---
\[\g{s\wedge (12\cdots k_1) \wedge(k_1+1 \cdots k_2)} \;\bsim\; \g{s\wedge (12\cdots k_2)}\]

\subsection{Tucking edges}
\label{sec:org4bd3a19}
We now prove a series of reduction rules that allow deletion of degree
\(2\) or higher vertices, resulting in graphs with fewer edges. Visually,
these operations look like tucking-in of an extended fin of the graph.

Tucking-in at an edge-hyperedge intersection incident at a common edge
yields \(\g{s\wedge 12\wedge 123} \bsim \g{s\wedge }(\g{2 \vee 23}) =
\g{s\wedge}(\g{23} \cup \btop) = (\g{s\wedge 23}) \cup \g{s} \bsim
\g{s\wedge 23}\). This last equi-satisfiability follows from Lemma
\ref{lemma-subgraph-drop}.

Degree \(3\) intersections have reduction rules for the following cases ---
\begin{enumerate}
\item A hyperedge with an edge incident on two of its three sides, i.e.
\(\g{s\wedge 12\wedge 13\wedge 123}\). \label{item:deg3_case_1}
\item A hyperedge of multiplicity \(2\), with an edge incident on one of its
sides, i.e. \(\g{s\wedge 12\wedge 123^2}\). \label{item:deg3_case_2}
\item A hyperedge with an edge of multiplicity two incident on one of its
sides, i.e. \(\g{s\wedge 12^2\wedge 123}\). \label{item:deg3_case_3}
\end{enumerate}

For instance \ref{item:deg3_case_1}, we get ---
\vspace*{-2ex}
\begin{flalign*}
\g{s\wedge 12\wedge 13\wedge 123} &\bsim \g{s\wedge }((\g{2\wedge 23}) \g{\vee 3})
        \;\cup\; \g{s\wedge }((\g{3\wedge 23}) \g{\vee 2})
        \;\cup\; \g{s\wedge }((\g{2\wedge 3}) \g{\vee 23}) &\\
      &= \g{s\wedge }(\g{3} \;\cup\; \g{23}) \;\cup\; \g{s\wedge }(\g{2}
        \;\cup\; \g{23}) \;\cup\; \g{s\wedge }(\g{23} \;\cup\; \btop) &\\
      &= \g{s} \;\cup\; (\g{s\wedge 2}) \;\cup\; (\g{s\wedge 3}) \;\cup\; (\g{s\wedge 23}) &\\
      &\bsim (\g{s\wedge 2}) \;\cup\; (\g{s\wedge 3}) \;\cup\; (\g{s\wedge 23}) &\\
      &\bsim (\g{s\wedge 2}) \;\cup\; (\g{s\wedge 3})&
\end{flalign*}

For instance \ref{item:deg3_case_2}, we get ---
\vspace*{-2ex}
\begin{flalign*}
\g{s\wedge 12\wedge 123^2}
   &\bsim \g{s\wedge }(\g{2 \vee 23^2}) \;\cup\; \g{s\wedge }(\g{23 \vee} (\g{2\wedge 23})) &\\
   &= \g{s\wedge }(\btop \;\cup\; \g{2} \;\cup\; \g{23}) \;\cup\; \g{s\wedge }(\btop \;\cup\; \g{23}) &\\
   &= \g{s} \;\cup\; (\g{s\wedge 2}) \;\cup\; (\g{s\wedge 23}) &\\
   &\bsim (\g{s\wedge 2}) \;\cup\; (\g{s\wedge 23}) &\\
   &\bsim \g{s\wedge 2}
\end{flalign*}

For instance \ref{item:deg3_case_3}, we get \(\g{s\wedge 12^2\wedge 123 \bsim
s\wedge }(\g{2 \vee} (\g{2\wedge 23})) \cup \g{s\wedge }(\g{23 \vee
2^2})\). From Table \ref{table-graph-disjunctions-incomplete}, we have
\[\btop\cup \g{2} \cup \g{23} \;\subset\; \g{2 \vee} (\g{2\wedge 23})
\;\subset\; \btop\cup \g{2} \cup \g{23} \cup (\g{2\wedge 23}).\] Thus, we
can write ---
\providecommand{\centralgraph}{\g{s\wedge 12^2\wedge 123}}
\providecommand{\s}{\g{s\wedge }}
\providecommand{\sep}{\;\subset\;}
\providecommand{\larr}{\;\leftarrow\;}
\providecommand{\bg}{\bgamma}
\[\s(\btop \cup \g{2} \cup \g{23}) \;\cup\; (\s\g{23}) \sep \centralgraph
\sep \s(\btop\cup \g{2} \cup \g{23} \cup (\g{2\wedge 23})) \cup
(\s\g{23})\]

Applying the \(\bg\) graph-satisfiability map, yields --- \[\bg(\s\btop)
\wedge \bg(\s\g{2}) \wedge \bg(\s\g{23}) \larr \bg(\centralgraph) \larr
\bg(\s\btop) \wedge \bg(\s\g{2}) \wedge \bg(\s\g{23}) \wedge
\bg(\s\g{2\wedge 23})\]

This can in turn be simplified (using the properties of \(\bg\) detailed in
\S \ref{sec:orgcbfe531}, the Lemmas \ref{lemma-abot_of_subgraph} and
\ref{lemma-shaved-version}, and the techniques outlined in Sec \ref{sec:org1981d72}) --- \[\bg(\s\g{2}) \larr
\bg(\centralgraph) \larr \bg(\s\g{2\wedge 23})\] This last result can be
seen as a partial rewrite rule owing to the presence of the subset-superset
pair.

\subsection{Opening a triple-intersection vertex}
\label{sec:org6227f90}
We can replace a three-hyperedge intersection incident on a common vertex
with three simple edges on the boundary. The proof of this reduction rule
is as follows ---
\begin{align*}
\g{s\wedge 123\wedge 124\wedge 134}\;
  &\bsim\; \g{s\wedge }(\g{23 \vee} (\g{24\wedge 34}))
     \;\cup\; \g{s\wedge }(\g{24 \vee} (\g{23\wedge 34}))
     \;\cup\; \g{s\wedge }(\g{34 \vee} (\g{23\wedge 24})) \\
  &=\; \g{s\wedge }(\btop \;\cup\; \g{23} \;\cup\; \g{234})
     \;\cup\; \g{s\wedge }(\btop \;\cup\; \g{24} \;\cup\; \g{234})
     \;\cup\; \g{s\wedge }(\btop \;\cup\; \g{34} \;\cup\; \g{234}) \\
  &=\; \g{s} \;\cup\; (\g{s\wedge 23}) \;\cup\; (\g{s\wedge 24}) \;\cup\; (\g{s\wedge 34})
     \;\cup\; (\g{s\wedge 234}) \\
  &\bsim\; (\g{s\wedge 23}) \;\cup\; (\g{s\wedge 24}) \;\cup\; (\g{s\wedge 34})
\end{align*}

\section{Satisfiability of mixed hypergraphs}
\label{sec:orga404456}
In this section we provide a list of known totally satisfiable and
unsatisfiable mixed-hypergraphs i.e. hypergraphs which have edges of size
\(1\), \(2\) or \(3\). List of candidate hypergraphs are generated
programmatically using SageMath's \texttt{nauty} package \cite{McKayPiPerno2014},
which has various tools for generating canonically-labeled, non-isomorphic
graphs with certain properties.

We list below several methods by which the satisfiability of a graph may be
checked using our \texttt{graphsat} package ---
\begin{enumerate}
\item If a graph is finite and small (fewer than 6 hyperedges), we can
sat-check it by passing it to a graph satchecker. Such a satchecker can
be found in the \texttt{mhgraph\_pysat\_satcheck} function in the \texttt{sat.py}
module.
\item If a graph is finite but not too small (7 to 20 hyperedges), we can
decompose it using local rewriting at the min-degree vertex. For this,
we use the \texttt{decompose} function found in the \texttt{graph\_rewrite.py} module.
\item If a graph is finite and big (20+ hyperedges), then we can reduce it to
a smaller graph by passing it to the \texttt{make\_tree} function in the
\texttt{operations.py} module.
\item Lastly, if a graph is infinite, we have to work out its satisfiability
status manually using reduction rules. Reduction rules themselves can be
generated by the \texttt{local\_rewrite} function in the \texttt{graph\_rewrite.py}
module.
\end{enumerate}

Figure \ref{fig-unsat-graphs} shows a selection of unsatisfiable hypergraphs up to
vertex size \(5\). A larger list can be found in the Appendix \ref{sec:org9d7b11d}.

\begin{figure}[h!]
\centering

\begin{subfigure}{0.17\textwidth}
\begin{center}
\includegraphics[trim={17cm 21cm 23cm 19cm}, clip,width=2.2cm]{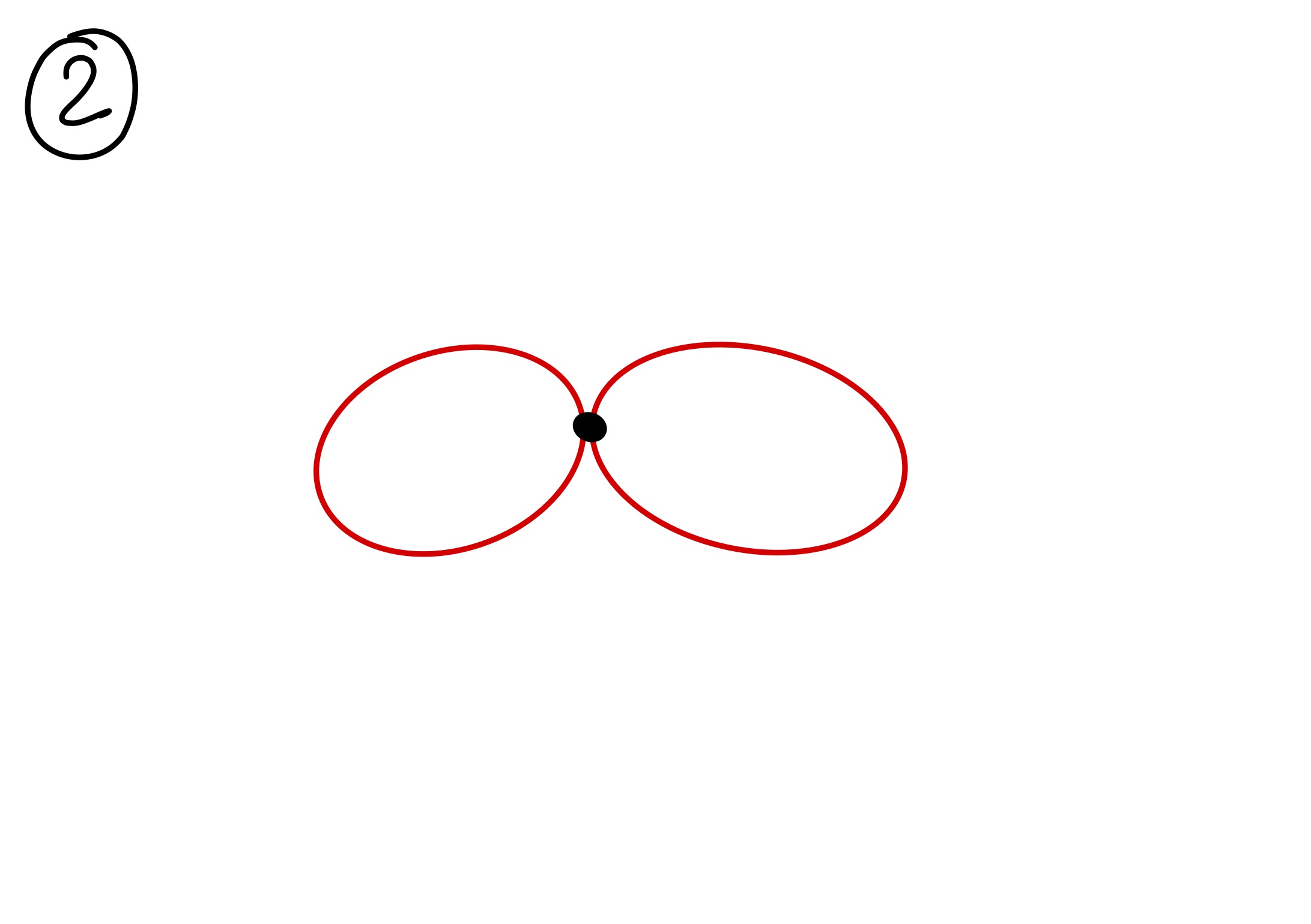}
\end{center}
\end{subfigure}
\begin{subfigure}{0.17\textwidth}
\begin{center}
\includegraphics[trim={17cm 28cm 23cm 19cm}, clip,width=2.2cm]{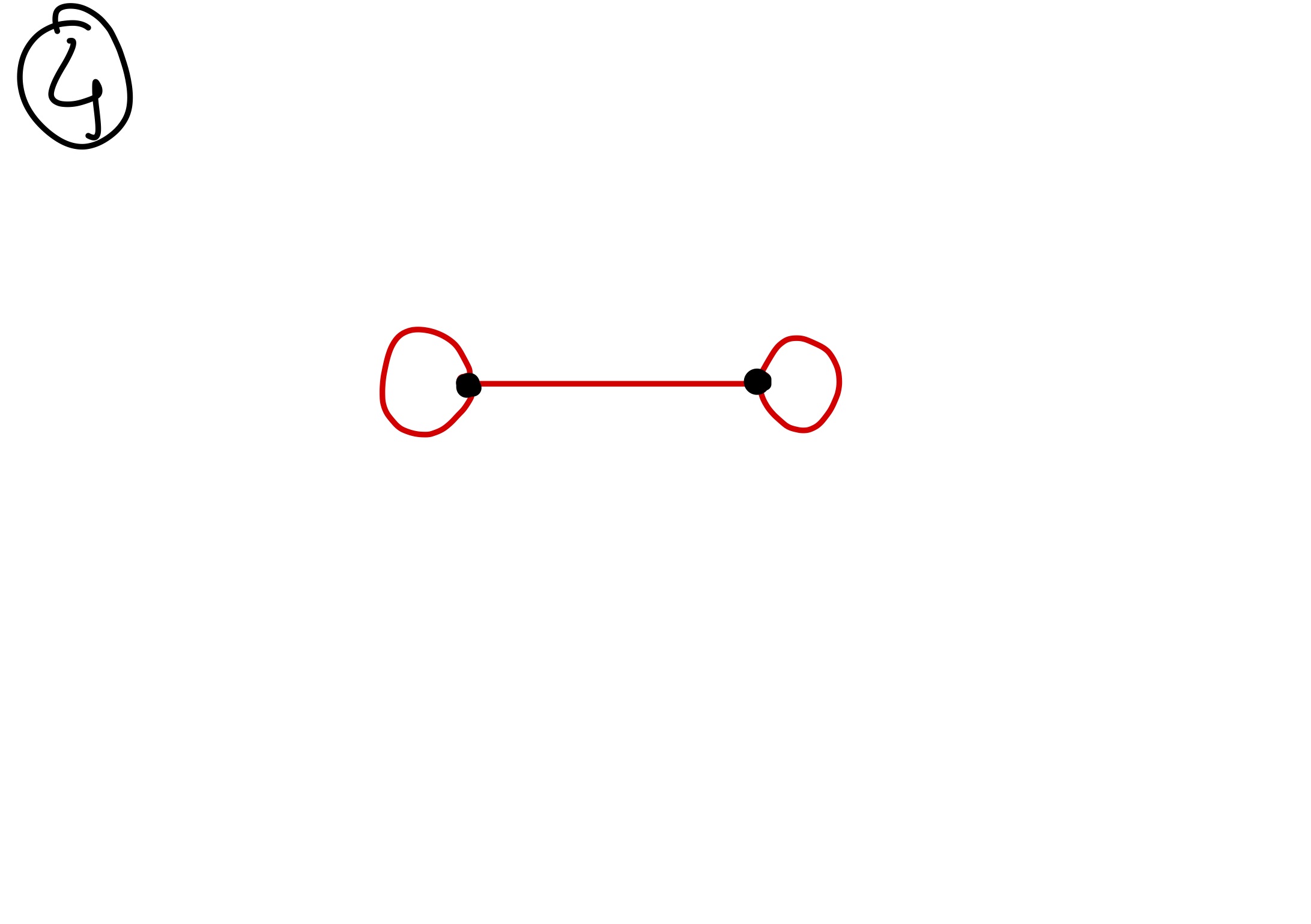}
\end{center}
\end{subfigure}
\begin{subfigure}{0.17\textwidth}
\begin{center}
\includegraphics[trim={17cm 23cm 28cm 9cm}, clip,width=2.2cm]{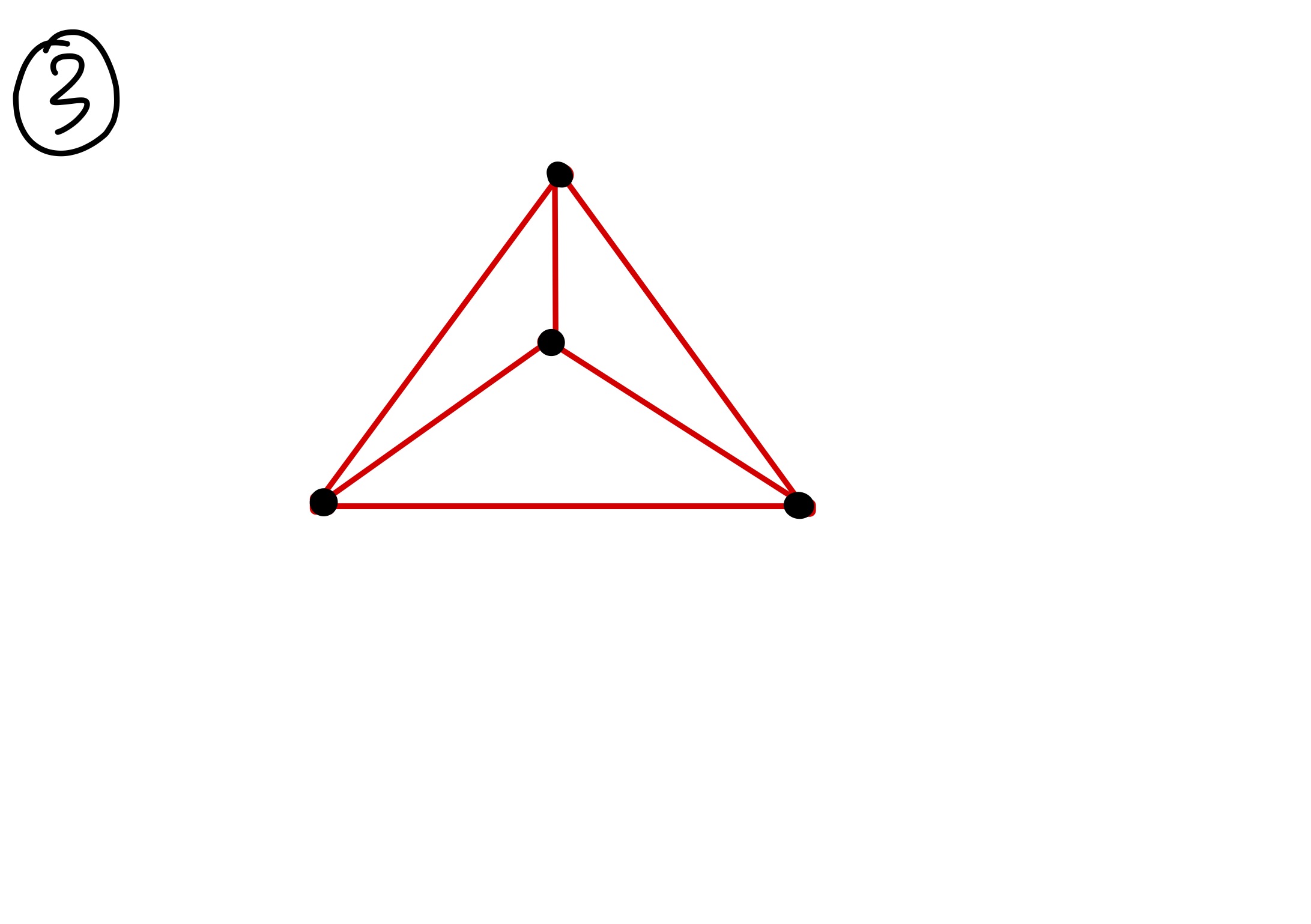}
\end{center}
\end{subfigure}
\begin{subfigure}{0.17\textwidth}
\begin{center}
\includegraphics[trim={5cm 17cm 8cm 2cm}, clip,width=2.2cm]{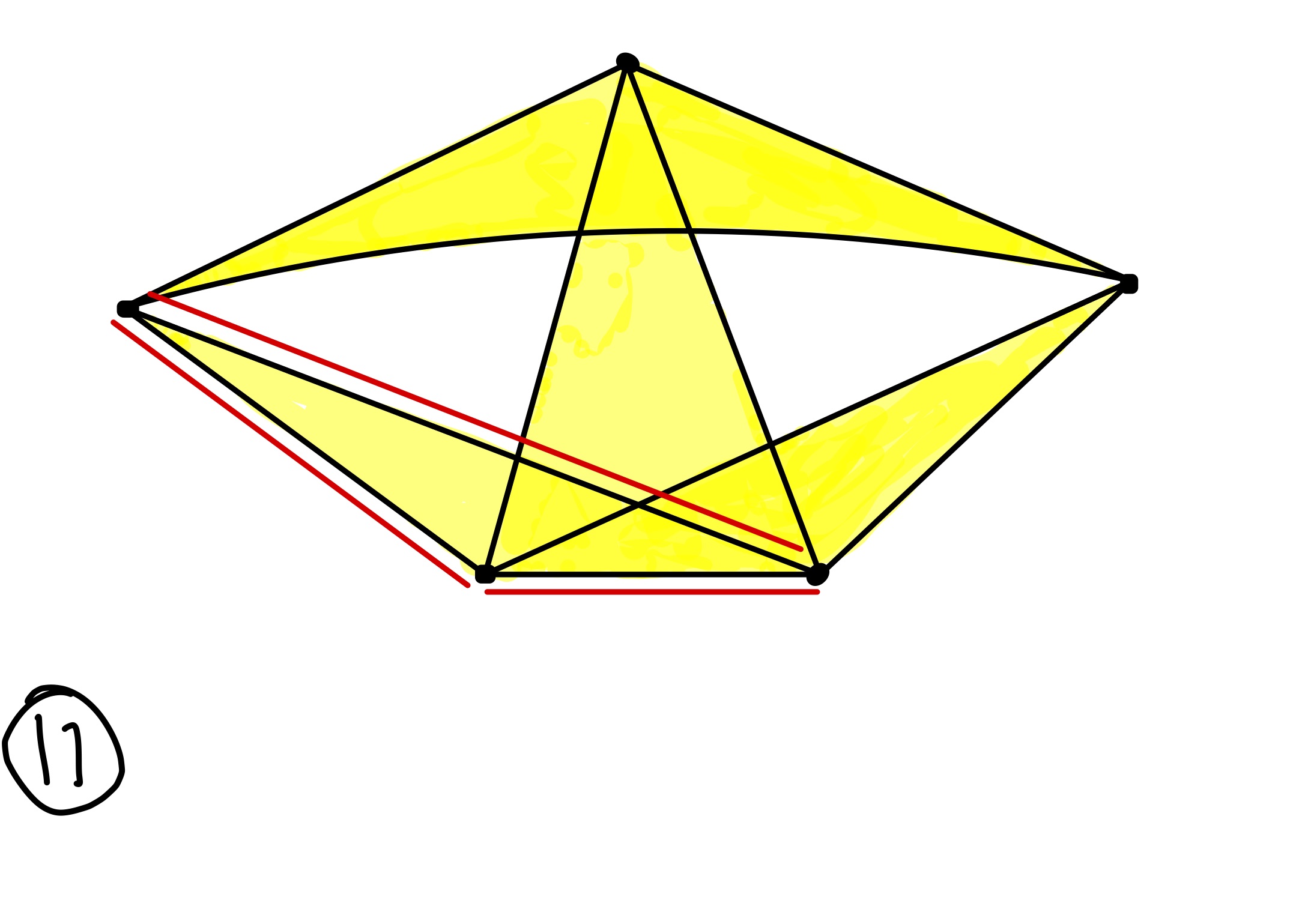}
\end{center}
\end{subfigure}
\begin{subfigure}{0.17\textwidth}
\begin{center}
\includegraphics[trim={7cm 9cm 24cm 13cm}, clip,width=2.2cm]{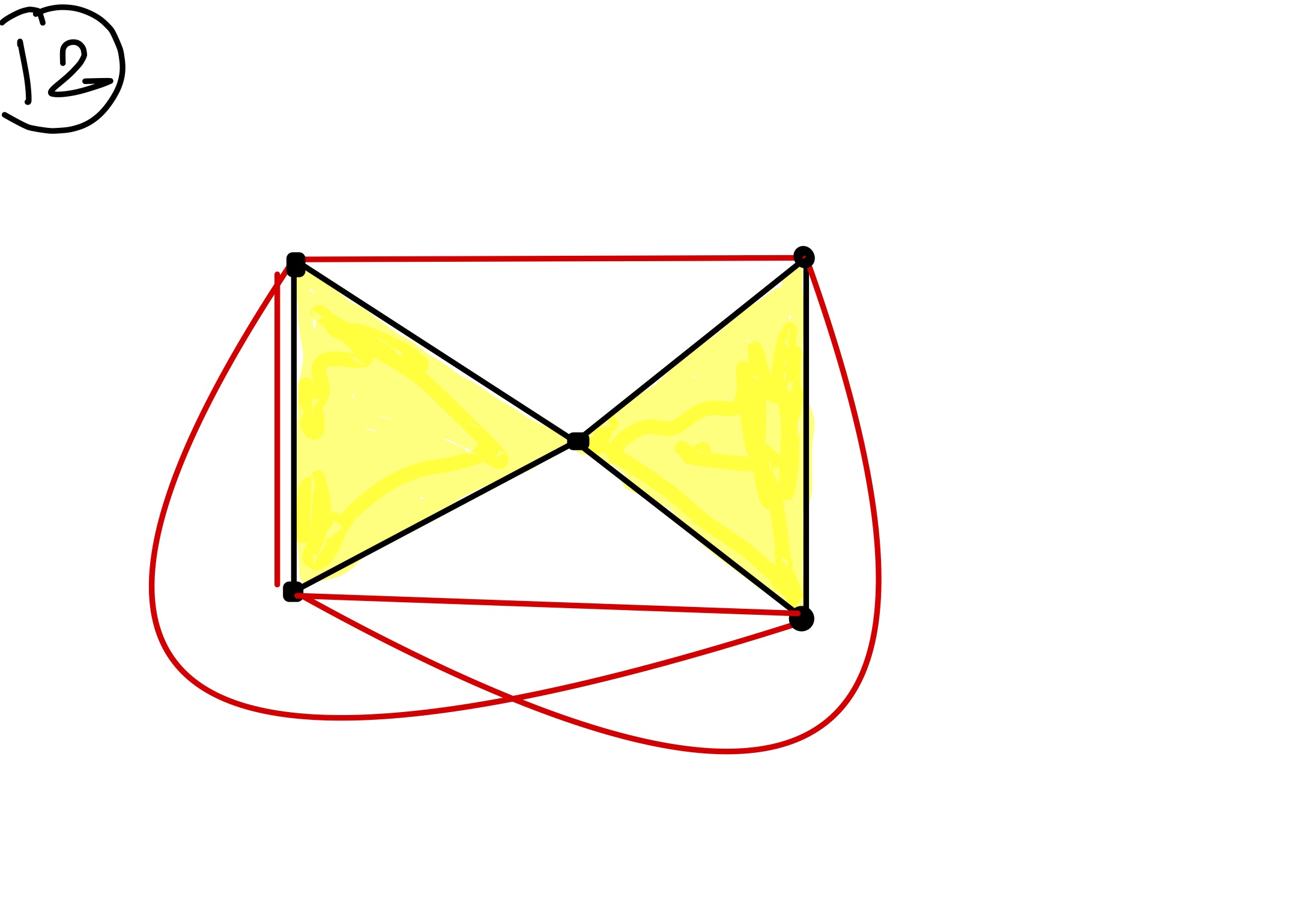}
\end{center}
\end{subfigure}

\begin{subfigure}{0.17\textwidth}
\begin{center}
\includegraphics[trim={23cm 28cm 23cm 18cm}, clip,width=2.2cm]{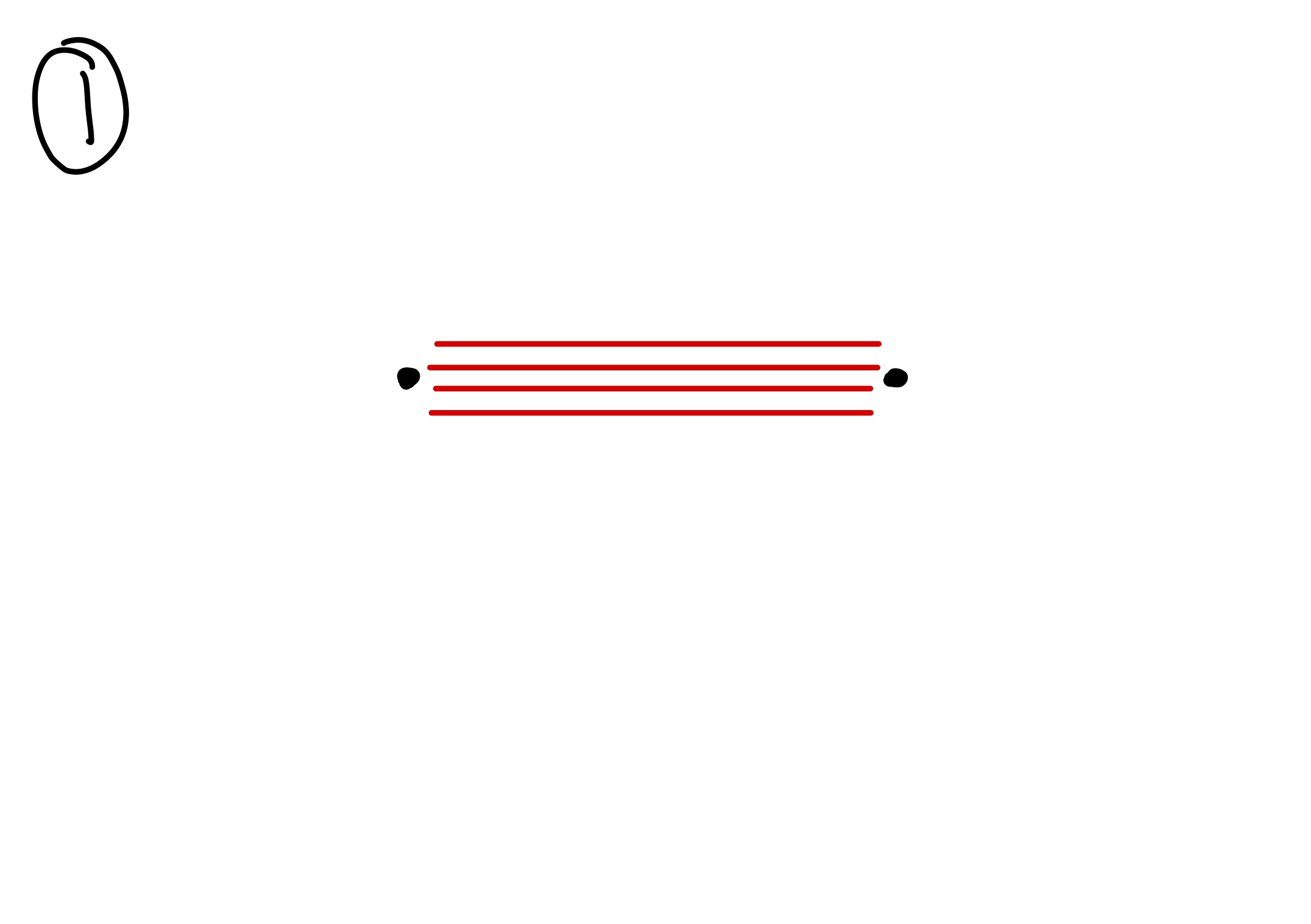}
\end{center}
\end{subfigure}
\begin{subfigure}{0.17\textwidth}
\begin{center}
\includegraphics[trim={13cm 9cm 16cm 9cm}, clip,width=2.2cm]{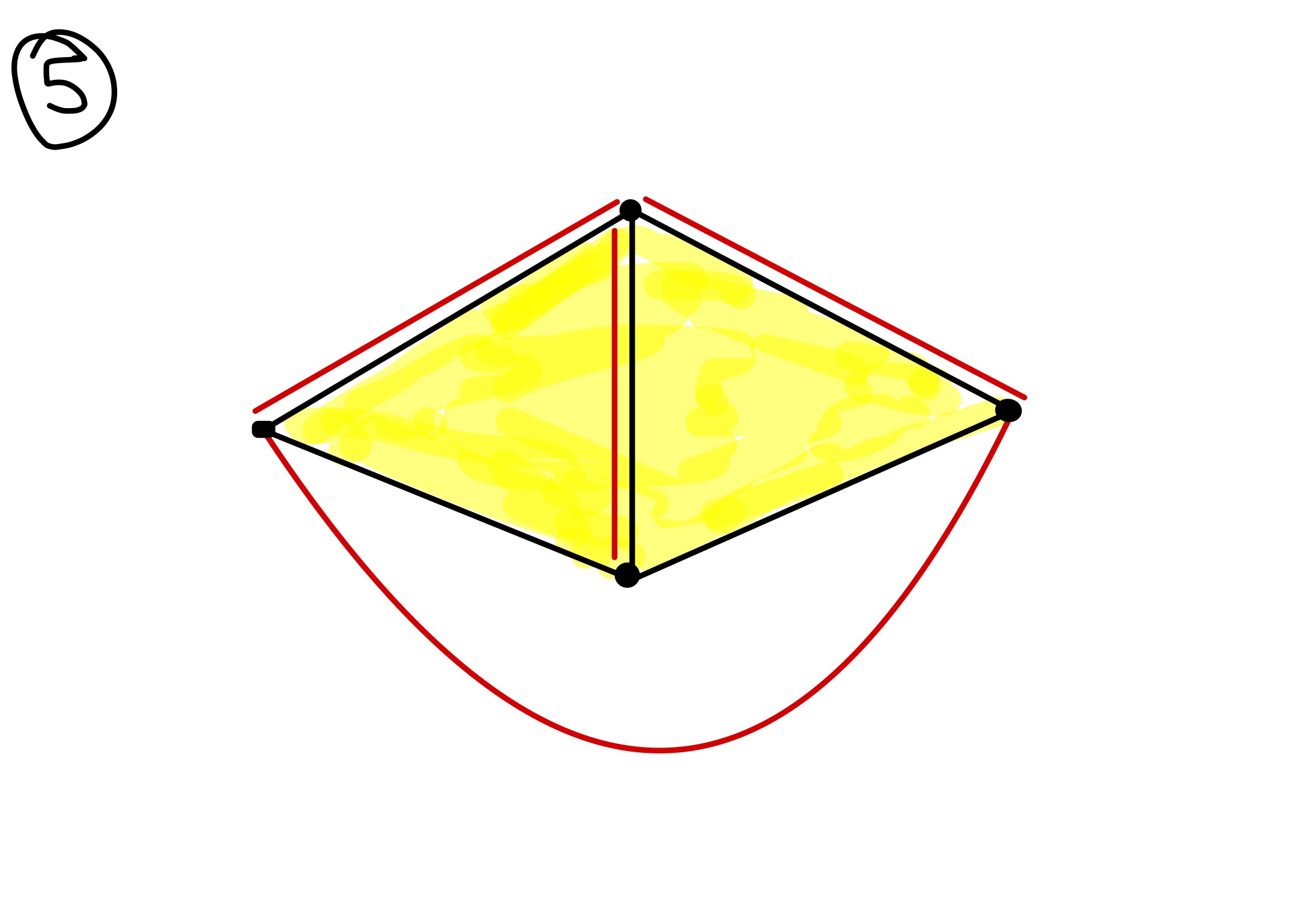}
\end{center}
\end{subfigure}
\begin{subfigure}{0.17\textwidth}
\begin{center}
\includegraphics[trim={15cm 9cm 15cm 9cm}, clip,width=2.2cm]{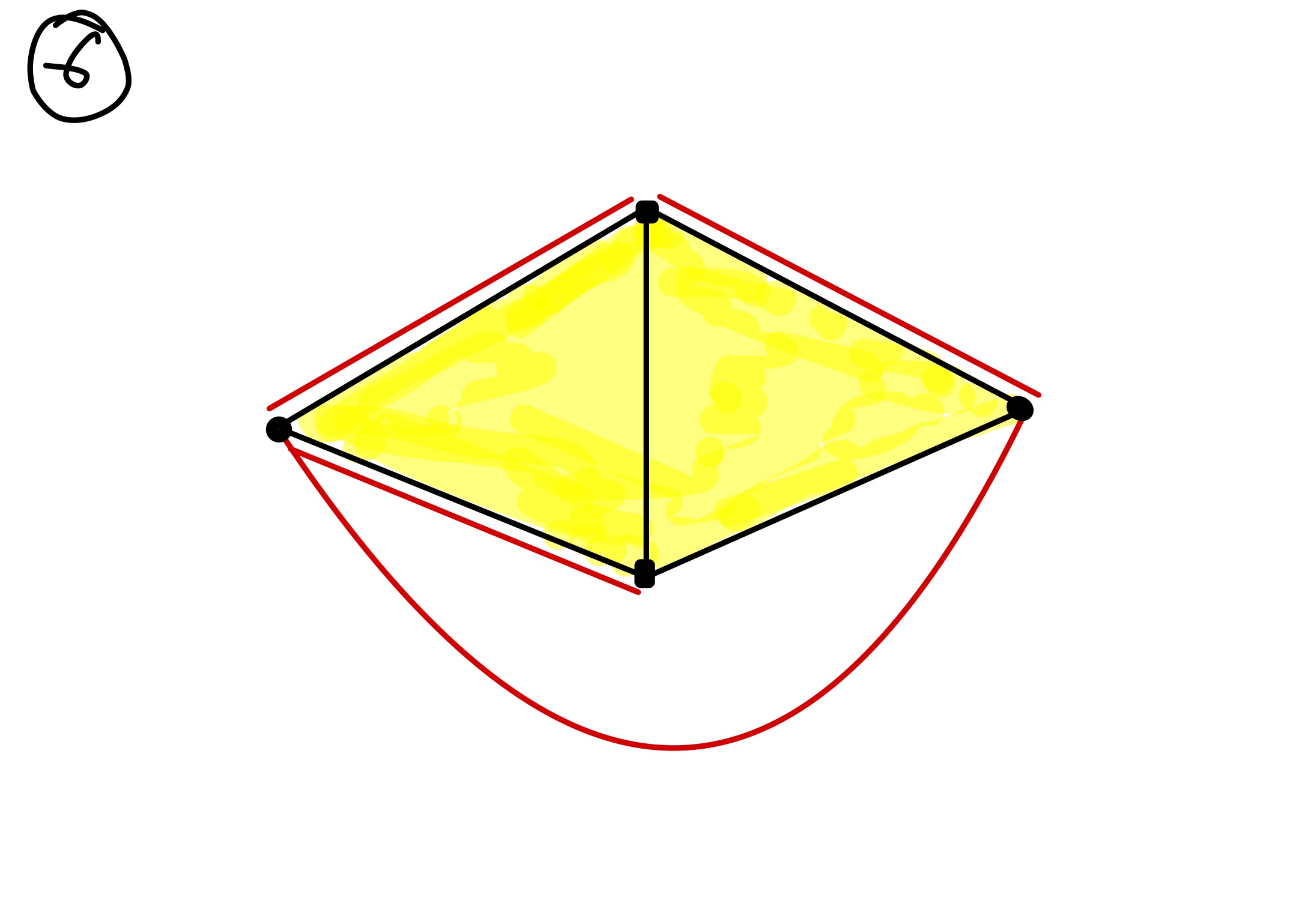}
\end{center}
\end{subfigure}
\begin{subfigure}{0.17\textwidth}
\begin{center}
\includegraphics[trim={15cm 7cm 13cm 12cm}, clip,width=2.2cm]{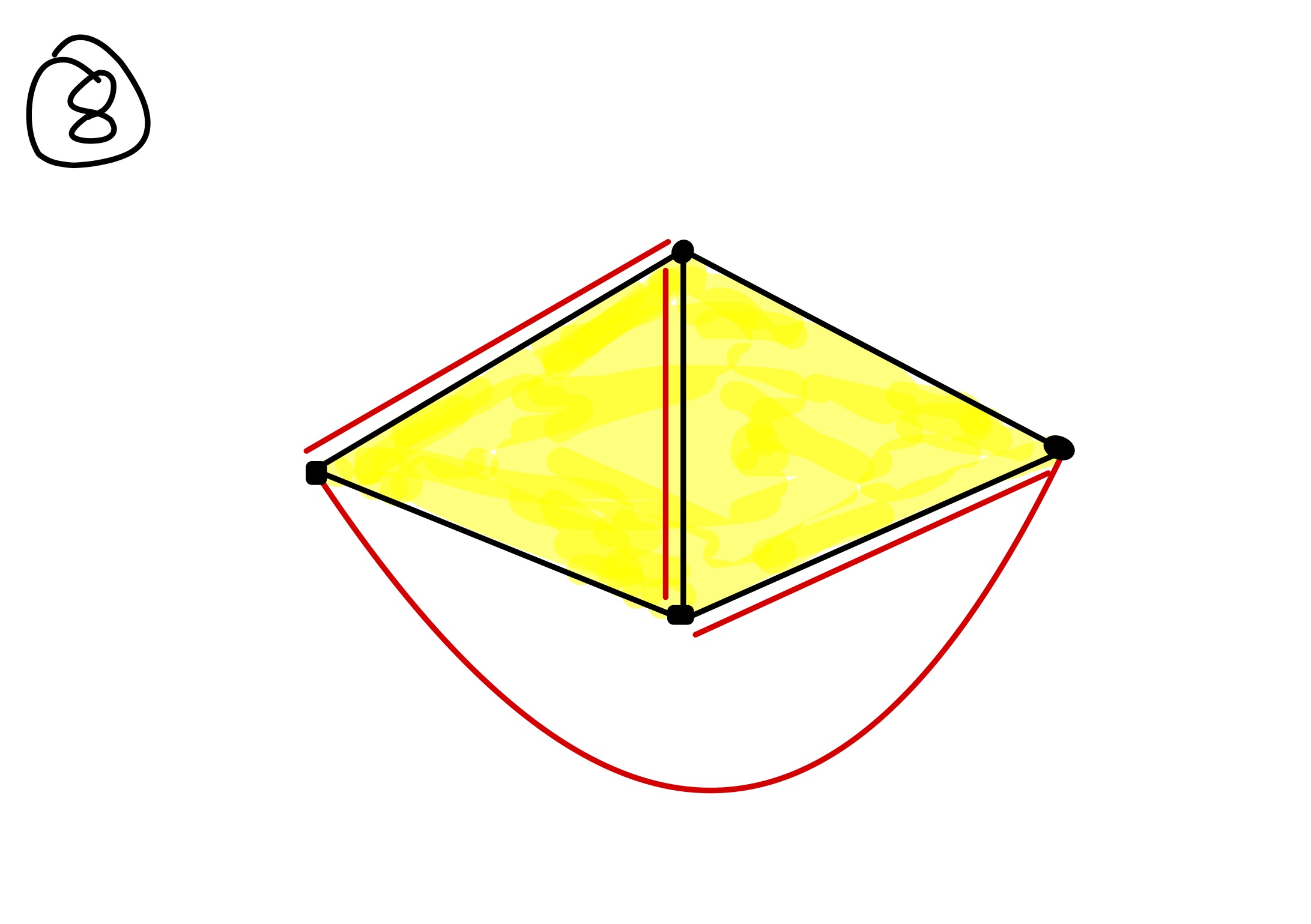}
\end{center}
\end{subfigure}
\begin{subfigure}{0.17\textwidth}
\begin{center}
\includegraphics[trim={21cm 14cm 20cm 12cm}, clip,width=2.2cm]{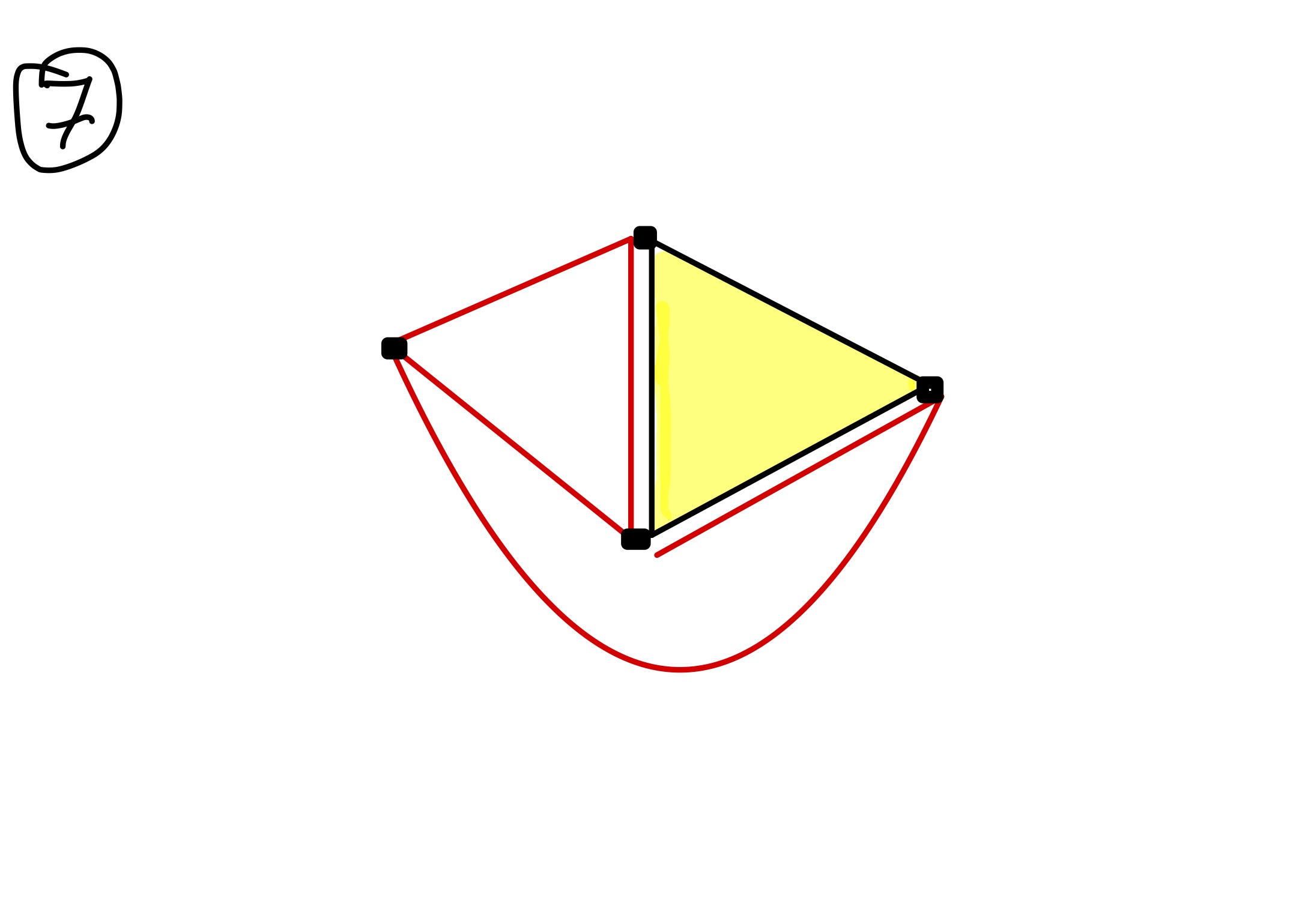}
\end{center}
\end{subfigure}

\begin{subfigure}{0.17\textwidth}
\begin{center}
\includegraphics[trim={23cm 18cm 22cm 12cm}, clip,width=2.2cm]{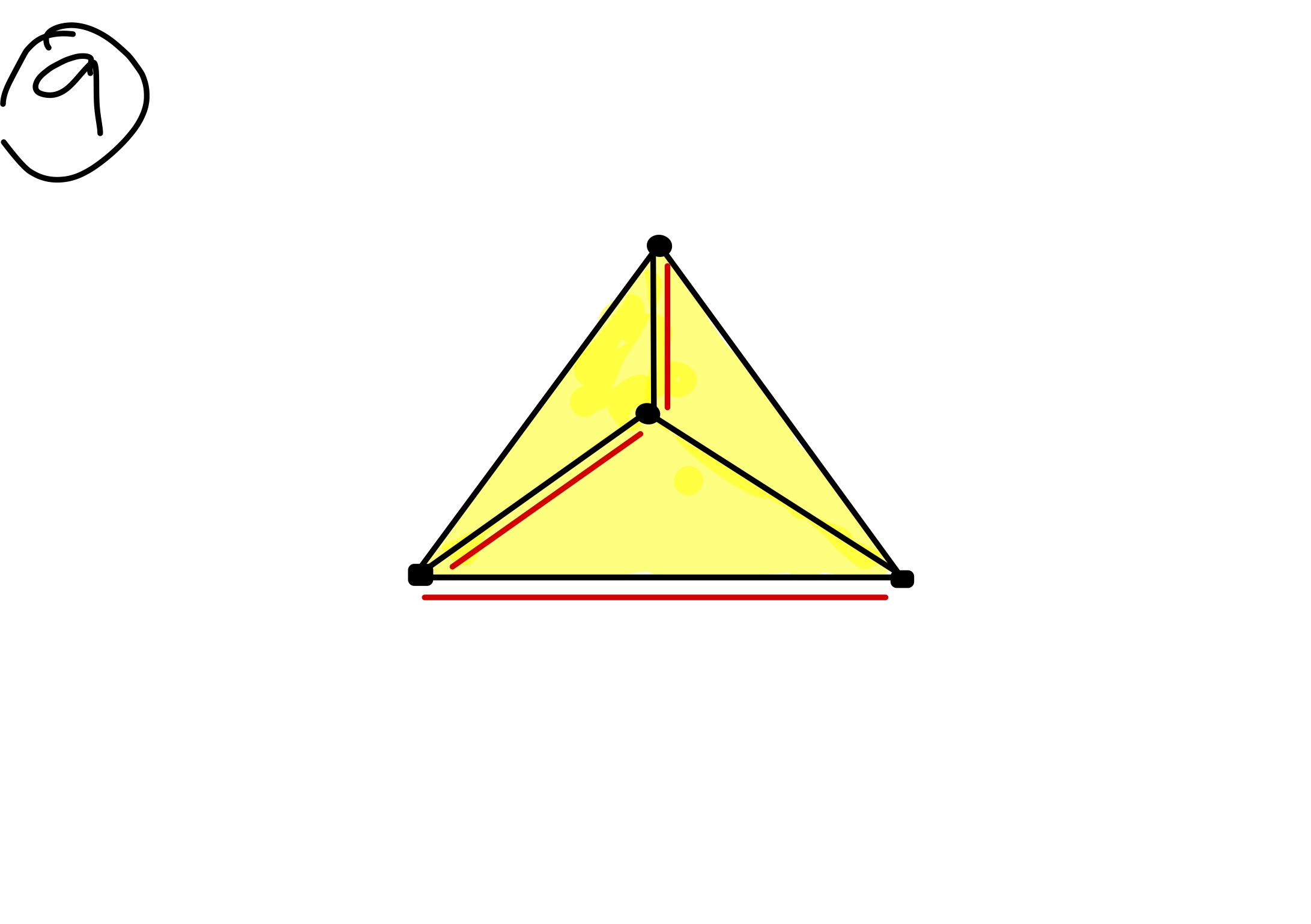}
\end{center}
\end{subfigure}
\begin{subfigure}{0.17\textwidth}
\begin{center}
\includegraphics[trim={15cm 11cm 26cm 9cm}, clip,width=2.2cm]{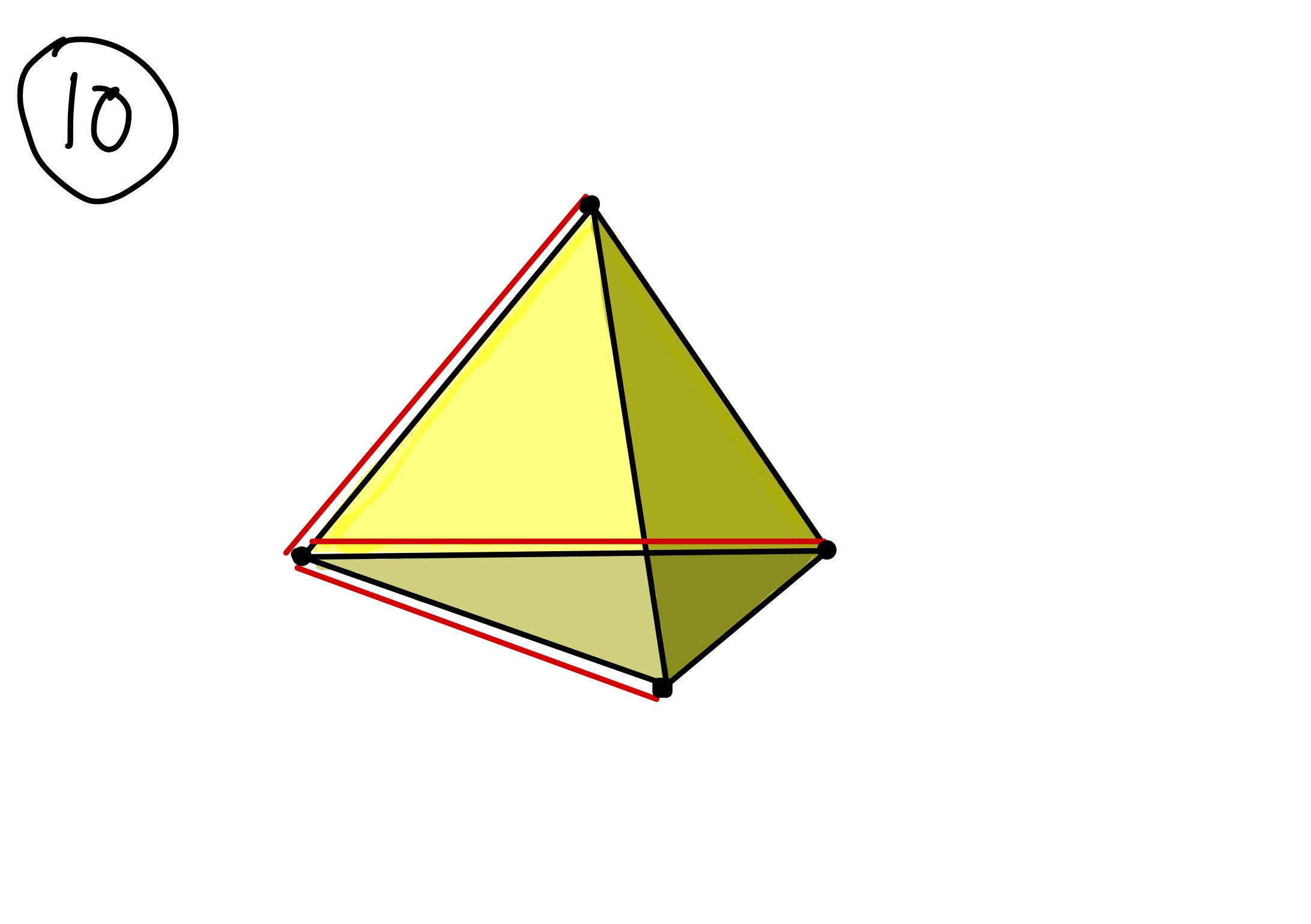}
\end{center}
\end{subfigure}
\begin{subfigure}{0.17\textwidth}
\begin{center}
\includegraphics[trim={15cm 14cm 11cm 9cm}, clip,width=2.2cm]{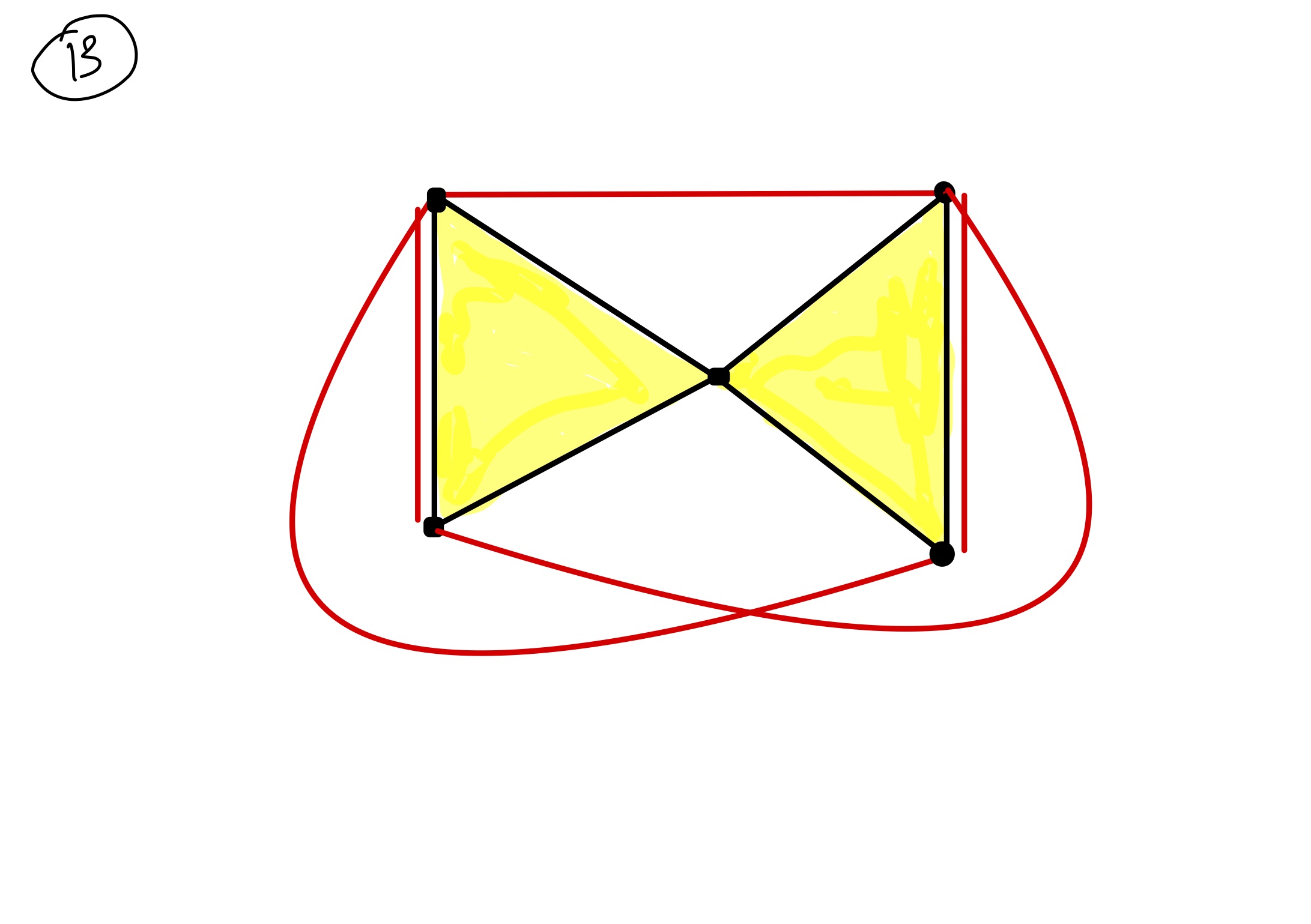}
\end{center}
\end{subfigure}
\begin{subfigure}{0.17\textwidth}
\begin{center}
\includegraphics[trim={17cm 19cm 29cm 13cm}, clip,width=2.2cm]{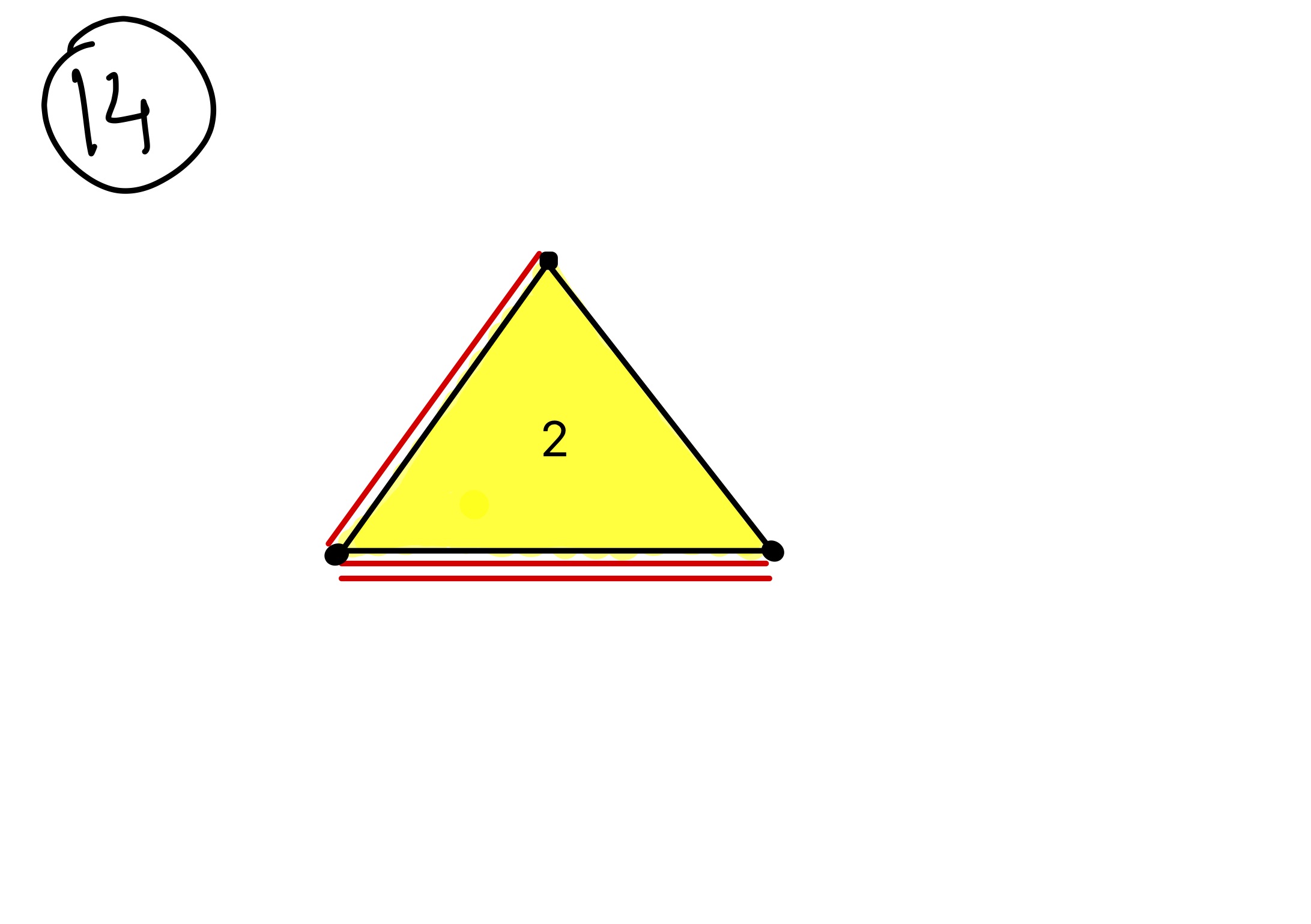}
\end{center}
\end{subfigure}
\begin{subfigure}{0.17\textwidth}
\begin{center}
\includegraphics[trim={22cm 12cm 22cm 10cm}, clip,width=2.2cm]{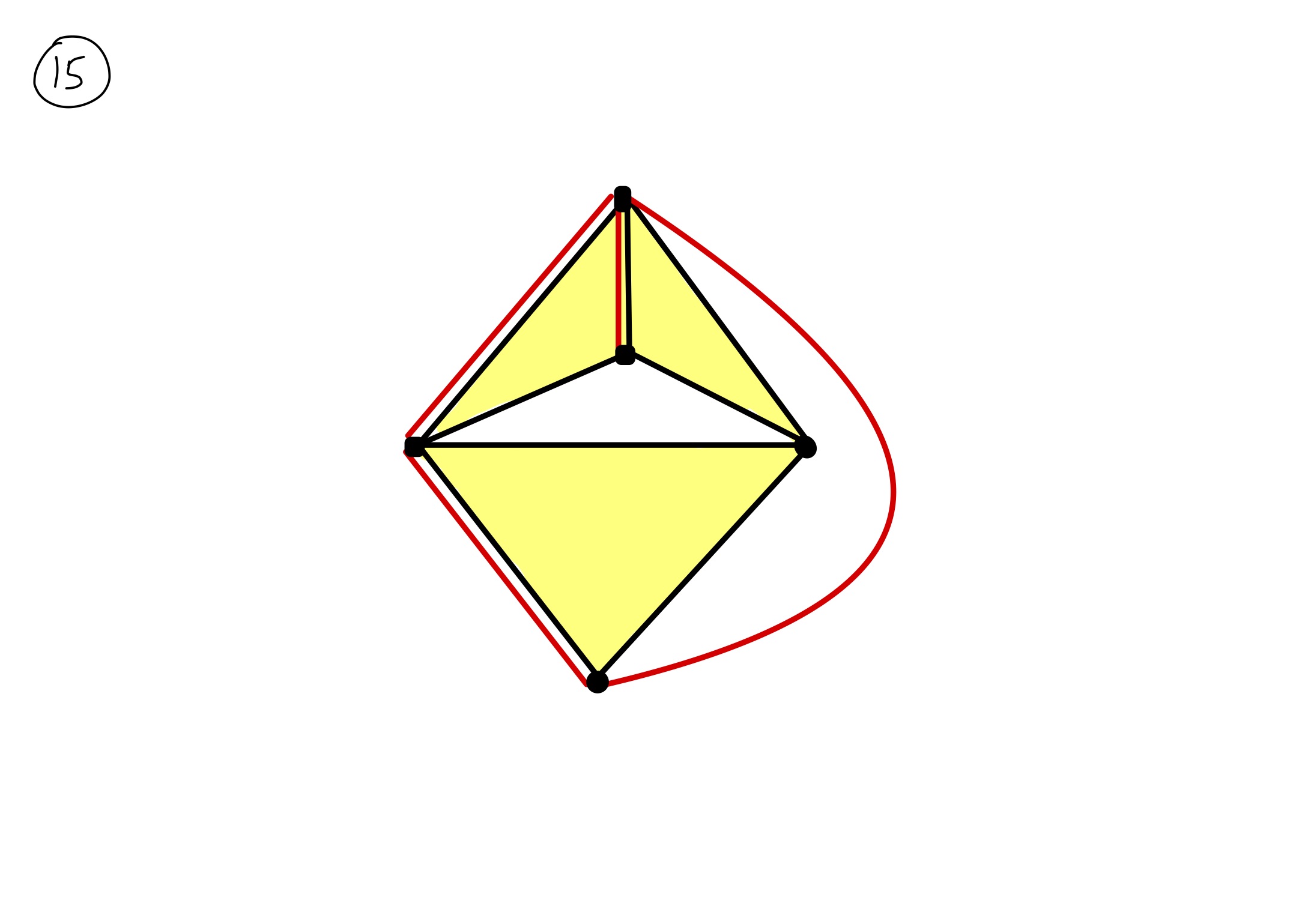}
\end{center}
\end{subfigure}

\begin{subfigure}{0.17\textwidth}
\begin{center}
\includegraphics[trim={24cm 12cm 21cm 10cm}, clip,width=2.2cm]{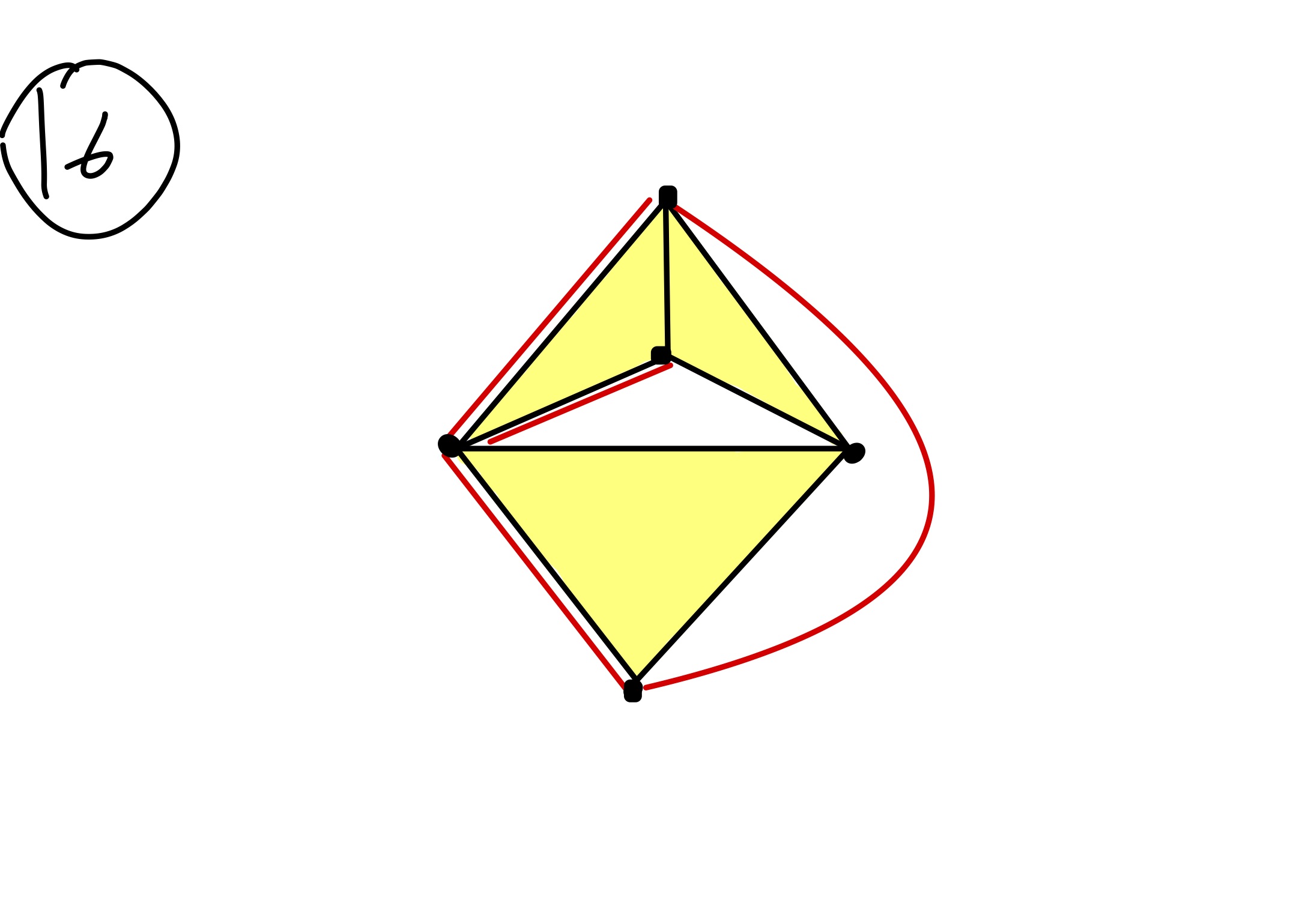}
\end{center}
\end{subfigure}
\begin{subfigure}{0.17\textwidth}
\begin{center}
\includegraphics[trim={17cm 13cm 22cm 9cm}, clip,width=2.2cm]{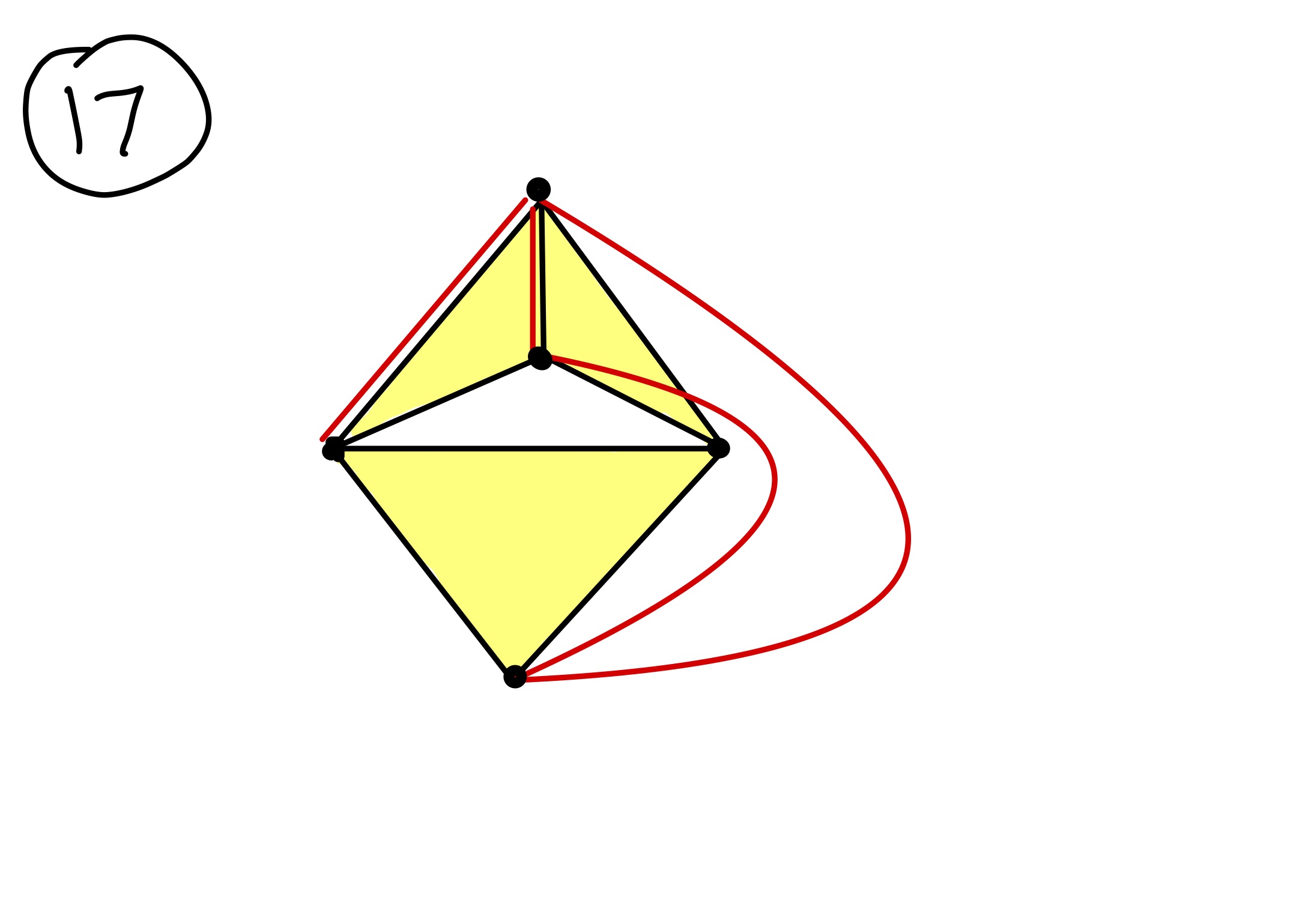}
\end{center}
\end{subfigure}
\begin{subfigure}{0.17\textwidth}
\begin{center}
\includegraphics[trim={09cm 2cm 9cm 10cm}, clip,width=2.2cm]{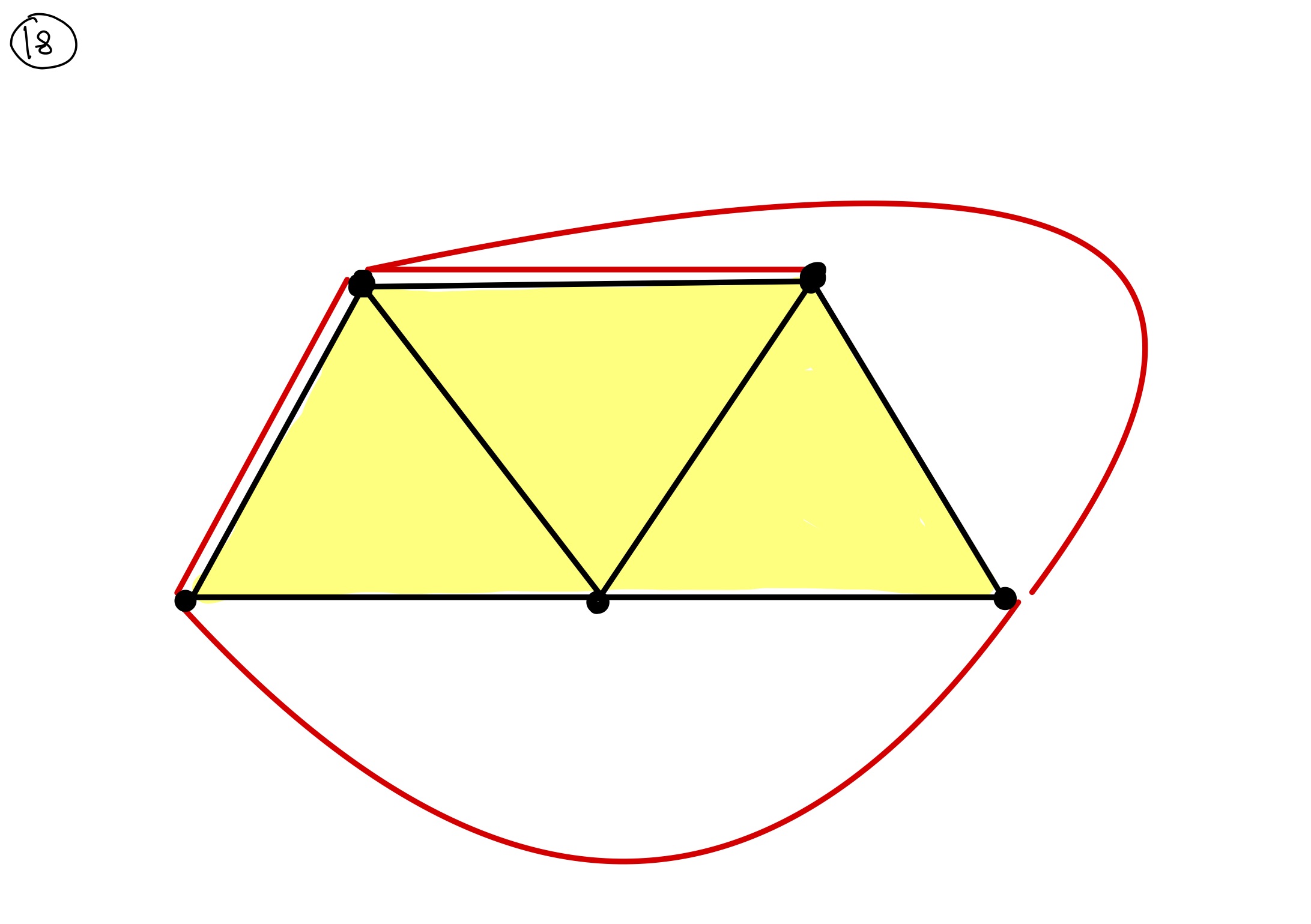}
\end{center}
\end{subfigure}
\begin{subfigure}{0.17\textwidth}
\begin{center}
\includegraphics[trim={10cm 6cm 13cm 4cm}, clip,width=2.2cm]{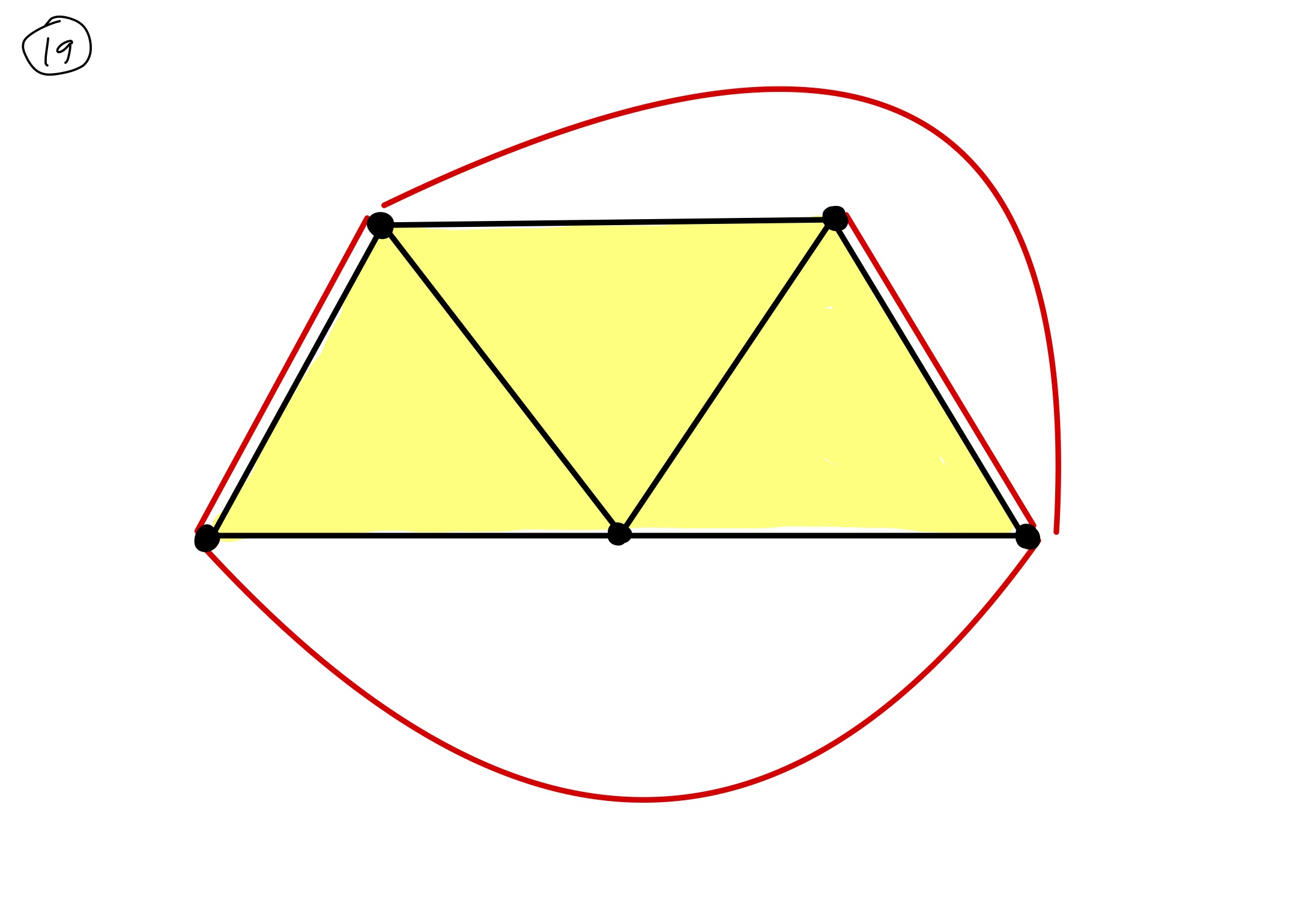}
\end{center}
\end{subfigure}
\begin{subfigure}{0.17\textwidth}
\begin{center}
\includegraphics[trim={20cm 3cm 10cm 9cm}, clip,width=2.2cm]{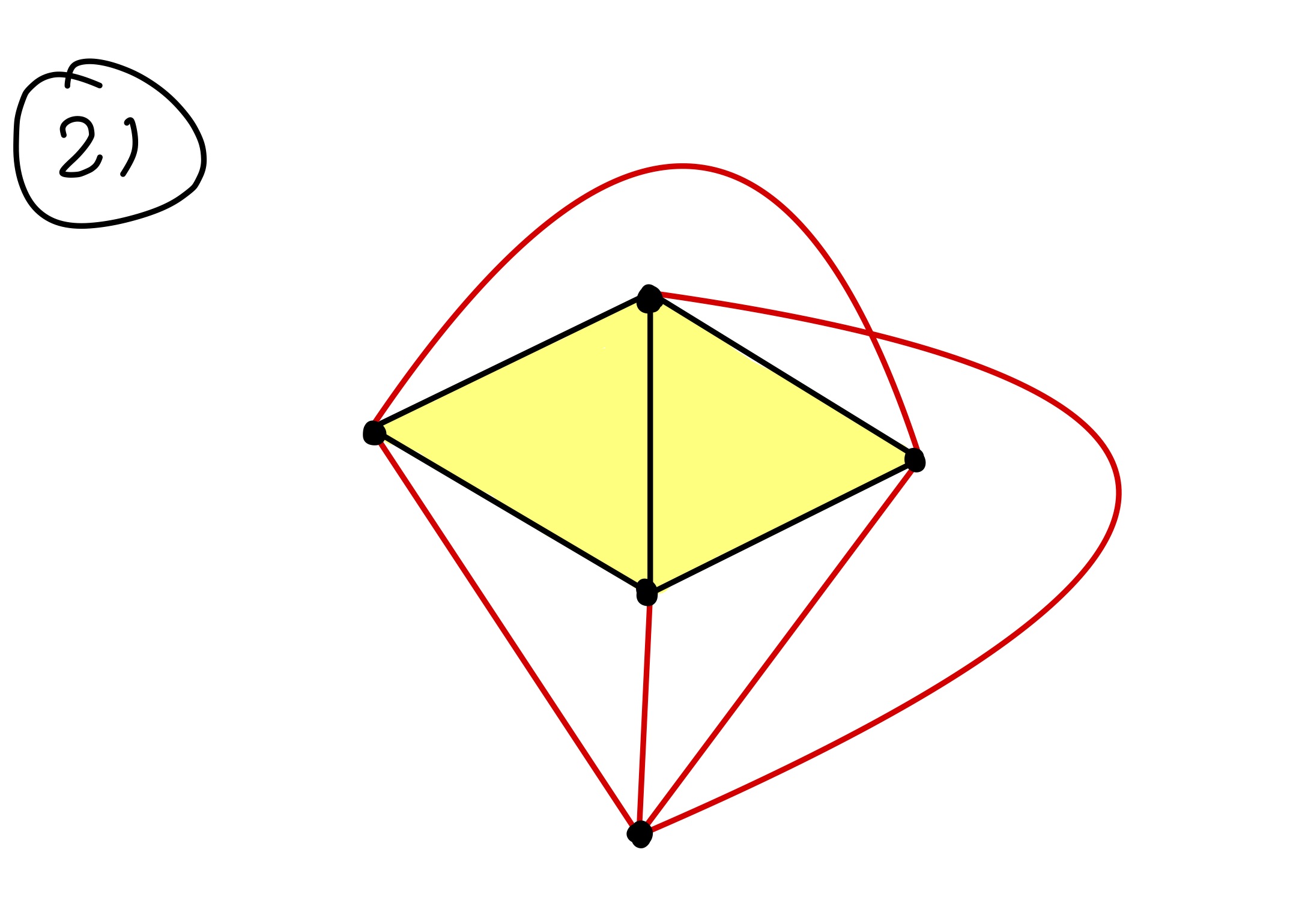}
\end{center}
\end{subfigure}

\begin{subfigure}{0.17\textwidth}
\begin{center}
\includegraphics[trim={24cm 25cm 29cm 9cm}, clip,width=2.2cm]{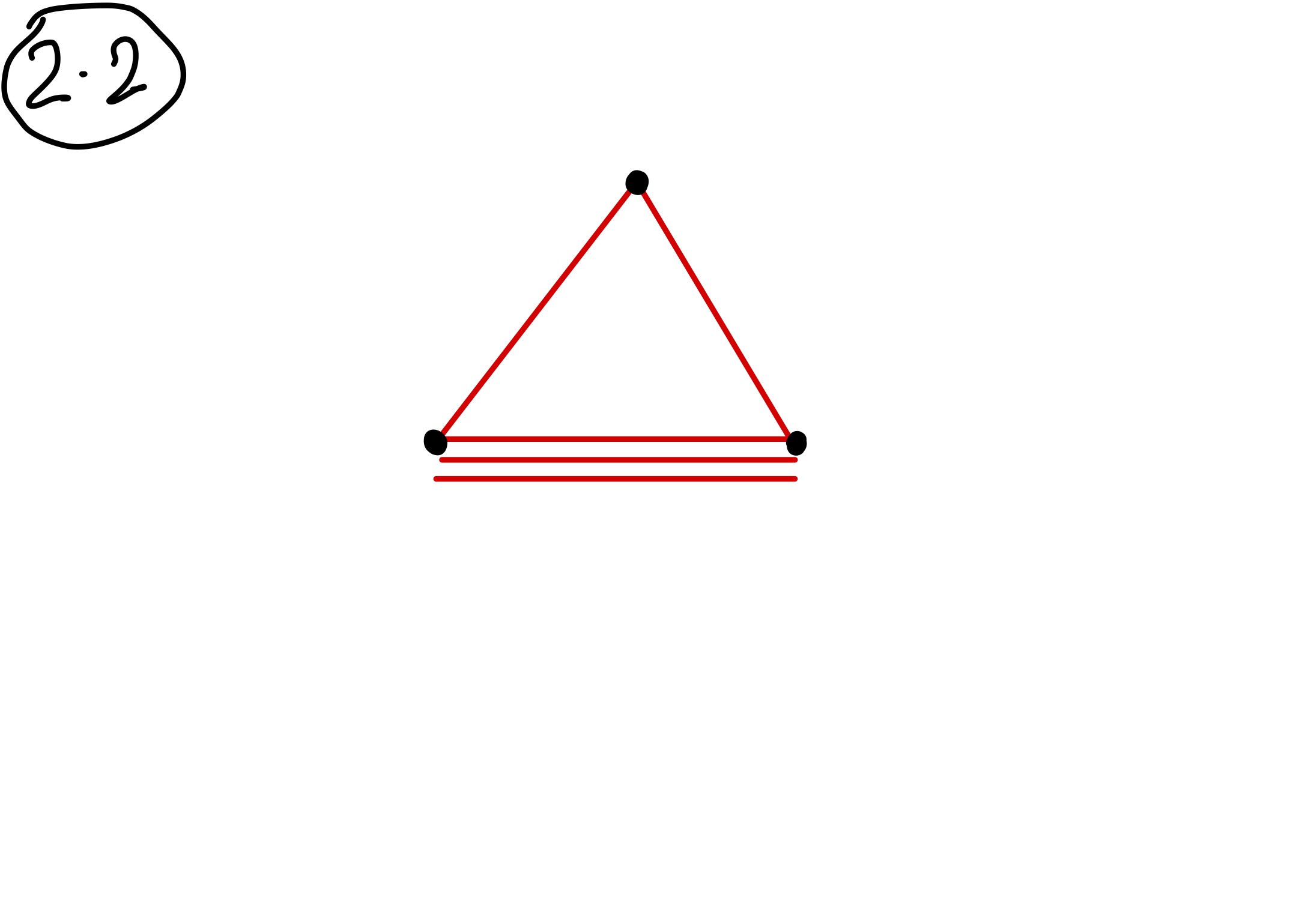}
\end{center}
\end{subfigure}
\begin{subfigure}{0.17\textwidth}
\begin{center}
\includegraphics[trim={16cm 30cm 30cm 17cm}, clip,width=2.2cm]{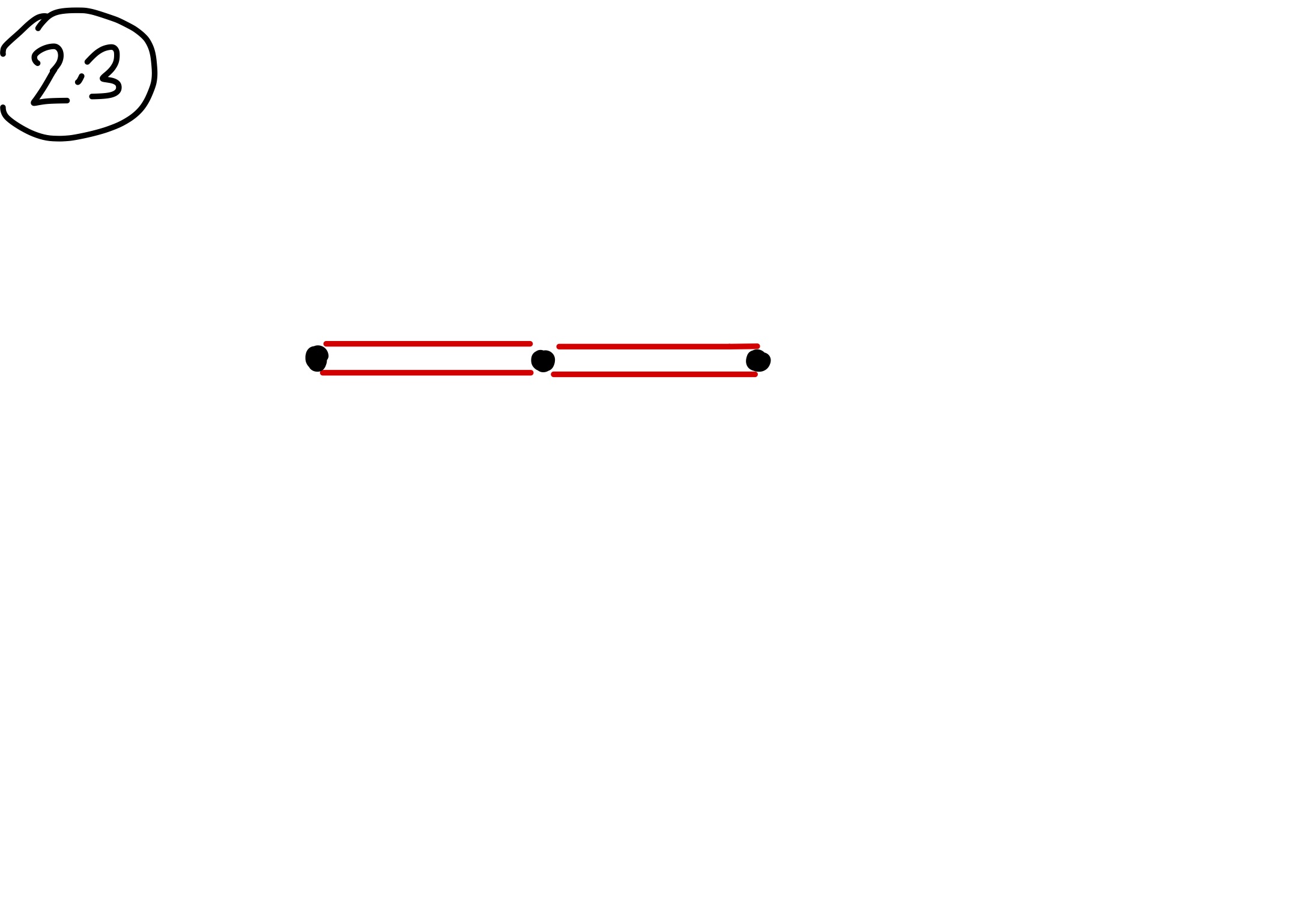}
\end{center}
\end{subfigure}
\begin{subfigure}{0.17\textwidth}
\begin{center}
\includegraphics[trim={14cm 23cm 30cm 15cm}, clip,width=2.2cm]{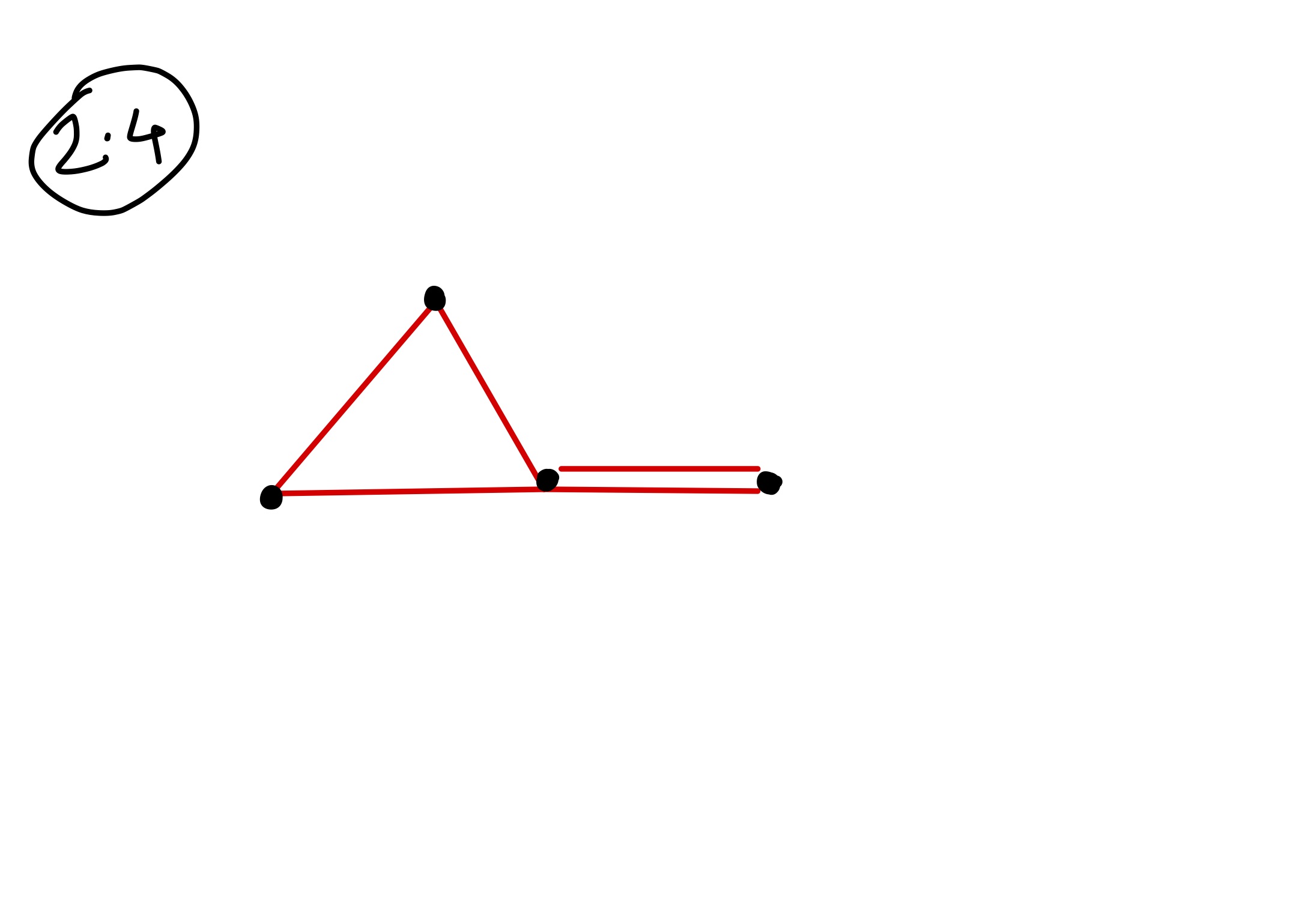}
\end{center}
\end{subfigure}
\begin{subfigure}{0.17\textwidth}
\begin{center}
\includegraphics[trim={23cm 15cm 28cm 12cm}, clip,width=2.2cm]{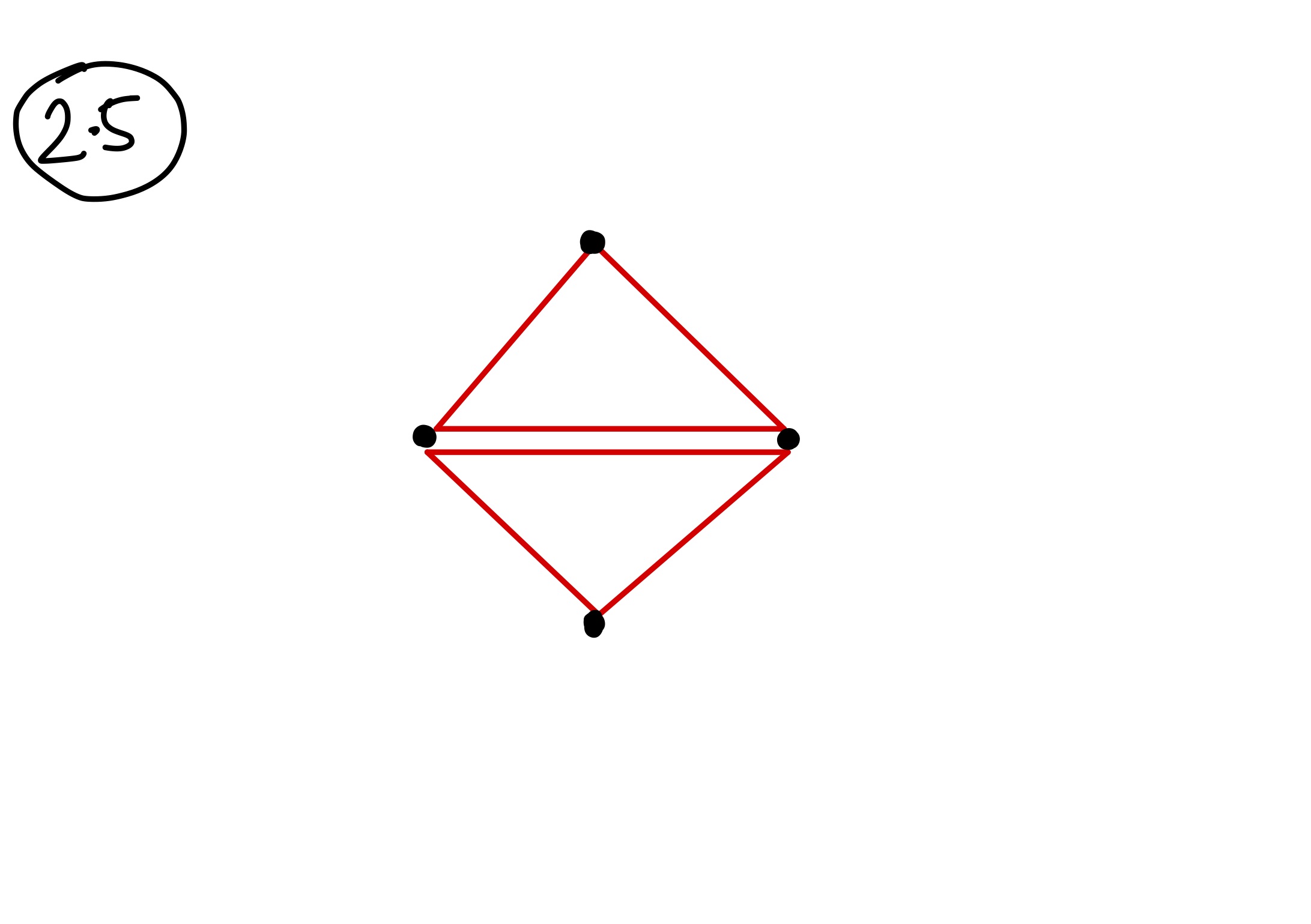}
\end{center}
\end{subfigure}
\begin{subfigure}{0.17\textwidth}
\begin{center}
\includegraphics[trim={19cm 19cm 26cm 16cm}, clip,width=2.2cm]{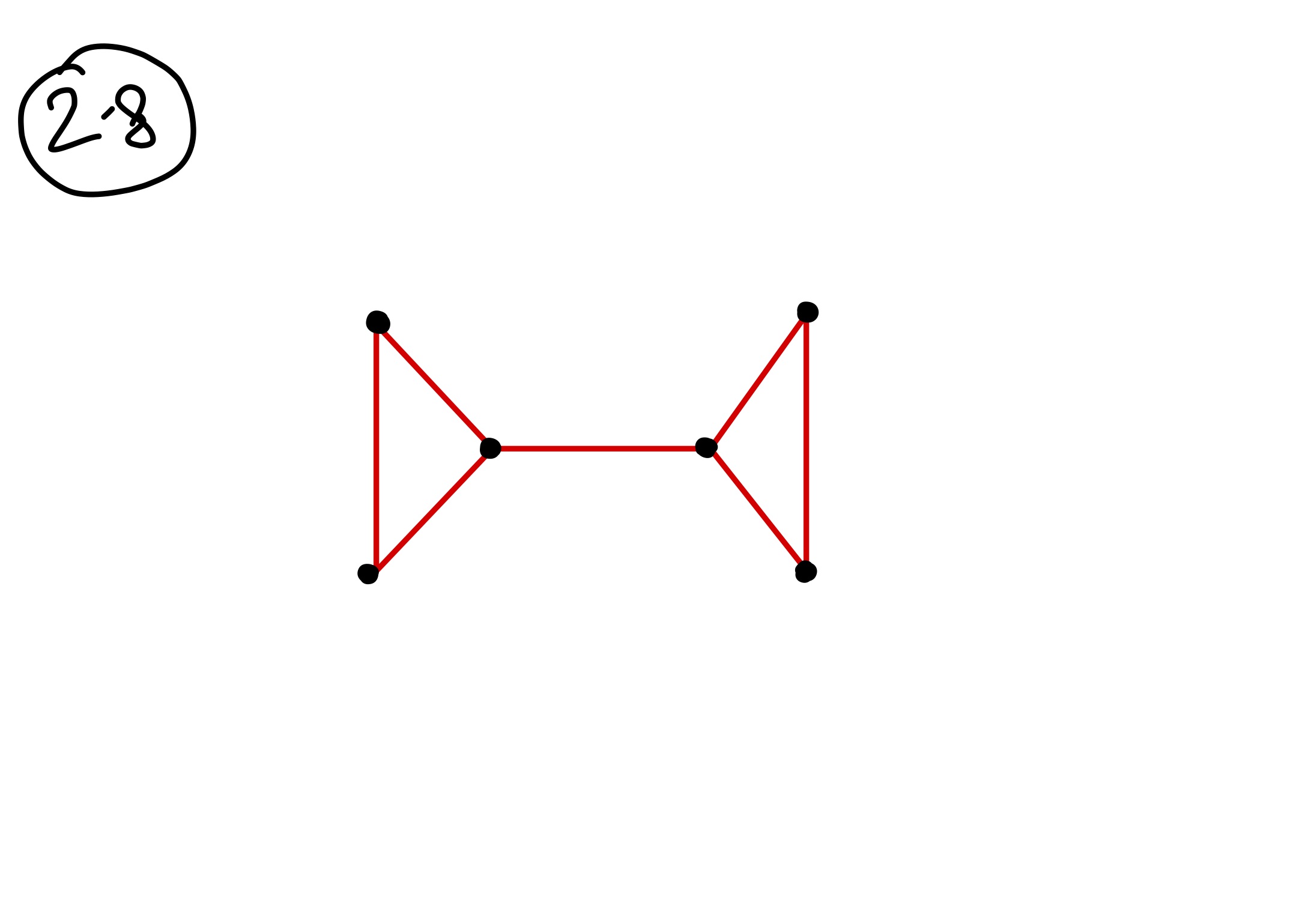}
\end{center}
\end{subfigure}

\begin{subfigure}{0.17\textwidth}
\begin{center}
\includegraphics[trim={20cm 23cm 26cm 8cm}, clip,width=2.2cm]{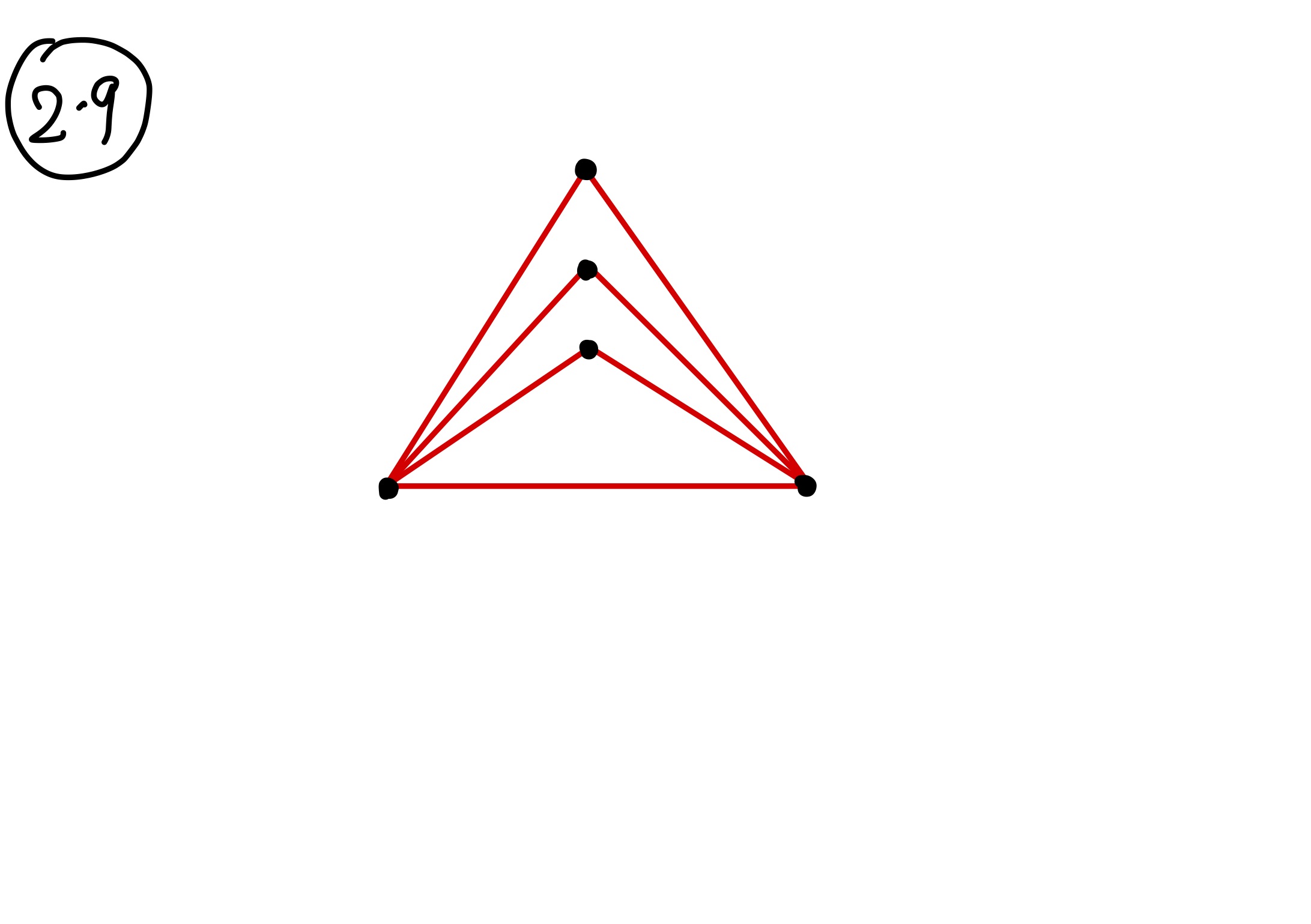}
\end{center}
\end{subfigure}
\begin{subfigure}{0.17\textwidth}
\begin{center}
\includegraphics[trim={21cm 19cm 27cm 16cm}, clip,width=2.2cm]{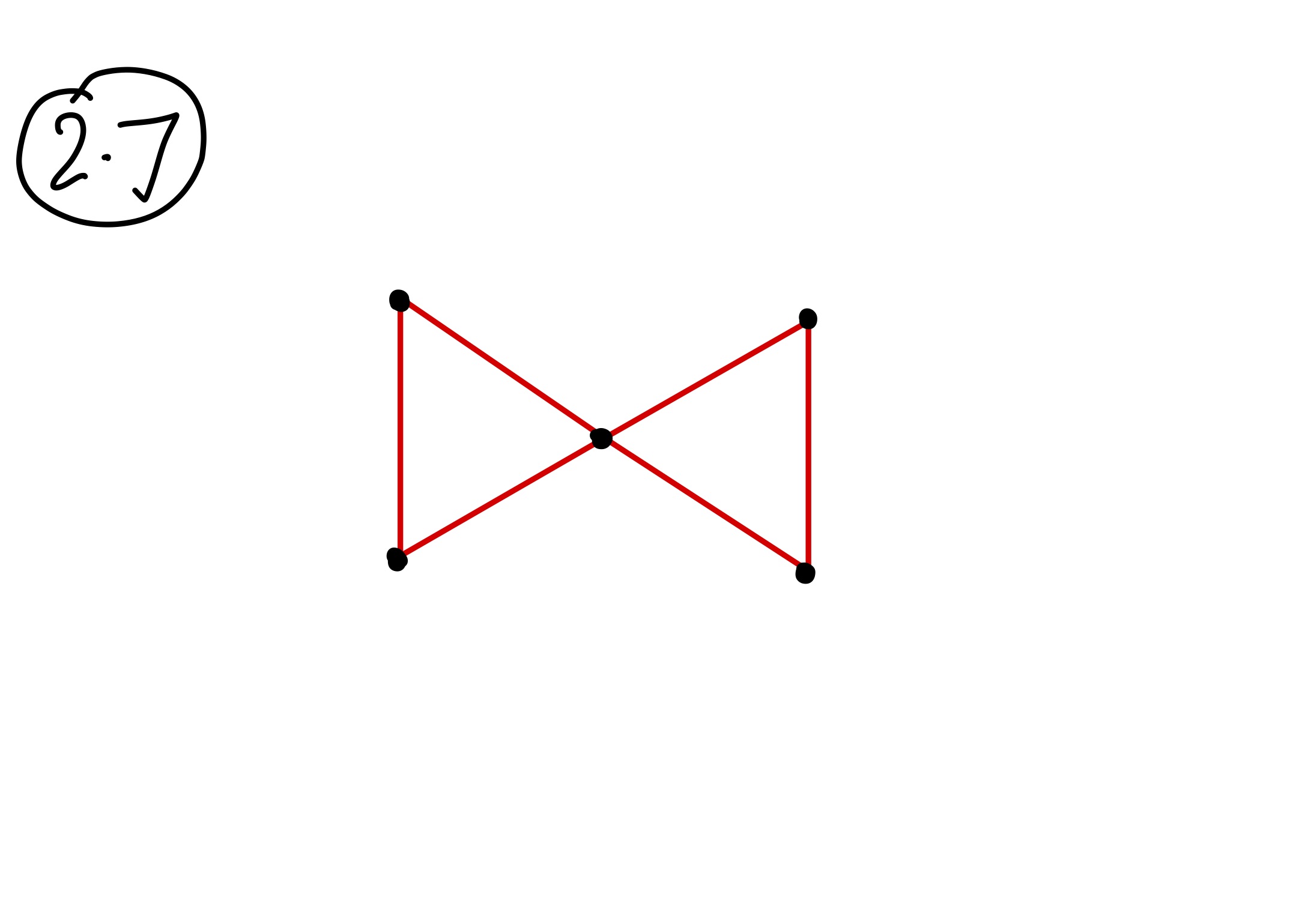}
\end{center}
\end{subfigure}
\begin{subfigure}{0.17\textwidth}
\begin{center}
\includegraphics[trim={21cm 17cm 27cm 12cm}, clip,width=2.2cm]{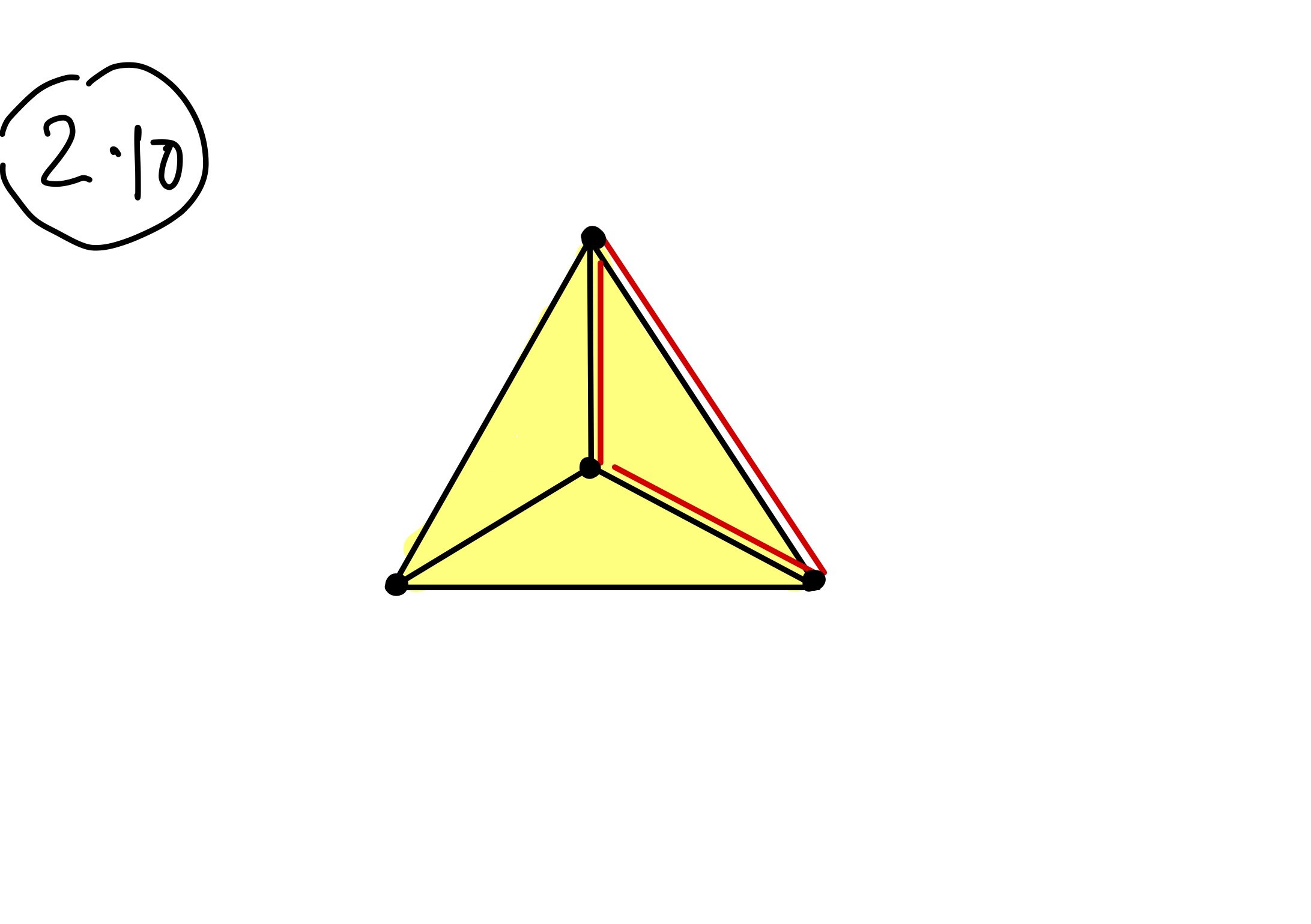}
\end{center}
\end{subfigure}
\begin{subfigure}{0.17\textwidth}
\begin{center}
\includegraphics[trim={18cm 16cm 20cm 15cm}, clip,width=2.2cm]{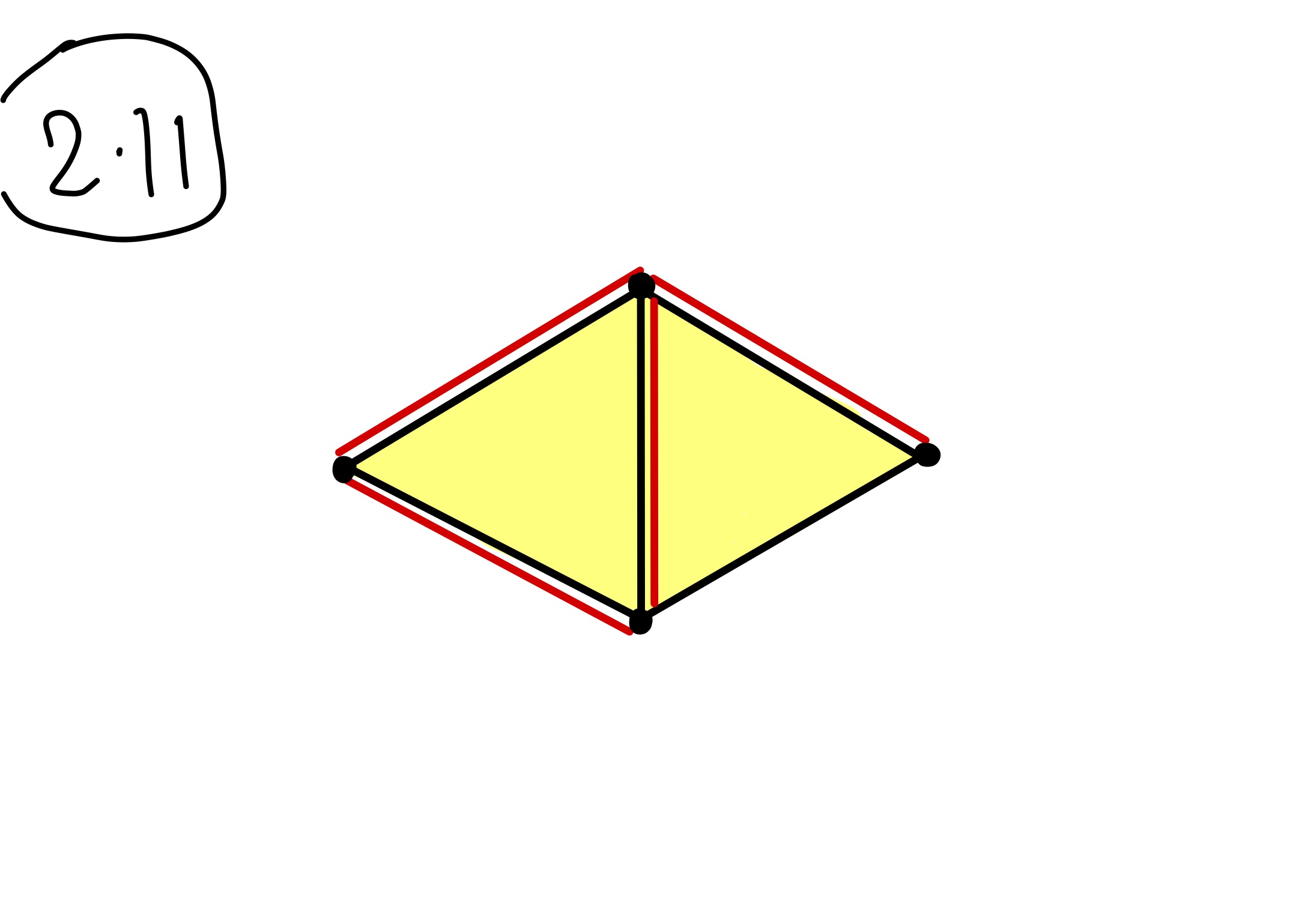}
\end{center}
\end{subfigure}
\begin{subfigure}{0.17\textwidth}
\begin{center}
\includegraphics[trim={19cm 12cm 19cm 12cm}, clip,width=2.2cm]{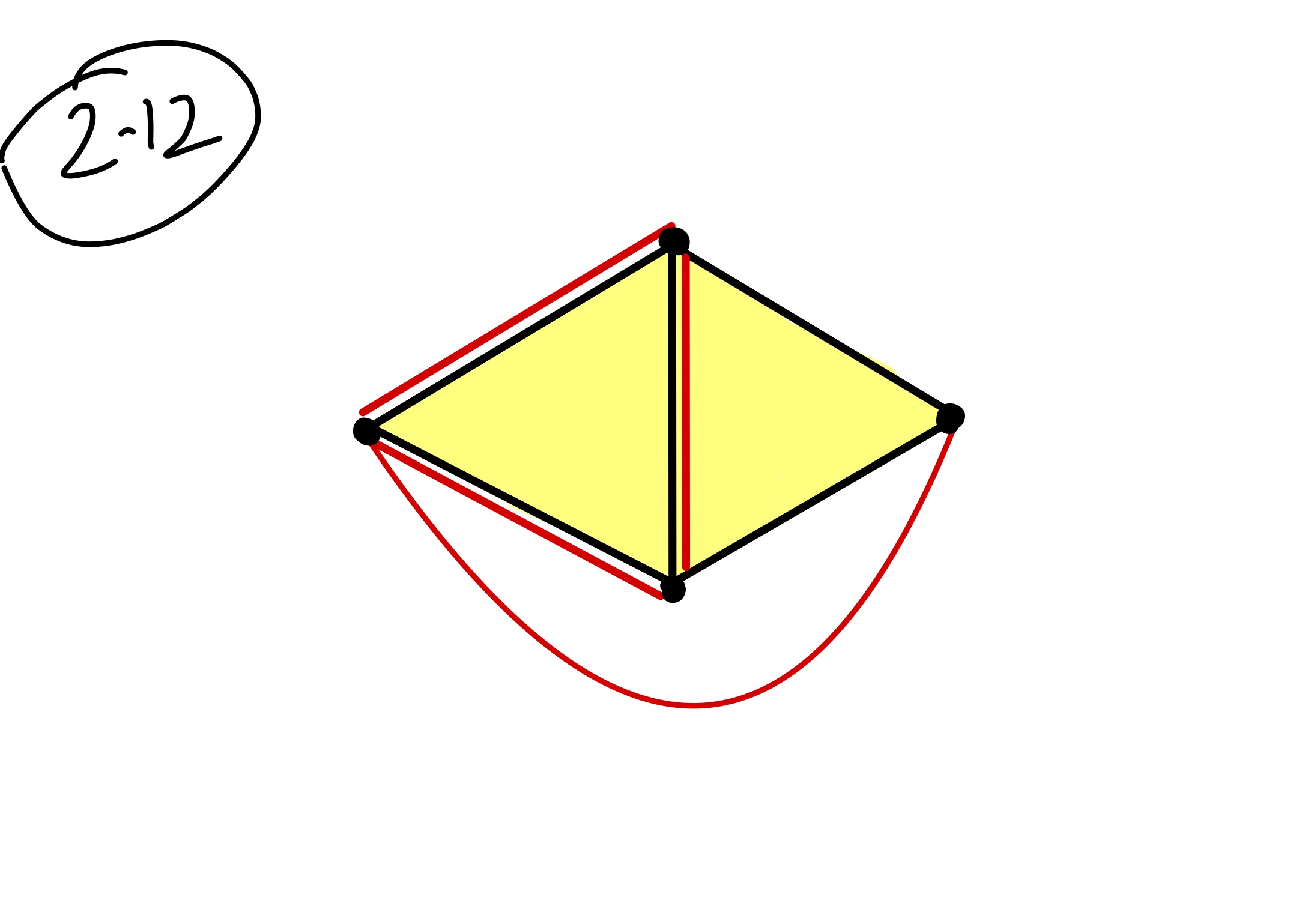}
\end{center}
\end{subfigure}

\begin{subfigure}{0.17\textwidth}
\begin{center}
\includegraphics[trim={22cm 13cm 17cm 11cm}, clip,width=2.2cm]{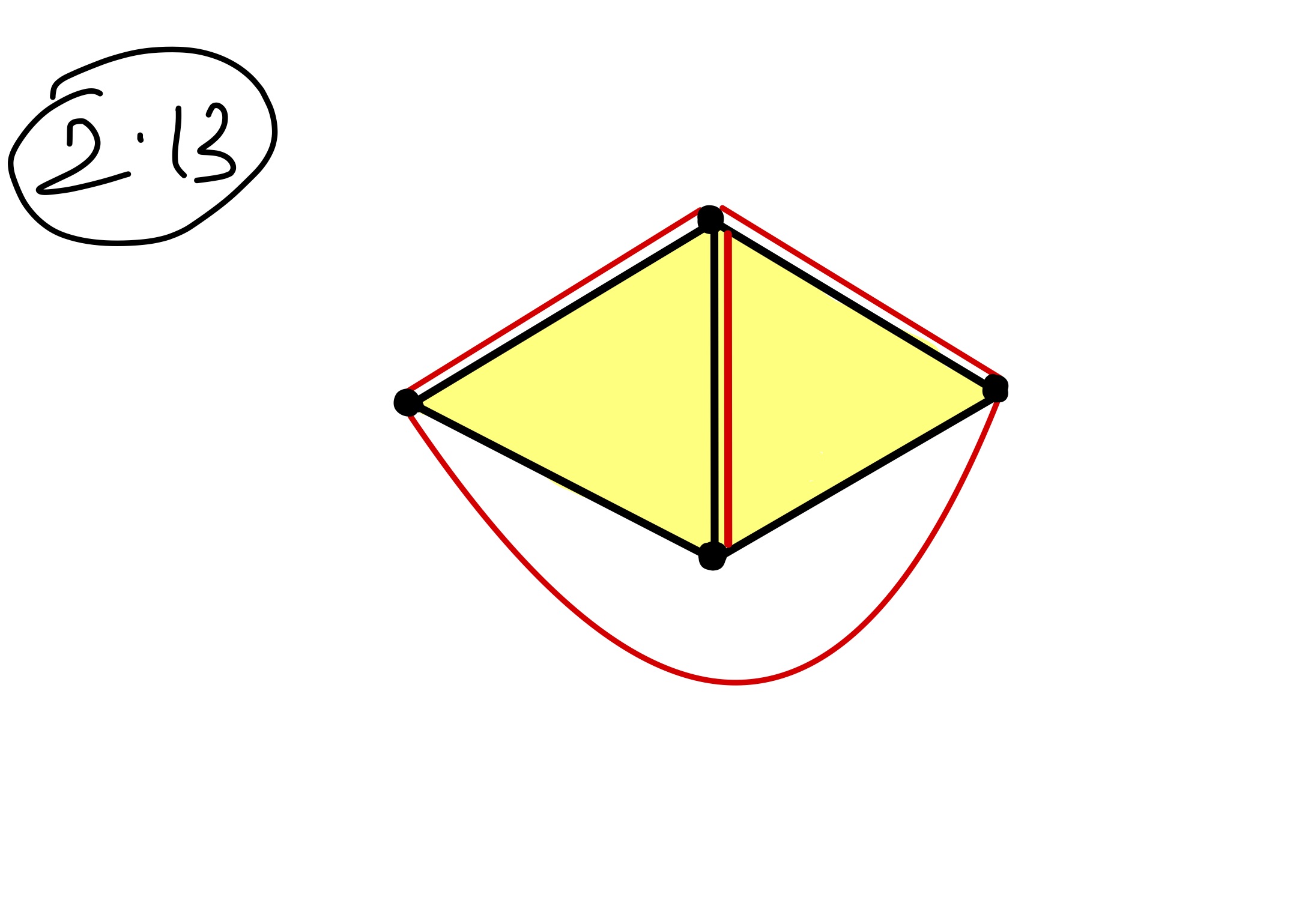}
\end{center}
\end{subfigure}
\begin{subfigure}{0.17\textwidth}
\begin{center}
\includegraphics[trim={21cm 9cm 17cm 11cm}, clip,width=2.2cm]{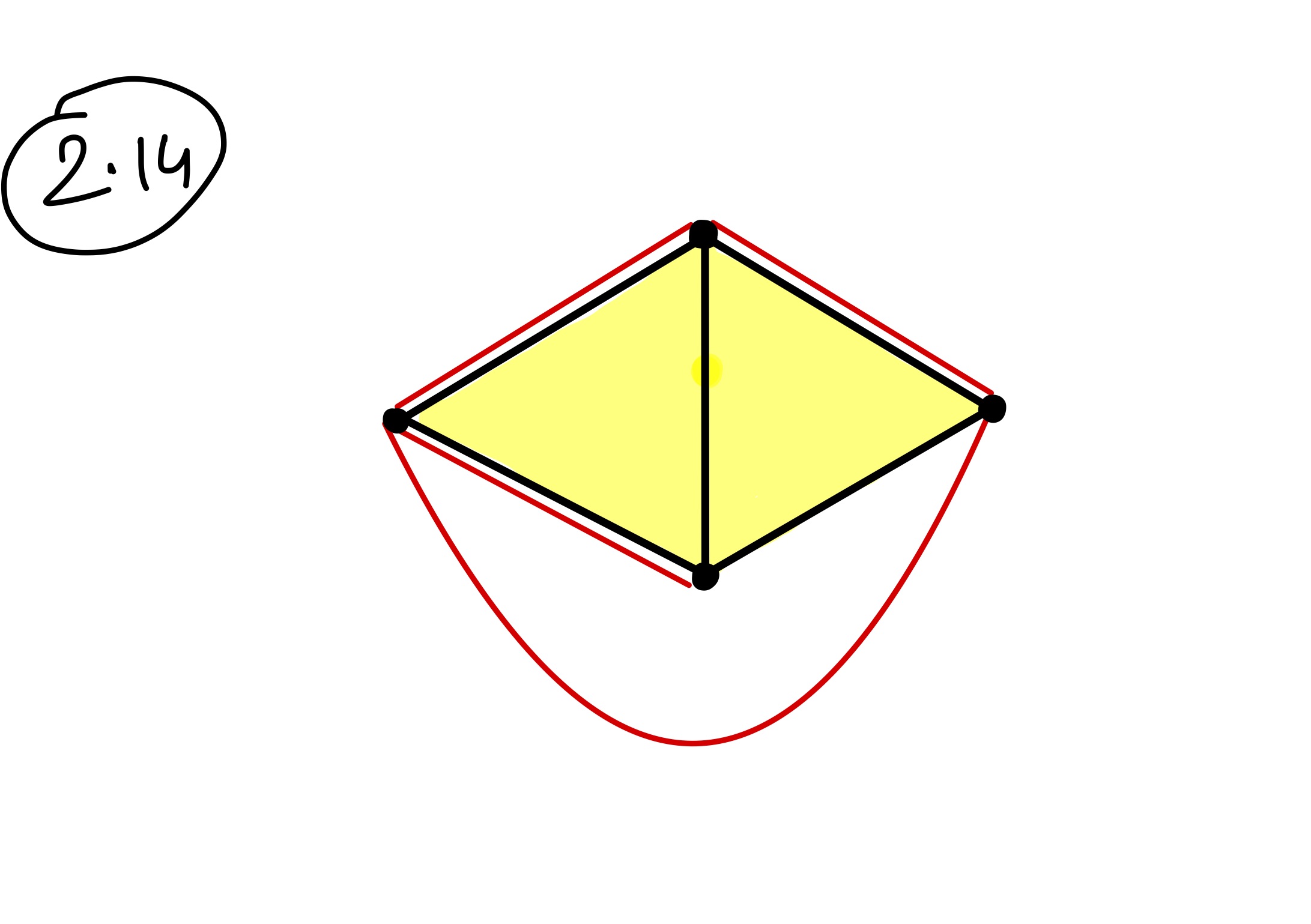}
\end{center}
\end{subfigure}
\begin{subfigure}{0.17\textwidth}
\begin{center}
\includegraphics[trim={20cm 11cm 24cm 17cm}, clip,width=2.2cm]{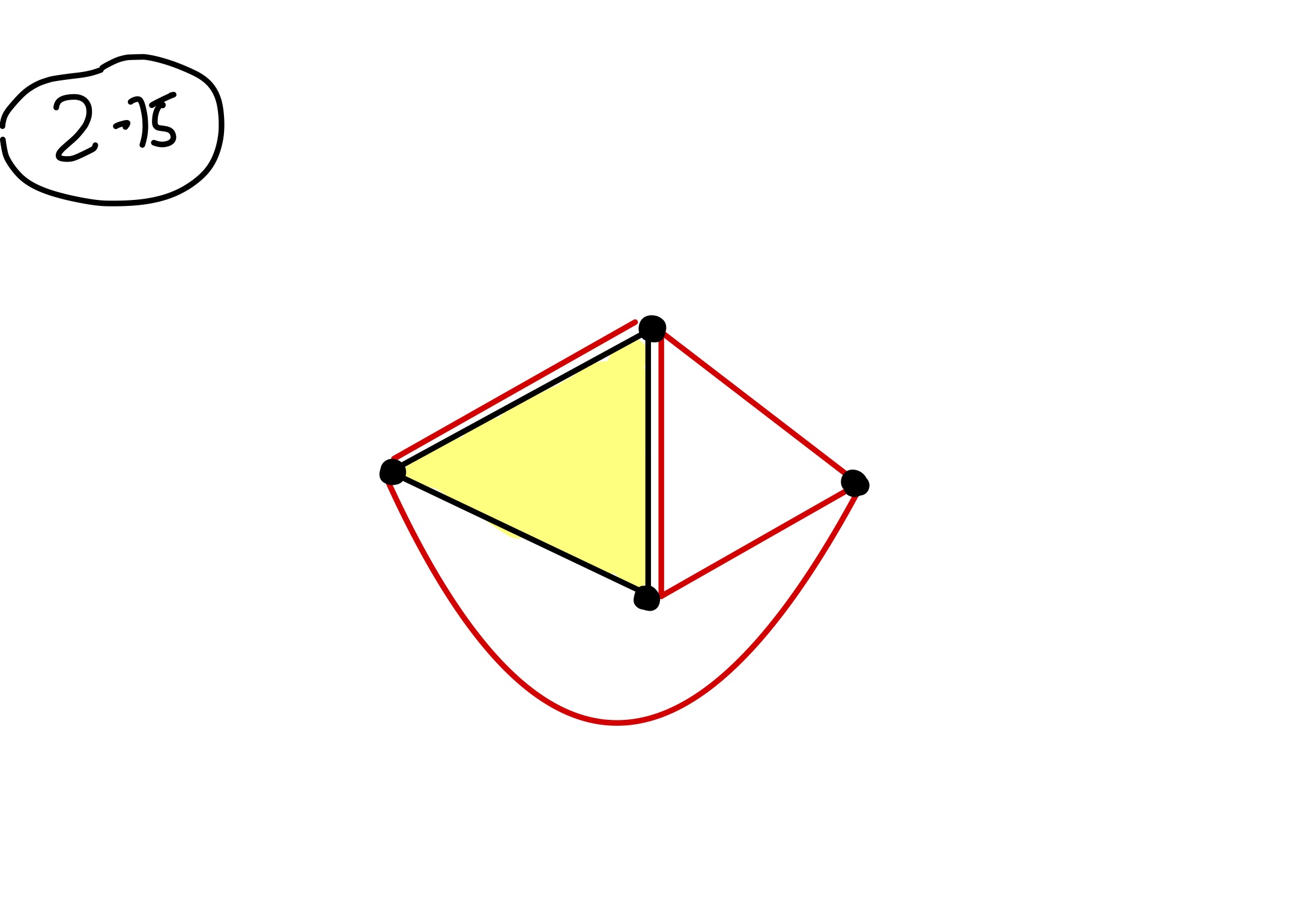}
\end{center}
\end{subfigure}
\begin{subfigure}{0.17\textwidth}
\begin{center}
\includegraphics[trim={21cm 5cm 17cm 15cm}, clip,width=2.2cm]{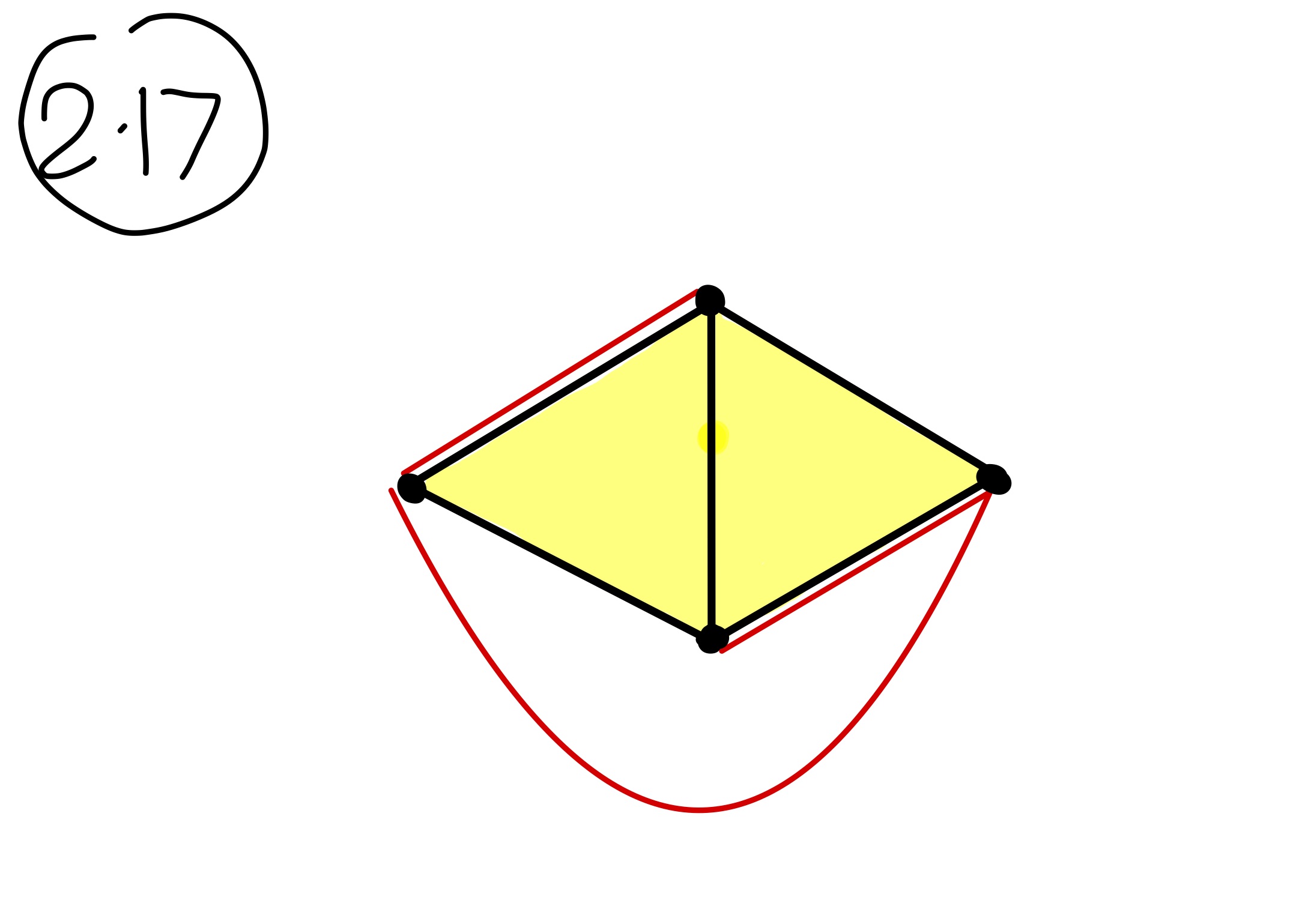}
\end{center}
\end{subfigure}
\begin{subfigure}{0.17\textwidth}
\begin{center}
\includegraphics[trim={20cm 20cm 18cm 11cm}, clip,width=2.2cm]{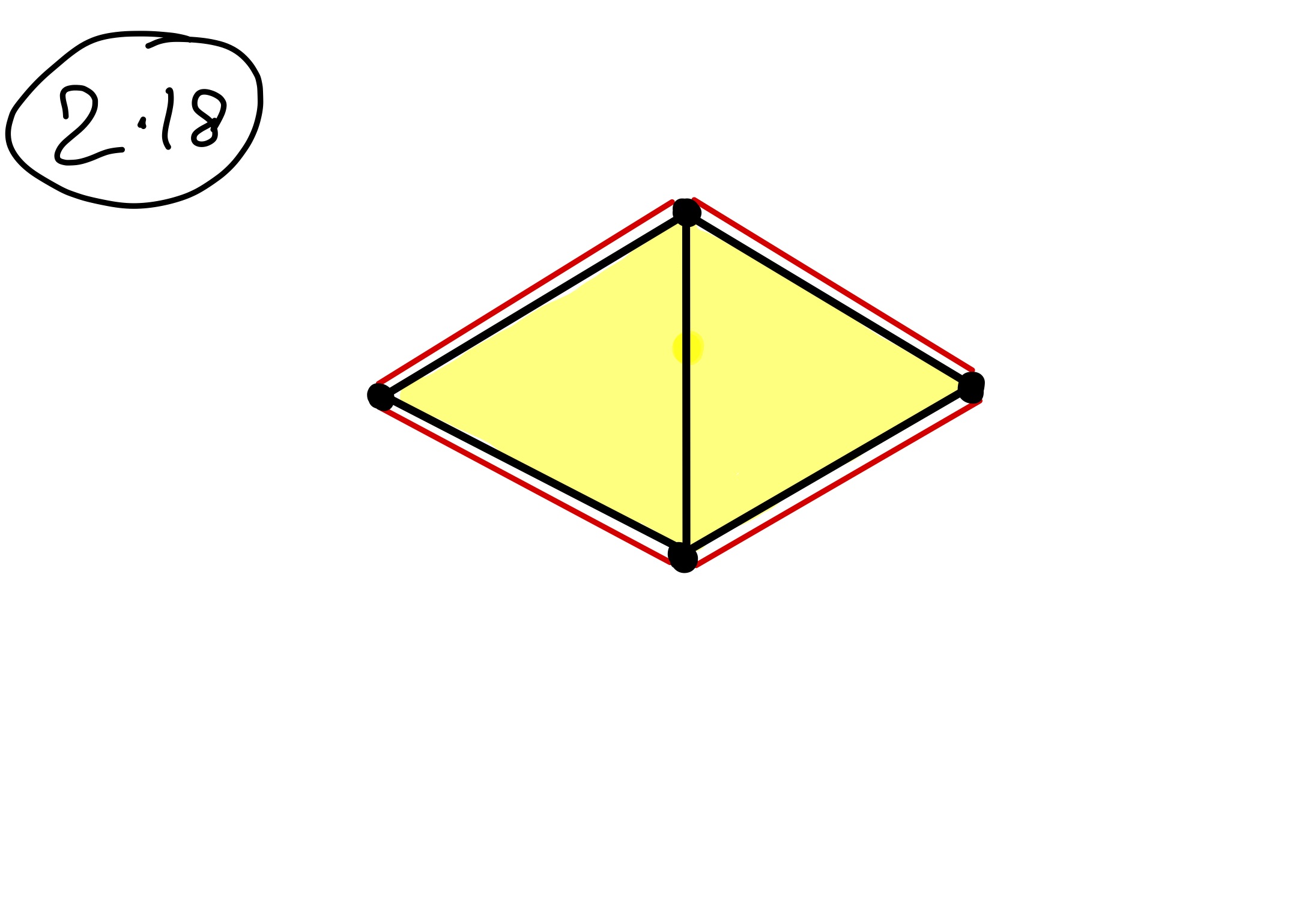}
\end{center}
\end{subfigure}

\begin{subfigure}{0.17\textwidth}
\begin{center}
\includegraphics[trim={15cm 17cm 27cm 9cm}, clip,width=2.2cm]{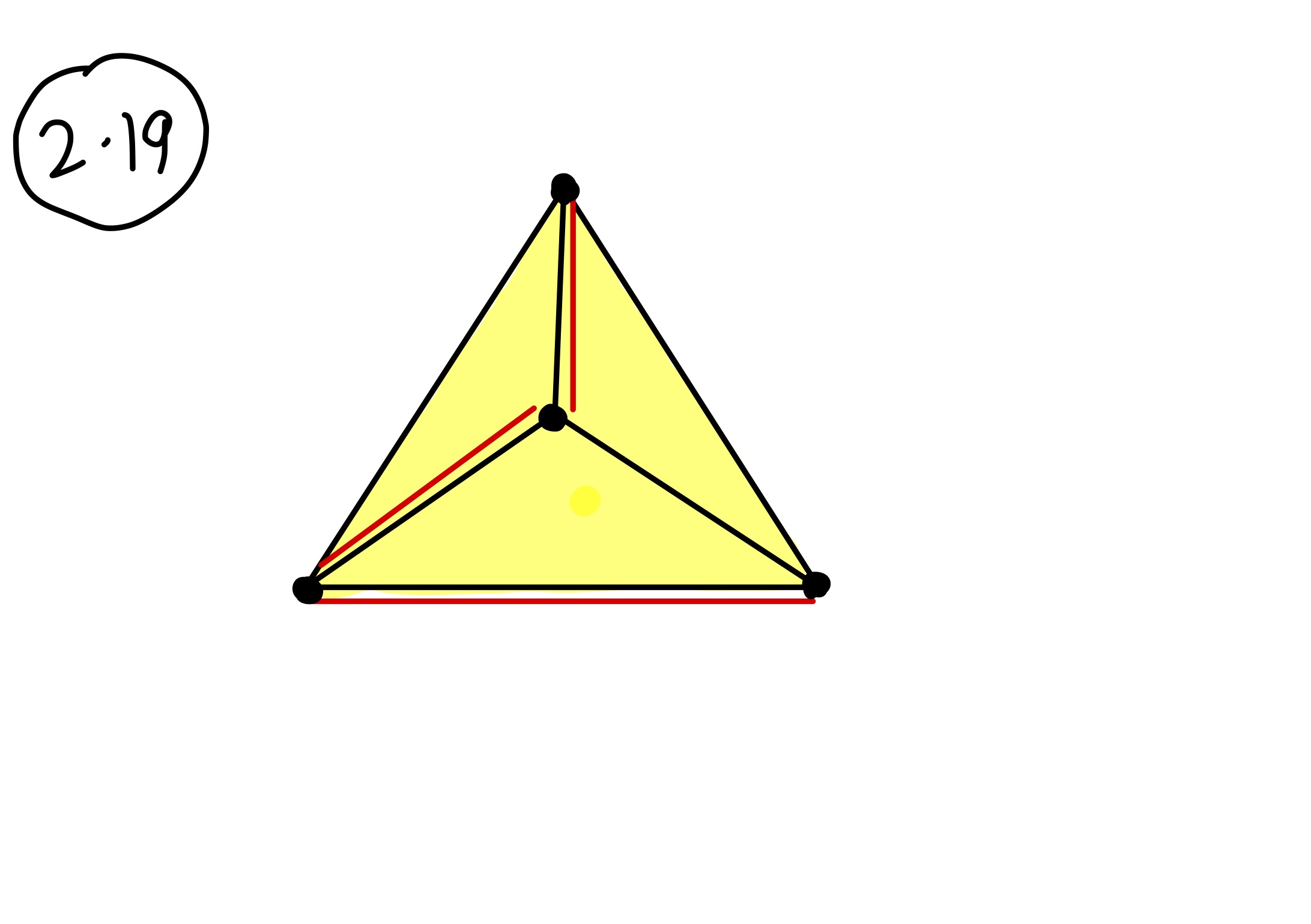}
\end{center}
\end{subfigure}
\begin{subfigure}{0.17\textwidth}
\begin{center}
\includegraphics[trim={19cm 11cm 16cm 13cm}, clip,width=2.2cm]{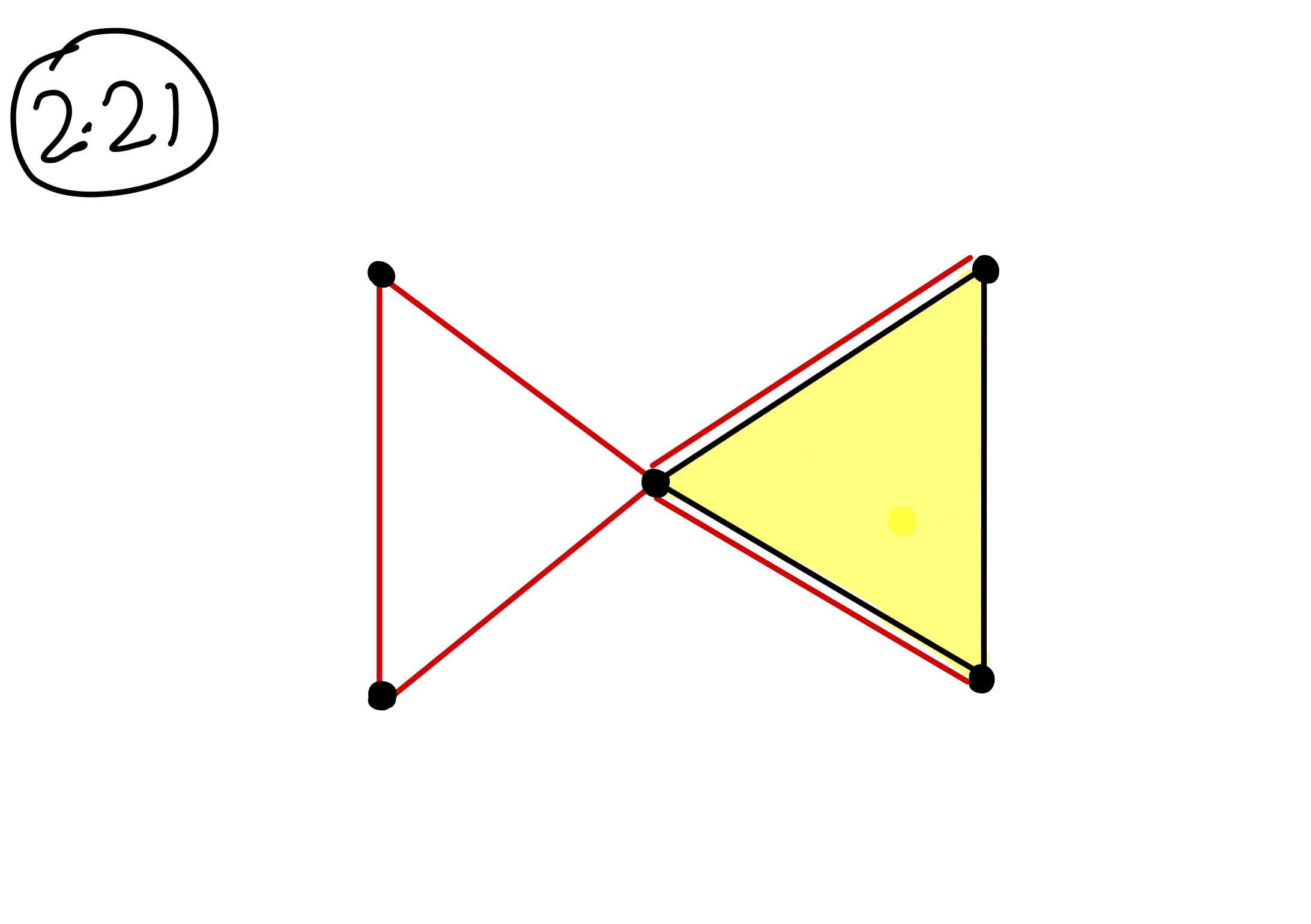}
\end{center}
\end{subfigure}
\begin{subfigure}{0.17\textwidth}
\begin{center}
\includegraphics[trim={15cm 13cm 23cm 11cm}, clip,width=2.2cm]{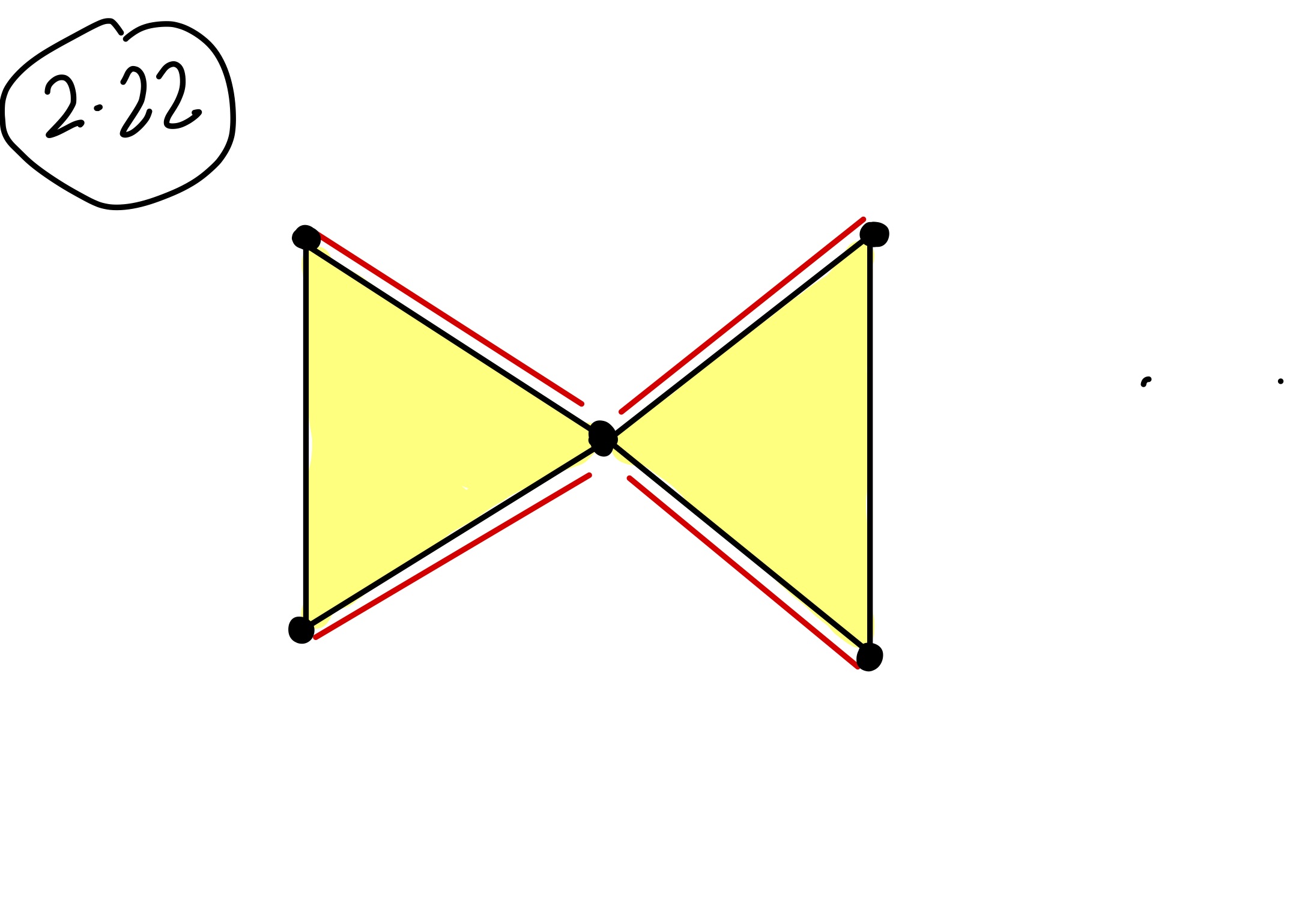}
\end{center}
\end{subfigure}
\begin{subfigure}{0.17\textwidth}
\begin{center}
\includegraphics[trim={23cm 22cm 28cm 10cm}, clip,width=2.2cm]{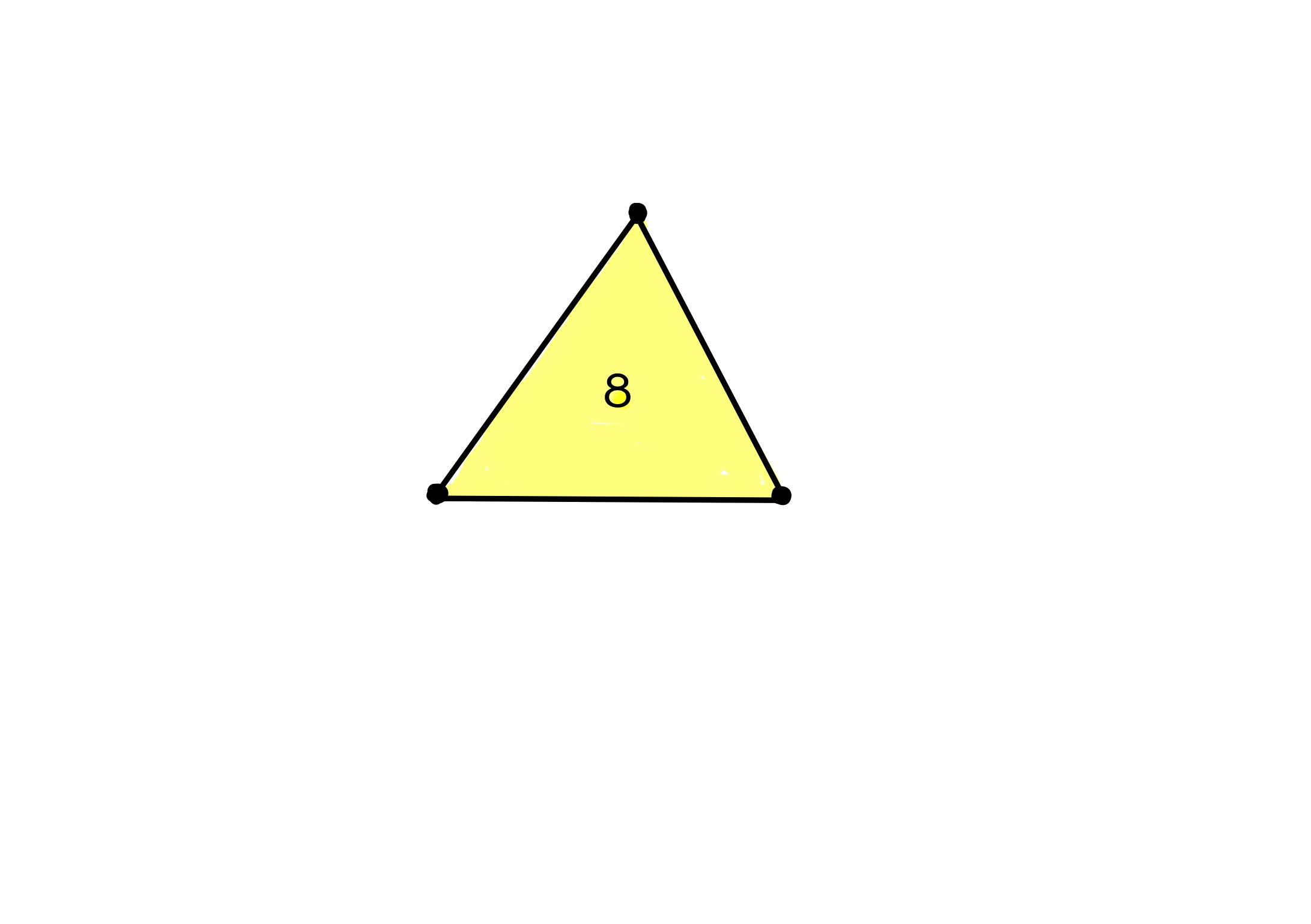}
\end{center}
\end{subfigure}
\begin{subfigure}{0.17\textwidth}
\begin{center}
\includegraphics[trim={21cm 22cm 22cm 13cm}, clip,width=2.2cm]{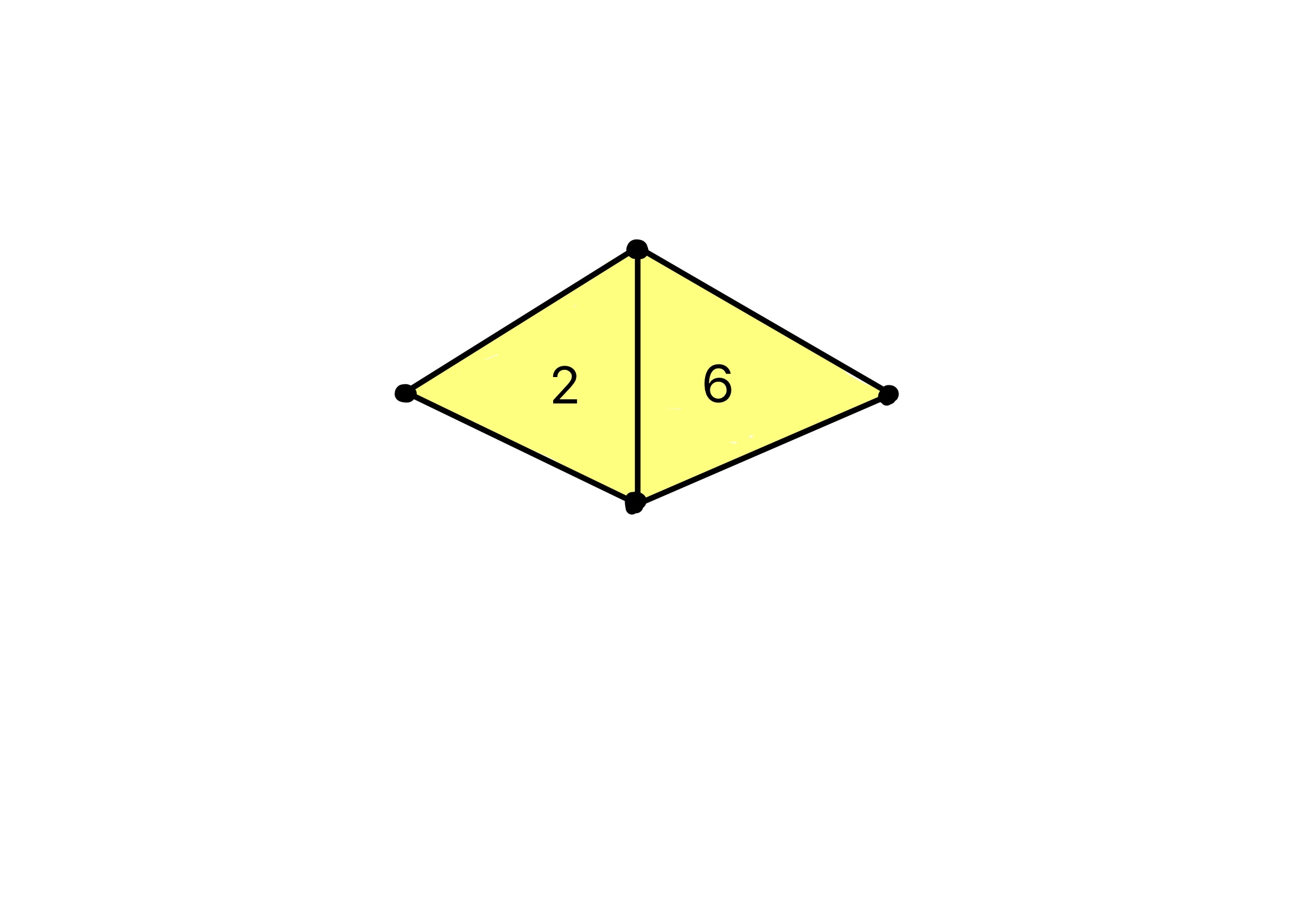}
\end{center}
\end{subfigure}

\begin{subfigure}{0.17\textwidth}
\begin{center}
\includegraphics[trim={21cm 19cm 22cm 16cm}, clip,width=2.2cm]{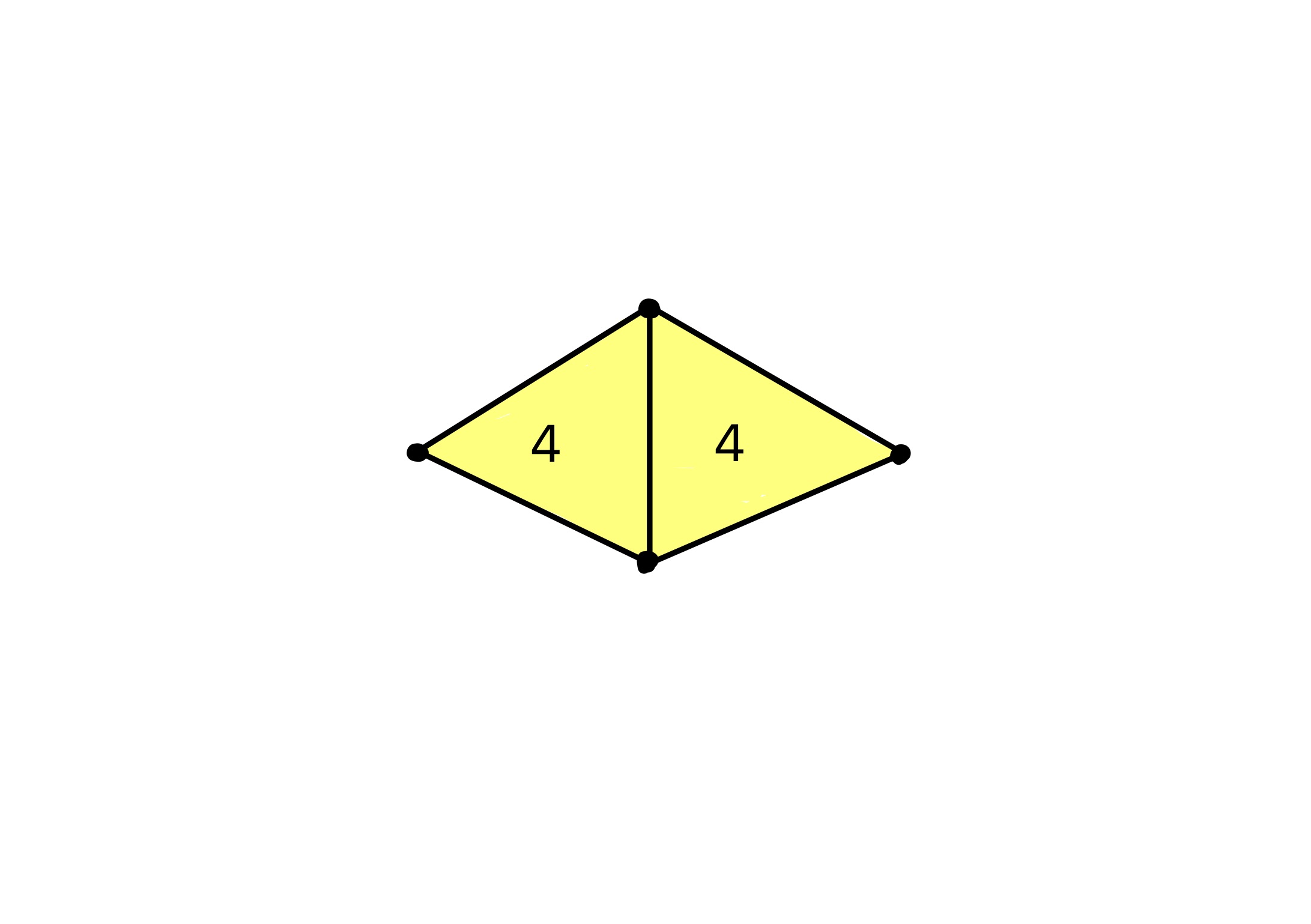}
\end{center}
\end{subfigure}
\begin{subfigure}{0.17\textwidth}
\begin{center}
\includegraphics[trim={23cm 22cm 24cm 12cm}, clip,width=2.2cm]{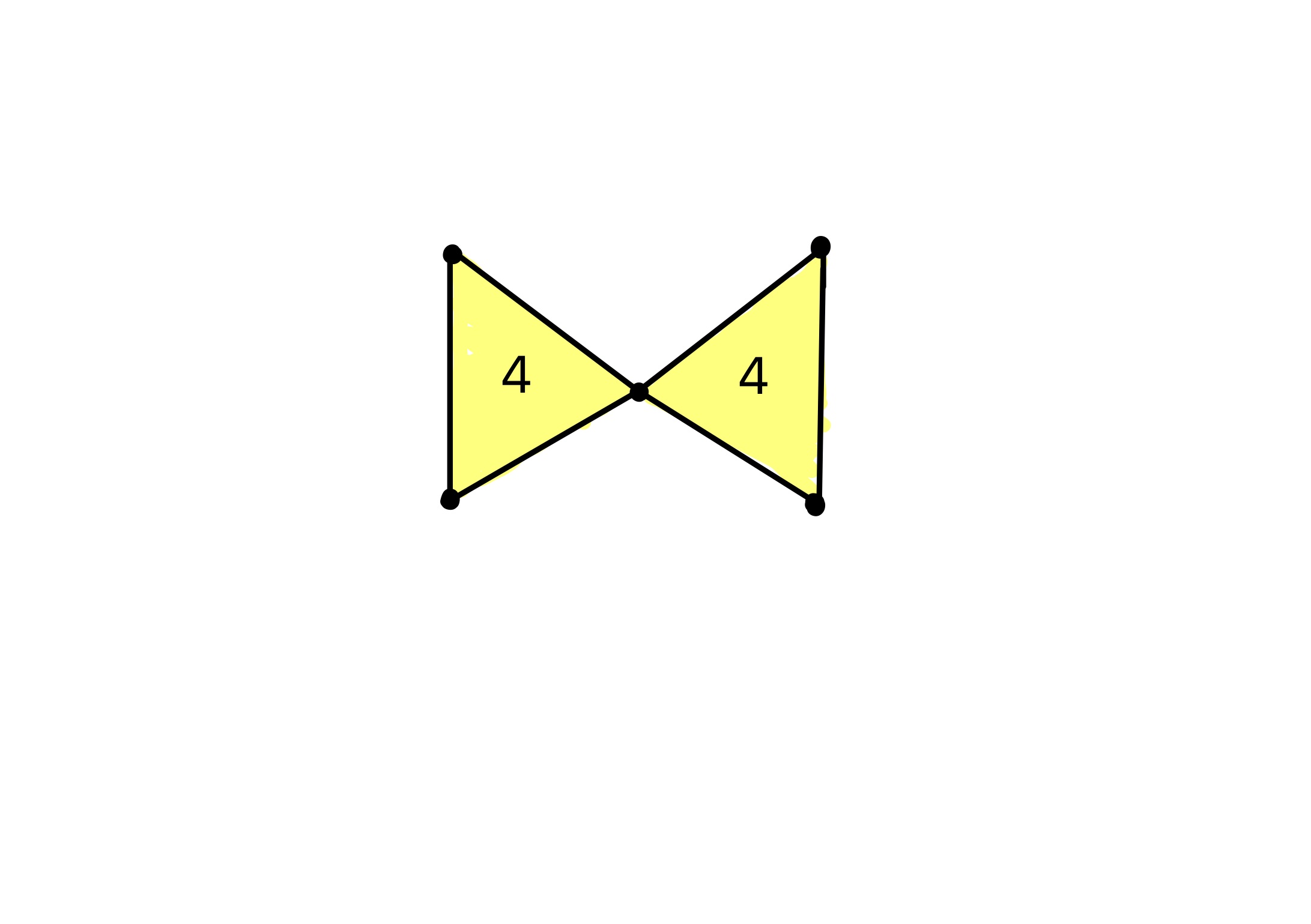}
\end{center}
\end{subfigure}
\begin{subfigure}{0.17\textwidth}
\begin{center}
\includegraphics[trim={19cm 16cm 26cm 10cm}, clip,width=2.2cm]{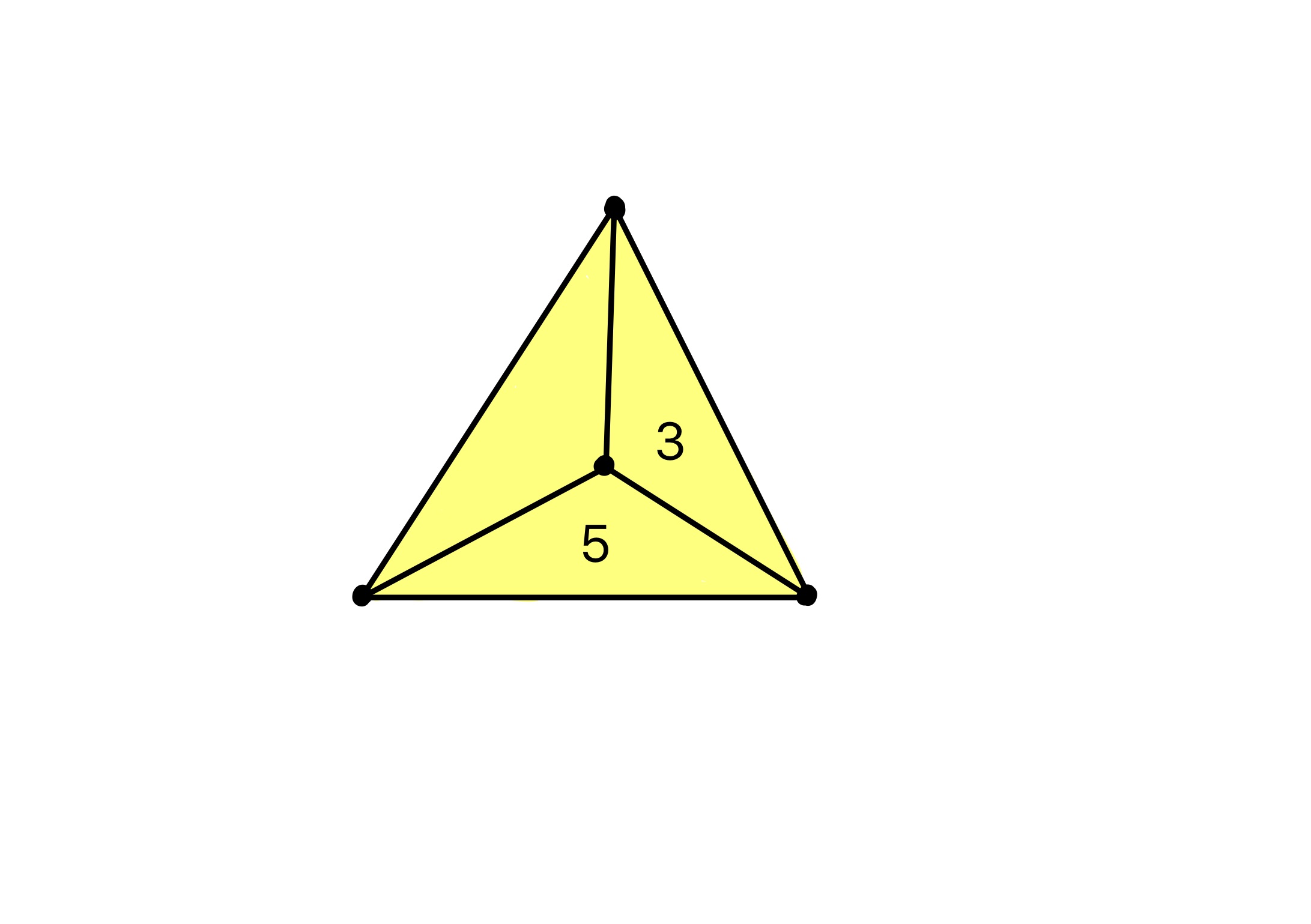}
\end{center}
\end{subfigure}
\caption{\label{fig-unsat-graphs}
A selection of known unsatisfiable looped-multi-hypergraphs.}
\end{figure}

\subsection{Minimality of unsatisfiable hypergraphs}
\label{sec:orgc6348d6}
In the case of multi-graphs, we showed in \cite{KaHi2020} that
satisfiability is invariant under homeomorphisms. We could then define a
minimal unsatisfiable multi-graph to be one which is unsatisfiable, with
every proper topological minor being totally satisfiable.

In the case of multi-hypergraphs, we have instead a list of reduction
rules. The reduction rules make it harder to define minimality due to
several reasons ---
\begin{enumerate}
\item Given a list of reduction rule, it is a computationally expensive to
check if any of these rules apply to a given graph.
\item For most reduction rules, the right side (which is what we obtain after
rewriting) is not a single graph --- it is instead a union of graphs.
This means that any notion of minimality for hypergraphs must
incorporate the effect of rewriting as a union of graphs instead of a
single graph.
\item Our list of reduction rules is not complete. We may find more reduction
rules by rewriting at higher degree vertices and carrying out longer
computations. Each additional reduction rule could make the minimality
criterion stricter and would shrink the size of any minimal set of
unsatisfiable hypergraphs.
\item Uniqueness of the minimal set of unsatisfiable hypergraphs is not
guaranteed owing to the complexity of the reduction rules.
\end{enumerate}

\subsection{Computational logistics}
\label{sec:org7cb9d3b}
In this section we discuss the logistical setup used for carrying out all
the calculations in this paper, along with a mention of the challenges
posed by continuing these computations on bigger graphs in the face of
exponentially more cases that need checking.

A computational procedure for finding all unsatisfiable
looped-multi-hypergraphs can be carried out as follows ---
\begin{description}
\item[{\textbf{Step 1.}}] Start with all looped-multi-hypergraphs sorted from smallest
to largest. This can be done by calling \texttt{nauty} from inside
SageMath. We use \texttt{nauty} to generate all graphs with the
following properties inside a specified vertex range ---
\begin{itemize}
\item the graph must be connected.
\item total number of vertices must lie within specified range.
\item edge sizes can be 1 (loops), 2 (simple edges), or 3 (hyperedges).
\item we disallow edges of size 4 or higher in order to keep the
computational task tractable.
\item we specify that the minimum vertex degree of the graph should be 2
(since leaf vertices are known to be reducible).
\item we allow an edge of size \(k\) to only have multiplicity less than
\(2^k\).
\end{itemize}
\item[{\textbf{Step 2.}}] Pick a graph and apply all known reduction rules to it.
\item[{\textbf{Step 3.}}] Sat-check the irreducible part of the graph left over from
Step 2 using brute-force strategy.
\item[{\textbf{Step 4.}}] If totally satisfiable, then pick the next graph and go back to
Step 2.
\item[{\textbf{Step 5.}}] If unsatisfiable, then add the irreducible part to the list
of known ``minimal criminals''.
\end{description}

While this procedure allows us to search for small unsatisfiable graphs, it
is clear that we have to contend with an exponential blowup in the number
of graphs as well as an exponential blowup in the number of Cnfs that need
to be sat-checked as we keep increasing the vertex count. Table
\ref{table-exponential-blowup-in-graphs} tabulates the number of graphs for
different vertex ranges.

\begin{table}[H]
\caption{Exponential blowup in the number of graphs with increasing vertex count.}
\begin{center}
\begin{tabular}{l l}
\hline
Number of connected simple graphs with less than 7 vertices & 143\\
Number of minimal unsatisfiable irreducible simple graphs & 4\\
 & \\
Number of connected looped-multi-hypergraphs with less than 6 vertices & 10080\\
Number of minimal unsatisfiable irreducible L-M-H-graphs & 202\\
 & \\
Number of connected looped-multi-hypergraphs with less than 7 vertices & 48,364,386\\
Number of minimal unsatisfiable irreducible L-M-H-graphs & unknown\\
\hline
\end{tabular}
\end{center}
\label{table-exponential-blowup-in-graphs}
\end{table}

\section{Satisfiability of triangulations}
\label{sec:org5b66ec3}
In this section we check a list of common/standard hypergraphs having edges
of size exactly \(3\). These hypergraphs can be drawn as triangulations of
various surfaces and are thus of interest when viewing \GraphSAT from a
topological point of view. We present computationally obtained
satisfiability and unsatisfiability results. The choice of structures we
study here is less systematic and more driven by ease of calculation.

\subsection{Thickening of graph edges}
\label{sec:org7e741ad}
We outline below a method to create triangulations starting with simple
graphs, such that the satisfiability status in going from the simple graph
to the triangulation remains unchanged.

The process involves a local rewrite of every simple edge \(\g{ab}\) in a
graph with the hyperedges \(\g{abc\wedge acd\wedge bcd}\), where \(\g{c}\) and
\(\g{d}\) are new vertices not previously appearing in the graph.

For example, since \(\g{ab^4}\) is an unsatisfiable graph, we can thicken
all its edges into hyperedges to form the triangulation
\[\g{ab1\wedge a12\wedge b12\wedge ab3\wedge a34\wedge b34\wedge ab5\wedge a56\wedge b56\wedge ab7\wedge a78\wedge b78}.\] This thickening
process is shown in Figure \ref{fig-thickening_of_ab4}.

\begin{figure}[h]
\centering
\begin{subfigure}{0.15\textwidth}
\begin{center}
\includegraphics[trim={22cm 30cm 34cm 17cm}, clip,width=2cm]{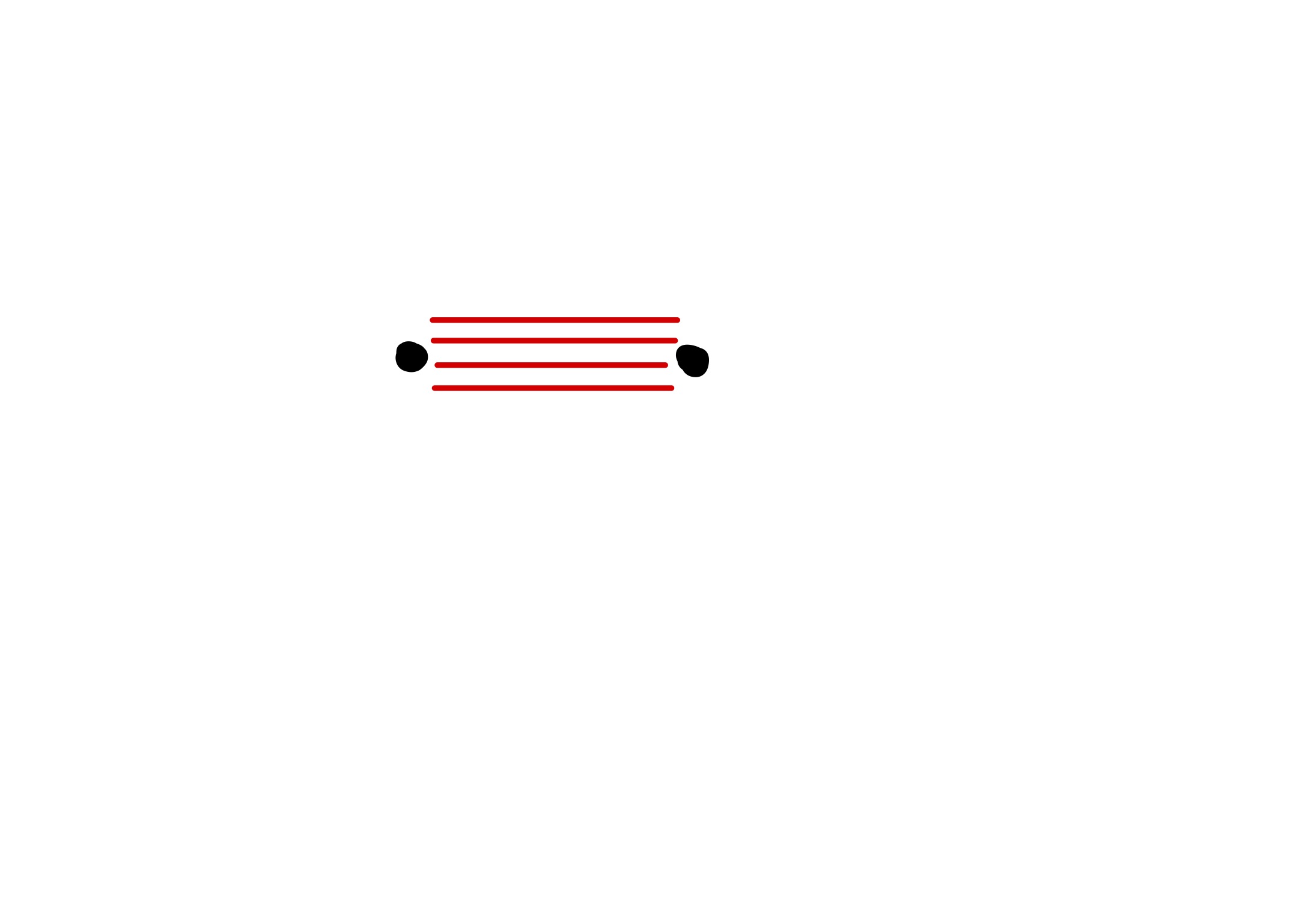}
\end{center}
\end{subfigure}
\begin{subfigure}{0.15\textwidth}
\begin{center}
\includegraphics[trim={21cm 20cm 36cm 08cm}, clip,width=2cm]{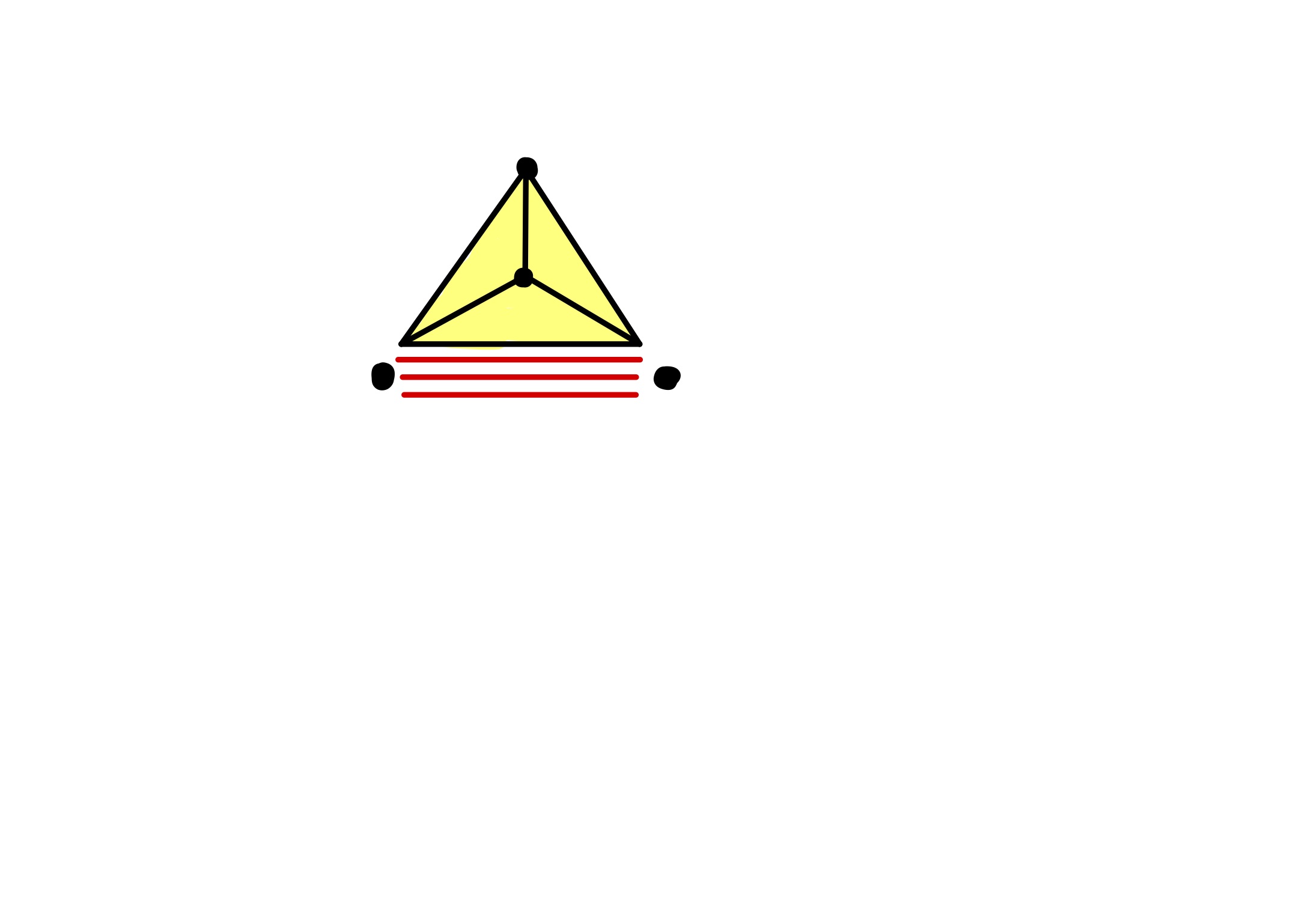}
\end{center}
\end{subfigure}
\begin{subfigure}{0.15\textwidth}
\begin{center}
\includegraphics[trim={29cm 20cm 29cm 7cm}, clip,width=2cm]{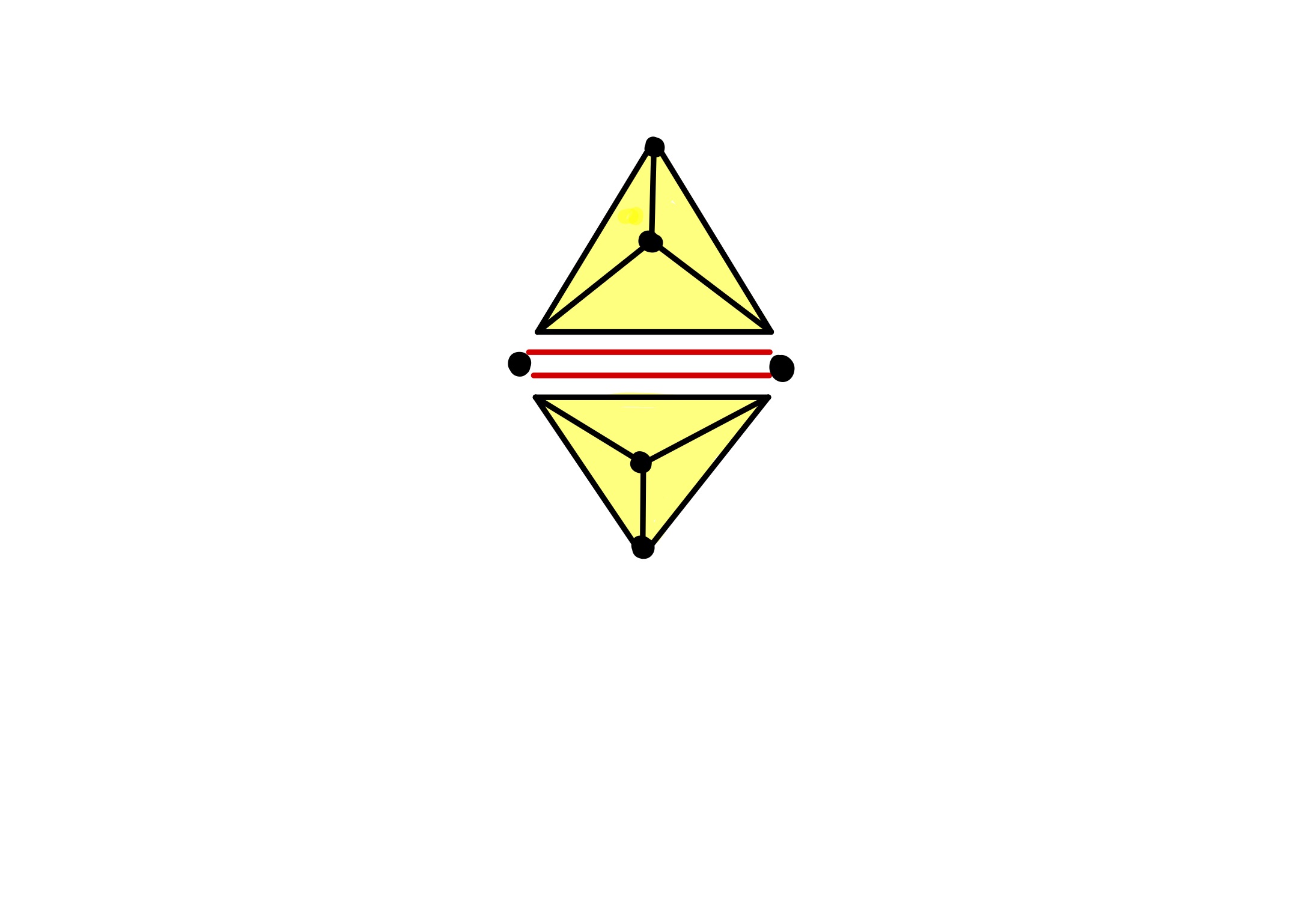}
\end{center}
\end{subfigure}
\begin{subfigure}{0.2\textwidth}
\begin{center}
\includegraphics[trim={11cm 15cm 36cm 11cm}, clip,width=3cm]{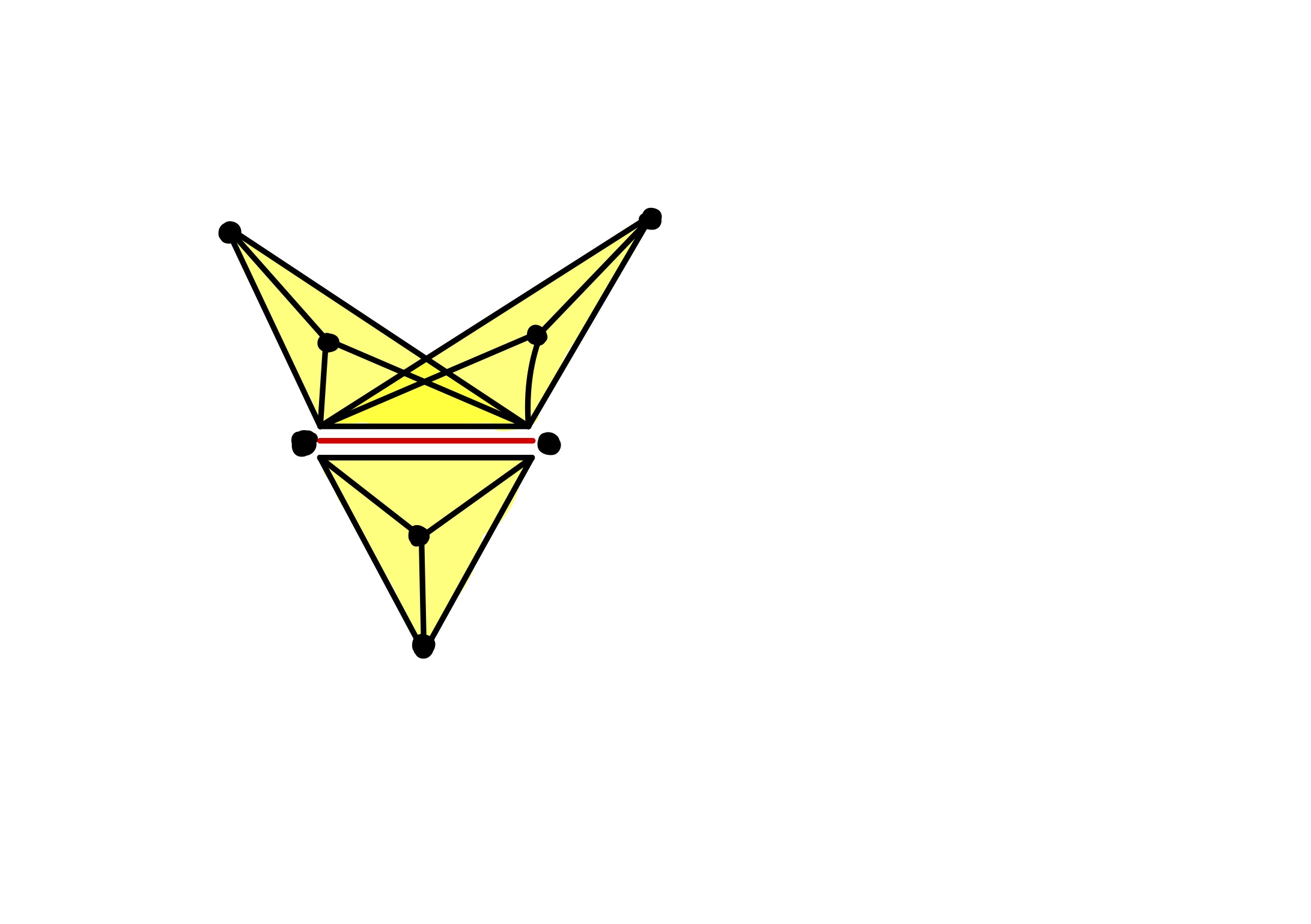}
\end{center}
\end{subfigure}
\begin{subfigure}{0.15\textwidth}
\begin{center}
\includegraphics[trim={15cm 14cm 20cm 5cm}, clip,width=3.5cm]{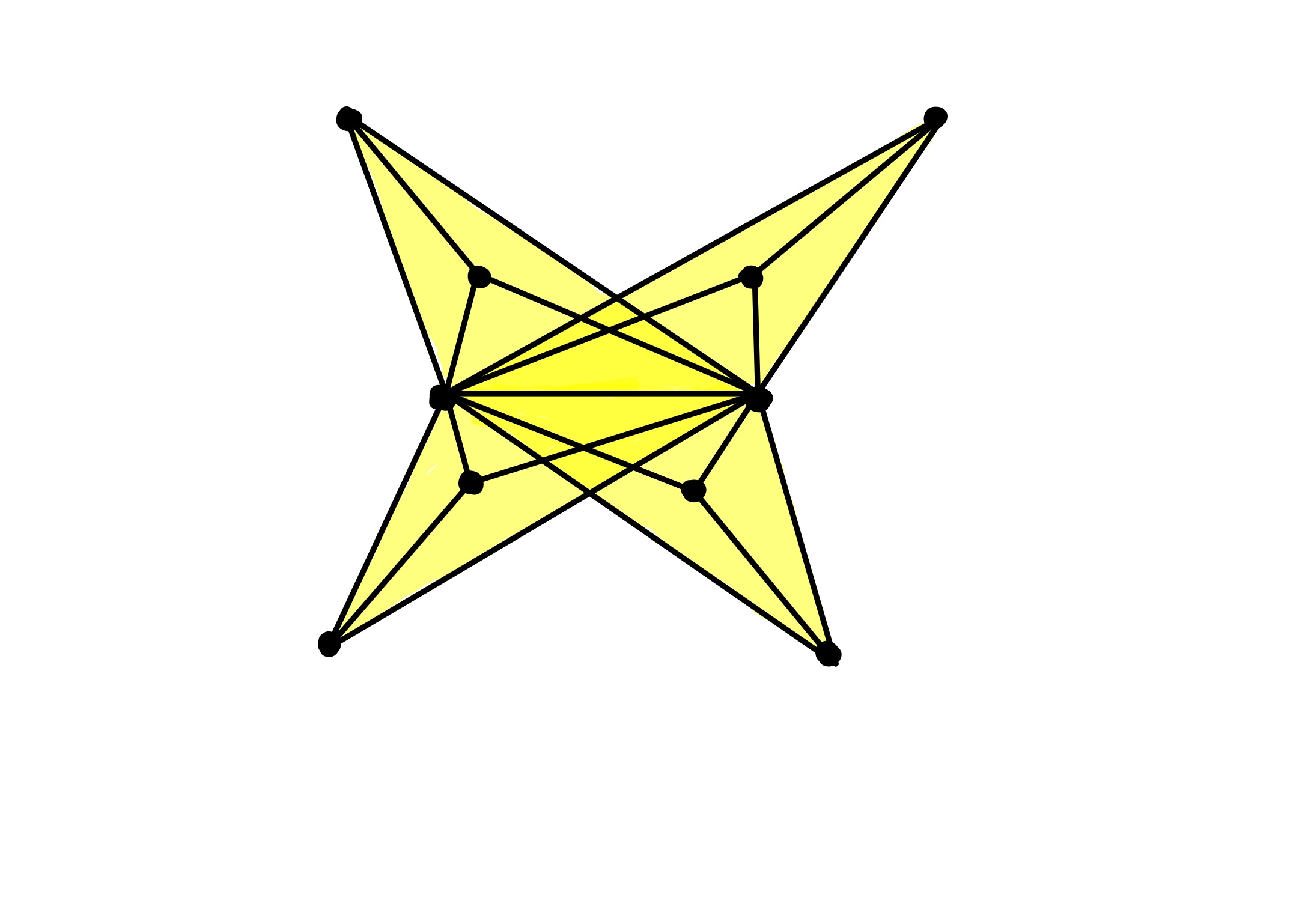}
\end{center}
\end{subfigure}
\caption{\label{fig-thickening_of_ab4}
Thickening of the edges of \(\g{ab^4}\) shown step-by-step.}
\end{figure}

Similarly, thickening of the unsatisfiable graph \(\g{ab^2\wedge bc^2}\)
results in the unsatisfiable triangulation shown in Figure
\ref{fig-thickening_of_ab2bc2}. This triangulation is planar and can therefore be
embedded in any surface. Thus, every surface has an unsatisfiable
triangulation.

\begin{figure}[h]
\centering
\begin{subfigure}{0.2\textwidth}
\begin{center}
\includegraphics[trim={17cm 29cm 30cm 18cm}, clip,width=2.5cm]{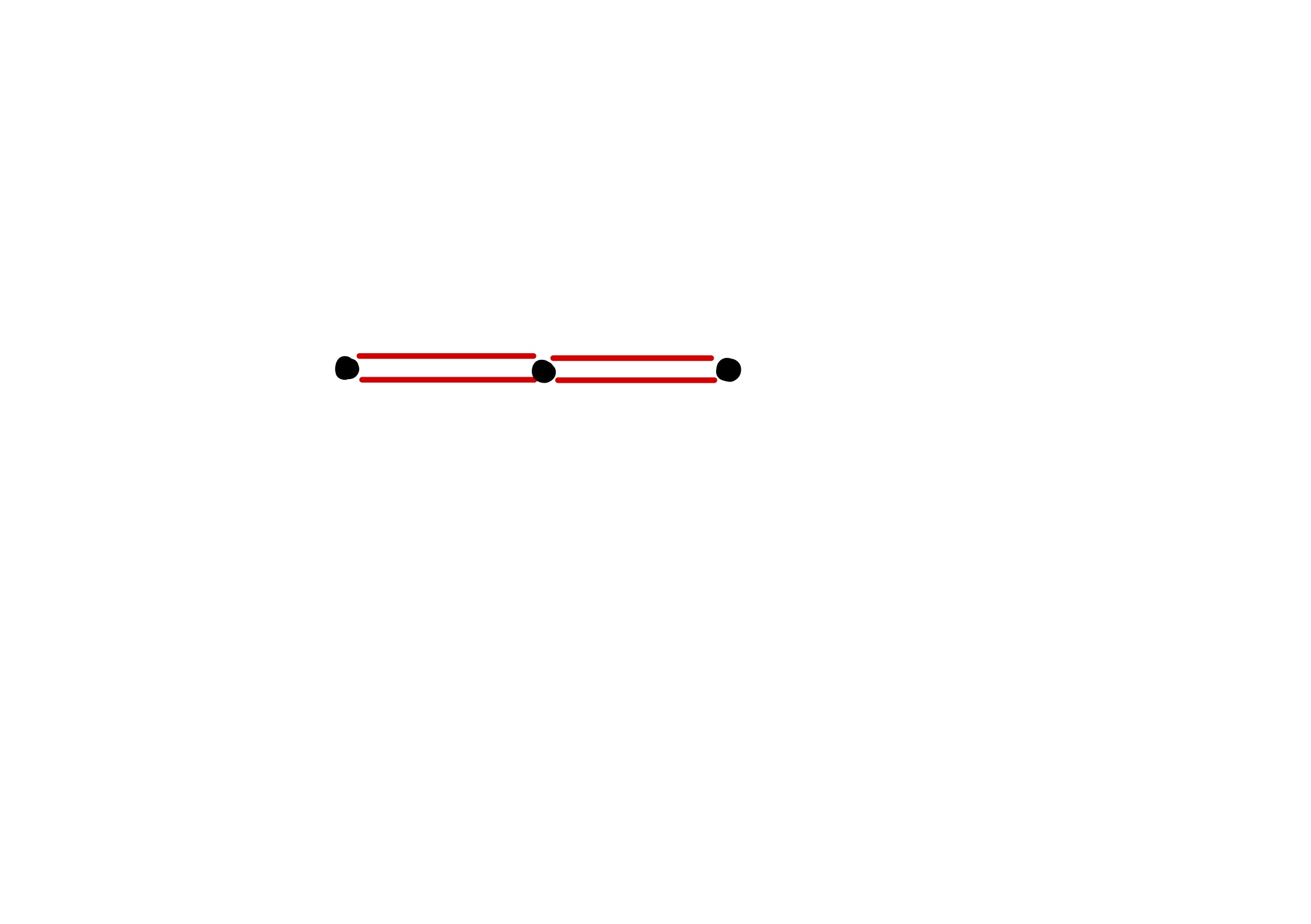}
\end{center}
\end{subfigure}
\begin{subfigure}{0.2\textwidth}
\begin{center}
\includegraphics[trim={14cm 17cm 25cm 12cm}, clip,width=3cm]{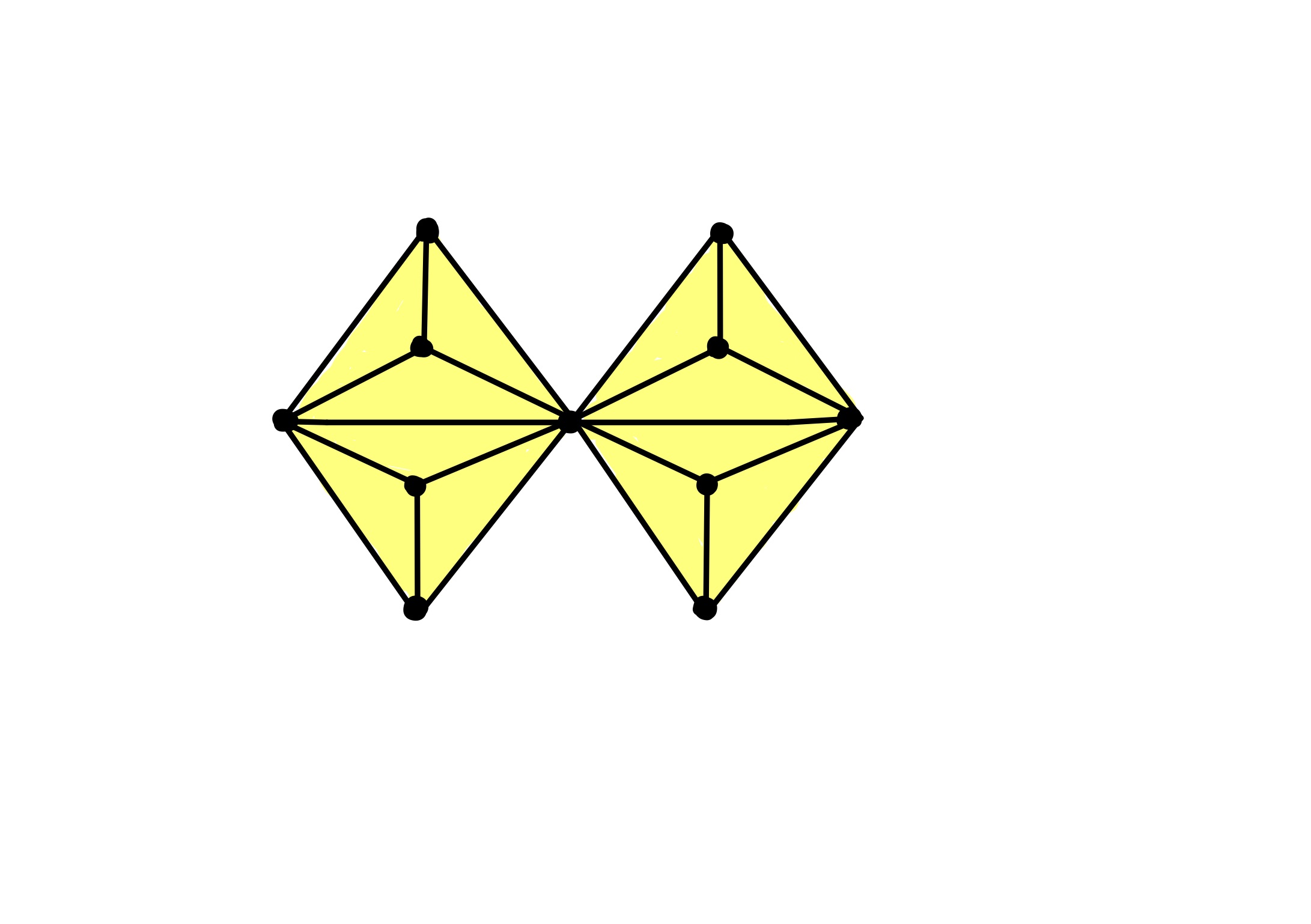}
\end{center}
\end{subfigure}
\caption{\label{fig-thickening_of_ab2bc2}
Thickening of the edges of \(\g{ab^2\wedge bc^2}\).}
\end{figure}

We also note that not all unsatisfiable triangulations are thickenings of
unsatisfiable simple graphs. To prove that graph satisfiability is
invariant under thickening of edges, we observe the following ---
\begin{align*}
  \g{s\wedge 123\wedge 134\wedge 234} &\bsim \g{s\wedge 123^2} &(\text{using edge-smoothing reduction rule})\\
                    &\bsim \g{s\wedge 12}    &(\text{using leaf vertex reduction rule})
\end{align*}
These reduction rules can be applied only because \(\g{s}\) does not have
any edges incident on vertices \(\g{3}\) and \(\g{4}\) since there vertices
are newly introduced by the thickening process.

\subsection{Tetrahedron and prisms}
\label{sec:orga829bde}
The tetrahedron's wire-frame structure, i.e. its edges form the graph
\(\g{K_4}\), a known unsatisfiable graph. The faces form the triangulation
\(\g{abc\wedge acd\wedge abd\wedge bcd}\). Using the reduction rule from \S \ref{sec:org6227f90} gives

\begin{align*}
\text{Tetrahedron}
    &\bsim\; \g{abc\wedge ab} \;\cup\; \g{abc\wedge ac} \;\cup\; \g{abc\wedge bc} \\
    &\bsim\; \g{ab} \;\cup\; \g{ac} \;\cup\; \g{bc} \\
    &\bsim\; \btop \;\cup\; \btop \;\cup\; \btop \\
    &=\; \btop
\end{align*}

Thus the tetrahedron is totally satisfiable.

On the other hand the triangular prism has two possible minimal
triangulations ---
\begin{enumerate}
\item The symmetric triangulation, given by
\[\g{123\wedge 125\wedge 134\wedge 145\wedge 236\wedge 256\wedge 346\wedge 456}.\]
\item The asymmetric triangulation, given by
\[\g{123\wedge 125\wedge 136\wedge 145\wedge 146\wedge 236\wedge 256\wedge 456}.\]
\end{enumerate}

These triangulations are shown in Figure \ref{fig-prisms}. Passing them to the
\texttt{decompose} function from the \texttt{graph\_rewite} module tells us that both
triangulations are totally satisfiable.

\begin{figure}[h]
\centering
\begin{subfigure}{0.4\textwidth}
\begin{center}
\includegraphics[trim={15cm 11cm 20cm 9cm}, clip,width=5cm]{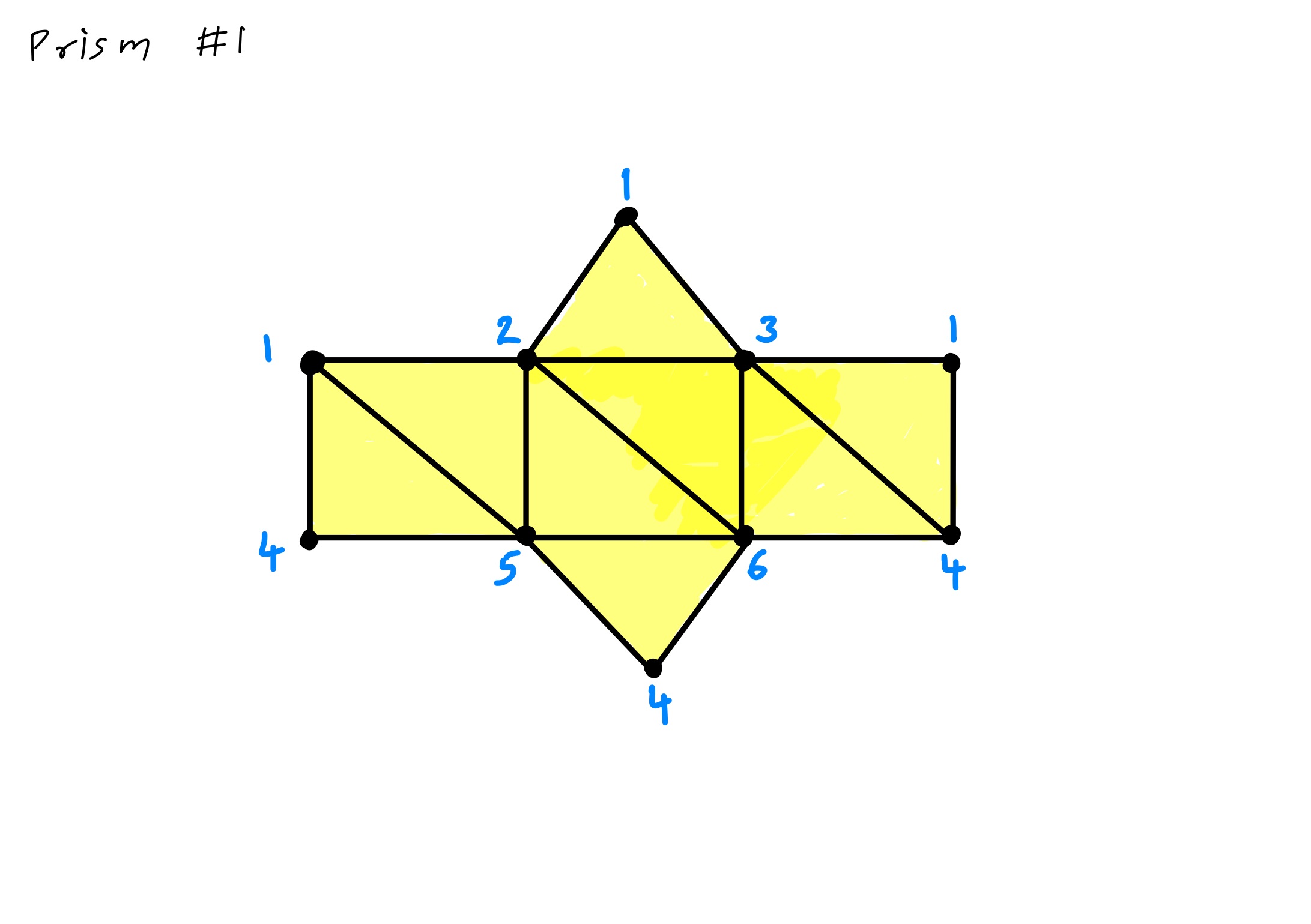}
\end{center}
\caption{The symmetric triangulation.}
\end{subfigure}
\begin{subfigure}{0.4\textwidth}
\begin{center}
\includegraphics[trim={13cm 8cm 21cm 10cm}, clip,width=5cm]{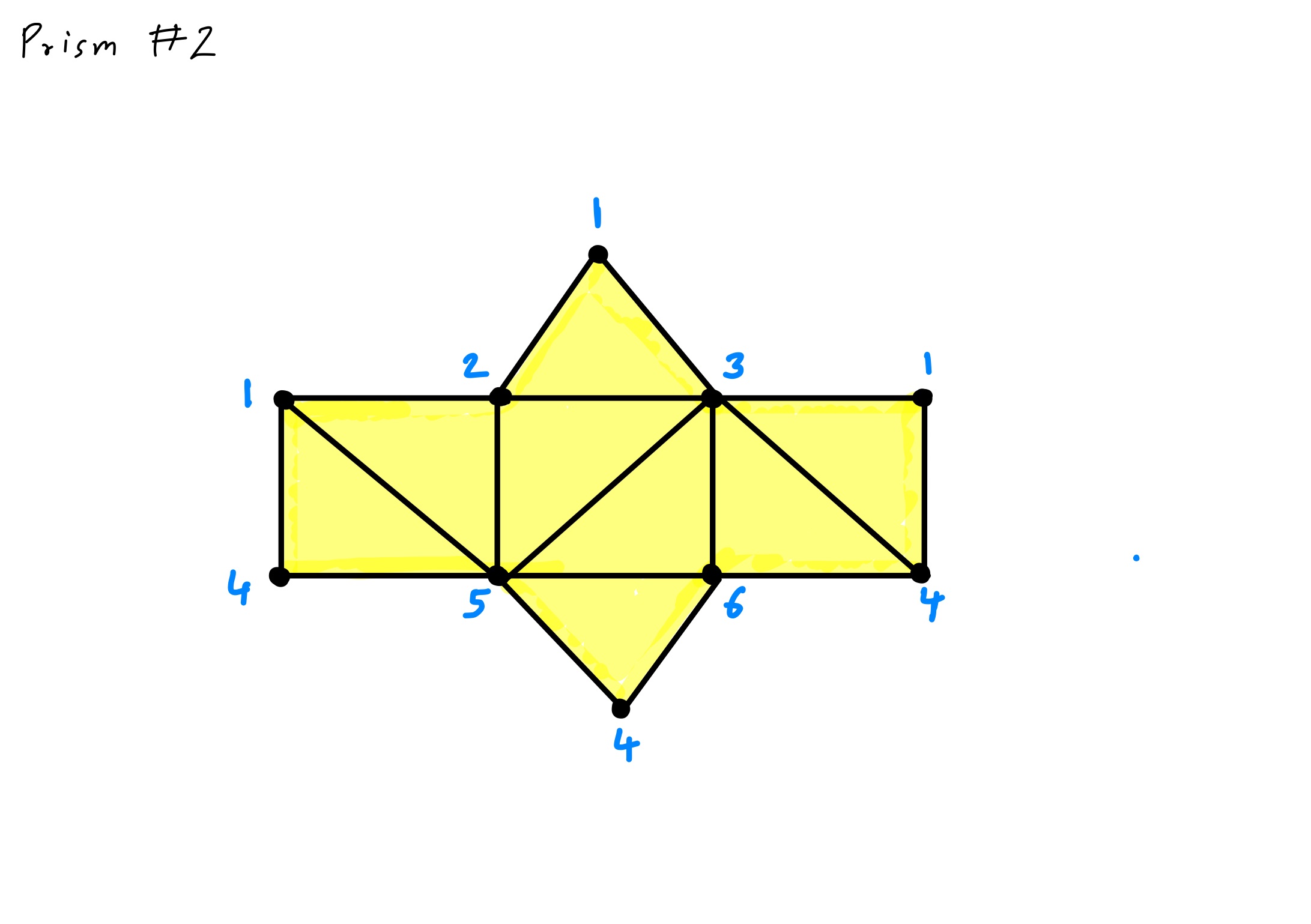}
\end{center}
\caption{The asymmetric triangulation.}
\end{subfigure}
\caption{\label{fig-prisms}
Minimal triangulations of a triangular prism}
\end{figure}

\begin{minted}[frame=lines,label= (python3.9) (scratch) <<prism-calculation>>]{python}
import mhgraph as mhg
import graph_rewrite as grw

prism1: mhg.MHGraph = mhg.mhgraph([[1,2,3], [1,2,5], [1,3,4], [1,4,5],
				   [2,3,6], [2,5,6], [3,4,6], [4,5,6]])
grw.decompose(prism1)

prism2: mhg.MHGraph = mhg.mhgraph([[1,2,3], [1,2,5], [1,3,6], [1,4,5],
				   [1,4,6], [2,3,6], [2,5,6], [4,5,6]])
grw.decompose(prism2)
\end{minted}
\footnotesize
\color{darkgray}
\uline{Output}:
\begin{verbatim}
(1, 2, 3)¹,(1, 2, 5)¹,(1, 3, 4)¹,(1, 4, 5)¹,(2, 3, 6)¹,(2, 5, 6)¹,(3, 4, 6)¹,(4, 5, 6)¹
    is SAT
(1, 2, 3)¹,(1, 2, 5)¹,(1, 3, 6)¹,(1, 4, 5)¹,(1, 4, 6)¹,(2, 3, 6)¹,(2, 5, 6)¹,(4, 5, 6)¹
    is SAT
\end{verbatim}
\normalsize
\color{black}

\subsection{Triangulation of a Möbius strip}
\label{sec:orga6c741c}
A Möbius strip can be triangulated as \[\g{124\wedge 146\wedge 235\wedge
245\wedge 346\wedge 356}\] (also shown in Figure \ref{fig-mobius-strip}). Using
\texttt{graph\_rewrite.decompose}, we conclude that this triangulation is totally
satisfiable.

\begin{figure}[h]
\centering
\begin{center}
\includegraphics[trim={8cm 14cm 5cm 12cm}, clip,width=7cm]{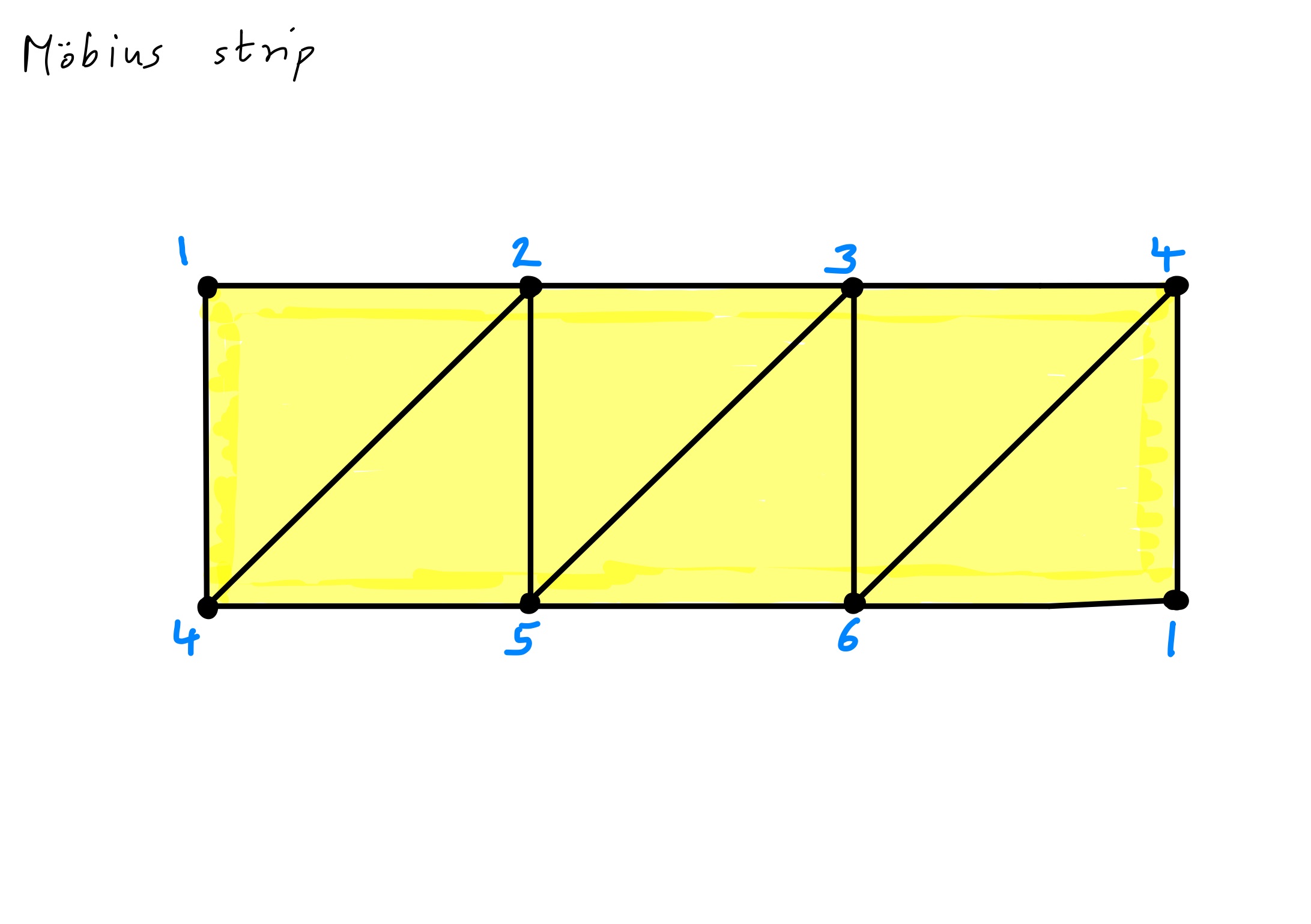}
\end{center}
\caption{\label{fig-mobius-strip}
Triangulation of a Möbius strip}
\end{figure}

\begin{minted}[frame=lines,label= (python3.9) (scratch) <<mobius>>]{python}
import mhgraph as mhg
import graph_rewrite as grw

mobius_strip: mhg.MHGraph = mhg.mhgraph([[1, 2, 4], [1, 4, 6], [2, 3, 5],
					 [2, 4, 5], [3, 4, 6], [3, 5, 6]])
grw.decompose(mobius_strip)
\end{minted}
\footnotesize
\color{darkgray}
\uline{Output}:
\begin{verbatim}
(1, 2, 4)¹,(1, 4, 6)¹,(2, 3, 5)¹,(2, 4, 5)¹,(3, 4, 6)¹,(3, 5, 6)¹ is SAT
\end{verbatim}
\normalsize
\color{black}

\subsection{Minimal triangulation of the real projective plane}
\label{sec:org249d63e}
The minimal triangulation of \(\mathbb{RP}^2\) has six vertices and is
unique up to relabeling of vertices. This triangulation is a classic result
and is often referred to in literature as \(\mathbb{RP}^2_6\). It can be
written as\\
\(\g{123\wedge 326\wedge 461\wedge 412\wedge 526\wedge 561\wedge 153\wedge 364\wedge 425\wedge 534}\), and is shown in Figure
\ref{fig-RP2}. Using \texttt{graph\_rewrite.decompose}, we conclude that this
triangulation is totally satisfiable.

\begin{figure}[h]
\centering
\begin{center}
\includegraphics[trim={10cm 6cm 6cm 4cm}, clip,width=7cm]{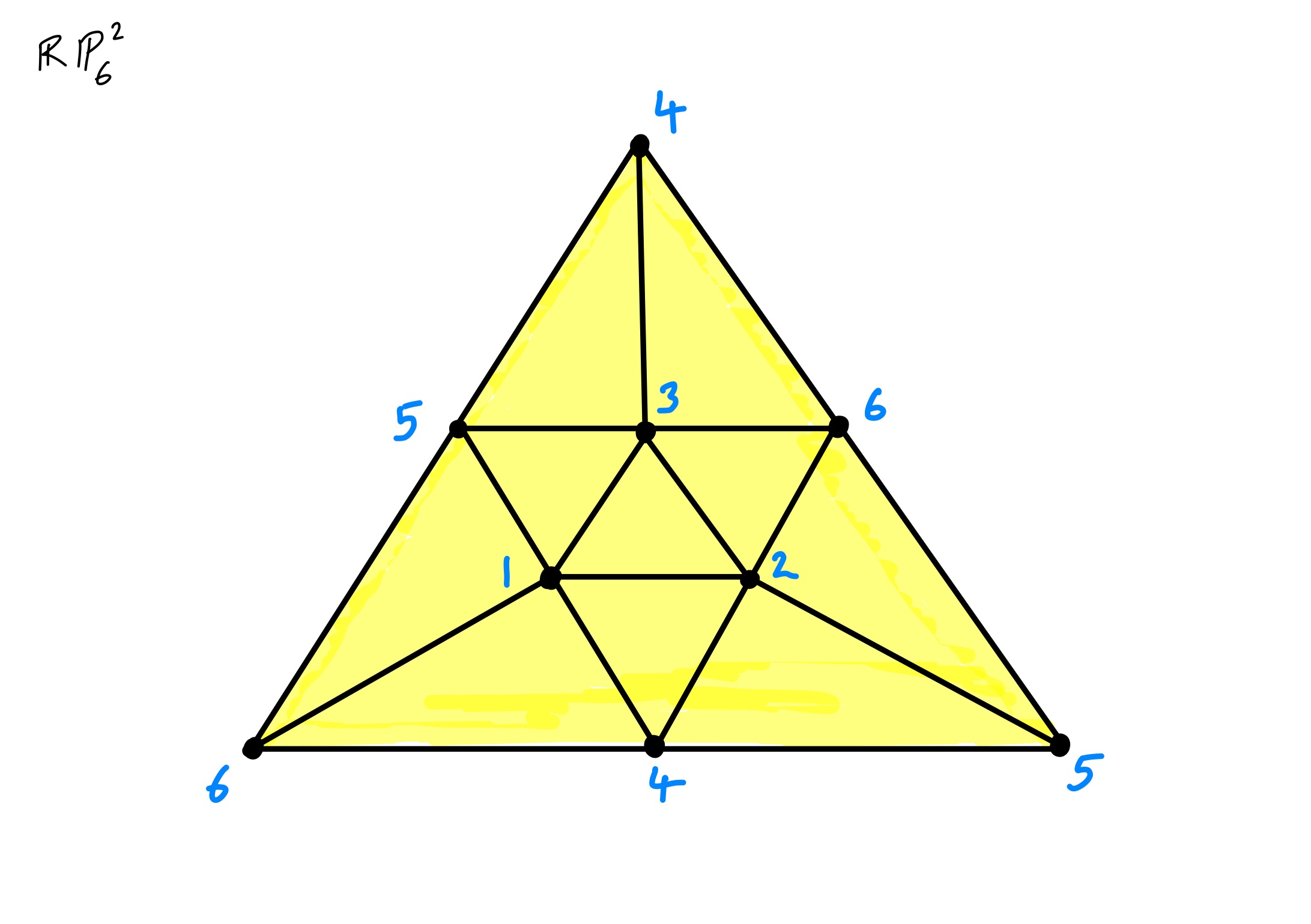}
\end{center}
\caption{\label{fig-RP2}
Minimal triangulation of the real projective plane, denoted \(\mathbb{RP}^2_6\).}
\end{figure}

\begin{minted}[frame=lines,label= (python3.9) (scratch) <<RP2>>]{python}
import mhgraph as mhg
import graph_rewrite as grw

rp2: mhg.MHGraph
rp2 = mhg.mhgraph([[1, 2, 3], [3, 2, 6], [4, 6, 1], [4, 1, 2], [5, 2, 6],
		   [5, 6, 1], [1, 5, 3], [3, 6, 4], [4, 2, 5], [5, 3, 4]])
grw.decompose(rp2)
\end{minted}
\footnotesize
\color{darkgray}
\uline{Output}:
\begin{verbatim}
(1, 2, 3)¹,(3, 2, 6)¹,(4, 6, 1)¹,(4, 1, 2)¹,(5, 2, 6)¹,(5, 6, 1)¹,(1, 5, 3)¹,
    (3, 6, 4)¹,(4, 2, 5)¹,(5, 3, 4)¹ is SAT
\end{verbatim}
\normalsize
\color{black}

\subsection{Minimal triangulation of a Klein bottle}
\label{sec:org81d17f5}
A Klein bottle has six distinct \(8\)-vertex triangulations
\cite{Cervone1994}. These triangulations all contain \(16\) distinct
hyperedges and have a minimum vertex degree of \(6\). These large numbers
make it difficult to determine the satisfiability status of these
triangulations without committing to significant computational resources.

Of the six distinct triangulations, we checked but one --- the ``242
triangulation'', given by the faces\\
\(\g{123\wedge 372\wedge 153\wedge 175\wedge 147\wedge 162\wedge 642\wedge
168\wedge 148\wedge 248\wedge 643}\)\\ \(\g{\wedge 374\wedge
685\wedge 653\wedge 825\wedge 275}\).

Passing it to \texttt{graph\_rewrite.decompose} and waiting for several hours of
computations results in the discovery that the \(242\) configuration is
unsatisfiable. In fact, it is unsatisfiable even if we remove the
\(\g{825\wedge 275}\) subgraph!

The \(242\) triangulation and its unsatisfiable subgraph are shown in
Figure \ref{fig-klein-bottle} for reference.

\begin{figure}[h]
\centering
\begin{subfigure}{0.49\textwidth}
\begin{center}
\includegraphics[trim={7cm 7cm 10cm 5cm}, clip,width=7cm]{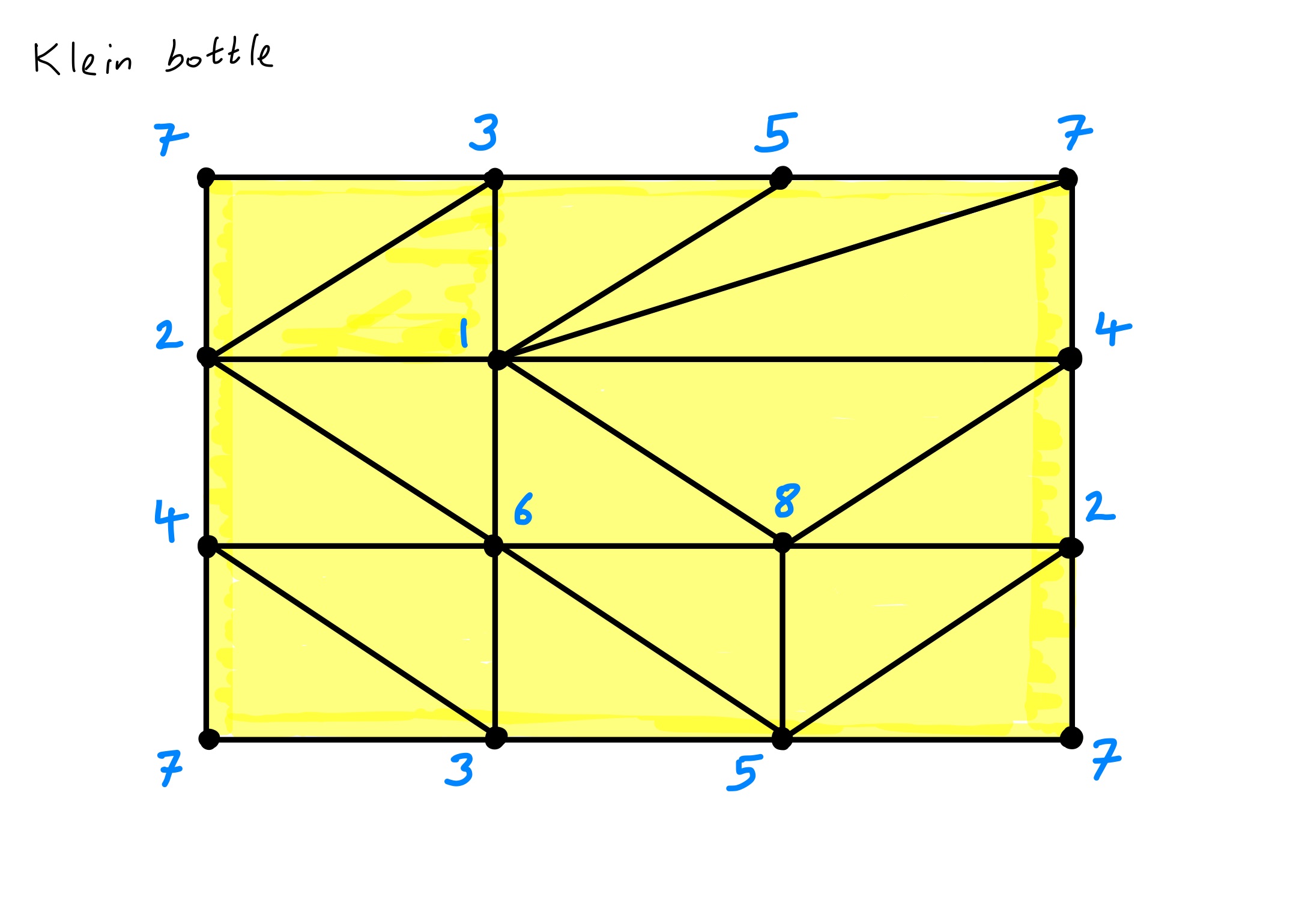}
\end{center}
\end{subfigure}
\begin{subfigure}{0.49\textwidth}
\begin{center}
\includegraphics[trim={5cm 7cm 10cm 7cm}, clip,width=7cm]{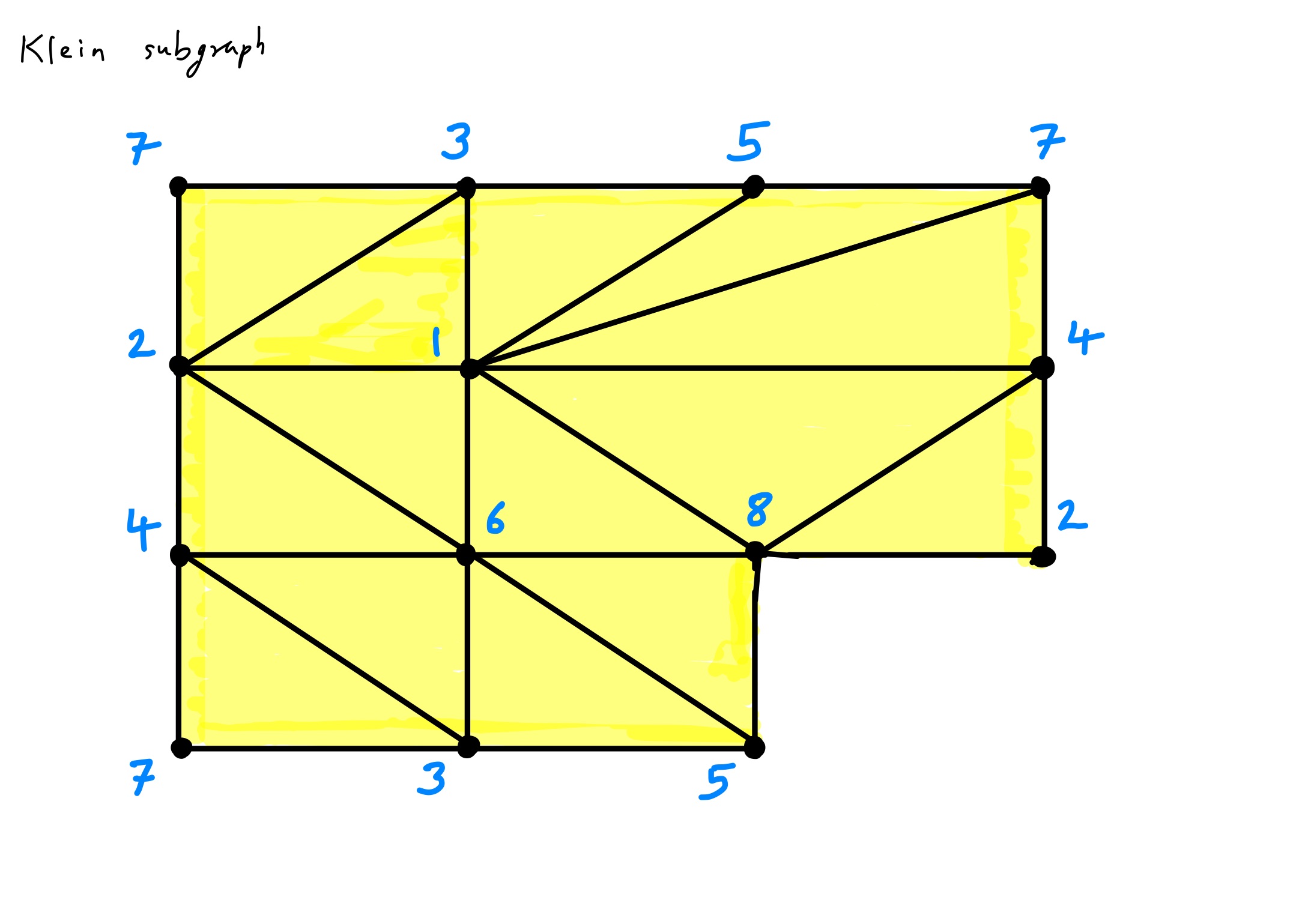}
\end{center}
\end{subfigure}
\caption{\label{fig-klein-bottle}
The ``242 triangulation'' of a Klein bottle and its unsatisfiable subgraph.}
\end{figure}

\subsection{Minimal triangulation of a torus}
\label{sec:org6c4410c}
The torus can be minimally triangulated \cite{Mobius1886} as --- \\
\(\g{126\wedge 267\wedge 237\wedge 371\wedge 674\wedge 745\wedge 715\wedge
156\wedge 412\wedge 452\wedge 523}\)\\ \(\g{\wedge 563\wedge
634\wedge 431}\).

This triangulation is shown in Figure \ref{fig-torus} and is found to be
unsatisfiable by the\\
\texttt{graph\_rewrite.decompose} function. In fact, it is unsatisfiable even if we
remove the \(\g{634\wedge 431}\) subgraph.

\begin{figure}[h]
\centering
\begin{subfigure}{0.4\textwidth}
\begin{center}
\includegraphics[trim={13cm 1cm 17cm 3cm}, clip,width=5cm]{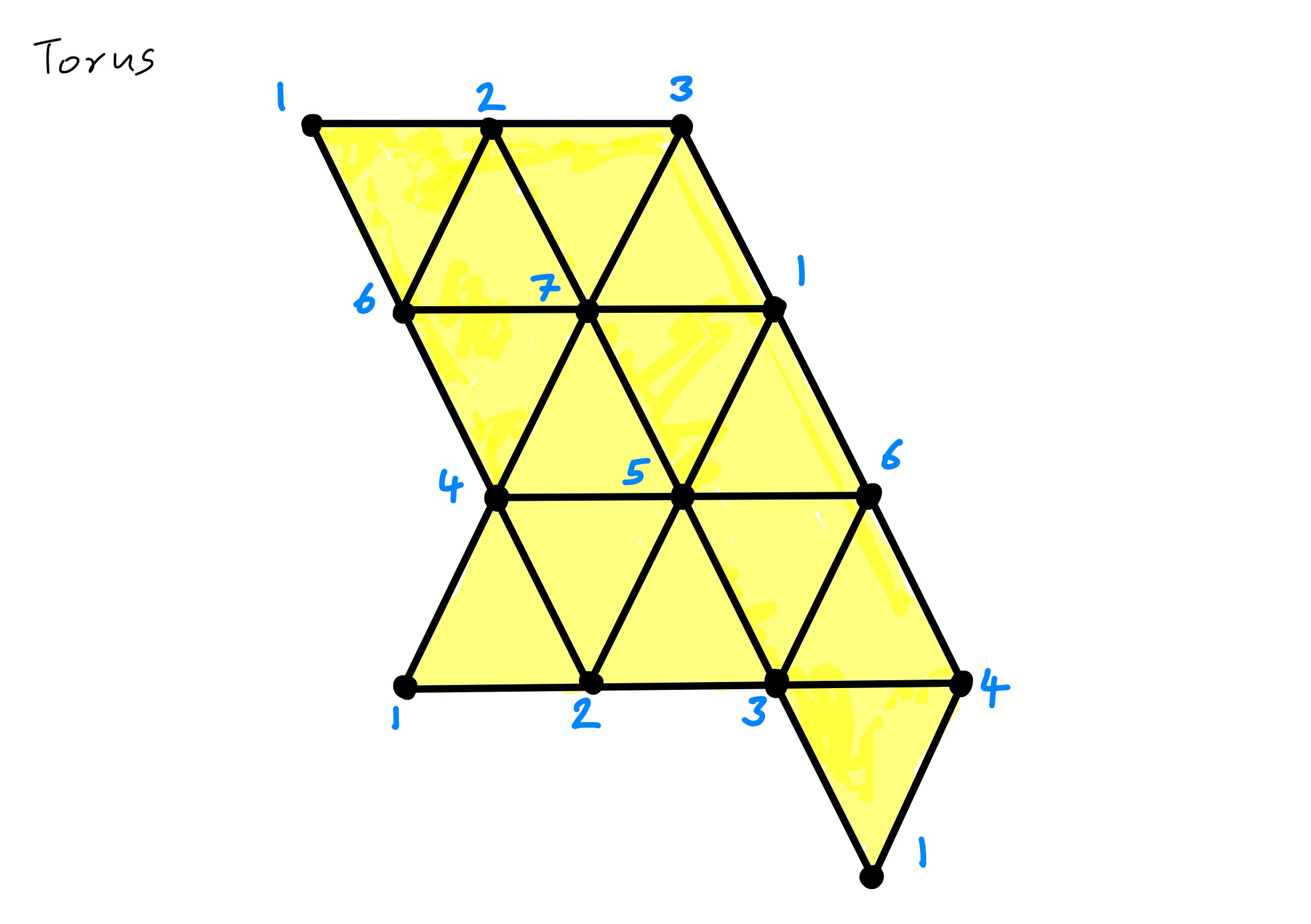}
\end{center}
\end{subfigure}
\begin{subfigure}{0.4\textwidth}
\begin{center}
\includegraphics[trim={16cm 0cm 15cm 9cm}, clip,width=5cm]{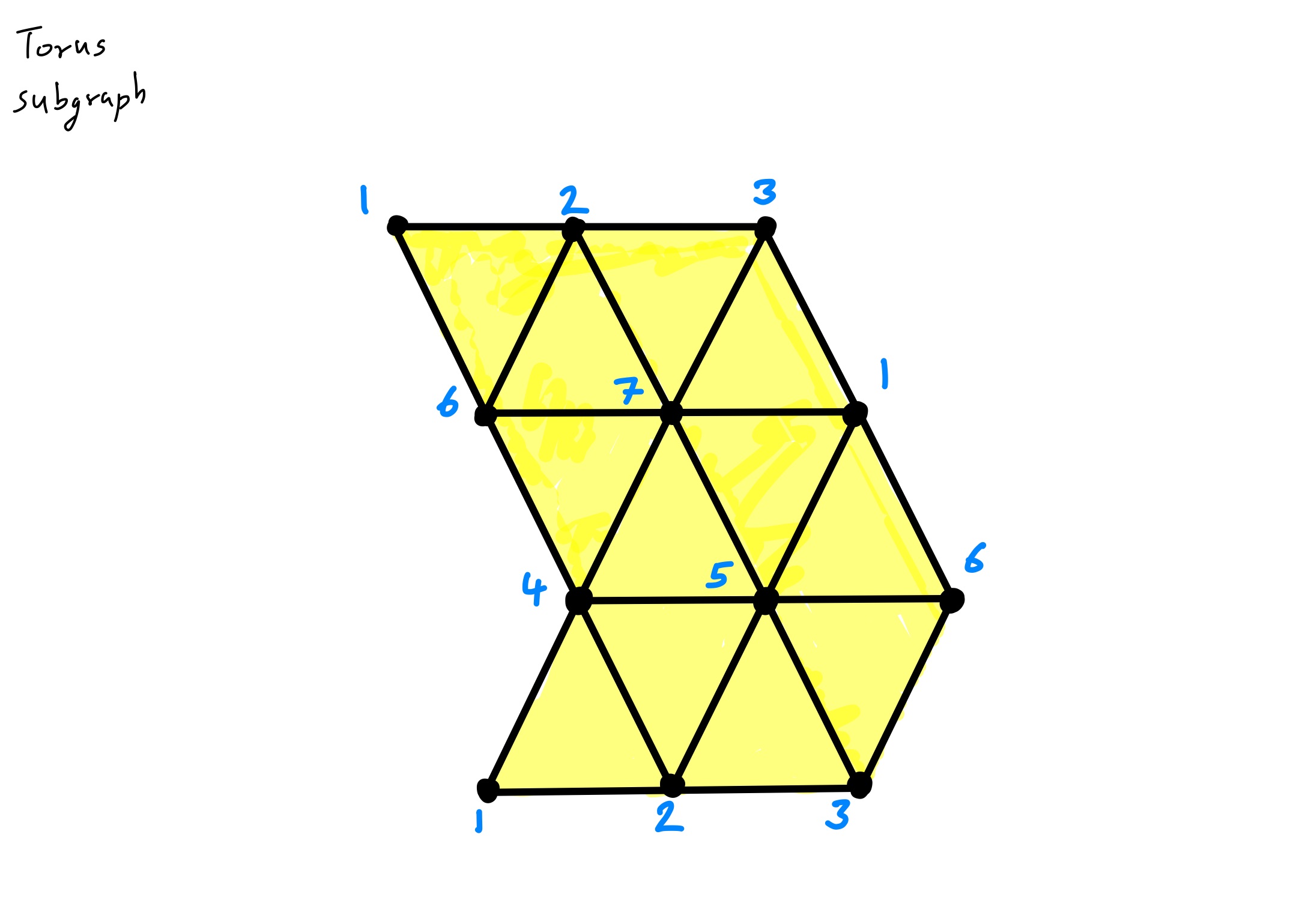}
\end{center}
\end{subfigure}
\caption{\label{fig-torus}
Minimal triangulation of a torus and its unsatisfiable subgraph.}
\end{figure}

\section{Satisfiability of infinite graphs}
\label{sec:org0d3ace9}
We recall from \S \ref{sec:org2ef3df6} that \(V\) is an arbitrary countable set, edges
are nonempty sets of vertices, and graphs are nonempty multisets of edges.
This definition does not exclude edges or graphs from being of countably
infinite size. A graph with infinite edges, or edges of infinite size is an
\emph{infinite graph}.

We note that Cnfs, in a similar vein, can also be infinite. \emph{Infinite Cnfs}
either have infinitely many clauses, or have clauses of infinite size.

The notions of assignment, satisfiability, unsatisfiability --- all carry
over to infinite graphs and infinite Cnfs. The only notions that do not
carry over are the questions of computational complexity since we cannot
talk of program run-time for infinite instances of \GraphSAT.

\subsection{Infinitely many disconnected loops}
\label{sec:orga57344c}
The graph \(\g{1\wedge 2\wedge 3\wedge \ldots}\) is a graph made of countably infinite
disconnected self-loops. This graph is totally satisfiable because every connected
component of it is.

\subsection{Uniform infinite trees}
\label{sec:org863e1f1}
For examples of totally satisfiable infinite graphs that are connected, we consider
a family of tree graphs. For positive integer \(n\), let \(\g{T_n}\) denote
an infinite tree graph with each vertex being connected to exactly \(n\)
different vertices via edges of size \(2\). These are also sometimes
referred to in the literature as infinite trees of uniform degree \(n\).

Each \(\g{T_n}\) is in fact totally satisfiable since we proved in
\cite{KaHi2020} that every tree is totally satisfiable and since the proof
did not depend on the finiteness of the graph, the theorem still hold for
infinite graphs .

Another intuitive way to see that \(\g{T_2}\), for example, is totally satisfiable
is that (at the level of Cnfs) each vertex can be used to satisfy its
adjacent clause (see Figure \ref{fig-infinite-line-and-ray}). This results in a
chain of assignments and each clause is satisfied in a style reminiscent of
Hilbert's famous infinite hotel.

\begin{figure}[h]
\centering
\begin{subfigure}{0.4\textwidth}
\begin{center}
\includegraphics[trim={16cm 27cm 20cm 19cm}, clip,width=7cm]{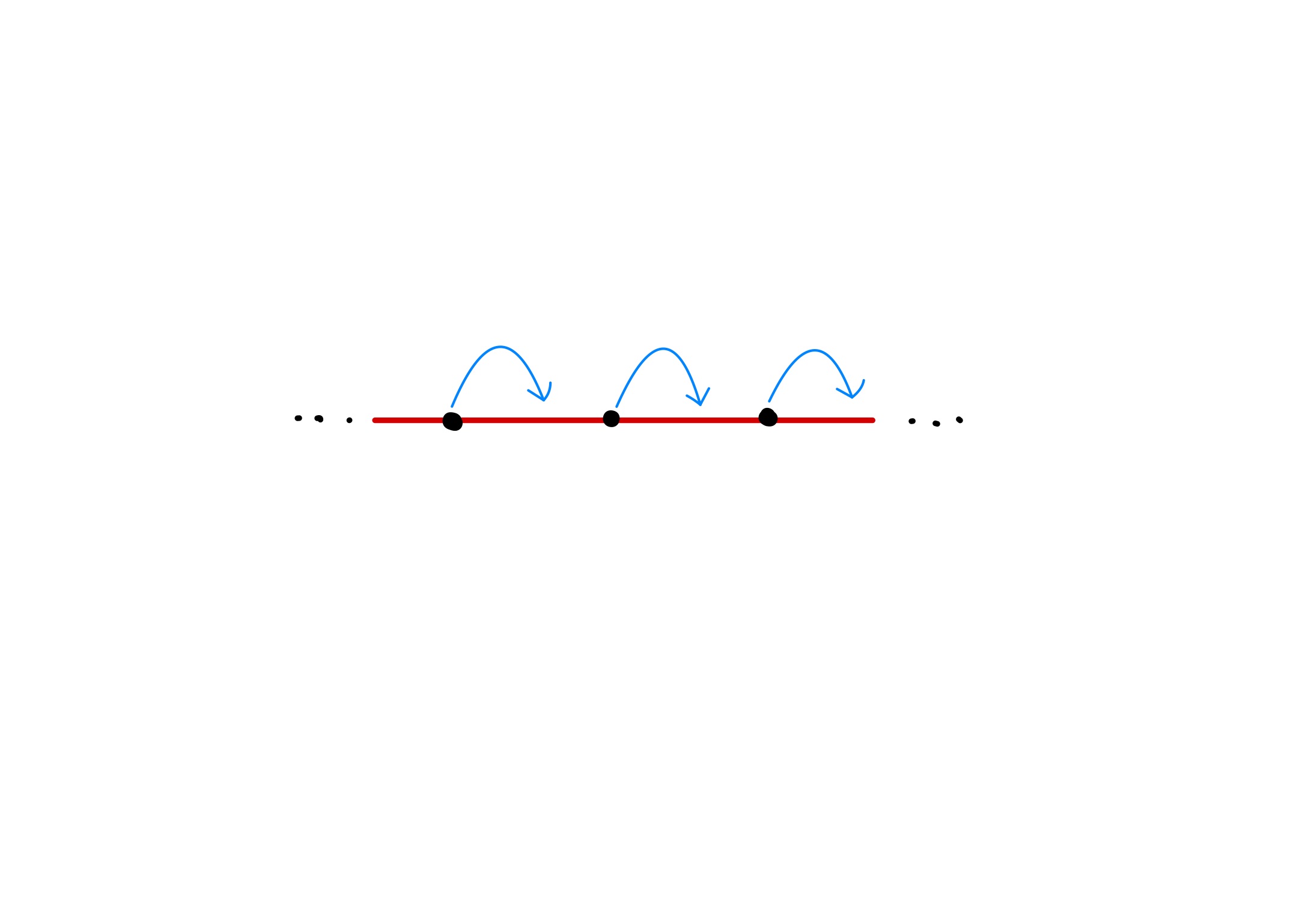}
\end{center}
\caption{\(\g{T_2}\) (the infinite line graph) can be satisfied by vertex assignments shown using blue arrows.}
\end{subfigure}
\hfill
\begin{subfigure}{0.4\textwidth}
\begin{center}
\includegraphics[trim={22cm 27cm 17cm 20cm}, clip,width=7cm]{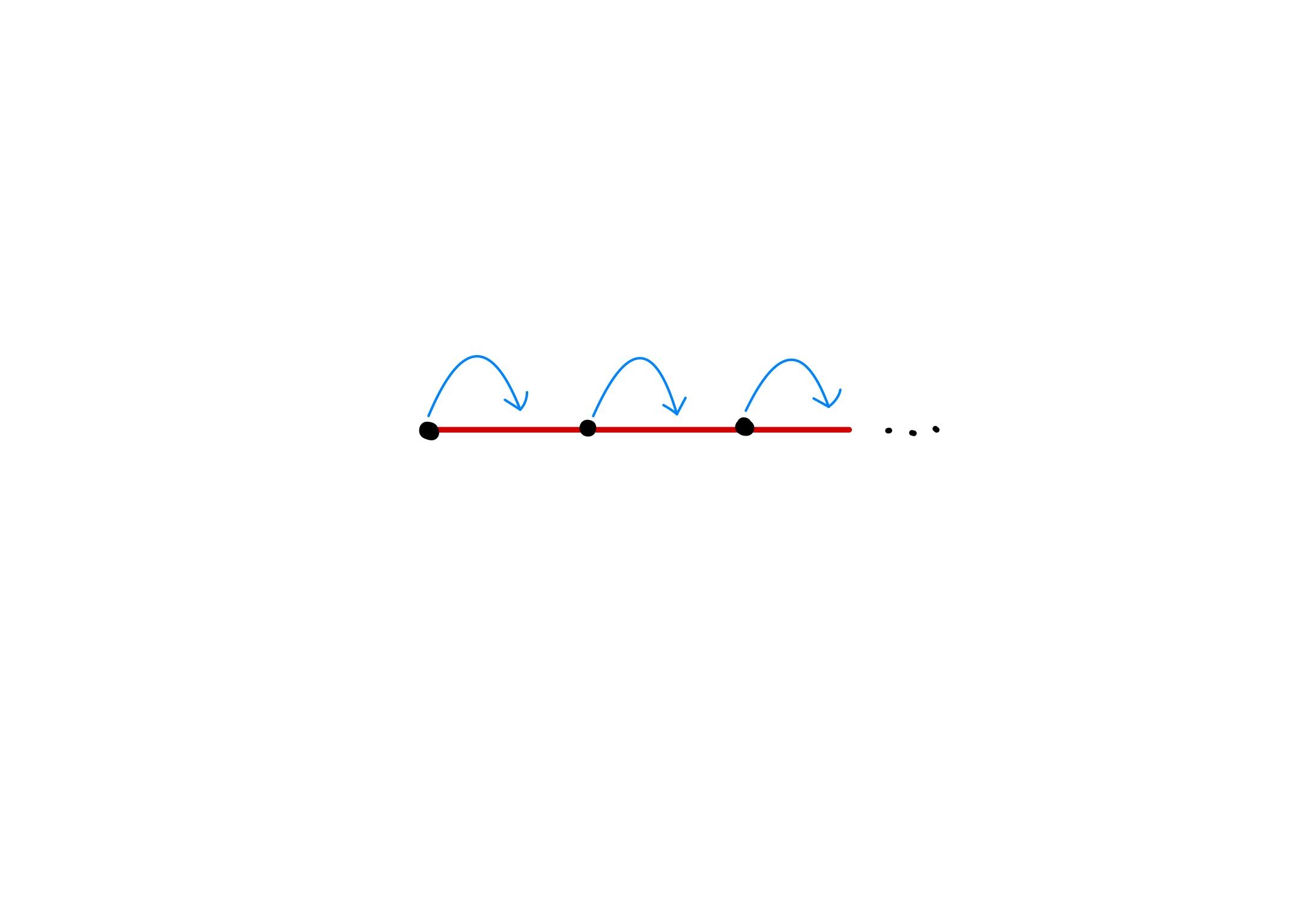}
\end{center}
\caption{The infinite ray graph is similarly totally satisfiable.}
\end{subfigure}

\vspace*{5mm}
\begin{subfigure}{0.7\textwidth}
\begin{center}
\includegraphics[trim={15cm 30cm 20cm 15cm}, clip,width=7cm]{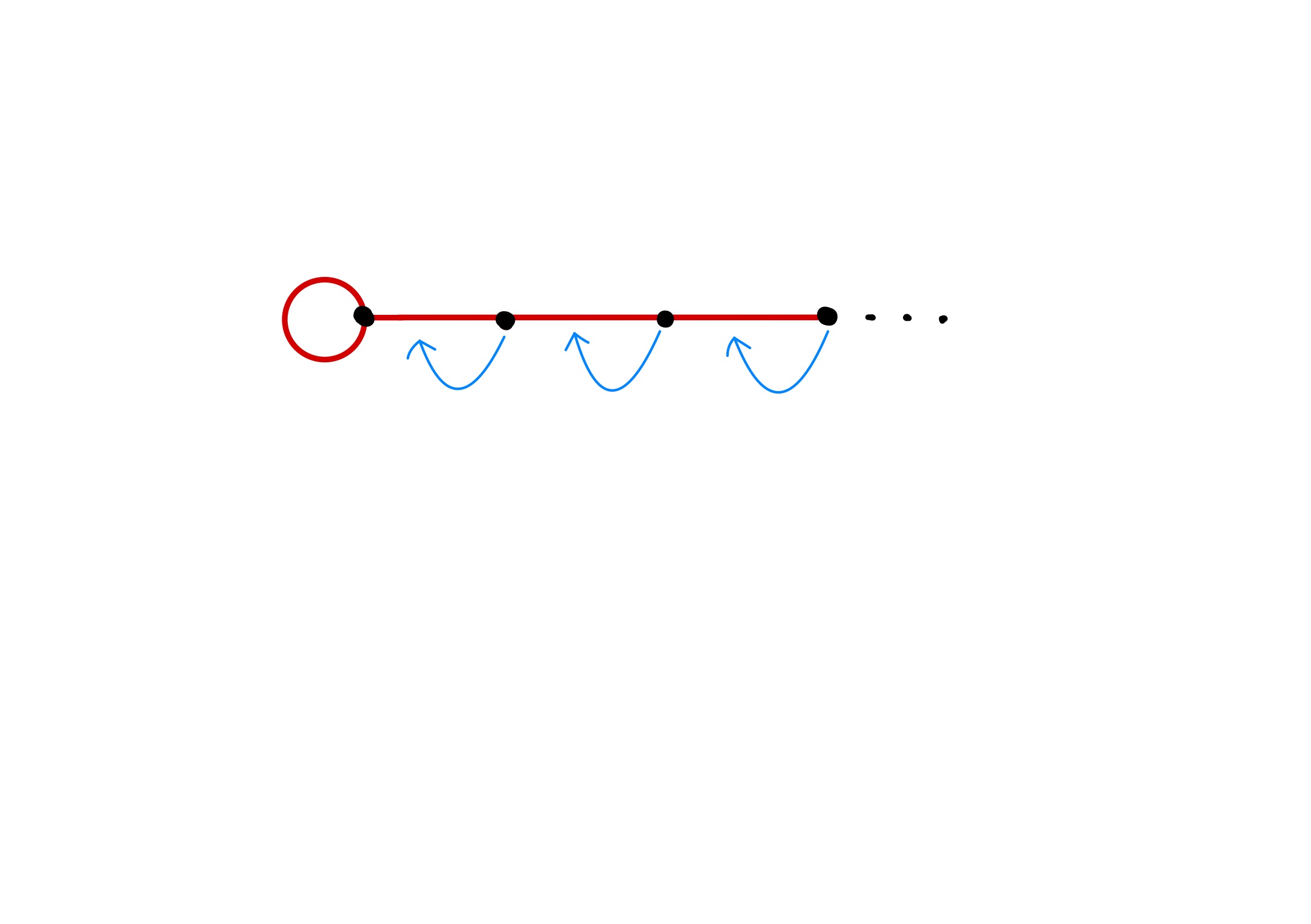}
\end{center}
\caption{The infinite ray graph with a looped on the tail is similarly totally satisfiable.}
\end{subfigure}
\caption{\label{fig-infinite-line-and-ray}
Infinite line and ray graphs are totally satisfiable using vertex assignment. Each edge is satisfied by a unique vertex.}
\end{figure}

\subsection{Infinite ray graph}
\label{sec:org3de146e}
Proof by demonstrating a vertex assignment also for other infinite graphs.
We should keep in mind that a valid vertex assignment can only help us
remove a single adjacent edge (or hyperedge) per vertex. Also, the
existence of a vertex assignment implies that the graph in question is
totally satisfiable, but its nonexistence does not prove that the graph is
unsatisfiable.

We use this technique to argue that the infinite ray graph with a looped
tail (see Figure \ref{fig-infinite-line-and-ray}) is totally satisfiable. At the level of
Cnfs, we can see that the tail vertex can be used to satisfy the loop. The
vertex next to the loop satisfies the last edge, the vertex after that
satisfies that last-but-one edge, and so on. This assignment is shown in
the figure using arrows. This proves that the infinite ray as well as the
infinite ray with looped tail are both totally satisfiable graphs.

\subsection{Bi-infinite strip}
\label{sec:org52e7457}
We next consider a thickened version of \(\g{T_2}\) made of hyperedges, as
shown in Figure \ref{fig-infinite-strip-and-tiling}. We call this is bi-infinite
strip and claim that it is totally satisfiable. The assignment that satisfies a
given Cnf in this graph can be derived by using the arrows shown in the
figure. Similarly, the mono-infinite strip shown in Figure
\ref{fig-infinite-strip-and-tiling} is also totally satisfiable by the vertex assignment
shown in the figure.

\begin{figure}[h]
\centering
\begin{subfigure}{0.4\textwidth}
\begin{center}
\includegraphics[trim={7cm 29cm 18cm 13cm}, clip,width=7cm]{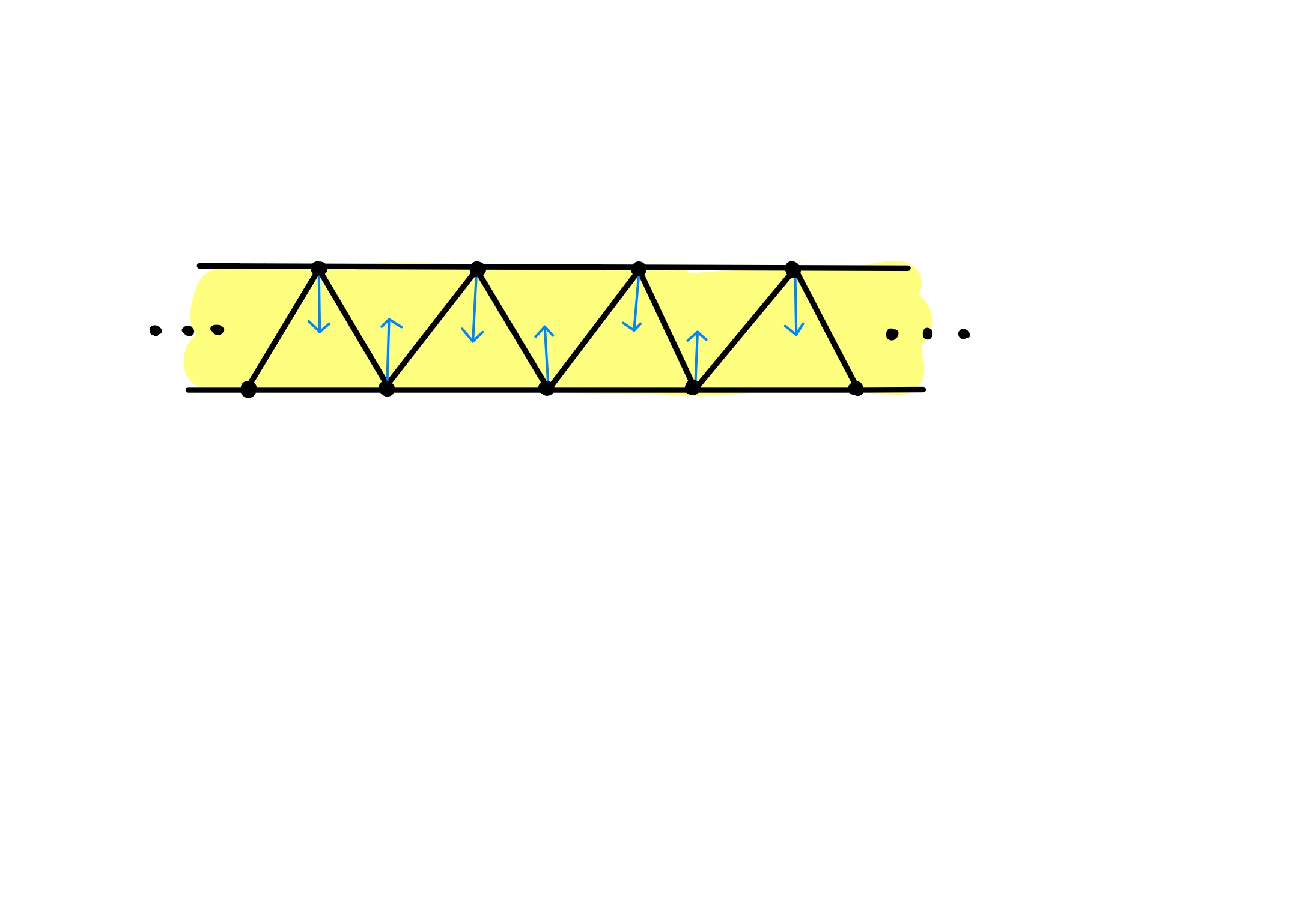}
\end{center}
\caption{The bi-infinite strip is totally satisfiable using the vertex assignments shown using blue arrows.}
\end{subfigure}
\hfill
\begin{subfigure}{0.4\textwidth}
\begin{center}
\includegraphics[trim={10cm 26cm 11cm 15cm}, clip,width=7cm]{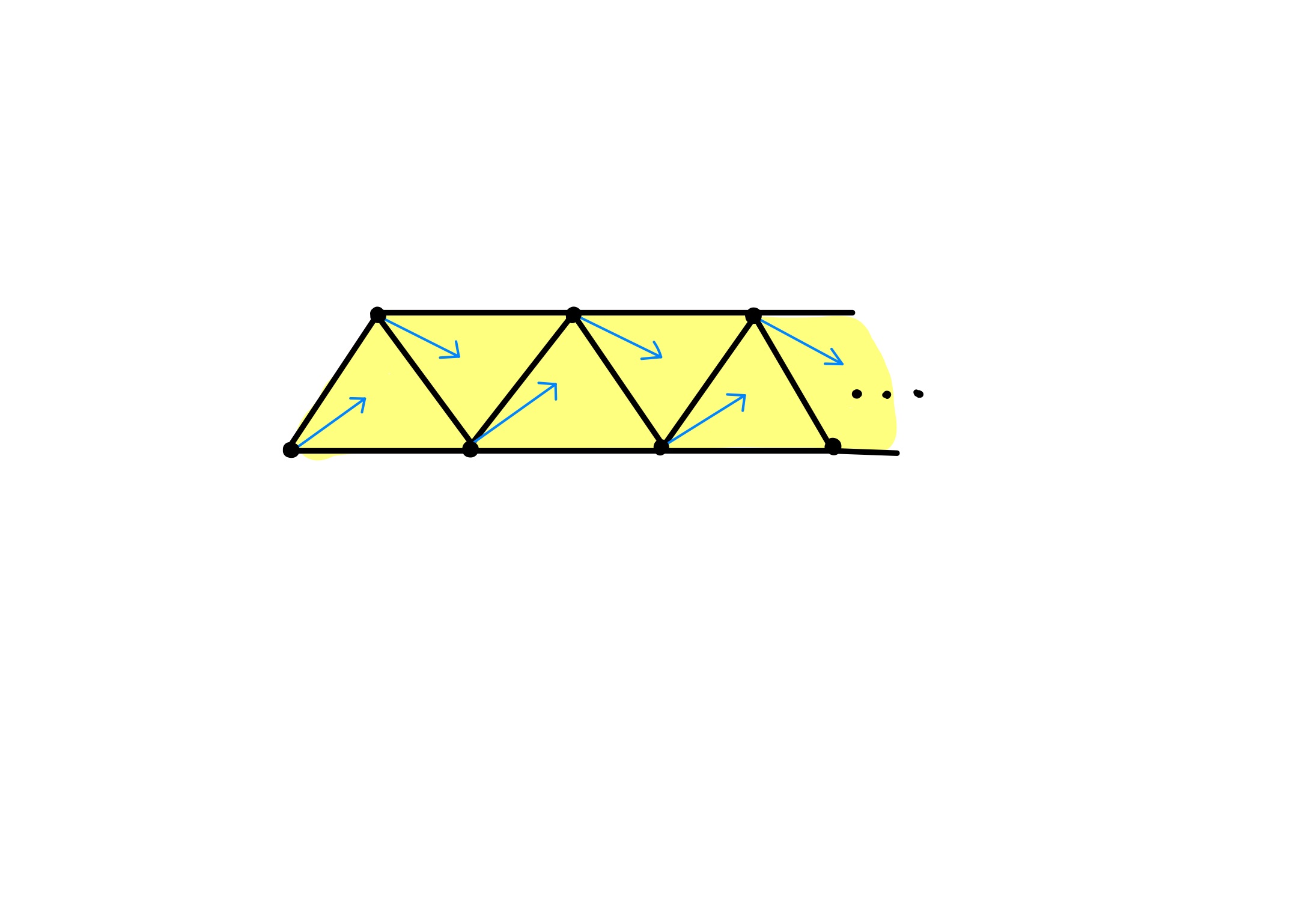}
\end{center}
\caption{The mono-infinite strip is similarly totally satisfiable.}
\end{subfigure}

\vspace*{5mm}
\begin{subfigure}{0.7\textwidth}
\begin{center}
\includegraphics[trim={5cm 10cm 20cm 7cm}, clip,width=8cm]{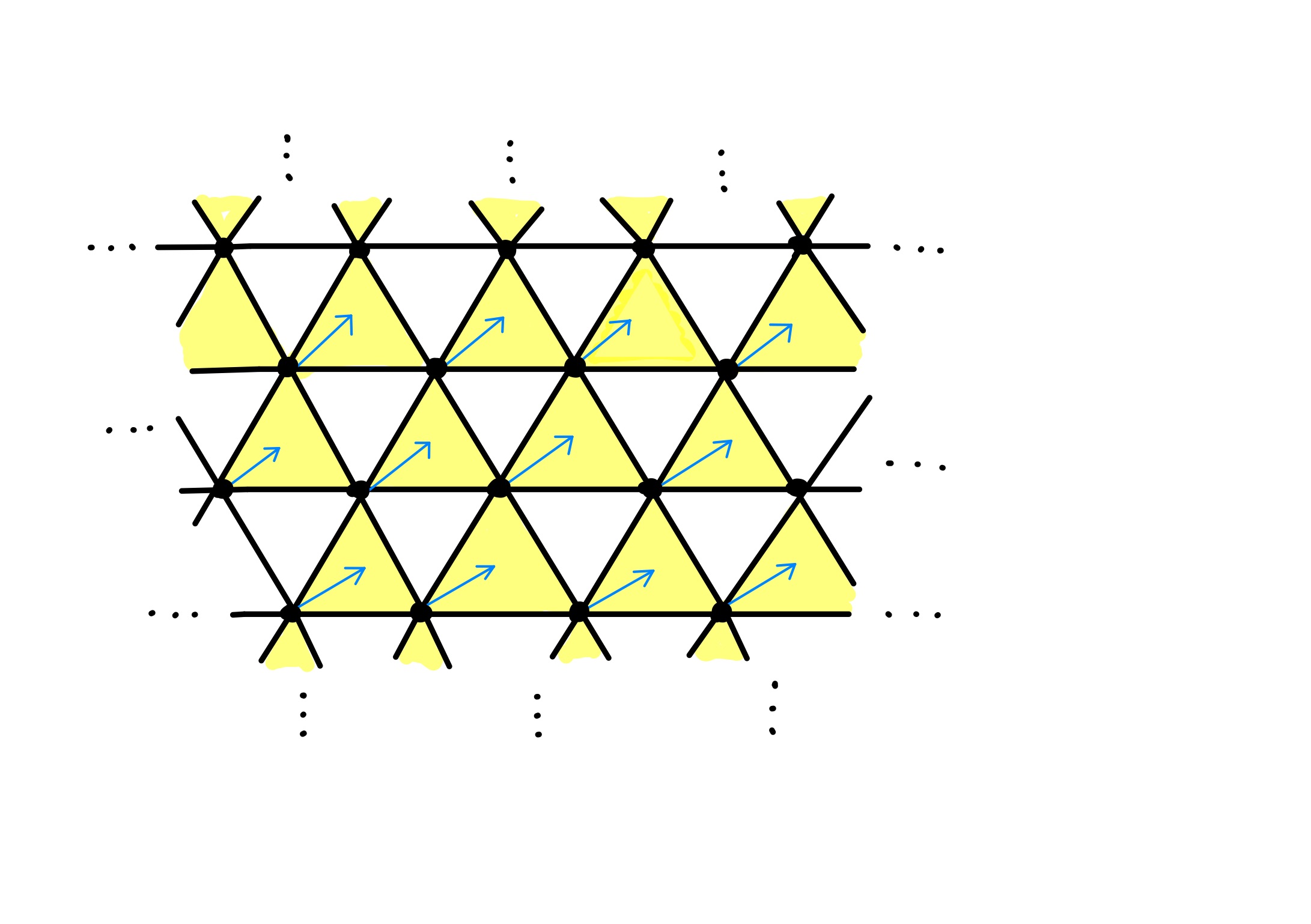}
\end{center}
\caption{This triangulation is formed by taking a triangular uniform plane tiling and removing alternate tiles. It is totally satisfiable using the vertex assignments shown.}
\end{subfigure}
\caption{\label{fig-infinite-strip-and-tiling}
Triangulations having vertex-assignments that use a single vertex to satisfy each hyperedge are totally satisfiable.}
\end{figure}

\subsection{Plane tiling with missing alternate tiles}
\label{sec:org572370c}
Lastly, we consider the tiling of the plane with alternate triangles and
holes as shown in Figure \ref{fig-infinite-strip-and-tiling}. This triangulation
can be satisfied by the vertex assignment shown in the same figure.

\subsection{Compactness theorem and infinite \GraphSAT}
\label{sec:orgdd07ba5}
A graph is totally satisfiable if and only if every Cnf in it is
satisfiable. The condition for every Cnf being satisfiable can itself be
translated into a large Cnf if we allow the introduction of new variables.

For example, consider the single-edge graph \(\g{ab}\). There is a set of
\(4\) Cnfs in the set \(\g{ab}\), and by relabeling the vertices, we can
write \(\bgamma(\g{ab}) = \sigma\big((a_1\vee b_1)\wedge(a_2\vee
\overline{b_2})\wedge(\overline{a_3}\vee
b_3)\wedge(\overline{a_4}\vee\overline{b_4})\big)\). We call this
translation map from Graphs to Cnfs \(\g{\tau}\) (short for translation).

We use this map \(\g{\tau}\) to change the total-satisfiability question of
a graph \(\g{g}\) from a universal quantification over all Cnfs in
\(\g{g}\) to an existential quantification over all truth-assignments for
the Cnf \(\g{\tau}(\g{g})\). This change to existential quantification
allows us to apply the Compactness Theorem.

In mathematical logic, the Compactness Theorem states that a set of
first-order sentences has a model if and only if every finite subset of it
has a model. In the context of \GraphSAT, this means that an infinite graph
is totally satisfiable if and only if every finite subgraph of it is
totally satisfiable. This means we can always restrict out attention to
studying only finite graphs. It also means that any unsatisfiable infinite
graph must have an unsatisfiable finite subgraph.

\section{Conclusion and future directions}
\label{sec:org23f1643}
An outcome of this work is the creation of a new graph decision problem ---
\GraphSAT. In \cite{KaHi2020} we showed that \(2\)\GraphSAT is in
complexity class P and has a finite obstruction set containing four simple
graphs \cite{KaHi2020}. The natural next step of exploring \(3\)\GraphSAT
gave rise to the local graph rewriting theorem (Theorem
\ref{thm-local_rewriting}), which leveraged the fact that taking a union over all
possible vertex-assignments preserves the satisfiability status of a graph.
Using this theorem, we were able to generate a list of graph reduction
rules and an incomplete list of obstructions to satisfiability of
multi-hypergraphs.

An incomplete list of known unsatisfiable looped-multi-hypergraphs
(pictured in Figure \ref{fig-unsat-graphs} and listed in Appendix \ref{sec:org9d7b11d}.

\subsection{Future directions}
\label{sec:org9131325}
We showed that the complexity class of 2\GraphSAT is P while the complexity
class for 3\GraphSAT is not known. Moreover, the effect of local graph
rewriting on 3\GraphSAT's complexity class is not known. Hence a key
research question that arises is whether local rewriting preserves
complexity, and whether it makes 3\GraphSAT easier in practice.

We have an incomplete list of unsatisfiable looped-multi-hypergraphs.
Questions that arise within this context are --- whether the number of
essential \SAT-invariant graph reduction rules is finite? Even if the
reduction rules are not finite, are they implementable in polynomial-time.
Even if they are not implementable in polynomial time, it is possible that
there is a polynomial-time check for demonstrating that none of the
reduction rules apply to a given graph.

It is also not known if the number of minimal unsatisfiable graphs under
these reduction rules is finite. So far we have found more than 200
distinct unsatisfiable and irreducible looped-multi-hypergraphs with less
than 7 vertices. If the complete list is infinite, it would imply that
3\GraphSAT is not in complexity class P.

If 3\GraphSAT is in P, this would give us an easy P-time heuristic check
for 3\SAT, simplifying some 3\SAT cases, while not directly affecting the
complexity class of 3\SAT.

Let \(\K{a}{b}\) denote the complete \(a\)-uniform hypergraph on \(b\)
vertices. To construct \(\K{a}{b}\), we can start with \(b\) vertices and
connect all \(\choose{b}{a}\) combinations with a hyperedge of size \(a\).
We can also think of this as the \((a-1)\)-skeleton of a \((b-1)\)-simplex.

The hypergraph \(\K{a}{b}\)'s satisfiability status is interesting because
it combines the extreme of having all possible hyperedges connected (which
can force unsatisfiability) with the extreme of each hyperedge being
incident on a large number of vertices (which can force satisfiability).

For example, we know that \(\K{2}{4}\) is \(K_4\), i.e. the complete simple
graph on \(4\) vertices and is known to be totally satisfiable. On the
other hand, the graph \(\K{2}{3}\) is \(C_3\) is known to be totally
satisfiable. Table \ref{table-complete-uniform-graphs} summarizes the known
satisfiability statuses of various \(\K{a}{b}\) graphs. As seen in the
table, the satisfiability-status of \(\K{4}{7}\) and \(\K{5}{7}\) are not
known.

\begin{table}[t]
\caption{A table showing the satisfiability statuses of complete uniform hypergraphs. Non-obvious results are shown in boldface.}
\begin{center}
\begin{tabular}{c l l l l l l l l}
\hline
 & b = 1 & b = 2 & b = 3 & b =  4 & b =  5 & b = 6 & b = 7 & \(\cdots\)\\
\hline
a = 1 & sat & sat & sat & sat & sat & sat & sat & \(\cdots\)\\
a = 2 & - & sat & sat & \textbf{unsat} & unsat & unsat & unsat & \(\cdots\)\\
a = 3 & - & - & sat & sat & \textbf{unsat} & unsat & unsat & \(\cdots\)\\
a = 4 & - & - & - & sat & sat & \textbf{sat} & \textbf{unknown} & \(\cdots\)\\
a = 5 & - & - & - & - & sat & sat & \textbf{unknown} & \(\cdots\)\\
a = 6 & - & - & - & - & - & sat & sat & \(\cdots\)\\
a = 7 & - & - & - & - & - & - & sat & \(\cdots\)\\
\(\vdots\) & - & - & - & - & - & - & - & \\
\end{tabular}
\end{center}
\label{table-complete-uniform-graphs}
\end{table}

The generalized rule for \(n\) triangular hyperedges meeting at a common
free vertex is not known. We do know the reduction rule only for \(n=3\)
--- \[\g{s\wedge 123\wedge 124\wedge 134} \bsim \g{s\wedge 23} \;\cup\;
\g{s\wedge 24} \;\cup\; \g{s\wedge 34}\] Reduction rules for \(n\geq 4\)
yield massive data-tables of resulting Cnfs, which we have so far been
unable to group into a convenient set of graphs.

\appendix

\section{Operator and notation summary}
\label{sec:org534de93}
All operators defined in \S \ref{sec:org93a5019} are summarized in
Table \ref{table-operator-summary}. These operators are written in increasing
order of binding-tightness. The order of binding-tightness can be used to
disambiguate expressions when multiple operators are used at the same time.

\begin{table}[H]
\scriptsize
\caption{Summary of all the operators.}
\begin{center}
\begin{tabular}{c l l p{2.5cm}}
\hline
Operator & Context & Meaning & Remarks\\
\hline
Invisible glue & between literals & boolean disjunction & binds tighter than all other operators\\
, & between clauses or Cnfs & boolean conjunction & also written as \(\wedge\)\\
\(\overline{x_1}\) & acts on literals & unary negation on literals & also written as \(\neg x_1\)\\
\(x[a]\) & action of assignment on Cnf & \(f_{a,\Cnf}(x)\) in \S \ref{sec:org5ecd0fd} & \\
\(\sim\) & between two Cnfs & equi-satisfiable Cnfs & equivalence relation\\
Invisible glue & between vertices & adjacency of vertices & \\
\(\g{e^n}\) & superscript for a (hyper)edge & edge-multiplicity & \\
\(\g{,}\) & between edges, or graphs & graph union & also called the adjacency of edges\\
\(\g{\vee}\) & between two sets of Cnfs & disjunction in \S \ref{sec:orgf3d1fdf} & \\
\(\g{\wedge}\) & between two sets of Cnfs & conjunction in \S \ref{sec:orgf3d1fdf} & \\
\(\g{g[v]}\) & action of vertex on a set of Cnfs & assignment in \S \ref{sec:org4e3aba8} & \\
\(\g{\le}\) & between two graphs & subgraph relation & \\
\(\g{\ll}\) & between two graphs & shaved version & \\
\(\bsim\) & between two sets of Cnfs & equi-satisfiable graphs/sets & binds looser than all other operators\\
\hline
\end{tabular}
\end{center}
\label{table-operator-summary}
\end{table}

\section{Standard graph disjunctions}
\label{sec:org7678103}
Here we list tables of standard graph disjunctions that one may encountered
when carrying out local graph rewriting calculations. These tables are all
generated using the \texttt{graph\_or} function in the \texttt{operations.py} module from
our \texttt{graphsat} Python package. In \S \ref{sec:org60b361b} we use these
disjunction results to derive graph reduction rules --- global rewrites that
do not affect the satisfiability of a graph. In \S \ref{sec:orgc6348d6}, we describe an ongoing effort to describe the
criterion for hypergraph minimality taking into account a growing list of
graph reduction rules.

The first two tables list graph disjunctions that can be written exactly as
a union of graphs; the third table lists graph disjunctions that can only
be listed as a subset-superset pair.

 \setlength{\tabcolsep}{0.5em}
\begin{minipage}[t]{0.35\textwidth}
\footnotesize
\begin{center}
\captionof{table}{\label{table-graph_disjunctions_size_2}
Graph disjunctions where size of \(\g{h_1}\) + size of \(\g{h_2}\) is \(2\).}
\begin{tabular}{r c l c l}
\hline
\(\g{h_1}\) &  & \(\g{h_2}\) &  & \(\g{h_1\vee h_2}\)\\
\hline
\(\g{a}\) & \(\g{\vee}\) & \(\g{a}\) & \(=\) & \(\btop \;\cup\; \g{a}\)\\
\(\g{a}\) & \(\g{\vee}\) & \(\g{b}\) & \(=\) & \(\g{ab}\)\\
\(\g{a}\) & \(\g{\vee}\) & \(\g{bc}\) & \(=\) & \(\g{abc}\)\\
\(\g{a}\) & \(\g{\vee}\) & \(\g{ab}\) & \(=\) & \(\btop \;\cup\; \g{ab}\)\\
\(\g{ab}\) & \(\g{\vee}\) & \(\g{cd}\) & \(=\) & \(\g{abcd}\)\\
\(\g{ab}\) & \(\g{\vee}\) & \(\g{ac}\) & \(=\) & \(\btop \;\cup\; \g{abc}\)\\
\(\g{ab}\) & \(\g{\vee}\) & \(\g{ab}\) & \(=\) & \(\btop \;\cup\; \g{ab}\)\\
\hline
\end{tabular}
\end{center}
\normalsize
\end{minipage}
\hspace*{0.1\textwidth}
\begin{minipage}[t]{0.4\textwidth}
\footnotesize
\begin{center}
\captionof{table}{\label{table-graph_disjunctions_size_3}
Graph disjunctions where size of \(\g{h_1}\) + size of \(\g{h_2}\) is \(3\).}
\begin{tabular}{r c l c l}
\hline
\(\g{h_1}\) &  & \(\g{h_2}\) &  & \(\g{h_1\vee h_2}\)\\
\hline
\(\g{a}\) & \(\g{\vee}\) & \(\g{a^2}\) & \(=\) & \(\g{a}\)\\
\(\g{a}\) & \(\g{\vee}\) & \(\g{b^2}\) & \(=\) & \(\g{a}\)\\
\(\g{a}\) & \(\g{\vee}\) & \(\g{ab^2}\) & \(=\) & \(\btop \;\cup\; \g{a} \;\cup\; \g{ab}\)\\
\(\g{a}\) & \(\g{\vee}\) & \((\g{b\wedge ab})\) & \(=\) & \(\g{a} \;\cup\; \g{ab}\)\\
\(\g{ab}\) & \(\g{\vee}\) & \(\g{c^2}\) & \(=\) & \(\g{ab}\)\\
\(\g{ab}\) & \(\g{\vee}\) & \((\g{a\wedge c})\) & \(=\) & \(\g{ab} \;\cup\; \g{abc}\)\\
\(\g{ab}\) & \(\g{\vee}\) & \(\g{a^2}\) & \(=\) & \(\g{ab}\)\\
\(\g{ab}\) & \(\g{\vee}\) & \((\g{a\wedge b})\) & \(=\) & \(\btop \;\cup\; \g{ab}\)\\
\(\g{ab}\) & \(\g{\vee}\) & \((\g{c\wedge ac})\) & \(=\) & \(\g{ab} \;\cup\; \g{abc}\)\\
\(\g{ab}\) & \(\g{\vee}\) & \((\g{c\wedge ab})\) & \(=\) & \(\g{ab} \;\cup\; \g{abc}\)\\
\(\g{ab}\) & \(\g{\vee}\) & \((\g{a\wedge cd})\) & \(=\) & \(\g{ab} \;\cup\; \g{abcd}\)\\
\(\g{ab}\) & \(\g{\vee}\) & \((\g{a\wedge ac})\) & \(=\) & \(\btop \;\cup\; \g{ab} \;\cup\; \g{abc}\)\\
\(\g{ab}\) & \(\g{\vee}\) & \((\g{a\wedge bc})\) & \(=\) & \(\btop \;\cup\; \g{ab} \;\cup\; \g{abc}\)\\
\(\g{ab}\) & \(\g{\vee}\) & \((\g{a\wedge ab})\) & \(=\) & \(\btop \;\cup\; \g{ab}\)\\
\(\g{ab}\) & \(\g{\vee}\) & \((\g{ab\wedge cd})\) & \(=\) & \(\g{ab} \;\cup\; \g{abcd}\)\\
\(\g{ab}\) & \(\g{\vee}\) & \((\g{ab\wedge ac})\) & \(=\) & \(\btop \;\cup\; \g{ab} \;\cup\; \g{abc}\)\\
\(\g{ab}\) & \(\g{\vee}\) & \(\g{ac^2}\) & \(=\) & \(\btop \;\cup\; \g{ab} \;\cup\; \g{abc}\)\\
\(\g{ab}\) & \(\g{\vee}\) & \((\g{ac\wedge bc})\) & \(=\) & \(\btop \;\cup\; \g{ab} \;\cup\; \g{abc}\)\\
\(\g{ab}\) & \(\g{\vee}\) & \(\g{ab^2}\) & \(=\) & \(\btop \;\cup\; \g{ab}\)\\
\hline
\end{tabular}
\end{center}
\normalsize
\vspace*{1ex}
\end{minipage}

 \setlength{\tabcolsep}{0.5em}
\begin{table}
\footnotesize
\begin{center}
\captionof{table}{\label{table-graph-disjunctions-incomplete}
Subset-superset pairs for graph disjunctions where size of \(\g{h_1}\) + size of \(\g{h_2}\) is at most \(3\).}
\begin{tabular}{r r r c l l l}
\hline
Subset &  & \(\g{h_1}\) &  & \(\g{h_2}\) &  & Superset\\
\hline
 &  & \(\g{a}\) & \(\g{\vee}\) & (\(\g{b\wedge c}\)) & \(\subset\) & \((\g{ab\wedge ac})\)\\
\(\g{ab}\) & \(\subset\) & \(\g{a}\) & \(\g{\vee}\) & (\(\g{a\wedge b}\)) & \(\subset\) & \(\g{ab}\;\cup\; (\g{a\wedge ab})\)\\
 &  & \(\g{a}\) & \(\g{\vee}\) & (\(\g{b\wedge cd}\)) & \(\subset\) & \((\g{ab\wedge acd})\)\\
\(\g{abc}\) & \(\subset\) & \(\g{a}\) & \(\g{\vee}\) & (\(\g{a\wedge bc}\)) & \(\subset\) & \(\g{abc} \;\cup\; (\g{a\wedge abc})\)\\
\(\g{ab}\) & \(\subset\) & \(\g{a}\) & \(\g{\vee}\) & (\(\g{b\wedge ac}\)) & \(\subset\) & \(\g{ab}\;\cup\; (\g{ab\wedge ac})\)\\
 &  & \(\g{a}\) & \(\g{\vee}\) & (\(\g{b\wedge bc}\)) & \(\subset\) & \((\g{ab\wedge abc})\)\\
\(\btop \;\cup\; \g{a} \;\cup\; \g{ab}\) & \(\subset\) & \(\g{a}\) & \(\g{\vee}\) & (\(\g{a\wedge ab}\)) & \(\subset\) & \(\btop \;\cup\; \g{a} \;\cup\; \g{ab} \;\cup\; (\g{a\wedge ab})\)\\
 &  & \(\g{a}\) & \(\g{\vee}\) & \((\g{bc\wedge de}\)) & \(\subset\) & \((\g{abc\wedge ade})\)\\
 &  & \(\g{a}\) & \(\g{\vee}\) & \((\g{bc\wedge bd}\)) & \(\subset\) & \((\g{abc\wedge abd})\)\\
 &  & \(\g{a}\) & \(\g{\vee}\) & \(\g{bc^2}\) & \(\subset\) & \(\g{abc^2}\)\\
\(\g{acd}\) & \(\subset\) & \(\g{a}\) & \(\g{\vee}\) & \((\g{ab\wedge cd})\) & \(\subset\) & \(\g{acd} \;\cup\; (\g{ab\wedge acd})\)\\
\(\g{ab}\;\cup\; \g{abc}\) & \(\subset\) & \(\g{a}\) & \(\g{\vee}\) & \((\g{ab\wedge bc})\) & \(\subset\) & \(\g{ab} \;\cup\; \g{abc} \;\cup\; (\g{ab\wedge abc})\)\\
\(\btop \;\cup\; \g{ab} \;\cup\; \g{ac}\) & \(\subset\) & \(\g{a}\) & \(\g{\vee}\) & \((\g{ab\wedge ac})\) & \(\subset\) & \(\btop \;\cup\; \g{ab} \;\cup\; \g{ac} \;\cup\; (\g{ab\wedge ac})\)\\
 &  & \(\g{ab}\) & \(\g{\vee}\) & \((\g{c\wedge d})\) & \(\subset\) & \((\g{abc\wedge abd})\)\\
 &  & \(\g{ab}\) & \(\g{\vee}\) & \((\g{c\wedge de})\) & \(\subset\) & \((\g{abc\wedge abde})\)\\
\(\g{abc}\) & \(\subset\) & \(\g{ab}\) & \(\g{\vee}\) & \((\g{c\wedge cd})\) & \(\subset\) & \(\g{abc} \;\cup\; (\g{abc\wedge abcd})\)\\
\(\g{abc}\) & \(\subset\) & \(\g{ab}\) & \(\g{\vee}\) & \((\g{c\wedge ad})\) & \(\subset\) & \(\g{abc} \;\cup\; (\g{abc\wedge abd})\)\\
 &  & \(\g{ab}\) & \(\g{\vee}\) & \((\g{cd\wedge ef})\) & \(\subset\) & \((\g{abcd\wedge abef})\)\\
 &  & \(\g{ab}\) & \(\g{\vee}\) & \((\g{cd\wedge ce})\) & \(\subset\) & \((\g{abcd\wedge abce})\)\\
 &  & \(\g{ab}\) & \(\g{\vee}\) & \(\g{cd^2}\) & \(\subset\) & \(\g{abcd^2}\)\\
\(\g{abcd}\) & \(\subset\) & \(\g{ab}\) & \(\g{\vee}\) & \((\g{cd\wedge ac})\) & \(\subset\) & \(\g{abcd} \;\cup\; (\g{abc\wedge abcd})\)\\
\(\btop \;\cup\; \g{abc} \;\cup\; \g{abd}\) & \(\subset\) & \(\g{ab}\) & \(\g{\vee}\) & \((\g{ac\wedge ad})\) & \(\subset\) & \(\btop \;\cup\; \g{abc} \;\cup\; \g{abd} \;\cup\; (\g{abc\wedge abd})\)\\
\(\btop \;\cup\; \g{abc} \;\cup\; \g{abd}\) & \(\subset\) & \(\g{ab}\) & \(\g{\vee}\) & \((\g{ac\wedge bd})\) & \(\subset\) & \(\btop \;\cup\; \g{abc} \;\cup\; \g{abd} \;\cup\; (\g{abc\wedge abd})\)\\
\(\g{abcd}\) & \(\subset\) & \(\g{ab}\) & \(\g{\vee}\) & \((\g{ac\wedge cd})\) & \(\subset\) & \(\g{abcd} \;\cup\; (\g{abc\wedge abcd})\)\\
\hline
\end{tabular}
\end{center}
\end{table}
\normalsize

The above tables show that the possible graph disjunctions grow quickly
with the edges participating in the disjunction. This is why we stop at a
maximum of three edges. For calculating the graph disjunction of more
edges, we can always use the \texttt{graph\_or} function from the \texttt{operations}
module on each individual disjunction.

\newpage
\section{List of known unsatisfiable graphs}
\label{sec:org9d7b11d}
Presented below is a list of known unsatisfiable hypergraphs. This list was
generated using SageMath's \texttt{nauty} module and then filtering for
unsatisfiable graphs.

\footnotesize
\begin{verbatim}
 1  (1)²
 2  (1), (2), (1,2)
 3  (1,2), (1,3), (1,4), (2,3), (2,4), (3,4)
 4  (1,2), (1,3), (2,3), (1,2,3), (1,2,4), (1,2,5), (3,4,5)
 5  (1,2), (1,3), (1,4), (2,3), (2,4), (1,2,5), (3,4,5)
 6  (1,2), (1,3), (1,4), (2,3), (2,4), (1,3,4)
 7  (1,2), (1,3), (1,4), (2,3), (2,4), (1,3,5), (2,4,5)
 8  (1,2)², (1,4), (1,2,4)²
 9  (1,2), (1,3), (1,4), (2,3), (1,2,4), (1,3,4)
10  (1,2), (1,3), (1,4), (2,3), (1,2,4), (1,4,5), (2,3,5)
11  (1,2), (1,3), (1,4), (2,3), (1,2,4), (1,3,5), (2,4,5)
12  (1,2), (1,3), (1,4), (2,3), (1,2,4), (2,3,4)
13  (1,2), (1,3), (1,4), (2,3), (1,2,4), (1,2,5), (3,4,5)
14  (1,2), (1,3), (1,4), (2,3), (1,2,5), (1,4,5), (3,4,5)
15  (1,2), (1,3), (1,4), (2,3), (1,4,5), (2,3,5), (2,4,5)
16  (1,2), (1,3), (1,4), (2,3), (1,4,5), (2,4,5), (3,4,5)
17  (1)², (1,3)
18  (1,2)², (1,4)², (1,2,4)
19  (1,2), (1,3), (1,4)², (2,3), (2,3,4)
20  (1,2)², (1,3)², (2,3)
21  (1,2)², (1,4), (1,5), (1,4,5), (2,4,5)
22  (1,2), (1,3), (1,4), (1,5), (2,3), (2,4,5), (3,4,5)
23  (1,2), (1,3)², (2,4), (1,2,4), (2,3,4)
24  (1,2), (1,3), (1,5), (2,4), (3,5), (2,3,4), (2,4,5)
25  (1,2), (1,3), (2,4), (1,2,3), (1,2,4), (1,3,4)
26  (1), (1,3), (1,5), (1,3,5), (3,4,5)
27  (1,2), (1,3), (1,5), (2,4), (1,2,4), (2,3,5), (3,4,5)
28  (1), (1,3)², (1,3,4)
29  (1,2), (1,3), (2,4), (1,2,4), (2,3,4), (1,3,4)
30  (1,2)², (1,3), (2,4), (1,3,4), (2,3,4)
31  (1,2), (1,3), (2,4), (3,4), (1,2,3), (1,2,4)
32  (1,2), (1,3), (2,4), (3,4), (1,2,5), (1,3,4), (3,4,5)
33  (1,2), (1,3), (2,4), (3,4), (1,2,4), (1,4,5), (2,3,5)
34  (1), (1,3)², (3,4)
35  (1,2), (1,3), (1,5), (2,4), (3,4), (1,2,5), (3,4,5)
36  (1,2), (1,3), (2,4), (1,2,3), (1,2,4), (1,3,5), (2,4,5)
37  (1,2), (1,3), (2,4), (1,2,3), (1,2,4), (1,4,5), (2,3,5)
38  (1,2), (1,3), (2,4), (1,2,3), (1,2,4), (1,2,5), (3,4,5)
39  (1,2), (1,3), (2,4), (1,2,3), (1,2,5), (2,4,5), (3,4,5)
40  (1,2), (1,3), (2,4), (1,2,3), (1,4,5), (2,3,4), (2,3,5)
41  (1,2), (1,3), (2,3), (2,4), (3,5), (4,5), (1,4,5)
42  (1,2), (1,3), (2,4), (3,5), (4,5), (1,2,3), (1,4,5)
43  (2)², (3,5)
44  (1,2), (1,3), (2,4), (3,5), (4,5), (1,2,5), (1,3,4)
45  (1,2), (1,3), (2,4), (4,5), (1,2,3), (1,3,5), (1,4,5)
46  (1,2), (1,3), (2,4), (4,5), (1,2,3), (1,3,5), (3,4,5)
47  (2)², (2,3,5)
48  (1,2), (1,3), (2,4), (4,5), (1,2,3), (1,4,5), (2,3,5)
49  (1,2), (1,3), (2,4), (4,5), (1,2,3), (1,4,5), (3,4,5)
50  (2), (1,2), (1,3), (1,2,3), (1,3,5)
51  (1,2), (1,3), (2,4), (1,2,3), (1,3,5), (1,4,5), (2,4,5)
52  (1,2), (1,3), (2,4), (1,2,3), (1,4,5), (2,3,5), (2,4,5)
53  (1,2), (1,3), (2,4), (1,3,5), (1,4,5), (2,3,5), (2,4,5)
54  (1,2), (1,3), (2,4), (1,2,3), (1,3,5), (2,4,5), (3,4,5)
55  (1,2), (1,3), (4,5), (1,2,4), (1,2,5), (1,3,4), (1,3,5)
56  (1,2), (1,3), (4,5), (1,2,4), (1,3,5), (2,4,5), (3,4,5)
57  (1,2), (1,3), (4,5), (1,2,4), (1,3,4), (1,3,5), (2,4,5)
58  (1,2), (1,3), (4,5), (1,2,4), (1,3,4), (2,3,5), (2,4,5)
59  (1,2), (1,3), (4,5), (1,2,4), (1,3,4), (2,4,5), (3,4,5)
60  (1,2), (1,3), (2,4)², (1,3,4), (2,3,4)
61  (1,2), (1,3), (2,4), (4,5), (1,2,5), (1,3,4), (3,4,5)
62  (1,2), (1,3), (2,4), (4,5), (1,2,5), (1,3,4), (1,3,5)
63  (1,2), (1,3), (2,4), (4,5), (1,3,4), (2,3,5), (3,4,5)
64  (1,2), (1,3), (2,4), (4,5), (1,2,5), (1,3,5), (2,3,4)
65  (1,2), (1,3), (2,4), (4,5), (1,3,5), (2,3,4), (3,4,5)
66  (1,2), (1,3), (2,4), (4,5), (1,2,4), (1,3,5), (2,3,5)
67  (1,2), (1,3), (2,4), (1,2,3), (1,4,5), (2,4,5), (3,4,5)
68  (1,2), (1,3), (2,4), (1,2,3), (1,3,5), (2,3,4), (2,4,5)
69  (1,2), (1,3), (2,4), (1,2,4), (1,2,5), (1,3,4), (3,4,5)
\end{verbatim}
\normalsize

\section{Implementation of local graph rewriting}
\label{sec:org322849c}
Below we include the docstring of the function, informing us what exactly
the function does, followed by its implementation as a code-snippet. The
implementation uses other functions defined in the package like
\texttt{operations.graph\_or}, \texttt{compute\_all\_two\_partitions\_of\_link}, and
\texttt{mhgraph.rest}. We will not detail each of these subsidiary functions here.
We leave it instead to the interested reader to look at \texttt{graphsat}'s source
code for more details.

\begin{minted}[frame=lines,label= (python3.9) (graph\_rewrite.py) <<local-rewrite-docstring>>]{python}
"""Locally rewrite at ``vertex`` assuming that the graph is only partially known.

This function only affects edges incident on ``vertex``, assuming that ``mhg`` only represents a part of the
full graph. The result is a dictionary of Cnfs grouped by their MHGraphs.
"""
\end{minted}

\begin{minted}[frame=lines,label= (python3.9) (graph\_rewrite.py) <<local\_rewrite>>]{python}
def local_rewrite(mhg: mhgraph.MHGraph, vertex: mhgraph.Vertex) -> dict[mhgraph.MHGraph, set[cnf.Cnf]]:
    <<local-rewrite-docstring>>                                                      # Add function docstring.
    rest: mhgraph.MHGraph = mhgraph.rest(mhg, vertex)                     # Compute the rest of mhg at vertex.

    part = list[mhgraph.MHGraph]                                    # The type of a part of link(mhg, vertex).
    two_partitions: Iterator[tuple[part, part]]              # Generate all 2-partitions of link(mhg, vertex).
    two_partitions = compute_all_two_partitions_of_link(mhg, vertex)

    resultant_cnfs: set[cnf.Cnf] = set()                                       # Initialize resultant Cnf set.

    for h1, h2 in two_partitions:                                    # Loop over all 2-partitions of the link.
	h1_or_h2: set[cnf.Cnf] = op.graph_or(h1, h2)                               # Disjunction of the parts.
	h12_and_rest: set[cnf.Cnf] = op.graph_and(h1_or_h2, rest)                 # Conjunction with the rest.
	resultant_cnfs |= h12_and_rest                                  # Add result to set of resultant Cnfs.

    return graph_collapse.create_grouping(resultant_cnfs)     # Group Cnfs by the Graphs to which they belong.
\end{minted}

\section{Implementation of graph disjunction}
\label{sec:org4527993}
We present an implementation of graph disjunction as a Python function.
This function can be found under the name \texttt{graph\_or} in the \texttt{operations.py}
module in our \texttt{graphsat} package. It computes the pairwise disjunction of
the Cartesian product of two sets of Cnfs. Since the disjunction of two
Cnfs is not a Cnf, we can bring it back into normal form using the function
\texttt{cnf\_or\_cnf} which is outlined below as a helper function and is part of
the \texttt{prop.py} module.

\begin{minted}[frame=lines,label= (python3.9) (operations.py) <<graph\_or>>]{python}
def graph_or(graph1: Union[MHGraph, set[cnf.Cnf]], graph2: Union[MHGraph, set[cnf.Cnf]]) -> set[cnf.Cnf]:
    """Disjunction of the Cartesian product of Cnfs."""
    if not isinstance(graph1, set):                 # Convert graph1 to its underlying set of Cnfs, if needed.
	graph1 = set(sat.cnfs_from_mhgraph(mhgraph(graph1)))
    if not isinstance(graph2, set):                 # Convert graph2 to its underlying set of Cnfs, if needed.
	graph2 = set(sat.cnfs_from_mhgraph(mhgraph(graph2)))

    product = it.product(graph1, graph2)                                          # Cartesian product of Cnfs.
    disjunction = it.starmap(prop.cnf_or_cnf, product)    # Use distributivity to bring back into normal form.
    disjunction_reduced = map(cnf.tautologically_reduce_cnf, disjunction)          # Simplify Cnf is possible.
    return set(disjunction_reduced)                                                    # Return a set of Cnfs.
\end{minted}

\begin{minted}[frame=lines,label= (python3.9) (prop.py) <<cnf\_or\_cnf>>]{python}
def clause_or_clause(clause1: Clause, clause2: Clause) -> Clause:             # Union of clause1 with clause2.
    return clause(clause1 | clause2)

def cnf_or_clause(cnf1: Cnf, clause_: Clause) -> Cnf:                 # Distribute each clause across the Cnf.
    return cnf([clause_or_clause(clause1, clause_) for clause1 in cnf1])

def cnf_or_cnf(cnf1: Cnf, cnf2: Cnf) -> Cnf:       # Distribute each clasuse across and then fold the result.
    return ft.reduce(cnf_and_cnf, [cnf_or_clause(cnf1, clause) for clause in cnf2])
\end{minted}

\bibliography{karve}
\bibliographystyle{acmdoi}
\end{document}